\newcommand{\mapolicebackref}[1]{
    \hspace*{\fill} \mbox{\textit {\small #1}}
}
\renewcommand*{\backref}[1]{}
\renewcommand*{\backrefalt}[4]{%
\ifcase #1 \mapolicebackref{no quotes}
    \or \mapolicebackref{quoted page #2}
    \else \mapolicebackref{#1 quotes pages #2}
\fi
}
\newlist{sk}{enumerate}{1}
\setlist[sk]{label=$\mathbb{(HMV)}$:, ref=$\mathbb{(HMV)}$, wide, labelwidth=!, labelindent=0pt}
\newlist{condH}{enumerate}{1}
\setlist[condH]{label=$\mathbb{(VFP)}$:, ref=$\mathbb{(VFP)}$, wide, labelwidth=!, labelindent=0pt}
\newtheorem{theorem}{Theorem}[section]
\newtheorem{proposition}[theorem]{Proposition}
\theoremstyle{definition}
\newtheorem{definition}[theorem]{Definition}
\theoremstyle{remark}
\newtheorem{remark}[theorem]{Remark}
\numberwithin{equation}{section}
\crefname{example}{Example}{Examples}
\Crefname{example}{Example}{Examples}
\crefname{assumption}{Assumption}{Assumptions}
\Crefname{assumption}{Assumption}{Assumptions}
\crefname{condition}{Condition}{Conditions}
\Crefname{condition}{Condition}{Conditions}
\let\para\S
\setlist{topsep=1ex, itemsep=0.5ex, before={\setlist{topsep=-.5ex}}}
\newcommand{\C}{\ensuremath{\mathcal{C}}}
\newcommand{\E}{\ensuremath{\mathbb{E}}}
\renewcommand{\S}{\ensuremath{\mathcal{S}}}
\newcommand{\N}{\ensuremath{\mathbb{N}}}
\newcommand{\R}{\ensuremath{\mathbb{R}}}
\def\<{\langle}
\def\>{\rangle}
\NewDocumentCommand{\Lin}{om}{\IfNoValueTF{#1}{L(\R^{#2},\R^{#2})}{L(\R^{#1},\R^{#2})}}
\NewDocumentCommand{\Cb}{om}{\IfNoValueTF{#1}{\C_b^{#2}}{\C_b^{#2,#1}}}
\NewDocumentCommand{\Lip}{om}{\IfNoValueTF{#1}{|#2|_{\mathrm{Lip}}}{|#2|_{\mathrm{Lip};\,#1}}}
\renewcommand{\geq}{\geqslant}
\renewcommand{\leq}{\leqslant}
\def\Cb{{\mathrm {BC}}}
\def\${|\!|\!|}
\newcommand{\vertiii}[1]{{\left\vert\kern-0.25ex\left\vert\kern-0.25ex\left\vert #1 \right\vert\kern-0.25ex\right\vert\kern-0.25ex\right\vert}}
\newcommand{\rom}[1]{(\textup{\uppercase\expandafter{\romannumeral#1}})}
\newcommand{\substackal}[1]{%
  \vcenter{%
    \Let@ \restore@math@cr \default@tag
    \baselineskip\fontdimen10 \scriptfont\tw@
    \advance\baselineskip\fontdimen12 \scriptfont\tw@
    \lineskip\thr@@\fontdimen8 \scriptfont\thr@@
    \lineskiplimit\lineskip
    \ialign{\hfil$\m@th\scriptstyle##$&$\m@th\scriptstyle{}##$\hfil\crcr
      #1\crcr
    }%
  }%
}
\newcommand\blfootnote[1]{%
  \begingroup
  \renewcommand\thefootnote{}\footnote{#1}%
  \addtocounter{footnote}{-1}%
  \endgroup
}
\definecolor{LB}{rgb}{0.29, 0.63, 0.73}
\definecolor{fondtitre}{RGB}{85,85,85}
\definecolor{fonddeboite}{RGB}{232,232,232}
\definecolor{shadecolor}{RGB}{232,232,232}
\newcommand*{\boitesimple}[1]{%
\begin{center}
\begin{minipage}{15cm}
\begin{shaded}
	#1
\end{shaded}
\end{minipage}
\end{center}
}
\begin{document}

\title{On the exponential ergodicity of the McKean-Vlasov SDE depending on a polynomial interaction}
\author[1]{\href{https://math.univ-angers.fr/membre/ag-aboubacrine-assadeck/}{Mohamed Alfaki \textsc{Ag Aboubacrine Assadeck}}\orcidlink{0000-0002-3281-1954}$^{\spadesuit\clubsuit}$}
\affil[1]{\footnotesize{\textcolor{blue}{\href{http://www.univ-angers.fr/}{Univ Angers,} \href{https://www.cnrs.fr/fr}{CNRS,} \href{http://recherche.math.univ-angers.fr/}{LAREMA,} \href{https://sfrmathstic.univ-angers.fr/fr/index.html}{SFR MATHSTIC,} \href{https://www.angers.fr/}{F-49000 Angers, France }}}}
\date{\today}

\newgeometry{top=0cm, bottom=1.5cm}

\maketitle
\thispagestyle{empty}

\blfootnote{$^\spadesuit$Email address: \href{mailto:mohamedalfaki.agaboubacrineassadeck@univ-angers.fr}{mohamedalfaki.agaboubacrineassadeck}@univ-angers.fr \newline $^\clubsuit$\href{https://mon-portfolio-de-chercheur.webnode.fr/}{\textcolor{blue}{Homepage of Mohamed Alfaki \textsc{Ag Aboubacrine Assadeck}}}
}
\vspace{-3em}

\begin{abstract}
\noindent In this paper, we study the long time behaviour of the Fokker-Planck and the kinetic Fokker-Planck equations with \textit{many body interaction}, more precisely with interaction defined by $U$-statistics, whose macroscopic limits are often called McKean-Vlasov and Vlasov-Fokker-Planck equations respectively. In the  continuity of the recent papers \cite[\cite{Ref2},\cite{Ref3}]{Ref1} and  \cite[\cite{Monmarche1},\cite{Monmarche2}]{MonmarchGullin}, we establish \textit{nonlinear functional inequalities} for the limiting McKean-Vlasov SDEs related to our particle systems. In the first order case, our results rely on \textit{large deviations} for $U$-statistics and a \textit{uniform logarithmic Sobolev inequality} in the number of particles for the invariant measure of the particle system. In the kinetic case, we first prove a uniform (in the number of particles) \textit{exponential convergence to equilibrium} for the solutions in the weighted Sobolev space $H^1(\mu)$ with a rate of convergence which is explicitly computable and independent of the number of particles. In a second time, we quantitatively establish an exponential return to equilibrium in Wasserstein's $\mathcal{W}_{2}-$metric for the Vlasov-Fokker-Planck equation.\newline

\noindent\textbf{Keywords.}
\textit{$U$-statistics; propagation of chaos; polynomial interaction; (kinetic) Fokker-Planck equation; McKean-Vlasov equation; functional inequalities; convergence to equilibrium; (hypo)coercivity.}\newline

\noindent\textbf{Mathematics Subject Classification.} 39B62; 82C31; 26D10; 47D07; 60G10;  60H10; 60J60.   
\end{abstract}
{
\small
\hypersetup{hidelinks}
\setcounter{tocdepth}{2}
\tableofcontents
}
\newpage

\restoregeometry

\section{Introduction}
In the continuity of the recent papers  \cite{Ref2} and \cite{Ref3}, we establish exponential convergence towards equilibrium for a class of McKean-Vlasov and Vlasov-Fokker-Planck with \textit{polynomial interaction} (macroscopic interaction associated with $U$-statistics and defined in \cref{GFFE} and \cref{flatintrinpol}). Before going further into the details, we recall the general setting related to our problem.\newline

\noindent\textbf{General homogeneous McKean-Vlasov diffusion.} The processes studied in this paper belong to the following class of stochastic differential equations:
\begin{equation}\label{Pmcvgeneral}
dX_{t}=b(X_{t},\mathbb{P}_{X_{t}})dt+\sigma(X_{t},\mathbb{P}_{X_{t}})dB_{t},
\end{equation}
with respectively $b:\mathbb{R}^{D}\times\mathcal{P}(\mathbb{R}^{D})\longrightarrow\mathbb{R}^{D}$ the drift coefficient, $\sigma:\mathbb{R}^{D}\times\mathcal{P}(\mathbb{R}^{D})\longrightarrow\mathcal{M}_{D, p}(\mathbb{R})$ the diffusion coefficient and $(B_{t})_{t\geq 0}$ a standard $p-$dimensional Brownian motion. More precisely, we are interested in the study of exponential ergodicity of the process defined by
\begin{equation}\label{McProcess}
dX_{t}=-(\mathcal{D}_{m}F(\mathbb{P}_{X_{t}},X_{t})+\frac{\sigma^2}{2}\nabla V(X_{t}))dt+\sigma dB_{t},
\end{equation}
where $F:\mathcal{P}(\R^{D})\to\overline{\R}$, $\mathcal{D}_{m}F$ is the intrinsic derivative ($\mathbf{L}-$derivation or derivation in the sense of Fr\'echet of $F$ on the probability measure space, see \cref{aboutfi1} for precise definition) defined as $\mathcal{D}_{m}F(m,\cdot):=\nabla\frac{\delta F}{\delta m}(m,\cdot)$ (for example, if $F(m)=\int\varphi dm$, we have $\frac{\delta F}{\delta m}(m,x)=\varphi(x)$ then, $\mathcal{D}_{m}F(m,x)=\nabla\varphi(x)$), $V$ is a \textit{confinement potential} and $\sigma>0$ (in this paper, without loss of generality and for the sake of standardization, we take $\sigma=\sqrt{2}$). Note that
\begin{equation}
   \cref{McProcess}\Longleftrightarrow dX_{t}=-\nabla\frac{\delta H}{\delta m}(\mathbb{P}_{X_{t}},X_{t})dt+\sigma dB_{t}
\end{equation}
with the functional $H$ given by 
\begin{equation}
H(\mu):=F(\mu)+\frac{\sigma^2}{2}\int Vd\mu.
\end{equation}
In the sequel, $F$ will be assumed to be a \textit{polynomial of degree at least two on the probability space} (see \cref{GFFE}), so that $H$ is a \textit{homogeneous polynomial (without constant term) on the probability space}.\newline

\noindent\textbf{General mean-field generators and mean-field limits.}
A mean-field particle system is a system of $n$ particles characterised by a generator of the form
\begin{equation}\label{meanfieldsG}
    \forall\varphi\in\mathcal{A}\subset\mathcal{C}_{b}((\R^{D})^{n}),\quad\mathcal{L}_{n}\varphi(x):=\sum_{i=1}^{n}\mathcal{L}_{\mu_{x}}\blacksquare_{i}\varphi(x),\quad\textnormal{where}\quad\mu_{x}:=\frac{1}{n}\sum\delta_{x_{i}}
\end{equation}
and for a given probability measure $\mu\in\mathcal{P}(\R^{D})$, $\mathcal{L}_{\mu}$ is the generator of a Markov process on $\R^{D}$ defined by
\begin{equation}\label{MarkovG}
    \mathcal{L}_{\mu}:=b(\cdot,\mu)\cdot\nabla+\frac{1}{2}\mathbf{Tr}(\sigma\sigma^{*}(\cdot,\mu)\nabla^{2}),
\end{equation}
and the notation $\mathcal{L}\blacksquare_{i}\varphi$ denotes the action of an operator $\mathcal{L}$ defined on (a subset of) $\mathcal{C}_{b}(\R^{D})$ against the i-th variable of a function $\varphi\in\mathcal{C}_{b}((\R^{D})^{n})$; in other words, $\mathcal{L}\blacksquare_{i}\varphi$ is defined as the function:
$$
x\in(\R^{D})^{n}\longmapsto\mathcal{L}[y\longmapsto\varphi(x_{1},\ldots,x_{i-1},y,x_{i+1},\ldots,x_{n})](x_{i})\in\R.
$$
The $n-$particle generator (\cref{meanfieldsG}) associated to this class of diffusion generators induces the \textit{McKean-Vlasov diffusion process} given by \cref{Pmcvgeneral}. The associated $n-$particle process is governed by the following system of SDEs:
\begin{equation}\label{Pmeanfields}
    \forall i\in\{1,\ldots,n\},\quad dX^{i,n}_{t}=b(X^{i,n}_{t},\mu_{X^{n}_{t}})dt+\sigma(X^{i,n}_{t},\mu_{X^{n}_{t}})dB^{i}_{t},
\end{equation}
where $B^{1},\ldots,B^{n}$ are $n$ independent Brownian motions.\newline 
A \textit{kinetic particle} $Z^{i,n}_{t}:=(X^{i,n}_{t},V^{i,n}_{t})\in\R^{d}\times\R^{d}$ is a particle defined by two arguments, its position $X^{i,n}_{t}$ and its velocity $V^{i,n}_{t}$ defined as the time derivative of the position. The evolution of a system of kinetic particles is usually governed by Newton's laws of motion. In a random setting, the typical system of SDEs is thus the following:
\begin{equation}\label{PKinetic}
\forall i\in\{1,\ldots,n\},\quad
    \begin{cases}
        dX^{i,n}_{t}=V^{i,n}_{t}dt\\
        dV^{i,n}_{t}=\mathbf{F}(X^{i,n}_{t},V^{i,n}_{t},\mu_{X^{n}_{t}})dt+\sigma(X^{i,n}_{t},V^{i,n}_{t},\mu_{X^{n}_{t}})dB^{i}_{t},
    \end{cases}
\end{equation}
where $\mathbf{F}:\R^{d}\times\R^{d}\times\mathcal{P}(\R^{d})\to\R^{d}$ and $\sigma:\R^{d}\times\R^{d}\times\mathcal{P}(\R^{d})\to\mathcal{M}_{d}(\R)$. Note that it is often assumed that the force field induced by the interactions between the particles depends only on their positions. Thus, we consider 
\begin{equation}\label{mesuremep}
    \mu_{X^{n}_{t}}:=\frac{1}{n}\sum_{i=1}^{n}\delta_{X^{i,n}_{t}}
\end{equation}
instead of $\mu_{Z^{n}_{t}}$. Note that in the system \cref{Pmeanfields} there are actually $nD$ independent one-dimensional Brownian motions. In particular, for kinetic particles defined by their positions and velocities, the noise is often added on the velocity variable only (this case is nevertheless covered by \cref{Pmeanfields} with a block-diagonal matrix $\sigma$ with a vanishing block on the position variable). This special case of the McKean-Vlasov diffusion in $\R^{D}=\R^{d}\times\R^{d}$ is also often called a \textit{second order system} by opposition to the \textit{first order systems} when $\R^{D}=\R^{d}$. In this paper, we will establish some uniform exponential convergence of the particle systems \cref{CM1} and \cref{CM2} which in turn  will allow us to derive the same properties for their mean-field limiting dynamics.\newline

\noindent\textbf{Propagation of chaos.} \cref{Pmcvgeneral} appears naturally as the mean field limit of \cref{Pmeanfields}: This phenomenon is called \textit{chaos propagation}. The notion of propagation of chaos for large systems of interacting particles originates in statistical physics and has recently become a central notion in many areas of applied mathematics. Kac gave the first rigorous mathematical definition of chaos (\cite{MKac1}) and introduced the idea that for time-evolving systems (\cref{Pmeanfields}), chaos should be propagated in time, a property therefore called the \textit{propagation of chaos}. Kac was still motivated by the mathematical justification of the classical collisional kinetic theory of Boltzmann for which he developed a simplified probabilistic model. Soon after Kac, McKean (\cite{McKean1})  introduced a class of diffusion models (\cref{Pmcvgeneral}) which were not originally part of Boltzmann theory but which satisfy Kac's propagation of chaos property. In the classical kinetic theory of Boltzmann, the problem is the derivation of continuum models starting from deterministic, Newtonian, systems of particles. In comparison, the fundamental contribution of Kac and McKean is to have shown that the classical equations of kinetic theory also have a natural stochastic interpretation. This philosophical shift is addressed in the enlightening introduction of Kac (\cite{MKac}) written for the centenary of the Boltzmann equation.
On this topic, we refer to (among others) the seminal papers \cite[\cite{McKean1},\cite{MKac1},\cite{MKac},\cite{kacprogramm},\cite{Ref6},\cite{Talay},\cite{Meleard},\cite{Mario},\cite{propachaos1},\cite{BGSR},\cite{GSRT}]{Boltzmann3} and more recently \cite[\cite{JabinWang1},\cite{Jabin},\cite{Golse}]{JabinWang} and for applications, the reader may look at  \cite[\cite{Ref10},\cite{Hauray}]{games} in mean field games, \cite[\cite{optim2},\cite{optim3},\cite{optim4}]{optim1} in optimization, \cite[\cite{data2},\cite{data3}]{data1} and \cite[\cite{birthdeath},\cite{neural1},\cite{neural2}]{Lucasmini} in machine learning, \cite[\cite{Biologie2},\cite{Biologie3},\cite{Biologie4},\cite{Biologie5}]{Biologie1} in biology...\newline

\noindent\textbf{General McKean-Vlasov PDE.} Let us now focus on the martingale problem related to \cref{Pmcvgeneral} and on the associated PDE. It is classically assumed that the domain of the generator $\mathcal{L}_{\mu}$ does not depend on $\mu$. This domain will be denoted by $\mathcal{F}\subset\mathcal{C}_{b}(\R^{D})$. In that case, it is easy to guess the form of the associated nonlinear system obtained when $n\to+\infty$. Taking a test function of the form $\varphi(x_{1},\ldots,x_{n}):=\psi(x_{1})$, where $\psi\in\mathcal{F}$, one obtains the one-particle Kolmogorov equation:
\begin{equation}\label{kolmogorovmeanfield}
   \frac{d}{dt}\langle\mathbb{P}_{X^{1,n}_{t}},\psi\rangle=\int_{(\R^{D})^{n}}\mathcal{L}_{\mu_{x}}\varphi(x)\mathbb{P}_{X^{n}_{t}}(dx)=\mathbb{E}[\mathcal{L}_{\mu_{X^{n}_{t}}}\varphi(X^{n}_{t})].
\end{equation}
Note that the right-hand side depends on the $n-$particle distribution. If the limiting system exists (propagation of chaos) then, its law $\mu_{t}$ at time $t\geq 0$ is typically obtained as the limit of the empirical measure process:
\begin{equation}\label{lawlimit}
    \mu_{X^{n}_{t}}\overset{n\to+\infty}{\longrightarrow}\mu_{t}
\end{equation}
This also implies $\mathbb{P}_{X^{1,n}_{t}}\overset{n\to+\infty}{\longrightarrow}\mu_{t}$. Reporting formally in the previous equation, it follows that $\mu_{t}$ should satisfy
\begin{equation}\label{GMcKeanPDE}
    \bigg(\forall\varphi\in\mathcal{F},\quad\frac{d}{dt}\langle\mu_{t},\varphi\rangle=\langle\mu_{t},\mathcal{L}_{\mu_{t}}\varphi\rangle\bigg)\Longleftrightarrow\partial_{t}\mu_{t}=\mathcal{L}_{\mu_{t}}^{\dag}\mu_{t},\quad\textnormal{where}\quad\mathcal{L}^{\dag}_{\mu_{t}}\quad\textnormal{is the weak adjoint of}\quad\mathcal{L}_{\mu_{t}}.
\end{equation}
This is the \textit{weak form} of an equation that is called the \textit{(nonlinear) evolution equation}. Note that the evolution equation is nonlinear due to the dependency of $\mathcal{L}_{\mu}$ on the measure argument $\mu$. This is a very analytical derivation. Its stochastic equivalent is given by a \textit{nonlinear martingale problem}:
\begin{definition}[Nonlinear Martingale Problem]\label{NLMP}
 Let $T\in(0,+\infty)$ and let us write $I:=[0,T]$. A pathwise law $\mu_{I}\in\mathcal{P}(\mathcal{D}(I,\R^{D}))$ is said to be a solution of the nonlinear mean-field martingale problem issued from $\mu_{0}\in\mathcal{P}(\R^{D})$ whenever $\forall\varphi\in\mathcal{F}$,
 \begin{equation}\label{EquaNLMP}
     M^{\varphi}_{t}:=\varphi(X_{t})-\varphi(X_{0})-\int_{0}^{t}\mathcal{L}_{\mu_{s}}\varphi(X_{s})ds,
 \end{equation}
is a $\mu_{I}$ martingale, where $(X_{t})_{t\in I}$ is the canonical process and for $t\geq 0$, $\mu_{t}:=(X_{t})_{\sharp}\mu_{I}$. The natural filtration of the canonical process is denoted by $\sigma_{X}.$
\end{definition}
\noindent Note that $\mu_{I}$ contains a priori much more information than the evolution equation \cref{GMcKeanPDE} and as the notation implies, $\mu_{t}:=(X_{t})_{\sharp}\mu_{I}$ solves the evolution equation.  If the nonlinear martingale problem is wellposed then the canonical process $(X_{t})_{t\in I}$ is a time inhomogeneous
Markov process on the probability space $(\mathcal{D}(I,\R^{D}),\sigma_{X},\mu_{I})$. This Markov process is called nonlinear in the sense of McKean or simply nonlinear for short. In other words, \cref{Pmcvgeneral} defines the stochastic process whose evolution of laws is governed by the evolution PDE \cref{GMcKeanPDE}.\newline
The evolution equation \cref{GMcKeanPDE} can be written in a \textit{strong form} (at least formally) and reads:
\begin{equation}\label{GNFPlanck}
    \partial_{t}\mu_{t}(x)=-\nabla_{x}\cdot(b(x,\mu_{t})\mu_{t})+\frac{1}{2}\sum_{i,j=1}^{D}\partial_{x_{i}}\partial_{x_{j}}\bigg((\sigma\sigma^{*})_{i,j}(x,\mu_{t})\mu_{t}\bigg).
\end{equation}
This is a \textit{nonlinear Fokker-Planck equation} which is used in many important modelling problems. This equation was obtained (formally) previously using only the generators when $n\to+\infty$. Here, there is an alternative way to derive the limiting system: looking at the SDE system \cref{Pmeanfields}, the empirical measure can be formally replaced by its expected limit $\mu_{t}$. Since all the particles are \textit{exchangeable}, this can be done in any of the $n$ equations.  The result is a process $(\overline{X}_{t})_{t\geq 0}$ which solves the SDE: (McKean-Vlasov process)
\begin{equation}\label{PMckeanG}
    d\overline{X}_{t}=b(\overline{X}_{t},\mu_{t})dt+\sigma(\overline{X}_{t},\mu_{t})dB_{t},
\end{equation}
where $(B_{t})_{t\geq 0}$ is a Brownian motion and $\overline{X}_{0}\sim\mu_{0}$. Moreover, since for all $i$, $X^{i,n}_{t}$ has law $\mathbb{P}_{X^{1,n}_{t}}$ and since it is expected that $\mathbb{P}_{X^{1,n}_{t}}\overset{n\to+\infty}{\longrightarrow}\mu_{t}$, the process $(\overline{X}_{t})_{t\geq0}$ and the distributions $(\mu_{t})_{t\geq 0}$ should be linked by the relation: for all $t\geq0$, $\overline{X}_{t}\sim\mu_{t}$. The dependency of the solution of a SDE on its law  is a special case of what is called a
nonlinear process in the sense of McKean (\cref{Pmcvgeneral} is equivalent to \cref{GMcKeanPDE} via mean-field system given by \cref{meanfieldsG} and nonlinear martingale problem given by \cref{EquaNLMP}). Note that when $\sigma=0$, the limit equation \cref{GNFPlanck} is the renowned \textit{Vlasov equation} which is historically one of the first and most important models in plasma physics and celestial mechanics. Equivalently, our main objective is the study of the long-time behavior of the solution flow of the nonlinear ($\mathcal{D}_{m}F$ must at least depend on the measure otherwise we find the standard Fokker-Planck PDE) Fokker-Planck equation:
\begin{equation}\label{McPDE}
\partial_{t}m=\nabla\cdot\bigg((\mathcal{D}_{m}F(m,\cdot)+\frac{\sigma^2}{2}\nabla V)m+\frac{\sigma^2}{2}\nabla m\bigg).
\end{equation}

\noindent\textbf{General framework.} Under appropriate conditions, the process \cref{PMckeanG}  is well defined or (equivalently) that the PDE \cref{GNFPlanck} or the martingale problem \cref{EquaNLMP} are wellposed. The result in \cite[Proposition.1]{propachaos1} (or \cref{ExistuniqueMcV}) gives the reference framework in which all these objects are well defined. Depending on the form of the drift and diffusion coefficients, the McKean-Vlasov diffusion can be used in a wide range of \textit{modelling problems}. The first case is obtained when $b$ and $\sigma$ depend linearly on the measure argument. Namely, for $n,m\in\N$, let us consider two functions $K_{1}:\R^{d}\times\R^{d}\to\R^{n}$, $ K_{2}:\R^{d}\times\R^{d}\to\R^{m}$, and let us take $b(x,\mu)=\overline{b}(x,K_{1}\star\mu(x))$, $\sigma(x,\mu)=\overline{\sigma}(x,K_{2}\star\mu(x))$, where $\overline{b}:\R^{d}\times\R^{n}\to\R^{d}$, $\overline{\sigma}:\R^{d}\times\R^{m}\to\mathcal{M}_{d}(\R)$ and  $K_{i}\star\mu(x):=\int K_{i}(x,y)\mu(dy)$. When $K_{1},K_{2}$ and  $\overline{b},\overline{\sigma}$ are Lipschitz and bounded, the propagation of chaos result is the given by McKean's theorem.\newline
In many applications, $\sigma$ is a constant diffusion matrix, $K_{1}(x,y)\equiv K(y-x)$ for a fixed symmetric radial kernel $K:\R^{d}\to\R^{d}$ and $b(x,\mu)=K\star\mu(x)$. The case where $K$ has a singularity is much more delicate but contains many important cases. For instance, in fluid dynamics, when $K$ is the Biot-Savart kernel $K(x)=\frac{x^{\perp}}{|x|^2}$ in dimension $d=2$ (defining $(x_{1},x_{2})^{\perp}=(-x_{2},x_{1})$) and $\sigma(x,\mu)=\sqrt{2\sigma}\mathbf{Id}_{2}$ for a fixed $\sigma>0$, the limit
Fokker-Planck equation reads:
\begin{equation}\label{FK2D}
    \partial_{t}\mu_{t}+\nabla\cdot(\mu_{t}K\star\mu_{t})=\frac{\sigma^2}{2}\Delta\mu_{t},
\end{equation}
By translation invariance, the quantity $\omega_{t}=\mu_{t}-1$ is the solution of the famous \textit{vorticity equation} which can be shown to be equivalent to the \textit{2D incompressible Navier-Stokes system} (see \cite{JabinWang1}). The case of \textit{gradient systems} is a sub-case of the previous one when $\sigma(x,\mu)=\sigma\mathbf{Id}$ for a constant $\sigma>0$ and
\begin{equation}\label{gradientsystemcase1}
    b(x,\mu)=-\nabla\mathbf{V}(x)-\int_{\R^{d}}\nabla W(x-y)\mu(dy)
\end{equation}
where $\mathbf{V}, W$ are two \textit{symmetric potentials} on $\R^{d}$ respectively called the \textit{confinement potential} and the \textit{interaction potential}. The limit Fokker-Planck equation
\begin{equation}\label{milieuxgranulaires}
    \partial_{t}\mu_{t}=\frac{\sigma^2}{2}\Delta\mu_{t}+\nabla\cdot\bigg(\mu_{t}\nabla(\mathbf{V}+W\star\mu_{t})\bigg),
\end{equation}
is called the \textit{granular-media equation}. The general case \cref{Pmcvgeneral}, where $b$ and $\sigma$ have a possibly nonlinear dependence on $\mu$ can be extended to even more general cases. A simple extension is the case of time-dependent functions $b$ and $\sigma$ (see e.g. \cite{propachaos1}).
In this article, we consider a polynomial dependence in the measure $\mu$ induced by \textit{order statistics (many-body interaction)} in order to generalize the results obtained in the case of a linear interaction in the measure $\mu$ defined by the convolution via a potential two-body interaction (\cite{Ref2}, \cite{Ref3}). More exactly, under adequate assumptions (\ref{Hypo1},\ref{Hypo2}), we are interested in the exponential return to equilibrium of the solution of \cref{McPDE} in the case
\begin{equation}\label{GFFE}
    F(\mu)=\sum_{k=2}^{N}\int W^{(k)}d\mu^{\otimes k},
\end{equation}
where $\forall k\in\{2,\ldots,N\}$, $W^{(k)}$ is a \textit{symmetric interaction potential} between $k$ particles and $N$ represents the number of such potentials. The intrinsic derivative $\mathcal{D}_{m}F(\nu,y)$ associated with this functional is given by
\begin{equation}\label{flatintrinpol}
   \nabla\frac{\delta F}{\delta m}(\nu,y)= \sum_{k=2}^{N}\sum_{j=1}^{k} \int\nabla_{x_{j}}W^{(k)}(x_{1},\ldots,x_{j-1},y,x_{j+1},\ldots,x_{k})\nu^{\otimes k-1}(dx_{1},\ldots,dx_{j-1},dx_{j+1},\ldots,dx_{k})
\end{equation}
The associated microscopic (particle-level) interaction is given by ($U-$statistic of order $k$ and kernel $\Phi\equiv W^{(k)}$)
\begin{equation}\label{U-stats1}
    U_{n}(W^{(k)}):=\frac{k!(n-k)!}{n!}\sum_{1\leq i_{1}<\ldots<i_{k}\leq n}W^{(k)}(X^{i_{1},n},\ldots,X^{i_{k},n}),\quad\textnormal{where } X^{n}:=(X^{1,n},\ldots,X^{n,n})\in(\R^{D})^{n}.
\end{equation}
 $U(X^{n}):=U_{n}(\Phi)$ is called $U-$statistic of order $k$ and kernel $\Phi$ associated with the sample $X^{n}.$ This statistic corresponds to the arithmetic mean of the kernel $\Phi$ over all the parts at $ k$ elements of the set of sample values. we often write $U_{n}(W^{(n)})(X^{n}):=:U(X^{n})$. We generalize this definition to the space of probabilities by the functional
\begin{equation}\label{U-stats2}
    \mu\in\mathcal{P}(\R^{D})\longmapsto\int_{\R^{kD}}\Phi d{\mu^{\bigotimes k}},
\end{equation}
called monome of degree $k$ and coefficient $\Phi$ on the probability space $\mathcal{P}(\R^{D})$. The link between these two microscopic and macroscopic interactions is given by
\begin{equation}
    \sum_{k=2}^{N}U_{n}(W^{k})=F(\mu_{X^{n}}).
\end{equation}
The \textit{granular-media equation} \cref{milieuxgranulaires} is given by \cref{McPDE} when 
\begin{equation}
    F(\mu)=\int W^{(2)}(x,y)\mu(dx)\mu(dy)\quad W^{(2)}(x,y)\equiv\frac{1}{2}W(x-y)\quad V(x)\equiv\frac{2}{\sigma^2}\mathbf{V}(x).
\end{equation}
Indeed, in this case, we have
\begin{align}
    \frac{\delta F}{\delta m}(\mu,x)=\int W^{(2)}(x,y)\mu(dy)+\int W^{(2)}(y,x)\mu(dy)=\int W(x-y)\mu(dy)=:W\star\mu(x) \\ 
    \mathcal{D}_{m}F(\mu,x):=\nabla\frac{\delta F}{\delta m}(\mu,x).
\end{align}

\noindent\textbf{Energy and Large Deviations.} Consider $G:\mathcal{M}^{p}_{1}(\R^{D})\to\overline{\R}$ (which can be nonlinear) and a probability (Gibbs) measure $\alpha$ associated with potential $V$. For any $\sigma>0$, we put
\begin{equation}\label{Rate2}
        V^{\sigma,G}(m):=G(m)+\frac{\sigma^2}{2}\mathbf{H}[m|\alpha].
\end{equation}
$V^{\sigma,G}$ is an energy function regularised by the $\mathbf{KL-}$divergence $\mathbf{H}[m|\alpha]$ which is given by \cref{defrelent} in \cref{defnot}. It is known (see e.g. \cite[Proposition.2.5]{Lucasmini}) that $V^{\sigma,G}$ is minimized by a measure $m^{\sigma,\star}$ satisfying the following fixed point problem (it is noteworthy that the variational form of the invariant measure of the classic Langevin equation is a particular example of this first order condition)
\begin{equation}\label{Pfix1}
m^{\sigma,\star}(dx)=\frac{1}{Z_{\sigma}}e^{-\frac{2}{\sigma^2}(\frac{\delta G}{\delta m}(m^{\sigma,\star},x)+\frac{\sigma^2}{2}V(x))}dx,
\end{equation}
where $Z_{\sigma}$ is the normalising constant, and for any $m\in\mathcal{M}^{p}_{1}(\R^{D})$ and $x\in\R^{D}$, $\frac{\delta G}{\delta m}(m,x)$ denotes the flat derivative of $G$ with respect to $m$, in the direction of $x$, evaluated at $m$. For any $\Theta_{1},\Theta_{2}\in\mathcal{M}^{p}_{1}(\R^{D})$, the function $\frac{\delta G}{\delta m}:\mathcal{M}^{p}_{1}(\R^{D})\times\R^{D}\to\R$ satisfies
\begin{equation}\label{aboutfi1} 
G(\Theta_{2})-G(\Theta_{1})=\int_{0}^{1}\int_{\R^{D}}\frac{\delta G}{\delta m}(\Theta_{1}+\lambda(\Theta_{2}-\Theta_{1}),x)(\Theta_{2}-\Theta_{1})(dx)\lambda_{\R}(d\lambda).
\end{equation}
This notion of derivative appears in the literature under several different names, including the linear functional derivative (see e.g \cite[Section.5.4.1]{flatderiv}) or the first variation \cite{ambrosio}. It is important to note that $\frac{\delta G}{\delta m}$ is defined only up to a constant, i.e., for any $c$, the function $\frac{\delta G}{\delta m}+c$  is also a flat derivative of $G$. Everywhere in this paper we will adopt a normalizing convention requiring
\begin{equation}
\int_{\R^{D}}\frac{\delta G}{\delta m}(m,x)m(dx)=0,\quad\textnormal{which then makes the choice of the constant unique.}
\end{equation}
Note that 
\begin{equation}
\mathcal{D}_{m}\mathbf{H}[\cdot|\alpha](\nu,y)=\nabla\log\bigg(\frac{d\nu}{d\alpha} \bigg)(y).
\end{equation}
Large Deviation Principles imply propagation of chaos, but they do not always give a way to quantify it since the related results are often purely asymptotic (for instance, Sanov theorem
is non-quantitative). Nevertheless, the results of large deviations turn out to be very useful for the technical passages in the macroscopic limits: when one makes tend the number of particles to infinity. In the seminal article \cite{LDP1}, the authors improve results from \cite{LDP2} and \cite{LDP3} on Large
Deviation Principles (LDP) for Gibbs measures and obtain as a byproduct a pathwise propagation of chaos result for the McKean-Vlasov diffusion. Firstly, \cite[Theorem.A]{LDP1} (or \cref{LDPPolinter}) states a large deviation principle for Gibbs measures with a polynomial potential. \cite[Theorem.B]{LDP1} quantifies the fluctuations of $\mu_{X^{n}}$ in the non-degenerate case. Analogous results for the degenerate case are given in \cite[Theorem.C]{LDP1}. For more details, see also \cite[Theorem.4.7, Corollary.3]{propachaos1}. We use the large deviations results obtained on the order statistics in \cite{Ref1}: In addition to the fact that the mean-field entropy functional (\cref{Rate1} or $V^{\sqrt{2},F}$ defined by \cref{Rate2}) is a rate function (\cref{LDPPolinter}) for the random empirical measure $\mu_{X^{n}}$, the authors show that it is a good rate function that has good tensorization properties.\newline 

\noindent\textbf{Long time behavior.} In the present paper, we are concerned by the long-time convergence towards the solution to an optimization problem on the subspace $\mathcal{M}^{p}_{1}(\R^{D})$ of probability measures $\mathcal{M}_{1}(\R^{D})$: we consider a function $\mathbf{E}:\mathcal{M}^{p}_{1}(\R^{D})\to\overline{\R}$ and we want to find a minimizing measure $m^{\star}:=\mathbf{arginf}_{\mathcal{M}^{p}_{1}(\R^{D})}\mathbf{E}$ such that for a \textit{gradient flow} (see e.g. \cite{ambrosio} and \cite{ottogradients}) $(m_{t})_{t\geq 0}$ associated with $\mathbf{E}$, we have an exponential estimate of the deviation $\mathbf{E}(m_{t})-\mathbf{E}(m^{\star})$ of the form (with $C\geq 1$ and $\rho>0$)
\begin{equation}\label{defhypocoercivity}
\mathbf{E}(m_{t})-\mathbf{E}(m^{\star})\leq C(\mathbf{E}(m_{0})-\mathbf{E}(m^{\star}))e^{-\rho t}.
\end{equation}
\cref{defhypocoercivity}-type Inequalities are called \textit{hypocoercive inequalities}. We call $\mathbf{E}-\mathbf{E}(m^{\star})$ the \textit{entropy functional} (\cite{Entmethods},\cite{BGL-MarkovDiffusion},\cite{Doljean}) of the system and $-\frac{d}{dt}(\mathbf{E}(m_{t})-\mathbf{E}(m^{\star}))$ the \textit{production of entropy} (usually called energy in mathematical literature). Clausius invents the concept of entropy, Boltzmann proposes to derive entropy along the flow. Generally speaking, an entropy is a \textit{Lyapunov functional} of a specific form. It is however hard (and even somewhat artificial) to give a formal narrow definition of entropies that distinguishes them from, say, energies. An entropy is a quantity calculated from a solution, which decreases over time when the solution obeys an evolution equation, and which is stationary only for the stationary solutions of the equation.
In conclusion, the concept of entropy is a tool that adapts to what we want to study. The notion of \textit{hypocoercivity} was proposed by T. Gallay. The objective is typically to control the entropy at time $t$ by the initial entropy multiplied by a constant $C$ (always greater than $1$) and a exponential decay factor, with exponential decay rate as good as possible in big time. This theory is inspired by the \textit{hypoelliptic theory of L. Hormander}, and the terminology hypocoercivity accounts for the relationship between entropy and its derivative with respect to $t$. There would be \textit{coercivity} if $C = 1$, which is clearly not possible in most cases considered in kinetic theory. It is well known that, for the standard \textit{Langevin equation} of Hamiltonian $V$ (given by \cref{McProcess} in the case $F\equiv0$), the following assertions are equivalent:
\begin{equation}\label{ISLdef}
    \exists\rho>0\forall\varphi\in\mathcal{C}^{\infty}_{c}(\mathbb{R}^{d}),\quad\rho\textbf{Ent}_{\alpha}[\varphi^{2}]\leq 2\int||\nabla\varphi||^{2}d\alpha.
\end{equation}
\begin{equation}\label{ISLdual}
    \exists\rho>0,\quad\rho\mathbf{H}[\cdot|\alpha]\leq 2\mathbf{I}[\cdot|\alpha].
\end{equation}
\begin{equation}
    \exists\rho>0\forall t\geq 0,\quad\mathbf{H}[\mu^{V}_{t}|\alpha]\leq\mathbf{H}[\mu^{V}_{0}|\alpha ]e^{-\rho t}.
\end{equation}
These three equivalent assertions imply the $T2-$\textit{Talagrand inequality}
\begin{equation}
    \rho\mathcal{W}^{2}_{2}(\cdot,\alpha)\leq 2\mathbf{H}[\cdot|\alpha],
\end{equation}
inequality which, in turn, implies an \textit{exponential contraction in wasserstein metric} $\mathcal{W}_{2}$, i.e. the exponential convergence of the flow $(\mu^{V}_{t })_{t\geq 0}$ (solution of the Fokker-Planck equation associated with the standard Langevin process of Hamiltonian $V$) to the maxwellian (invariant measure of the Langevin process that can also be seen from equivalently as the unique $\textbf{argmin}_{\mathcal{P}(\mathbb{R}^{d})}\mathbf{H}[\cdot|\alpha]$) $ \alpha$ of the \textit{Fokker-Planck PDE} given by \cref{McPDE} in the case $F\equiv0$:
\begin{equation}
    \forall t\geq 0,\quad\mathcal{W}_{2}^{2}(\mu^{V}_{t},\alpha)\leq\frac{2}{\rho}\mathbf{H}[\mu^{V}_{0}|\alpha]e^{-\rho t}.
\end{equation}
\cref{ISLdef} and \cref{ISLdual} respectively define the \textit{logarithmic Sobolev inequality} (\cite{BGL-MarkovDiffusion}) and its \textit{dual version}. According to the \textit{dimension curvature criterion of Bakry-Emery}, we have 
\begin{equation}
    \bigg(\exists\rho>0\forall (x,h)\in\R^{d}\times\R^{d},\quad\langle\nabla^2V(x)h,h\rangle\geq\rho||h||^{2}_{2}\bigg)\Longrightarrow\cref{ISLdef}.
\end{equation}
Note that in the case of the symmetric Langevin-Kolmogorov process, we have
\begin{equation}
    m_{t}=\mu^{V}_{t},\quad m^{\star}=\alpha,\quad\mathbf{E}=\mathbf{H}[\cdot|\alpha],\quad\mathbf{E}(m_{t})-\mathbf{E}(m^{\star})=\mathbf{H}[\mu^{V}_{t}|\alpha],
\end{equation}
\begin{equation}
   -\frac{d}{dt}(\mathbf{E}(m_{t})-\mathbf{E}(m^{\star}))=-\frac{d}{dt}\mathbf{H}[\mu^{V}_{t}|\alpha]=\mathbf{I}[\mu^{V}_{t}|\alpha].
\end{equation}
The objective of this work is to identify a flow of measures $(m^{\sigma,F}_{t})_{t\geq 0}$ (flow solution of \cref{McPDE}) such that 
\begin{equation}
V^{\sigma,F}(m^{\sigma,F}_{t})-V^{\sigma,F}(m^{\sigma,\star})\overset{t\to+\infty}{\longrightarrow}0,
\end{equation}
as well as conditions (\ref{Hypo1},\ref{Hypo2}) that ensure that this convergence is exponential. To this end, we equip the space $\mathcal{M}^{p}_{1}(\R^{D})$ with a suitable distance function $\mathbf{d}:\mathcal{M}^{p}_{1}(\R^{D})\times\mathcal{M}^{p}_{1}(\R^{D})\to\R_{+}$ and consider a corresponding gradient flow, where the form of the flow is dictated by the choice of $\mathbf{d}$. Such a problem has been dealt with in the case of the Fisher-Rao metric (see \cite{birthdeath}): the authors established from a Polyak-Lojasiewicz inequality the exponential convergence of the gradient flow $(m^{\sigma,G}_{t})_{t\geq 0}$ described by the birth-death equation along $V^{\sigma,G}$ towards $V^{\sigma,G}(m^{\sigma,\star})$. In our case, \cref{defhypocoercivity} implies the exponential decay in \textbf{d}-metric (transport distance): 
\begin{equation}\label{CVexpD}
    \mathbf{d}(m^{\sigma,F}_{t},m^{\sigma,\star})\leq\gamma(V^{\sigma,F}(m^{\sigma,F}_{0})-V^{\sigma,F}(m^{\sigma,\star}))e^{-\rho t}.
\end{equation}
\cref{CVexpD} is a consequence of transport inequalities (see \cite{Villani}).
Moreover, given a measure $m^{\sigma,\star}$ satisfying the first order condition \cref{Pfix1}, it is formally a stationary solution to \cref{McPDE} called the \textit{Maxwellian of the McKean-Vlasov PDE}. Therefore, formally, we have already obtained the correspondence between the \textit{minimiser of the free energy function} and the \textit{invariant measure} of \cref{McProcess}. In this paper, the connection is rigorously proved mainly with a probabilistic argument.
The study of stationary solutions to nonlocal, diffusive \cref{McPDE} is classical topic with it roots in statistical physics literature and with strong links to Kac's program in Kinetic theory \cite{kacprogramm}. We also refer reader to the excellent monographs \cite{ambrosio} and \cite{BakryEmery}. An important issue is the \textit{long-time behaviour of gradient systems} which is often studied under \textit{convexity assumptions} on the potentials. In particular, variational approach has been developed in \cite{Ref9} and \cite{ottogradients} where authors studied \textit{dissipation of entropy} for granular media equations \cref{milieuxgranulaires} with the symmetric interaction potential of convolution type (interaction potential corresponds to term $\mathcal{D}_{m}F$ in \cref{McPDE}). Following on from the work done in \cite{ottogradients} and \cite{Ref9} (among others) on the long-time behavior of \cref{milieuxgranulaires}, in \cite{Ref2}, the authors proved via a \textit{uniform logarithmic Sobolev inequality} in the number of particles that
\begin{equation}\label{GuillinWu}
    \forall t\geq 0,\quad H_{W}[\nu_{t}]\leq H_{W}[\nu_{0}]e^{-\rho_{ LS}\frac{t}{2}}\quad\textnormal{and}\quad\mathcal{W}^{2}_{2}(\nu_{t},\nu_{\infty})\leq\frac{2}{\rho_{LS}}H_{W}[\nu_{0}]e^{-\rho_{LS}\frac{t}{2}}.
\end{equation}
\cref{GuillinWu} translates the exponential decrease of the \textit{mean field entropy} $H_{W}$ (given by \cref{defhypocoercivity} with $\mathbf{E}=V^{\sigma,F}$) and the \textit{contraction} in Wasserstein metric ($\mathbf{d}=\mathcal{W}_{2}$) of the solution flow of \cref{McPDE} in the case
\begin{equation}
    \sigma=\sqrt{2}\quad\textnormal{and}\quad F(\mu)=\frac{1}{2}\int W(x,y)\mu(dx)\mu(dy).
\end{equation}
The study of the long-time behaviour for the VFP equation is often more difficult than that of the McKean-Vlasov equation because of two reasons: 
\begin{enumerate}
    \item it is a degenerate diffusion process where the Laplacian acts only on the volocity variable and; \item it is not a gradient flows but simultaneously  presents both Hamiltonian and gradient flows effects.
\end{enumerate}
In \cite{MonmarchGullin}, combining the results of \cite{Ref2} and \cite{Monmarche1}, the trend to equilibrium in large time is studied for a large particle system (given by \cref{CM2} in case of a two-body interaction) associated to a \textit{Vlasov-Fokker-Planck equation} by the authors: they showed that under some conditions (that allow \textit{non-convex confining potentials}), the convergence rate is proven to be independent from the number of particles. From this are derived \textit{uniform in time propagation of chaos estimates} and an \textit{exponentially fast convergence for the nonlinear equation itself}.\newline

\noindent\textbf{Contributions.} In this paper, we are going to prove
\begin{enumerate}
    \item entropic convergence to equilibrium for the nonlinear McKean-Vlasov SDE (mean field limit of the first order system given by \cref{CM1}) generalizing results (given in \cref{GuillinWu}) of \cite{Ref2}.
    \item by \textit{Villani's hypocoercivity theorem} (see e.g. \cite[Theorem.3]{Ref3} or \cite[Theorem.35]{villanihypo}) the $H^{1}-$convergence for the kinetic Fokker-Planck equation with mean field interaction given by \cref{CM2}.
    \item exponential convergence towards equilibrium in metric $\mathcal{W}_{2}-$Wasserstein for the flow solution of the Vlasov-Fokker-Planck equation: mean field limit of the second order system given by \cref{CM2}.
\end{enumerate}
In the literature, these results are obtained by purely analytical tools such as, among others, the \textit{gradient flow structure}, the \textit{Gronwall lemma}. In this paper, we give \textit{rigorously probabilistic proofs} (see \cref{sketchandpreli}, \cref{fig1} and \cref{proofsmain}) based directly on the \textit{propagation of chaos}, the \textit{large deviations principle} (see \cref{Cesaro1} and \cref{Cesaro2}), the \textit{uniform log-Sobolev inequality} (see \cref{transport}), Villani's hypocoercivity (\cite[Theorem.3]{Ref3} or \cite[Theorem.18 and Theorem.35]{villanihypo}) theorem (see \cref{prelvfp2}) and \textit{Hormander's form} (see e.g. respectively Theorem.7 and Theorem.10 in \cite[\cite{Monmarche2}]{Monmarche1}). The fact that the interaction is polynomial is important in calculations, among other things, for passing to the limit in the number of particles: technical passage to the limit given by LDP.\newline

\noindent\textbf{Plan of the paper.}
Let us finish this introduction by the plan of the paper. In the next three sections, we will present
our mean field systems (\cref{CM1},\cref{CM2}), our set of assumptions (\ref{Hypo1},\ref{Hypo2}) and the main results (in \cref{main}) of the paper concerning logarithmic Sobolev inequality of mean field particles systems as well as exponential convergences to equilibrium for McKean-Vlasov (\cref{theo1},\cref{theo2}), kinetic Fokker-Planck (\cref{theo3}) and Vlasov-Fokker-Planck (\cref{theo4})  SDEs. In \cref{sketchandpreli}, we sketch a proof of our results and we introduce the pre-proof tools. In \cref{proofsmain}, we prove our main results. And we end the paper with the appendix, the acknowledgments and the bibliographical references.

\section{Notations and Definitions}\label{defnot}
We try to keep coherent definitions and notations throughout the article, but as the various objects and what they represent may become confusing, we list them here for reference :\newline

\noindent\textbf{Notations.}
 We note $|||\cdot|||_{H^{1}\to H^{1}}$ the operator norm associated with the weighted Sobolev $H^1(\mu^{n}_{Z})$ space induced by the invariant measure $\mu^{n}_{Z}$ of our second-order system given by \cref{CM2}. We have
 \begin{align}
     H^{1}(\mu^{n}_{Z}):=\bigg\{\varphi\in L^{2}(\mu^{n}_{Z}),\quad\nabla\varphi\in (L^{2}(\mu^{n}_{Z}))^{n}\bigg\},\quad
     ||\varphi||^2_{H^{1}}:=||\varphi||^2_{L^2(\mu^{n}_{Z})}+\int\bigg( ||\nabla_{x}\varphi||^2_{2}+||\nabla_{v}\varphi||^2_{2}\bigg)d\mu^{n}_{Z}.
 \end{align}
 $(B_{t})_{t\geq 0}$ represents the standard Brownian motion. We consider $((B^{i})_{t\geq 0})_{i\in\{1,\ldots,n\}}$ $n$ independent copies of $(B_{t})_{t\geq 0}$. For all $n\geq 1$, $\mathfrak{G}_{n}$ is the n-th symmetric group. For all $p\in [1+\infty)$, the Wasserstein $p$-distance between two probability measures $\mu$ and $\nu$ on $\R^{D}$ with finite  $p$-moments is given by
 \begin{align}
     \mathcal{W}_{p}(\mu,\nu):=\bigg(\inf_{\gamma\in\Gamma(\mu,\nu)}\int_{\R^{D}\times\R^{D}} ||x-y||^{p}_{2}\gamma(dxdy)\bigg)^{\frac{1}{p}},\quad
     \Gamma(\mu,\nu):=\bigg\{\gamma\in\mathcal{P}(\R^{D}\times\R^{D}),\quad\pi_{1}\gamma=\mu\quad\textnormal{and}\quad\pi_{2}\gamma=\nu\bigg\}.
\end{align}
 We note $\mathcal{M}^{p}_{1}(\R^{D})$ the space of probability measures with finite $p-$moments and $||\cdot||_{\mathbf{op}}$ the matrix subordinate norm.\newline
 
\noindent\textbf{Definitions.}\newline
\textit{Good rate function:} We recall the definition of a rate function on a Polish space $E$ and
the LDP for a sequence of probability measures on $(E, \mathcal{B}(E))$. $I$ is said to be a rate function on $E$ if it is a lower semi-continuous function from $E$ to $[0, \infty]$ (i.e., for all $L\geq 0$, the level set $\{I\leq L\}$ is closed). $I$ is said to be a good rate function if it is inf-compact, i.e. $\{I\leq L\}$ is compact for any $L\in\R$. A consequence of a rate function being good is that its infimum is achieved over any non-empty closed set.\newline
$\Phi$-\textit{entropy:} Let $(\Omega,\mathcal{T},\mathbb{P})$ be a probability space, $\Phi:I\subset\R^{D}\longrightarrow\R$ convex and $X:\Omega\longrightarrow \ I\subset \R^{D}$ a random vector such as $X\in L^{1}$, $\Phi(X)\in L^{1}$ and $\E[X]\in I$. We call $\Phi$-entropy of $X$, the quantity defined by:
\begin{equation}
\E^{\Phi}[X]:=\E[\Phi(X)]-\Phi(\E[X]).
\end{equation}
By assumptions, it is easy to see that $\mathcal{D}^{\Phi}:=\mathbf{Dom}(\E^{\Phi})\subset L^{1}$ is convex and moreover, $\E^{\Phi}:\mathcal{D}^{\Phi}\longrightarrow\R_{+}$ by Jensen's inequality.
\begin{remark}
Variance and entropy are examples of $\Phi-$entropies for $\Phi=||\cdot||^{2}$ and $\Phi:x\longmapsto x\log(x).$ It is easy to prove (by the theorem of the orthogonal projection on a closed convex set of a Hilbert space) that the variance of $X$ is exactly the square of the distance in norm $L^{2}$ of $X$ to the subspace of almost surely constant random variables, that is:
\begin{equation}
\mathbb{V}[X]=\underset{\lambda\in\R}{\mathbf{inf}}\textnormal{ }\E[|X-\lambda|^{2}]=\underset{\lambda \in\R}{\mathbf{inf}}\textnormal{ }\E[\Phi(X)+\Phi(\lambda)-\Phi'(\lambda)X]
\end{equation}
and this lower bound is reached at $\lambda=\E[X]$. We talk about variational formulation of $\Phi-$entropies (Monge-Kantorovitch duality and Wasserstein spaces). Moreover, we have:
\begin{equation}
\mathbf{Ent}[(1+\varepsilon X)^{2}]=2\varepsilon^{2}\mathbb{V}[X]+\mathbf{O}(\varepsilon^{3}).
\end{equation}
If $\Phi$ is strictly convex, then the $\Phi-$entropy of $X$ is zero if and only if $X$ is constant almost surely.
\end{remark}
\noindent\textit{Relative entropy:} Let $\mu\in\mathcal{P}(\R^{D})$. We define $\mathbf{H}[\cdot|\mu]:\mathcal{P}(\R^{d})\longrightarrow[0,+\infty]$ such that 
\begin{equation}\label{defrelent}
\mathbf{H}[\nu |\mu]=\begin{cases}
\E_{\nu}[\log\frac{d\nu}{d\mu}]=:&\mathbf{Ent}_{\mu}[\frac{d\nu}{d \mu}]\quad\textnormal{if }\nu\ll\mu,\\
&+\infty\quad\quad\quad\textnormal{otherwise.}
\end{cases}
\end{equation}
And we recall that in the first case of absolute continuity, $\frac{d\nu}{d\mu}$ is the \textit{Radon-Nikodym density} of $\nu$ with respect to $\mu.$\newline
\textit{Relative Fisher information:} We also define the \textit{Fisher-Donsker-Varadhan information} of $\nu$ with respect to $\mu$ by: 
\begin{equation}
\mathbf{I}[\nu|\mu]=\int\bigg|\bigg|\nabla\sqrt{\frac{d\nu}{d\mu}}\bigg|\bigg|^{2}d\mu=\frac{1}{4}\int\bigg|\bigg|\nabla\log\frac{d\nu}{d\mu}\bigg|\bigg|^{2}d\nu=\frac{1}{4}\int\bigg|\bigg|\nabla\frac{\delta\mathbf{H}[\cdot|\mu]}{\delta m}(\nu,y)\bigg|\bigg|^2\nu(dy)
\end{equation}
if $\nu\ll\mu$ and $\sqrt{\frac{d\nu}{d\mu}}\in\mathbf{H}^{1}_{\mu}$, and $\mathbf{I}[\nu|\mu]=+\infty$ otherwise. $\mathbf{H}^{1}_{\mu}$ is the domain of the Dirichlet form 
\begin{equation}
\mathcal{E}_{\mu}: g\longmapsto\int||\nabla g||^ {2}d\mu.
\end{equation}
\textbf{UPI.} We say that $\mu(dx):=\frac{1}{Z}e^{-H(x)}dx$ (Gibbs probability measure of hamiltonian $H:\R^{nD}\to\R$) satisfies a uniform Poincar\'e inequality if
    \begin{equation}
    \exists\lambda>0\quad\forall n\geq 2\quad\forall\varphi\in\mathcal{C}^{\infty}_{c}(\R^{nD}),\quad\lambda\mathbb{V} _{\mu}[\varphi]\leq\mathbb{E}_{\mu}[||\nabla\varphi||^2].
    \end{equation}
And we call Poincar\'e constant the best constant $\lambda_{1}(\mu)$ for which we have such an inequality.\newline   
\textbf{ULSI.} We say that $\mu$ satisfies a uniform logarithmic Sobolev inequality if
    \begin{equation}
    \exists\rho>0\quad\forall n\geq 2\quad\forall\varphi\in\mathcal{C}^{\infty}_{c}(\R^{nD}),\quad\rho\mathbf{Ent} _{\mu}[\varphi^2]\leq\mathbb{E}_{\mu}[||\nabla\varphi||^2].
    \end{equation}
And the best constant $\rho_{LS}(\mu)$ for which such an inequality holds is called the logarithmic Sobolev constant.
\begin{remark} We recall that
    \begin{equation}
        \textbf{ULSI.}\Longrightarrow\textbf{UPI.}
    \end{equation}
The  Poincar\'e and log-Sobolev inequalities for $\mu$ are equivalent to exponential decreases of the semigroup $(P_{t})_{t\geq 0}$ respectively in variance and in entropy, i.e.
\begin{itemize}
    \item \textbf{Poincar\'e}
    \begin{equation}
    \forall\quad f\in L^{2}(\mu)\quad t\geq 0,\quad ||P_{t}f-\langle\mu,f\rangle||_{L^{2} (\mu)}\leq e^{-\lambda_{1}(\mu)t} ||f-\langle\mu,f\rangle||_{L^{2}(\mu)}.
    \end{equation}
    \item \textbf{Log-Sobolev}
    \begin{equation}
    \forall\quad f\in L^{1}(\mu)\log L^{1}(\mu)\quad t\geq 0,\quad\mathbf{Ent}_{\mu}[P_{t }f]\leq e^{-\rho_{LS}(\mu)t}\mathbf{Ent}_{\mu}[f].
    \end{equation}
    Here, the notation $L^{1}(\mu)\log L^{1}(\mu)$ denotes the entropy definition domain under $\mu.$
\end{itemize}

\end{remark}
\noindent We say that $\mu$ satisfies a $T_{p}-$transport (Talagrand) inequality if there exists $\alpha>0$ such that $\mathcal{W}_{p}(\cdot,\mu)\leq\sqrt{\alpha\mathbf {H}[\cdot|\mu]}$.
\begin{remark}
Moreover, as with the  Poincar\'e and log-Sobolev inequalities, the second implies the first. The class of probabilities verifying $T_{1}$ is identical to that having an exponential moment of finite order $2$. The $T_{2}$ inequality is significantly more structured than the $T_{1}$ inequality since it involves a spectral gap inequality.
\end{remark}

\section{Mean-Field Systems and Assumptions}
Throughout the paper, we consider a \textit{confinement potential of a particle} $V:\R^{d}\longrightarrow\R\in\mathcal{C}^{2}(\R^{d})$ and $N$ \textit{interaction potentials} such that
\begin{equation}
\forall k\in\{2,\ldots,N\},\quad W^{(k)}:(\R^{d})^{k}\longrightarrow\R\in\mathcal{C}^{ 2}((\R^{d})^{k}).\end{equation} 
 We recall that $\forall\sigma\in\mathfrak{G}_{k}$ and $\forall x=(x_{1},\ldots,x_{k}),$
\begin{align}
W^{(k)}(\sigma\cdot x)=W^{(k)}(x),\quad
 \alpha(dx):=\frac{1}{C}e^{-V(x)}dx,\quad
U_{n}(W^{(k)}):=\frac{1}{|I^{k}_{n}|}\underset{(i_{1},\ldots,i_{ k})\in I^{k}_{n}}{\sum}W^{(k)}(x_{i_{1}},\ldots,x_{i_{k}}),
\end{align}
 where $I^{k}_{n}:=\{(i_{1},\ldots,i_{k})\in\mathbb{N}^{k}|i_{p}\neq i_{q},\quad 1\leq i_{p}\leq n\}$ is the set of possible arrangements of $k$ integers of the set of $n$ first nonzero integers, which gives $|I^{k}_{n}|=A^{k}_{n}:=\frac{n!}{(n-k)!}$.
We define $W^{(k),-}:=\max(-W^{(k)},0)$ and $W^{(k),+}:=\max(W^{ (k)},0)$ the negative and positive parts of $W^{(k)}$. $\forall\mu$ such that $W^{(k),-}\in L^{1}(\mu^{\bigotimes k})$,
 \begin{equation}
 \mathbf{W}^{(k)}[\mu]:=\mathbb{E}_{\mu^{\bigotimes k}}[W^{(k) }]=\mathbb{E}_{\mu^{\bigotimes k}}[W^{(k),+}]-\mathbb{E}_{\mu^{\bigotimes k}}[W^{(k),-}].
\end{equation}

\subsection{Our Systems}
\textbf{First order case.}
We consider the \textit{microscopic mean-field many-body interaction energy} given by
\begin{equation}
H_{n}(x_{1},\ldots,x_{n}):=\sum_{j=1}^{n}V(x_{j})+n\sum_{k=2}^{N}U_{n }(W^{(k)}).
\end{equation}
The (non-kinetic) McKean-Vlasov process is defined as the mean field limit (under adequate assumptions given below) of the sequence $(X^{n}) _{n\geq N}$ of Langevin-Kolmogorov process of Hamiltonian $H_{n}$, i.e.: ($N$ fixed)
\begin{equation}\label{CM1}
\forall n\geq N,\quad dX^{n}_{t}=\sqrt{2}dB_{t}-\nabla H_{n}(X^{n}_{t})dt.\end{equation}
Let 
\begin{equation}
\mathcal{L}_{n}:=\Delta-\nabla H_{n}\cdot\nabla
\end{equation} 
be the \textit{infinitesimal generator} and $(P^{n}_{t})_{t\geq0}$ the associated \textit{semigroup} of unique invariant measure (under \ref{Hypo1} below), the \textit{Gibbs measure} 
\begin{equation}
\mu_{n}(dx):=\frac{1}{Z_{n}}e^{-H_{n}(x)}dx\quad\textnormal{with}\quad Z_{n}:=\int_{(\R^{d})^{n}}e^{-H_{n}(x)}dx<+\infty
\end{equation} 
is the normalization constant (called \textit{partition function}).
Note that 
\begin{equation}\label{IM1}
\mu_{n}(dx)=\frac{C^{n}}{Z_{n}}e^{-n\sum_{k=2}^{N}U_{n}(W^ {(k)})}\alpha^{\bigotimes n}(dx).
\end{equation}
Without interaction (i.e. $\forall k$, $W^{(k)}\equiv 0$ or constant), $\mu_{n}=\alpha^{\bigotimes n}$ (i.e. the particles are independent).
We denote 
\begin{equation}
L_{n}(x;\cdot):=\frac{1}{n}\sum_{i=1}^{n}\delta_{x_{i}}(\cdot)
\end{equation} 
the map empirical measurement (which may be deterministic or random depending on the nature of the configurations). We know that under general conditions, by propagation of chaos (\cite{Ref6}), $L_{n}(X^{n};\cdot)$ converges weakly towards the solution of the nonlinear partial differential equation of McKean-Vlasov associated with the system of particles. We define 
\begin{equation}
\mu^{*}_{n}(dx):=e^{-n\sum_{k=2}^{N}U_{n}(W^{(k)})}\alpha ^{\bigotimes n}(dx)=\frac{Z_{n}}{C^{n}}\mu_{n}(dx).
\end{equation}
The \textit{macroscopic mean-field energy} is given by
\begin{equation}
\mathbf{E}_{W}[\mu]:=\begin{cases}\mathbf{H}[\mu|\alpha]+\sum_{k=2}^{N}\mathbf{W}^{(k)}[\mu]\quad &\textnormal{if }\mathbf{H}[\mu|\alpha]<+\infty\textnormal{ and } W^{(k),-}\in L^{1}(\mu^{\bigotimes k}) ,\\ +\infty &\textnormal{otherwise.}\end{cases}
\end{equation} 
Let
\begin{equation}
\textbf{dom}(\mathbf{H}_{W}):=\bigg\{\mu,\quad\mathbf{H}[\mu|\alpha]<+\infty\quad\textnormal{and}\quad\forall k,\quad W^{(k),-}\in L^{1}(\mu^{\bigotimes k})\bigg\}.
\end{equation}
\begin{remark}
$\mathbf{H}_{W}:=\mathbf{E}_{W}-\inf\mathbf{E}_{W}$ is called the \textit{mean field entropy}. We can prove that $\mathbf{H}_{W}$ is \textit{inf-compact} (\cref{LDP}) and that there is at least one minimizer usually called \textit{equilibrium point}. From the point of view of statistical physics, $\mathbf{H}_{W}$ is an entropy or \textit{free energy} associated to the \textit{nonlinear McKean-Vlasov equation} given by \cref{CM1}. The uniqueness of the minimizer means that there is no phase transition for the mean-field. Works on the uniqueness in the case of pair interaction: \cite{Ref2}, \cite{Ref8} and \cite{Ref9}. These authors (\cite{Ref8},\cite{Ref9}) showed that $\mathbf{H}_{W}$ is \textit{strictly displacement convex} (i.e. along the $\mathcal{W}_{2}$-geodesic) under various sufficient conditions on the convexity of the confinement potential $V$ and the pair interaction potential $W^{(2)}$. In case of a many-body interaction, under assumptions in \ref{Hypo1}, we prove in \cref{Pfixunique} the uniqueness: then we denote $\mu_{\infty}$ this minimizer. 
\end{remark}

\noindent Analogously, we define the \textit{mean-field Fisher information} by:
\begin{equation}
\mathbf{I}_{W}[\mu]:=  \frac{1}{4}\int\bigg|\bigg|\nabla\frac{\delta\mathbf{E}_{W}}{\delta m}(\mu,y)\bigg|\bigg|^2\mu(dy).  
\end{equation}
\begin{remark}
Without interaction ($\forall k$, $W^{(k)}\equiv\textit{constant}$), we find the Lyapunov functionals associated with the standard symmetric Langevin-Kolmogorov process whose Hamiltonian is given by the confinement potential $V$. More precisely, in this case:
\begin{equation}
\mathbf{E}_{W}=\mathbf{H}[\cdot|\alpha]+\sum_{k=2}^{N}\textit{constant},\quad\mathbf{H}_{W}=\mathbf{H}[\cdot|\alpha]\quad\textnormal{and}\quad\mathbf{I}_{W}=\mathbf{I}[\cdot|\alpha].
\end{equation}
\end{remark}
\noindent\textbf{Kinetic case.}
 Set 
 \begin{equation}
 z:=(x_{1},\ldots,x_{n},v_{1},\ldots,v_{n})\in\R^{2nd},\quad H^{Z}_{n}(z)=\frac{1}{2}\sum_{j=1}^{n}|v_{j}|^{2}+2V(x_{j})+n\sum_{k=2}^{N}U_{n}(W^{(k)})
 \end{equation} 
 and $Z^{n}:=(X^{n,1},\ldots,X^{n,n},V^{n,1},\ldots,V^{n,n})\in(\R^{d}\times\R^{d})^{n}$ such that
\begin{equation}\label{CM2}
\begin{cases}
dX^{n,i}_{t}=\nabla_{v_{i}}H^{Z}_{n}(Z^{n}_{t})dt\\ dV^{n,i}_{t}=-\bigg(\nabla_{x_{i}}H^{Z}_{n}(Z^{n}_{t})+\nabla_{v_{i}}H^{Z}_{n}(Z^{n}_{t})\bigg)dt+\sqrt{2}dB^{i}_{t}.
\end{cases}
\end{equation}
We are going to study the long-time behavior of the mean-field limit of the Langevin process $(Z^{n}_{t})_{t\geq 0}$ of Hamiltonian $H^{Z}_{n}(x,v):=S_{1,n}(x)+S_{2,n}(v)$ with $S_{1,n}$ is none other than the Hamiltonian $H_{n}$ of the McKean-Vlasov case and $S_{2,n}$ the velocity part ($S_{2,n}:=H^{Z}_{n}-S_{1,n}$). Invariant measure of the Langevin process is given by
\begin{equation}
\mu^{n}_{Z}(dxdv)=\frac{1}{\widetilde{C}}e^{-H^{Z}_{n}(z) }dxdv=\frac{1}{C_{1,n}}e^{-S_{1,n}(x)}dx\frac{1}{C_{2,n}}e^{-S_{ 2,n}(v)}dv=\mu_{1,n}\bigotimes\mu_{2,n}(dxdv).
\end{equation}
And the parabolic PDE in the sense of the distributions associated with this Kolmogorov-Fokker-Planck SDE is:
\begin{equation}\label{equavfpcv1}
\partial_{t}\mu=\Delta_v\mu+\nabla S_{2,n}\cdot\nabla_{v}\mu-\nabla S_{1,n}\cdot\nabla_{v}\mu+\nabla S_ {2,n}\cdot\nabla_{x}\mu=\Delta_v\mu+v\cdot\nabla_{v}\mu-\nabla S_{1,n}\cdot\nabla_{v}\mu+v \cdot\nabla_{x}\mu=\mathcal{L}^{\dag}_{Z,n}\mu
\end{equation}
with 
\begin{equation}\label{generatorcm2}
\mathcal{L}_{Z,n}:=\Delta_v-v\cdot\nabla_{v}+\nabla S_{1,n}\cdot\nabla_{v}-v\cdot\nabla_{x}
\end{equation} 
the generator of the strongly continuous semigroup $(P^{Z,(n)}_{t})_{t\geq0}$ (if the \textit{hessian} $\nabla^{2}S_{1,n }$ is bounded, it is a Markovian semigroup defined by the Kolmogorov-Fokker-Planck SDE) and we note $\mathcal{L}^{\dag}_{Z,n}$ adjoint in the sense of distributions. In other words, for any test function $\varphi\in\mathcal{C}^{\infty}_{c}((\R^{d}\times\R^{d})^{n})$, the function $(t,z)\longmapsto P^{Z,(n)}_{t}\varphi(z)$ is the unique solution of the Cauchy problem:
\begin{equation}
\begin{cases}\frac{\partial h}{\partial t}=\mathcal{L}_{Z,n}h,\\ h(0,\cdot)=\varphi.\end{cases}\Longleftrightarrow\begin{cases}\frac {\partial\mu_{t}}{\partial t}=\mathcal{L}^{\dag}_{Z,n}\mu_{t},\\ \mu_{0}=\delta_{z}.\end{cases}
\end{equation}
Vlasov Fokker Planck free energy and associated mean field entropy are given by
\begin{align}
\mathcal{E}[\mu]&:=\mathbf{H}[\mu|dxdv]+\frac{1}{2}\int_{\R^{d}\times\R^{d}}| |v||^2\mu(dxdv)+\sum_{k=2}^{N}\int_{(\R^{d}\times\R^{d}) ^{k}} W^{(k)}d\mu^{\otimes k}+\int V(x)\mu(dxdv)\\
&=\mathbf{H}[\mu|\alpha\otimes\mathcal{N}(0,\mathbf{Id}_{d})]+\sum_{k=2}^{N}\int_{(\R^{d}\times\R^{d}) ^{k}} W^{(k)}d\mu^{\otimes k}\nonumber
\end{align}
and
\begin{equation}\label{Entropy2}
\mathcal{S}:=\mathcal{E}-\mathbf{Inf}\mathcal{E}=\mathcal{E}-\mathcal{E}[\mu^{Z}_{\infty}].
\end{equation}
They are \textit{Lyapunov functionals for the Vlasov-Fokker-Planck partial differential equation} whose solutions are obtained as mean-field limits of our kinetic Fokker-Planck particle system given by \cref{CM2}.
Mean Field Fisher Information for Vlasov-Fokker-Planck is given by
$\bigg(A:=\begin{pmatrix}0\\ \mathbf{Id}_{d}\end{pmatrix}\in\mathcal{M}_{2d,d}(\R)\bigg)$
\begin{equation}
\mathcal{I}[ \mu]:=\int\bigg\langle\nabla_{x,v}\frac{\delta}{\delta m}\mathcal{E}(\mu,x,v),AA^{*}\nabla_{x, v}\frac{\delta}{\delta m}\mathcal{E}(\mu,x,v)\bigg\rangle\mu(dxdv)=\int\bigg|\bigg|\nabla_{x,v}\frac{\delta}{\delta m}\mathcal{E}(\mu,x,v)\bigg|\bigg|^2_{AA^{*}}\mu(dxdv).
\end{equation}
The functional obtained by replacing $A$ by $Z:=\begin{pmatrix}z_{1}\mathbf{Id}_{d}\\ z_{2}\mathbf{Id}_{d}\end{pmatrix}$ $\in\mathcal{M}_{2d,d}(\R)$, we will talk about auxiliary Fisher information.
We have
\begin{equation}
\frac{d}{dt}\mathcal{E}[\mu^{\textbf{VFP}}_{t}]=\frac{d}{dt}\mathcal{S}[\mu^{\textbf {VFP}}_{t}]=-\mathcal{I}[\mu^{\textbf{VFP}}_{t}]\leq0.
\end{equation}

\subsection{Our Assumptions}
\begin{sk}\hypertarget{sk}{}
\item\label{Hypo1} 
We put the following hypotheses on the potentials which will ensure properties of existence, uniqueness and contraction:
\begin{quotation}
\noindent $\mathbf{(H1)}$\textbf{(Hessian)}\label{Hypo1:H1} The hessian of the confinement potential is bounded from below and the hessians of the interaction potentials are bounded.
\begin{remark}
This is a regularity condition. It also provides good properties on the confinement potential and the interaction potentials: Since the Hessian $\nabla^{2}V$ of $V$ is bounded from below, and $V$ satisfies a Lyapunov condition $\mathbf{(H2)}$, by Cattiaux-Guillin-Wu, $\mathbf{\alpha}$ satisfies a \textit{logarithmic Sobolev inequality}.
\end{remark}

\noindent $\mathbf{(H2)}$\textbf{(Lyapunov)}\label{Hypo1:H2} There are two positive constants $c_{1}$ and $c_{2}$ such that 
\begin{equation}
\forall x\in\R^{d},\quad x\cdot\nabla V(x)\geq c_{1}||x||^{2}-c_{2}.
\end{equation}
This hypothesis is a Lyapunov condition.\newline

\noindent $\mathbf{(H3)}$ $\mathbf{(H^{(1)}_{VW})}$\label{Hypo1:H3} 
\begin{equation}
\mathbf{H}[\mu|\alpha]+\mathbb{E}_{\mu^{\bigotimes k}}[W^{( k),+}]<+\infty,\quad\forall\lambda>0,\quad \int e^{\lambda W^{(k),-}(x)-\sum_{j=1}^{k}V(x_{j})}dx<+\infty
\end{equation} 
for some measure $\mu$.
\begin{remark}
For exemple, can to take 
\begin{equation}\mu\in\textbf{dom}(\mathbf{H}_{W})\bigcap\bigcap_{k=2}^{N}\bigg\{\mu,\quad W^{(k),+}\in L^{1}(\mu^{\bigotimes k})\bigg\}.\end{equation}
\end{remark}

\noindent $\mathbf{(H4)}$ $\mathbf{(H^{(2)}_{VW})}$\label{Hypo1:H4} There exists $p\geq 2$ such that for some $x_{0}$ (hence any $x_{0}$) 
\begin{equation}
\forall\lambda>0,\quad\int_{\R^{d}}e^{\lambda||x-x_{0}||^{p}}\alpha(dx)<+\infty.
\end{equation}

\noindent $\mathbf{(H5)}$\textbf{(Logsob)}\label{Hypo1:H5} The invariant measure $\mu_{n}$ of the system satisfies a logarithmic sobolev inequality such that 
\begin{equation}\limsup_{n\to +\infty}\rho_{LS}(\mu_{n})>0.
\end{equation} 
\begin{remark}
This assumption is usually given by Zegarlinski conditions (\cite{Ref2},\cite{Zegarlinski},\cite{Z96}).
\end{remark}

\noindent $\mathbf{(H6)}$\textbf{(Lipschitz)} There exists a distance $\mathbf{d}_{Lip}$ on a subset $\mathcal{Z}$ of $\mathcal{P}(\R^{d})$ such that $(\mathcal{M }^{2}_{1}(\R^{d}),\mathcal{W}_{2})$ \textit{continuously injects} into $(\mathcal{Z},\mathbf{d}_{Lip })$ and $\Phi:\mu\in\mathcal{Z}\longmapsto\Phi(dx):=\frac{1}{Z_{\mu}}e^{-\frac{\delta F}{\delta m}(\mu,x)-V(x)}dx\in\mathcal{Z}$ satisfies
\begin{equation}\label{Lipcond}
        \exists k\in (0,1[,\quad\forall\mu,\nu\in\mathcal{Z},\quad\mathbf{d}_{Lip}(\Phi(\mu),\Phi(\nu))\leq k\mathbf{d}_{Lip}(\mu,\nu).
\end{equation}
In others terms, $\Phi$ is $k$-Lipschitz (\textit{contraction}) for $\mathbf{d}_{Lip}.$
\begin{remark}
This assumption is verified in the case
\begin{equation}
        \sup_{\mu,\nu\in\mathcal{M}^{1}_{1}(\R^{d}), \mu\neq\nu}\frac{1}{\mathcal{W}_{1}(\mu,\nu)}\int||x-y||\Phi(\mu)(dx)\Phi(\nu)(dy)<1,
\end{equation}
and in this case, we have
\begin{equation}
        \mathcal{Z}=\mathcal{M}^{1}_{1}(\R^{d})\quad\mathbf{d}_{Lip}=\mathcal{W}_{1}\quad k=\sup_{\mu,\nu\in\mathcal{M}^{1}_{1}(\R^{d}), \mu\neq\nu}\frac{1}{\mathcal{W}_{1}(\mu,\nu)}\int||x-y||\Phi(\mu)(dx)\Phi(\nu)(dy).
\end{equation}
    Note that
\begin{align}\label{condeberl}
        \frac{1}{\mathcal{W}_{1}(\mu,\nu)}\int||x-y||\Phi(\mu)(dx)\Phi(\nu)(dy)=\frac{1}{\mathcal{W}_{1}(\mu,\nu)Z_{\mu}Z_{\nu}}\int||x-y||e^{-\frac{\delta F}{\delta m}(\mu,x)-\frac{\delta F}{\delta m}(\nu,y)-V(x)-V(y)}dxdy.
\end{align}
Here, we assume a contraction assumption to ensure uniqueness.
In \cite{Ref2}, the authors used \textit{Eberle conditions} applied to \cref{condeberl} to establish this Lipschitz hypothesis \cref{Lipcond}: \textit{Lipschitzian spectral gap condition for one particle.}
Some authors (see e.g. \cite{ottogradients},\cite{Ref9}) rather use the \textit{displacement-convexity} by assuming that the functional $G$ in $V^{\sigma,G}$ is displacement-convex. And as the relative entropy is strictly displacement-convex, $V^{\sigma,G}$ is also strictly displacement-convex, which implies the existence of an entropy minimizer ensuring its uniqueness.
\end{remark}
\end{quotation}
\end{sk}

\begin{condH}\hypertarget{condH}{}
\item\label{Hypo2} In this case, we assume the following assumptions:
\begin{quotation}
\noindent\ref{Hypo1}\newline
\noindent\textbf{VFP1.} Lipschitz interactions:
\begin{equation}
\forall k\in\{2,3,\ldots,N\}\quad\exists K>0,\quad||\nabla W^{(k)}| |\leq K.
\end{equation}
\noindent\textbf{VFP2.} Lyapunov condition on confinement: 
\begin{equation}
||\nabla^2V||_{\textbf{op}}\leq K_{1}||\nabla V||+K_{2 }.
\end{equation}
\begin{remark}
Either of these conditions ensures that the kinetic Fokker-Planck semigroup converges exponentially ( as a family of operators of $\mathcal{H}^{1}(\mu^{n}_ {Z})$ indexed by time ) towards $\mu^{n}_{Z}$ and uniformly in the number of particles (see \cite{Ref3} or \cite{villanihypo}).
\end{remark}
\end{quotation}
\end{condH}
\section{Main Theorems}\label{main}
\subsection{First-order case}
Under \ref{Hypo1}, we establish (see \cref{proofsmain} for the proof) the following two main results (thus generalizing those of \cite{Ref2}). Let $(\mu_{t})_{t\geq 0}$ (given by the arrow $\mathbf{(1)}$ in \cref{fig1}) be the flow of solution distributions of the McKean-Vlasov equation associated with the particle system defined by the $U-$statistic and the confinement potential. Then for any initial condition admitting a moment of order $2$, the mean field entropy $\mathbf{H}_{W}$ decreases exponentially along the flow, i.e.:
\begin{theorem}[Exponential decreasing of mean-field entropy]\label{theo1}
Assume \ref{Hypo1} and let $\mu_{0}\in\mathcal{M}^{2}_{1}(\R^{d})$ be an initial condition. Then
\begin{equation}
        \forall t\geq 0,\quad\mathbf{H}_{W}[\mu_{t}]\leq\mathbf{H}_{W}[\mu_{0}]e^{-\rho_{ LS}\frac{t}{2}}.
\end{equation}
\end{theorem}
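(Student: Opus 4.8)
We may assume $\mathbf{H}_{W}[\mu_{0}]<+\infty$, otherwise the inequality is vacuous. The plan is to read the decay of $\mathbf{H}_{W}$ along the McKean-Vlasov flow off from the exponential entropy dissipation of the finite particle system \cref{CM1}, and to let the number $n$ of particles tend to infinity with the help of the large deviations of $U$-statistics and of propagation of chaos. Run \cref{CM1} from the product law $\mu_{0}^{\otimes n}$, write $\nu_{n,t}:=\mathrm{Law}(X^{n}_{t})$, and recall that the reversible measure of the semigroup $(P^{n}_{t})$ is the Gibbs measure $\mu_{n}$. By \textbf{(H5)} the measures $\mu_{n}$ satisfy a logarithmic Sobolev inequality with constant $\rho_{LS}(\mu_{n})$, hence the semigroup dissipates relative entropy exponentially:
\begin{equation}
\mathbf{H}[\nu_{n,t}\,|\,\mu_{n}]\;\leq\;\mathbf{H}[\mu_{0}^{\otimes n}\,|\,\mu_{n}]\;e^{-\rho_{LS}(\mu_{n})\,t/2}.
\end{equation}

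It then remains to divide by $n$ and pass to the limit $n\to+\infty$. For the right-hand side, the explicit factorisation $\mu_{n}=\tfrac{C^{n}}{Z_{n}}e^{-n\sum_{k}U_{n}(W^{(k)})}\alpha^{\otimes n}$ gives, for every $n$,
\begin{equation}
\frac1n\,\mathbf{H}[\mu_{0}^{\otimes n}\,|\,\mu_{n}]=\mathbf{H}[\mu_{0}|\alpha]+\sum_{k=2}^{N}\mathbf{W}^{(k)}[\mu_{0}]+\frac1n\log\frac{Z_{n}}{C^{n}},
\end{equation}
where the first two terms are exact because the $U$-statistic averages $W^{(k)}$ over \emph{distinct} indices; Varadhan's lemma applied to the large deviation principle \cref{LDPPolinter} (see also \cref{Cesaro1}) yields $\tfrac1n\log\tfrac{Z_{n}}{C^{n}}\to-\inf\mathbf{E}_{W}$, so that $\tfrac1n\mathbf{H}[\mu_{0}^{\otimes n}|\mu_{n}]\to\mathbf{E}_{W}[\mu_{0}]-\inf\mathbf{E}_{W}=\mathbf{H}_{W}[\mu_{0}]$. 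For the left-hand side one needs the matching lower bound $\liminf_{n}\tfrac1n\mathbf{H}[\nu_{n,t}|\mu_{n}]\geq\mathbf{H}_{W}[\mu_{t}]$. This I would get from the Donsker-Varadhan variational formula $\mathbf{H}[\nu_{n,t}|\mu_{n}]\geq n\,\mathbb{E}[g(\mu_{X^{n}_{t}})]-\log\int e^{n\,g(\mu_{x})}\mu_{n}(dx)$, valid for every bounded continuous functional $g$ on $\mathcal{P}(\R^{d})$: by propagation of chaos $\mathbb{E}[g(\mu_{X^{n}_{t}})]\to g(\mu_{t})$, and by Varadhan's lemma $\tfrac1n\log\int e^{n\,g(\mu_{x})}\mu_{n}(dx)\to\sup_{\mu}\{g(\mu)-\mathbf{H}_{W}[\mu]\}$, because under the polynomial Gibbs tilt $\mu_{n}$ the empirical measure obeys the large deviation principle with good rate function $\mathbf{H}_{W}$ (again \cref{LDPPolinter}); optimising over $g\leq\mathbf{H}_{W}$ and using that $\mathbf{H}_{W}$ is inf-compact gives the bound. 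Combining the three estimates, and passing to a subsequence along which $\rho_{LS}(\mu_{n})\to\rho_{LS}:=\limsup_{n}\rho_{LS}(\mu_{n})>0$, produces $\mathbf{H}_{W}[\mu_{t}]\leq\mathbf{H}_{W}[\mu_{0}]\,e^{-\rho_{LS}t/2}$.

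An equivalent route, closer to the uniform functional inequality of \cref{transport}, applies the particle logarithmic Sobolev inequality to $\varphi=\sqrt{d\nu_{n,t}/d\mu_{n}}$ to obtain $\rho_{LS}(\mu_{n})\,\mathbf{H}[\nu_{n,t}|\mu_{n}]\leq 2\,\mathbf{I}[\nu_{n,t}|\mu_{n}]$, divides by $n$ and passes to the limit (using the same large deviation tools together with $\tfrac1n\mathbf{I}[\nu_{n,t}|\mu_{n}]\to\mathbf{I}_{W}[\mu_{t}]$), producing a \emph{nonlinear} logarithmic Sobolev inequality $\rho_{LS}\,\mathbf{H}_{W}[\mu_{t}]\leq c\,\mathbf{I}_{W}[\mu_{t}]$ along the flow; combined with the dissipation identity $\tfrac{d}{dt}\mathbf{H}_{W}[\mu_{t}]=-c'\,\mathbf{I}_{W}[\mu_{t}]\leq0$ (itself the limit of $\tfrac{d}{dt}\tfrac1n\mathbf{H}[\nu_{n,t}|\mu_{n}]=-\tfrac1n\mathbf{I}[\nu_{n,t}|\mu_{n}]$), Gr\"onwall's lemma closes the argument with rate $\rho_{LS}/2$.

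The main obstacle is precisely this passage to the limit, and within it the lower bound $\liminf_{n}\tfrac1n\mathbf{H}[\nu_{n,t}|\mu_{n}]\geq\mathbf{H}_{W}[\mu_{t}]$: since $\mathbf{H}_{W}$ is \emph{not convex} (the monomials $\mathbf{W}^{(k)}$ are not), linear test functions in the Donsker-Varadhan formula do not suffice and one must use genuinely nonlinear $g$ together with a bona fide large deviation principle for the empirical measure under the polynomial Gibbs measure $\mu_{n}$ — this is exactly where the large deviation results for $U$-statistics of \cref{LDPPolinter}, and their tensorisation properties, are indispensable — as well as a sufficiently strong (at least weak-in-probability) propagation of chaos for \cref{CM1} toward $(\mu_{t})_{t\geq0}$; controlling $\tfrac1n\mathbf{I}[\nu_{n,t}|\mu_{n}]$ in the alternative route demands comparable care. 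The remaining ingredients — exponential entropy dissipation under a log-Sobolev inequality, the dissipation identity along \cref{McPDE}, and Gr\"onwall's lemma — are routine.
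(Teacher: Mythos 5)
Your proposal is correct and, while it shares the paper's overall architecture (particle-level entropy dissipation from the uniform log-Sobolev inequality of $\mu_{n}$, then division by $n$ and passage to the limit via the LDP for the empirical measure under $\mu_{n}$ and propagation of chaos), the way you obtain the lower bound $\liminf_{n}\frac1n\mathbf{H}[\nu_{n,t}|\mu_{n}]\geq\mathbf{H}_{W}[\mu_{t}]$ is genuinely different from the paper's. The paper splits $\frac1n\mathbf{H}[\mu_{n}(t)|\mu_{n}]$ additively through the explicit Gibbs density $d\mu_{n}/d\alpha^{\otimes n}$ into (a) the product-reference part $\frac1n\mathbf{H}[\mu_{n}(t)|\alpha^{\otimes n}]$, which it bounds below by the one-marginal entropy $\mathbf{H}[\mu_{n}^{(1)}(t)|\alpha]$ using the tensorization inequality (\cref{entropie}) and then passes to the limit via propagation of chaos together with lower semi-continuity of relative entropy; (b) the interaction term $\sum_{k}\int U_{n}(W^{(k)})\,d\mu_{n}(t)$, which it sends to $\sum_{k}\mathbf{W}^{(k)}[\mu_{t}]$ by propagation of chaos; and (c) the partition-function term $\frac1n\log Z_{n}-\log C\to -\inf\mathbf{E}_{W}$. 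You instead compress all three into a single stroke by taking $f(x)=n\,g(\mu_{x})$ in the Donsker--Varadhan representation of $\mathbf{H}[\nu_{n,t}|\mu_{n}]$ and then applying Varadhan's lemma to the Gibbs LDP, after which the lower bound follows from lower semi-continuity of $\mathbf{H}_{W}$ alone. Your route is more abstract and cleaner precisely where the paper is most fragile: the paper's step (b) needs a uniform-integrability argument to pass from the convergence in law of $\mu_{X_{t}^{n}}$ to the convergence of $\int U_{n}(W^{(k)})\,d\mu_{n}(t)$ for potentially unbounded $W^{(k)}$, whereas restricting to bounded continuous $g\leq\mathbf{H}_{W}$ in Donsker--Varadhan sidesteps this. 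The trade-off is that you must fully establish the LDP for $L_{n}$ under $\mu_{n}$ with good rate function $\mathbf{H}_{W}$ (the paper does so in \cref{infcompactHW}, via exponential approximation of unbounded $U$-statistics) and the Varadhan upper-Laplace bound, which is essentially the content of that same LDP machinery; so the two routes ultimately rest on the same large-deviations input. Your secondary route through the nonlinear log-Sobolev inequality and the dissipation identity $\frac{d}{dt}\mathbf{H}_{W}[\mu_{t}]=-\mathbf{I}_{W}[\mu_{t}]$ is explicitly avoided in the paper (see the remark following the sketch) because it requires McKean--Vlasov PDE regularity that the paper chooses not to invoke; you correctly flag that caveat. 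One small improvement in your write-up over the paper's: you notice that $\int U_{n}(W^{(k)})\,d\mu_{0}^{\otimes n}=\mathbf{W}^{(k)}[\mu_{0}]$ is an exact identity for all $n\geq k$ (because $U$-statistics average over distinct indices, i.i.d.\ under the product measure), which the paper presents asymptotically with a citation to the law of large numbers (\cref{tcl}) and even a typo ($\mu^{\otimes n}$ for $\mu^{\otimes k}$) in \cref{Cesaro1}.
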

\noindent From the exponential decrease of the mean field entropy along the flow, we deduce the following exponential convergence in Wassertein metric:
\begin{theorem}[Exponential convergence in Wasserstein metric from flow to equilibrium]\label{theo2}
Assume \ref{Hypo1} and give us an initial condition $\mu_{0}\in\mathcal{M}^{2}_{1}(\R^{d})$. Then
\begin{equation}
        \forall t\geq0,\quad\mathcal{W}^{2}_{2}(\mu_{t},\mu_{\infty})\leq\frac{2}{\rho_{LS}}\mathbf{H}_{W}[\mu_{0}]e^{-\rho_{LS}\frac{t}{2}}.
\end{equation}
\end{theorem}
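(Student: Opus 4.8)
The plan is to derive \cref{theo2} from \cref{theo1} via a \emph{nonlinear Talagrand transport inequality} comparing the mean-field entropy to the squared Wasserstein distance from the equilibrium: I claim that under \ref{Hypo1},
\[
\mathcal{W}^2_2(\mu,\mu_\infty)\leq\frac{2}{\rho_{LS}}\,\mathbf{H}_W[\mu]\qquad\text{for every }\mu\in\mathcal{M}^2_1(\R^d).
\]
Granting this, \cref{theo2} is immediate: evaluate the inequality at $\mu=\mu_t$ and plug in the bound $\mathbf{H}_W[\mu_t]\leq\mathbf{H}_W[\mu_0]e^{-\rho_{LS}t/2}$ of \cref{theo1}. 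So the real content is the transport inequality, and I would prove it by a propagation-of-chaos / large-deviations argument rather than by displacement convexity.

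To establish the transport inequality, assume $\mu$ lies in $\mathbf{dom}(\mathbf{H}_W)$ (otherwise the right-hand side is $+\infty$). By $\mathbf{(H5)}$ the Gibbs measures $\mu_n$ satisfy a uniform logarithmic Sobolev inequality; put $\rho_{LS}:=\limsup_n\rho_{LS}(\mu_n)>0$ and fix a subsequence along which $\rho_{LS}(\mu_n)\to\rho_{LS}$. By the Otto--Villani theorem each $\mu_n$ then satisfies a $T_2$-inequality, which applied to the tensor measure $\mu^{\otimes n}$ and divided by $n$ gives
\[
\frac1n\,\mathcal{W}^2_2(\mu^{\otimes n},\mu_n)\leq\frac{2}{n\,\rho_{LS}(\mu_n)}\,\mathbf{H}[\mu^{\otimes n}\,|\,\mu_n].
\]
For the left-hand side, projecting an optimal coupling of $\mu^{\otimes n}$ and $\mu_n$ onto each pair of coordinates and using exchangeability of $\mu_n$ yields $\frac1n\mathcal{W}^2_2(\mu^{\otimes n},\mu_n)\geq\mathcal{W}^2_2(\mu,(\mu_n)^{(1)})$, where $(\mu_n)^{(1)}$ is the one-particle marginal of $\mu_n$; by chaoticity of the Gibbs measure (a consequence of the large-deviations principle for $U$-statistics, \cref{LDPPolinter}) together with the uniform-in-$n$ Gaussian-type moment bounds coming from $\mathbf{(H2)}$ and the uniform LSI, $(\mu_n)^{(1)}\to\mu_\infty$ in $\mathcal{W}_2$, so $\liminf_n\frac1n\mathcal{W}^2_2(\mu^{\otimes n},\mu_n)\geq\mathcal{W}^2_2(\mu,\mu_\infty)$.

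For the right-hand side I would use the explicit form $\mu_n(dx)=\tfrac{C^n}{Z_n}e^{-n\sum_k U_n(W^{(k)})}\alpha^{\otimes n}(dx)$ together with the exact identity $\int U_n(W^{(k)})\,d\mu^{\otimes n}=\mathbf{W}^{(k)}[\mu]$ (the $U$-statistic averages only over pairwise-distinct indices), which gives
\[
\frac1n\,\mathbf{H}[\mu^{\otimes n}\,|\,\mu_n]=\mathbf{H}[\mu\,|\,\alpha]+\sum_{k=2}^N\mathbf{W}^{(k)}[\mu]+\frac1n\log\frac{Z_n}{C^n}=\mathbf{E}_W[\mu]+\frac1n\log\frac{Z_n}{C^n}.
\]
Since $\tfrac{Z_n}{C^n}=\E_{\alpha^{\otimes n}}\big[e^{-nF(\mu_{X^n})}\big]$, Varadhan's lemma applied to the Sanov-type LDP of $\mu_{X^n}$ under $\alpha^{\otimes n}$ with good rate function $\mathbf{H}[\cdot|\alpha]$ gives $\frac1n\log\tfrac{Z_n}{C^n}\to-\inf_\mu\big(F(\mu)+\mathbf{H}[\mu|\alpha]\big)=-\inf\mathbf{E}_W=-\mathbf{E}_W[\mu_\infty]$, hence $\frac1n\mathbf{H}[\mu^{\otimes n}|\mu_n]\to\mathbf{E}_W[\mu]-\mathbf{E}_W[\mu_\infty]=\mathbf{H}_W[\mu]$. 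Letting $n\to\infty$ along the chosen subsequence in the displayed $T_2$-bound then yields $\mathcal{W}^2_2(\mu,\mu_\infty)\leq\frac{2}{\rho_{LS}}\mathbf{H}_W[\mu]$, completing the argument.

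I expect the main obstacle to be the rigorous limit $n\to\infty$. First, $\frac1n\log(Z_n/C^n)\to-\inf\mathbf{E}_W$ requires the \emph{good} (inf-compact) rate function property and control of the negative parts $W^{(k),-}$ through the exponential-integrability assumptions $\mathbf{(H3)}$--$\mathbf{(H4)}$, so that Varadhan's lemma genuinely applies despite possibly unbounded kernels; second, the convergence $(\mu_n)^{(1)}\to\mu_\infty$ in $\mathcal{W}_2$ (not merely weakly) and the lower-semicontinuity step on the left-hand side need uniform second-moment estimates on the $\mu_n$, which I would extract from the Lyapunov condition $\mathbf{(H2)}$ (equivalently from the $T_2$-bound itself tested against Dirac-like measures). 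The uniqueness of the minimiser $\mu_\infty$ (\cref{Pfixunique}) is what makes $\inf\mathbf{E}_W=\mathbf{E}_W[\mu_\infty]$ legitimate; everything else—the implication $\text{ULSI}\Rightarrow T_2$, the tensorization used, and the final combination with \cref{theo1}—is routine.
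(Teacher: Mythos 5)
Your proposal is correct and matches the paper's approach: the paper isolates the nonlinear $T_2$-Talagrand inequality $\rho_{LS}\mathcal{W}^2_2(\cdot,\mu_\infty)\leq 2\mathbf{H}_W[\cdot]$ as part (iii) of \cref{transport}, proves it there by exactly the chain you describe (ULSI for $\mu_n$ from $\mathbf{(H5)}$, Otto--Villani, tensorization of $T_2$ applied to $\mu^{\otimes n}$ vs.\ $\mu_n$, projection to the one-particle marginal plus exchangeability, the Ces\`aro limit $\frac1n\mathbf{H}[\mu^{\otimes n}|\mu_n]\to\mathbf{H}_W[\mu]$ from \cref{Cesaro1}, convergence $(\mu_n)^{(1)}\to\mu_\infty$ and lower semicontinuity of $\mathcal{W}_2$), and then the proof of \cref{theo2} is the one-line deduction combining this with \cref{theo1}. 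You have simply inlined the proof of the transport inequality rather than citing it as a preliminary result, but the argument is the same.
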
 
\subsection{Kinetic case}
\noindent For kinetic type models, the extension of the above results relies on applications of hypocoercivity arguments (see e.g. \cite{Ref3} or \cite{villanihypo} for background). In this setting, we first obtain an exponential decrease in $|||\cdot|||_{H^{1}\to H^{1}}$ norm (defined in \cref{defnot}).
\begin{theorem}[Uniform exponential convergence to equilibrium in the weighted Sobolev space]\label{theo3}
Assume \ref{Hypo2} and give us an initial condition $\mu\in\mathcal{M}^{2}_{1}(\R^{d}\times\R^{d})$. Then
    \begin{equation}
        \exists\alpha>0\quad\exists\beta>0\quad\forall n\geq 2,\quad\bigg|\bigg|\bigg|P^{Z,(n)}_{t}-\mu^{n}_{Z}\bigg|\bigg|\bigg|_ {H^1\to H^{1}}\leq\alpha e^{-\beta t}.
    \end{equation}
\end{theorem}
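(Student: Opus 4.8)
\noindent The plan is to cast the generator $\mathcal{L}_{Z,n}$ of \cref{generatorcm2} into the abstract framework of Villani's $H^{1}$ hypocoercivity theorem and then to verify that every constant produced by that theorem can be chosen independently of the number of particles $n$. Write $\mathcal{L}_{Z,n}=-A^{*}A+B$ in $L^{2}(\mu^{n}_{Z})$, where $A:=\nabla_{v}$ (so that $-A^{*}A=\Delta_{v}-v\cdot\nabla_{v}$ is the Ornstein--Uhlenbeck part in the velocity variable, the adjoint being taken with respect to the Gaussian marginal $\mu_{2,n}$) and $B:=\nabla_{x}S_{1,n}\cdot\nabla_{v}-v\cdot\nabla_{x}$ is the Liouville operator $\{H^{Z}_{n},\,\cdot\,\}$, which is antisymmetric in $L^{2}(\mu^{n}_{Z})$ because the Hamiltonian flow preserves $\mu^{n}_{Z}$. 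A direct computation yields the commutators $C:=[A,B]=-\nabla_{x}$, $[A,A^{*}]=\mathbf{Id}_{nd}$, $[C,A]=0$ and, crucially, $[C,B]=-(\nabla^{2}_{x}S_{1,n})A$. Villani's theorem (\cite[Theorem.18 and Theorem.35]{villanihypo}, or \cite[Theorem.3]{Ref3}) then outputs $\alpha,\beta>0$ with $|||P^{Z,(n)}_{t}-\mu^{n}_{Z}|||_{H^{1}\to H^{1}}\leq\alpha e^{-\beta t}$, and inspection of its proof shows that $\alpha$ and $\beta$ depend only on (a) a control of $\|\nabla^{2}S_{1,n}\|_{\mathbf{op}}$ --- either a uniform bound, or more generally the affine bound $\|\nabla^{2}S_{1,n}\|_{\mathbf{op}}\leq K_{1}\|\nabla S_{1,n}\|+K_{2}$ --- and (b) a Poincar\'e constant for the position marginal $\mu_{1,n}(dx)\propto e^{-S_{1,n}(x)}\,dx$. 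The whole point is therefore to establish (a) and (b) uniformly in $n$.

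\noindent For (a), recall that $S_{1,n}$ consists of a per-particle confinement term, block diagonal in the Hessian with blocks proportional to $\nabla^{2}V(x_{j})$, plus the rescaled many-body interaction $n\sum_{k=2}^{N}U_{n}(W^{(k)})$. By $\mathbf{(H1)}$ the confinement blocks are bounded below, and when they are not bounded above, \textbf{VFP2} supplies precisely the affine bound $\|\nabla^{2}V\|_{\mathbf{op}}\leq K_{1}\|\nabla V\|+K_{2}$ required by Villani's hypotheses. For the interaction part, a counting argument on the arrangements $I^{k}_{n}$ shows that, for $a\neq b$, the $(a,b)$-block of $n\,\nabla^{2}_{x}U_{n}(W^{(k)})$ is an average of at most $k(k-1)\,|I^{k-2}_{n-2}|$ second-derivative blocks of $W^{(k)}$ rescaled by $n/|I^{k}_{n}|$, hence has operator norm $O(1/n)$ (using that $\nabla^{2}W^{(k)}$ is bounded by $\mathbf{(H1)}$), while the diagonal blocks are $O(1)$; summing the $O(n)$ blocks of a row gives $\sup_{n}\|n\,\nabla^{2}_{x}U_{n}(W^{(k)})\|_{\mathbf{op}}<+\infty$. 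Moreover \textbf{VFP1}, $\|\nabla W^{(k)}\|\leq K$, ensures that $\|\nabla S_{1,n}\|$ grows like the confinement gradient up to a bounded term, so that the affine bound on $V$ transfers to $S_{1,n}$. This establishes (a).

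\noindent For (b), write $\mu_{1,n}(dx)\propto\widetilde{\alpha}^{\bigotimes n}(dx)\,e^{-n\sum_{k=2}^{N}U_{n}(W^{(k)})}$ with $\widetilde{\alpha}$ the Gibbs measure associated to the one-particle confinement appearing in $S_{1,n}$. Since its potential has Hessian bounded below by $\mathbf{(H1)}$ and satisfies the Lyapunov condition $\mathbf{(H2)}$, the Cattiaux--Guillin--Wu criterion gives a logarithmic Sobolev inequality for $\widetilde{\alpha}$ on $\R^{d}$, hence an $n$-independent one for $\widetilde{\alpha}^{\bigotimes n}$ by tensorization; the many-body perturbation $e^{-n\sum_{k}U_{n}(W^{(k)})}$ with bounded Hessians then preserves a logarithmic Sobolev inequality with constant bounded below uniformly in $n$ --- this is exactly the Zegarlinski-type mechanism behind $\mathbf{(H5)}$, which applies verbatim here (alternatively one may quote the remark following \textbf{VFP2}). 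Thus $\liminf_{n\to+\infty}\rho_{LS}(\mu_{1,n})>0$, and since $\textbf{ULSI}\Longrightarrow\textbf{UPI}$ this furnishes the uniform Poincar\'e constant needed in (b). Feeding (a) and (b) into Villani's theorem yields $\alpha,\beta$ independent of $n$, which is the assertion.

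\noindent The main obstacle is the uniformity in $n$. Villani's theorem is a statement about a fixed operator, and its decay rate comes from a Gronwall inequality for the modified functional $\mathsf{F}(h):=\|h\|^{2}_{L^{2}(\mu^{n}_{Z})}+a\|\nabla_{x}h\|^{2}+2b\langle\nabla_{x}h,\nabla_{v}h\rangle+c\|\nabla_{v}h\|^{2}$ with $b^{2}<ac$; one has to reread its proof and check that the admissible $a,b,c$ and the resulting rate depend only on the Hessian control in (a) and the spectral gap in (b), and never on the ambient dimension $nd$. The delicate term is the contribution of $[C,B]=-(\nabla^{2}_{x}S_{1,n})A$, which must be absorbed into the $c\|\nabla_{v}h\|^{2}$ part with a coefficient that does not degenerate as $n\to+\infty$; this is precisely why $\mathbf{(H1)}$, \textbf{VFP1} and \textbf{VFP2} are imposed and why the mean-field scaling $n\,U_{n}(W^{(k)})$ of the interaction --- rather than an unnormalised sum --- is essential for the bound to be $n$-free.
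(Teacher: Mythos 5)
Your proof is structurally on the right track --- you cast the generator in Villani's $A^*A+B$ framework, compute the commutators, and correctly identify $[C,B]=-(\nabla^2_xS_{1,n})A$ as the term whose absorption must be uniform in $n$ --- but it has a genuine gap at precisely that step. You assert that ``inspection of [Villani's] proof shows that $\alpha$ and $\beta$ depend only on (a) and (b), and never on the ambient dimension $nd$.'' This is not so for \cite[Theorem.35]{villanihypo} as stated: under the affine bound $\|\nabla^2 S_{1,n}\|_{\mathbf{op}}\leq K_1\|\nabla S_{1,n}\|+K_2$, the rate it delivers does depend on the dimension, which the paper itself flags explicitly in its discussion of arrow $\mathbf{(2)}$ of the kinetic case. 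Your route therefore requires a dimension-free replacement of the boundedness condition, and you never supply one; ``one has to reread its proof and check'' waves away exactly the hardest point.

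The paper closes this gap via \cref{prelvfp2}, the Lyapunov-type weighted $L^2$ estimate
\begin{equation}
\int\|\nabla^2 V(x_i)\|^2_{\mathbf{op}}\,\psi^2\,d\mu_{1,n}\;\leq\;C_1\int|\nabla_x\psi|^2\,d\mu_{1,n}+C_2\int\psi^2\,d\mu_{1,n},
\end{equation}
with $C_1,C_2$ explicit in $N,K,K_1,K_2,d$ and, crucially, free of $n$. It is derived from \textbf{VFP2} together with the elementary \cref{prelvfp1} applied to the one-coordinate block generator $\mathcal{T}_i$ inside $\mathcal{H}_n$, and it is this integral inequality --- not a pointwise control of $\|\nabla^2 S_{1,n}\|_{\mathbf{op}}$ --- that the refinement \cite[Theorem.3]{Ref3} of Villani's theorem accepts as input in place of the boundedness condition, yielding $n$-free constants. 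Your counting argument, giving a uniform-in-$n$ bound on the operator norm of the interaction Hessian $n\,\nabla^2_x U_n(W^{(k)})$, is correct and a useful explicit complement to the paper's appeal to $\mathbf{(H1)}$; but it is the confinement Hessian $\nabla^2 V$, which is unbounded in general, that actually threatens the dimension-independence, and for that you need \cref{prelvfp2}, not an operator-norm bound.
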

\begin{remark} 
We still have \cref{theo3} if we replace the uniform logarithmic Sobolev inequality given in \ref{Hypo1:H5} by a uniform Poincar\'e inequality. We keep the logarithmic Sobolev inequality to have the following \cref{theo4}. Note that the constants $\alpha>0$ and $\beta>0$ can be made explicit uniform. The originality of the proof relies on functional inequalities and hypocoercivity with Lyapunov type conditions, usually not suitable to provide adimensional results.
\end{remark}

\begin{theorem}[Exponential decay in Wasserstein metric]\label{theo4}
Under \ref{Hypo2}, there are constants $C>0$, $\xi>0$ and $\kappa>0$ such that $\forall\mu\in\mathcal{P}_{2}(\R^{d}\times\R^{d})$, $\forall n\geq 2$ and $\forall t>0,$
    \begin{align}
        &\mathbf{H}[\mu^{n}_{Z}(t)|\mu^{n}_{Z}]\leq C\mathbf{H}[\mu^{n}_{Z}(0)|\mu^{n}_{Z}]e^{-\xi t},\label{vfpcv1}\\
        &\mathcal{W}^2_{2}(\mu^{\mathbf{VFP}}_{t},\mu^{Z}_{\infty})\leq\kappa C\mathcal{S}[\mu] e^{-\xi t},\label{vfpcv2}
    \end{align}
   where $\mu$ is the initial condition and $\mathcal{S}$ (defined in \cref{Entropy2}) is the mean-field entropy associated with our second order system given by \cref{CM2}.
\end{theorem}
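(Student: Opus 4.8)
The plan is to establish \cref{vfpcv1} first — an entropic hypocoercive estimate for the $n$-particle kinetic system \cref{CM2} whose constants do \emph{not} depend on $n$ — and then to transfer it to the nonlinear Vlasov--Fokker--Planck flow by a propagation-of-chaos / large-deviations argument, finally converting the resulting entropy decay into a $\mathcal{W}_2$ decay through a transport inequality. Both halves rely on the uniform logarithmic Sobolev input \ref{Hypo1:H5} and on the Lipschitz/Lyapunov structure \ref{Hypo2}.

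\noindent\emph{Step 1 (uniform-in-$n$ entropic decay for the particle system).} The starting point is \cref{theo3}, which provides $\vertiii{P^{Z,(n)}_t-\mu^n_Z}_{H^1\to H^1}\le\alpha e^{-\beta t}$ with $\alpha,\beta$ independent of $n$. Since $\mu^n_Z=\mu_{1,n}\otimes\mu_{2,n}$ with $\mu_{2,n}$ a product of standard Gaussians (hence $\mathrm{LSI}$ with constant $1$) and $\mu_{1,n}$ satisfying a logarithmic Sobolev inequality with $\liminf_n\rho_{LS}(\mu_{1,n})>0$ by \ref{Hypo1:H5}, tensorization yields a logarithmic Sobolev inequality for $\mu^n_Z$ with constant bounded below uniformly in $n$. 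Next one invokes the short-time regularization (hypoelliptic smoothing) of the kinetic Fokker--Planck semigroup: writing $h_s$ for the density of the law $\mu^n_Z(s)$ with respect to $\mu^n_Z$, one has for $t_0\in(0,1]$
\begin{equation}
\|h_{t_0}-1\|^2_{H^1(\mu^n_Z)}\le C_{\mathrm{reg}}\,t_0^{-3}\,\mathbf{H}[\mu^n_Z(0)\,|\,\mu^n_Z],
\end{equation}
with $C_{\mathrm{reg}}$ independent of $n$; this is where the explicit Hörmander form of $\mathcal{L}_{Z,n}$ (cf. \cite[Theorem.7 and Theorem.10]{Monmarche1} and \cref{prelvfp2}) enters, the non-global growth of $\nabla^2 S_{1,n}$ being absorbed by the particle-number-independent Lyapunov condition \textbf{VFP2}. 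Combining the regularization bound on $[0,t_0]$ with \cref{theo3} on $[t_0,t]$, and then the uniform logarithmic Sobolev inequality together with the entropy-dissipation structure of the kinetic equation, promotes the $H^1$ decay to $\mathbf{H}[\mu^n_Z(t)\,|\,\mu^n_Z]\le C\,\mathbf{H}[\mu^n_Z(0)\,|\,\mu^n_Z]\,e^{-\xi t}$ with $C,\xi$ uniform in $n$; equivalently, one runs Villani's modified-entropy hypocoercivity scheme (\cite[Theorem.35]{villanihypo}, \cite{Ref3}) directly, the $n$-uniformity coming from exactly the same two ingredients. This is \cref{vfpcv1}.

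\noindent\emph{Step 2 (mean-field limit and transport).} Run \cref{CM2} from the chaotic datum $\mu^{\otimes n}$, so that $\mu^n_Z(0)=\mu^{\otimes n}$ and, by propagation of chaos (available under the Lipschitz bound \textbf{VFP1}, as in \cite{Ref1}), the one-particle marginal of $\mu^n_Z(t)$ converges weakly to $\mu^{\mathbf{VFP}}_t$. On the one hand, the unbiasedness of the $U$-statistics gives $\mathbb{E}_{\mu^{\otimes n}}[U_n(W^{(k)})]=\int W^{(k)}\,d\mu^{\otimes k}$ exactly, while the free-energy limit $\tfrac1n\log\widetilde C\to-\mathbf{Inf}\,\mathcal{E}$ furnished by the large-deviation principle of \cite{Ref1} yields $\tfrac1n\mathbf{H}[\mu^{\otimes n}\,|\,\mu^n_Z]\to\mathcal{S}[\mu]$. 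On the other hand, the comparison between the normalized relative entropy with respect to the Gibbs measure $\mu^n_Z$ and the large-deviation rate function (the Cesàro-type statements \cref{Cesaro1} and \cref{Cesaro2}), combined with lower semicontinuity of relative entropy along the above weak convergence, gives $\liminf_n\tfrac1n\mathbf{H}[\mu^n_Z(t)\,|\,\mu^n_Z]\ge\mathcal{S}[\mu^{\mathbf{VFP}}_t]$. Dividing \cref{vfpcv1} by $n$ and letting $n\to\infty$ then produces the nonlinear entropy decay $\mathcal{S}[\mu^{\mathbf{VFP}}_t]\le C\,\mathcal{S}[\mu]\,e^{-\xi t}$. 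Finally, passing the uniform logarithmic Sobolev inequality for $\mu^n_Z$ to the mean-field limit produces a nonlinear Talagrand inequality $\mathcal{W}^2_2(\cdot,\mu^Z_\infty)\le\kappa\,\mathcal{S}[\cdot]$ (this is \cref{transport}), whence $\mathcal{W}^2_2(\mu^{\mathbf{VFP}}_t,\mu^Z_\infty)\le\kappa\,\mathcal{S}[\mu^{\mathbf{VFP}}_t]\le\kappa C\,\mathcal{S}[\mu]\,e^{-\xi t}$, which is \cref{vfpcv2}.

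\noindent\emph{Main obstacle.} The delicate point is the $n$-uniformity in Step 1: the hypocoercive constants and, above all, the regularization constant $C_{\mathrm{reg}}$ must not deteriorate with the particle number, although the Hessian of $S_{1,n}$ is not globally bounded and only the Lyapunov-type control \textbf{VFP2} is available. This forces one to work with the Hörmander form of $\mathcal{L}_{Z,n}$ and with particle-number-independent Lyapunov functions rather than with global second-order bounds, and it is precisely this uniformity that legitimizes the limit $n\to\infty$ in Step 2. A secondary point is to ensure that the large-deviation rate function is genuinely identified with $\mathcal{E}-\mathbf{Inf}\,\mathcal{E}$ on the class of measures with finite second moment, which is why the hypothesis $\mu\in\mathcal{P}_2(\R^d\times\R^d)$ appears and guarantees $\mathcal{S}[\mu]<+\infty$.
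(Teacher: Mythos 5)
Your overall architecture matches the paper's: establish \cref{vfpcv1} as a uniform-in-$n$ entropic hypocoercivity estimate for the $n$-particle kinetic system, using the factorization $\mu^n_Z=\mu_{1,n}\otimes\mu_{2,n}$ and the tensorized log-Sobolev inequality with constant $\max(\rho,1)$ for the $n$-uniformity; then transfer to the nonlinear flow by propagation of chaos, the Cesàro-type limit $\tfrac1n\mathbf{H}[\mu^{\otimes n}|\mu^n_Z]\to\mathcal{S}[\mu]$, lower semicontinuity, and a nonlinear $T_2$ inequality. The paper carries out Step 2 in essentially the same way (with the marginal superadditivity $n\,\mathcal{W}^2_2(\text{marginal})\le\mathcal{W}^2_2(\text{full})$ to derive the nonlinear Talagrand inequality), and you are actually somewhat more explicit than the paper on the lower-semicontinuity bound $\liminf_n\tfrac1n\mathbf{H}[\mu^n_Z(t)|\mu^n_Z]\ge\mathcal{S}[\mu^{\mathbf{VFP}}_t]$, which the paper leaves implicit.

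The one place where you diverge, and where there is a gap, is the mechanism you sketch for \cref{vfpcv1}. The claimed short-time regularization
\begin{equation*}
\|h_{t_0}-1\|^2_{H^1(\mu^n_Z)}\le C_{\mathrm{reg}}\,t_0^{-3}\,\mathbf{H}[\mu^n_Z(0)\,|\,\mu^n_Z]
\end{equation*}
is not a standard fact and is false in general: relative entropy $\mathbf{H}[\nu|\mu]$ does not control the $\chi^2$-divergence $\|h-1\|^2_{L^2(\mu)}$ (one has the opposite inequality $\mathbf{H}\le\chi^2$), and one can choose $\nu_0$ with finite entropy but infinite $\chi^2$-divergence whose short-time evolute still fails to lie in $L^2(\mu^n_Z)$, let alone $H^1$. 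So the route ``regularize entropy into $H^1$, apply \cref{theo3}, then downgrade to entropy'' does not go through as written. The paper avoids this entirely: it does not pass through \cref{theo3} for \cref{vfpcv1}, but invokes the modified-entropy (or Lyapunov/Hörmander) hypocoercivity results of Monmarch\'e and Villani which prove entropic decay directly, with the sole new input being that the log-Sobolev constant of $\mu^n_Z$ and the Lyapunov bound from \cref{prelvfp2} are uniform in $n$. You do mention this as an ``equivalent'' alternative, and that is the route that actually works; the regularization-plus-$H^1$ shortcut should be dropped.
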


\section{Sketch of proofs and preliminaries}\label{sketchandpreli}
\subsection{Sketch of proofs}
\begin{figure}[htb!]
    \centering
    \includegraphics[height=106.2mm]{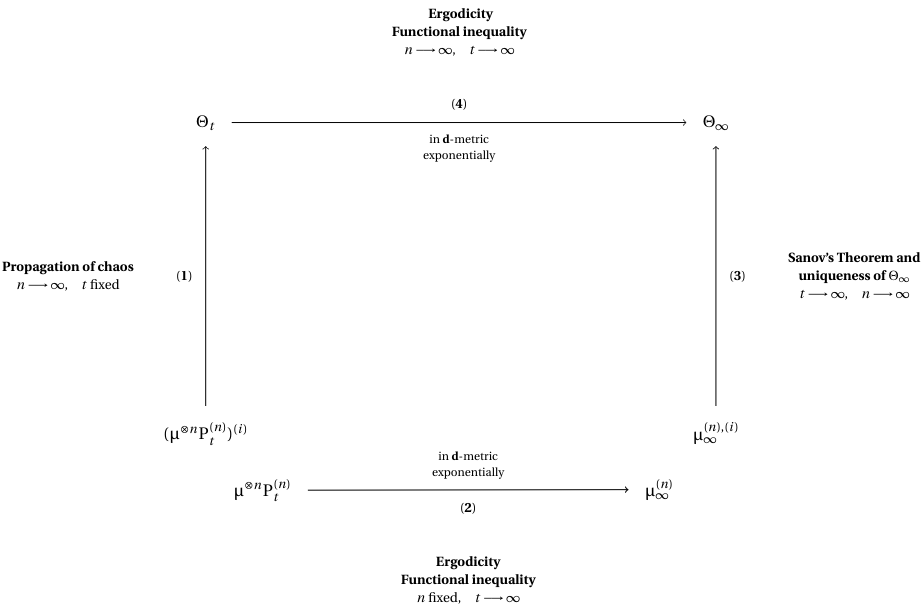}
    \caption{Diagram of convergences}
    \label{fig1}
\end{figure}
\noindent\textbf{First order case.}
The diagram given in \cref{fig1} summarizes the strategy of proof: we show $\mathbf{(4)}$ from $\mathbf{(1)}$ , $\mathbf{(2)}$  and $\mathbf{(3)}$.
And in this diagram, the quantities involved are:
\begin{itemize}
    \item $\mu^{\otimes n} P^{(n)}_{t}=\mu_{n}(t)$ the law at time $t$ of the particle system induced by the confinement potential and the $U-$statistics;
    \item $\mu^{(i)}_{n}(t)$ the $i-$th marginal of $\mu_{n}(t)$;
    \item $\mu^{(n)}_{\infty}=\mu_{n}$ the invariant measure of the particle system;
    \item $\mu^{(i)}_{n}$ the $i-$th marginal of $\mu_{n}$;
    \item $\Theta_{t}=\mu_{t}$ the law at time $t$ of the McKean-Vlasov process obtained by propagation of chaos;
    \item $\Theta_{\infty}=\mu_{\infty}$ the invariant measure of the McKean-Vlasov process;
    \item $\mathbf{d}=\mathcal{W}_{2}.$
\end{itemize}

\noindent\textbf{Arrow $\mathbf{(1)}$.}
The McKean-Vlasov process classically appears as the mean-field limit of a particle system. This property is recalled and studied, among others, in \cite{propachaos1}.\newline

\noindent\textbf{Arrow $\mathbf{(2)}$.}
The process $X^{n}$ is a homogeneous diffusion process of the Langevin-Kolmogorov type which is a class of Markov processes. In the literature, the long-time behavior for this class is classically studied (see e.g. \cite[\cite{BakryEmery}]{BGL-MarkovDiffusion}). In order to ensure this property (see \cref{prel}.\cref{transport}), exponentially in time and uniformly in number of particle $n$, we rely on  $\mathbf{(H5)}$ in \ref{Hypo1} and the equivalence between Sobolev's inequality, exponential decay of entropy and Talagrand's second inequality for Gibbs measures.\newline

\noindent\textbf{Arrow $\mathbf{(3)}$.}
This arrow is ensured by $\mathbf{(H3)}$  and $\mathbf{(H4)}$ in \ref{Hypo1} which allow us to obtain large deviations principle and Sanov-type theorem (see  \cref{prel}.\cref{LDP}.\cref{Thmsanov}).\newline

\noindent\textbf{Arrow $\mathbf{(4)}$.}
To establish this last arrow, we will use the fact that the nonlinear Sobolev inequality ($\rho_{LS}\mathbf{H}_{W}\leq 2\mathbf{I}_{W}$) given in \cref{prel}.\cref{transport} is also equivalent to the exponential decrease of the mean field entropy $\mathbf{H}_{W}$ along the flow $(\mu_{t})_{t\geq 0}$ of the McKean-Vlasov distributions and to the second nonlinear Talagrand inequality ($\rho_{LS}\mathcal{W}^{2}_{2}(\cdot,\mu_{\infty})\leq 2\mathbf{H}_{W}$). Note that Talagrand inequalities allow to recover usual Wasserstein convergence (and then convergence in law) from entropic convergence. Note that concentration inequalities could also stem from Talagrand inequalities, although the stronger Logarithmic Sobolev inequality is more often used in this context.
\begin{remark}
The exponential convergence in entropy (given in \cref{theo1}) should be equivalent to the mean field log-Sobolev inequality $\rho_{LS}\mathbf{H}_{W}\leq 2\mathbf{I}_{W}$ (in \cref{transport}), basing on (gradient flow and Gronwall lemma) 
\begin{equation}
-\frac{d}{dt}\mathbf{H}_{W}[\mu_{t}]=\mathbf{I}_{W}[\mu_{t}]\Longrightarrow\frac{d}{dt}\mathbf{H}_{W}[\mu_{t}]\leq -\frac{1}{2}\rho_{LS}\mathbf{H}_{W}[\mu_{t}]\Longrightarrow\mathbf{H}_{W}[\mu_{t}]\leq\mathbf{H}_{W}[\mu_{0}]e^{-\rho_{LS}\frac{t}{2}}
\end{equation} 
noted by Carrillo-McCann-Villani in their convex framework. The proof of $-\frac{d}{dt}\mathbf{H}_{W}[\mu_{t}]=\mathbf{I}_{W}[\mu_{t}]$  demands the \textit{regularity of} $t\longmapsto\mu_{t}$ (\cref{fig1}) which requires the \textit{PDE theory of the McKean-Vlasov equation}. That is why we prefer to give a \textit{rigorously probabilistic proof based directly on the log-Sobolev inequality of} $\mu_{n}$ (\cref{fig1}) in \ref{Hypo1}.$\mathbf{H5}$. As for \cref{theo2} on exponential decay in Wasserstein metric, it follows from the previous one (\cref{theo1}) via Talagrand's T2-inequality.
\end{remark}

\noindent\textbf{Second order case.}
The proof in this case, can also be described by the diagram given in \cref{fig1} but with the following notations:
\begin{itemize}
    \item $\mu^{\otimes n} P^{(n)}_{t}=\mu^{n}_{Z}(t)$ the law at time $t$ of the kinetic particle system induced by the confinement potential and the $U-$statistics;
    \item $(\mu^{\otimes n} P^{(n)})^{(i)}=:\mu^{n,(i)}_{Z}(t)$ the $i-$th marginal of $\mu^{n}_{Z}(t)$;
    \item $\mu^{(n)}_{\infty}=\mu^{n}_{Z}$ the invariant measure of the particle system;
    \item $\mu^{n,(i)}_{Z}$ the $i-$th marginal of $\mu^{n}_{Z}$;
    \item $\Theta_{t}=\mu^{\textbf{VFP}}_{t}$ the law at time $t$ of the Vlasov-Fokker-Planck process obtained by propagation of chaos;
    \item $\Theta_{\infty}=\mu^{Z}_{\infty}$ the invariant measure of the Vlasov-Fokker-Planck process;
    \item $\mathbf{d}=|||\cdot-\cdot|||_{H^{1}\to H^{1}}$ or $\mathbf{d}=\mathcal{W}_{2}$.
\end{itemize}

\noindent\textbf{Arrow $\mathbf{(1)}$.}
We first recall the generator $\mathcal{L}_{Z,n}$ defined (in Hormander form) by \cref{hormanderkfp} is a non-symmetric hypoelliptic  operator (see \cref{remarkhormanderform}). The related $n$-particle system given by \cref{CM2} converges to the Vlasov-Fokker-Planck equation (mean-field limit of \cref{CM2}) when  $n\to +\infty$ (see e.g. \cite{propachaos1}).\newline

\noindent\textbf{Arrow $\mathbf{(2)}$.}
The process $Z^{n}$ is a homogeneous diffusion process of the Langevin type usually called kinetic Fokker-Planck process. The study of the long-time behavior of the particle system requires the help of hypocoercivity tools (see  e.g. \cite{Ref3} and \cite{villanihypo}).  We recall that 
\begin{equation}
\forall x\in\R^{nd},\quad S_{1,n}(x):=\sum V(x_{i})+n\sum_{k=2}^{N }U_{n}(W^{(k)}).
\end{equation}
In particular, \ref{Hypo2} ensures the following Poincar\'e and log-Sobolev inequalities
\begin{itemize}
    \item\textbf{UPI.} We say that $\mu_{1,n}$ satisfies a uniform Poincar\'e inequality if
    \begin{equation}
    \exists\lambda>0\quad\forall n\geq 2\quad\forall\varphi\in\mathcal{C}^{\infty}_{c}(\R^{nd}),\quad\lambda\mathbb{V} _{\mu_{1,n}}[\varphi]\leq\mathbb{E}_{\mu_{1,n}}[||\nabla_{x}\varphi||^2].
    \end{equation}
    \item\textbf{ULSI.} We say that $\mu_{1,n}$ satisfies a uniform logarithmic Sobolev inequality if
    \begin{equation}
    \exists\rho>0\quad\forall n\geq 2\quad\forall\varphi\in\mathcal{C}^{\infty}_{c}(\R^{nd}),\quad\rho\mathbf{Ent} _{\mu_{1,n}}[\varphi^2]\leq\mathbb{E}_{\mu_{1,n}}[||\nabla_{x}\varphi||^2].
    \end{equation}
\end{itemize}
 Under \textbf{(UPI)}, we are able to obtain as an application of Villani's theorem the following exponential rate to equilibrium 
\begin{align}
\forall n\geq 2,\quad\bigg|\bigg|\bigg|P^{Z,(n)}_{t}-\mu^{n}_{Z}\bigg|\bigg|\bigg|_ {H^1\to H^{1}}\leq\alpha e^{-\beta t}
\end{align}
with constants $\alpha>0$ and $\beta>0$ make explicit uniform. The idea in Villani's proof of \cite[Theorem.3]{Ref3} is as follows: if one could find a Hilbert space such that the operator $\mathcal{L}_{Z,n}$ is coercive with respect to its norm, then one has exponential convergence for the semigroup $(P^{Z,(n)}_{t})_{t\geq 0}$ under such a norm. If, in addition, this norm is equivalent to some usual norm (such as $\mathcal{H}^1(\mu^{n}_{Z})-$norm), then one obtains exponential convergence under the usual norm as well. In his statement of \cite[Theorem.35]{villanihypo}, the \textit{boundedness condition} is verified by $||\nabla^2S_{1,n}||_{\textbf{op}}\leq C(1+||\nabla S_{1,n}||)$ with a constant $M$ depending unfortunately on the dimension. The $L^2$ and $H^1$ norms are not suitable to obtain a result on the non-linear system (such as \cref{vfpcv1} and \cref{vfpcv2}). On the other hand, thanks to \textbf{(ULSI)} playing a fundamental role in the exponential return in Wasserstein metric (see e.g. \cite[Theorem.7]{Monmarche1} or \cite[Theorem.10]{Monmarche2}), we are able to prove \cref{vfpcv1} which in turn will allow us to deduce \cref{vfpcv2}.\newline

\noindent\textbf{Arrow $\mathbf{(3)}$.}
The results of large deviations on the U$-$statistics in the non-kinetic case in \cref{prel} and the fact that $\mu^{n}_{Z}=\mu_{1,n}\bigotimes\mu_{2,n}$ allow to deduce that the random empire measurements of the kinetic particle system satisfy the principle of large deviations under $\mu^{n}_{Z}$ of good rate function defined by 
\begin{equation}
\forall (\mu_{x },\mu_{v})\in\mathcal{P}_{x}(\R^{d})\times\mathcal{P}_{v}(\R^{d}),\quad\mathbf{I}(\mu_{x},\mu_{v}):=\mathbf{H}_{W}[\mu_{x}]+\mathbf{H}[\mu_{v}|\mathcal {N}(0,\mathbf{Id}_{d})].
\end{equation}
Thus there exists by inf-compactness a Maxwellian to the nonlinear Vlasov-Fokker-Planck equation and this equilibrium (invariant measure of the nonlinear Vlasov-Fokker-Planck process) is unique. See  \cref{prel}.\cref{LDP}.\cref{Thmsanov}.\cref{Pfixunique}.\cref{ldpcontraction}.\newline

\noindent\textbf{Arrow $\mathbf{(4)}$.} This part is obtained by the first-order case by exploiting the uniform logarithmic Sobolev inequality and the Hormander form given  by \cref{hormanderkfp} (see \cref{prelvfp}).
\begin{remark}
 By applying hypocoercivity tools to the system with $n$ particles given by \cref{CM2}, we obtain a (uniform in $n$) convergence rate to equilibrium which in turn extends to the limiting non linear system.
\end{remark}

\subsection{Preliminaries}\label{prel}
The results on the $U-$statistics (\ref{tcl},\ref{couplage},\ref{jensen},\ref{decouplage1}) and the inf-compactness of the entropy functional $\mathbf{H}_{W}$ (\ref{lower},\ref{LDP},\ref{Thmsanov}) are inspired by \cite{Ref1} in the case $S=\mathbb{R}^{d}$. We recall that the expectation of $W^{(k)}$ under $\mu^{\bigotimes k}$ exists if and only if
  \begin{equation}
  \mathbb{E}_{\mu^{\bigotimes k}}[W^{(k),+}]<+\infty\quad\textnormal{or}\quad\mathbb{E}_{\mu^{\bigotimes k}}[W^{(k),-}]<+\infty.
  \end{equation}

\noindent\textbf{Back to U-statistics.}\label{TCL}
First we present the law of large numbers of the $U-$statistic (see [\cite{Ref4}, Corollary 3.1.1] or [\cite{Ref1}, Lemma 3.1]). We recall that $U$-statistics are defined in \cref{U-stats1}.
\begin{proposition}[law of large numbers for $U-$statistics]\label{tcl}
Let $(X_{n})_{n\geq 1}$ be a sequence of independent and identically distributed random variables with values in a measurable space $(E,\mathcal{B}(E))$ equipped with its Borelian tribe and $\Phi:E^{k}\longrightarrow\R$ a symmetric measurable function such that
\begin{equation}
\mathbb{E}[|\Phi(X_{1},\ldots,X_{k})|]<+\infty,\quad\textnormal{then}\quad U_{n}(\Phi)\overset{ n\to+\infty}{\longrightarrow}\mathbb{E}[\Phi(X_{1},\ldots,X_{k})]\quad\textnormal{ with probability}\quad 1.
\end{equation}
\end{proposition}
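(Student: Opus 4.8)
The plan is to realize the sequence $(U_n(\Phi))_{n\geq k}$ as a reverse (backward) martingale and then conclude with the reverse martingale convergence theorem (L\'evy's downward theorem) together with the Hewitt--Savage zero-one law. This is the standard route and matches the references cited; I sketch it here for completeness.

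First I would set up the filtration. For each $n\geq k$ let $\mathcal{E}_n$ denote the $\sigma$-algebra of events in $\sigma(X_1,\ldots,X_n)$ that are invariant under every permutation of the coordinates $X_1,\ldots,X_n$, and put $\mathcal{F}_n:=\mathcal{E}_n\vee\sigma(X_{n+1},X_{n+2},\ldots)$. A permutation of $\{1,\ldots,n+1\}$ fixing $n+1$ is in particular a permutation of $\{1,\ldots,n\}$, so $\mathcal{E}_{n+1}\subset\mathcal{E}_n\vee\sigma(X_{n+1})$ and hence $(\mathcal{F}_n)_{n\geq k}$ is \emph{decreasing}. The crucial identity to establish is
\[
U_n(\Phi)=\mathbb{E}\big[\Phi(X_1,\ldots,X_k)\,\big|\,\mathcal{F}_n\big],\qquad n\geq k .
\]
Indeed, $U_n(\Phi)$ is a symmetric function of $X_1,\ldots,X_n$, hence $\mathcal{F}_n$-measurable; and by the exchangeability of the i.i.d.\ sequence (and the independence of $(X_1,\ldots,X_n)$ from $(X_{n+1},X_{n+2},\ldots)$) one has $\mathbb{E}[\Phi(X_{i_1},\ldots,X_{i_k})\mid\mathcal{F}_n]=\mathbb{E}[\Phi(X_1,\ldots,X_k)\mid\mathcal{F}_n]$ for every arrangement $(i_1,\ldots,i_k)\in I^{k}_{n}$; averaging these $A^{k}_{n}$ identities and using $\mathbb{E}[U_n(\Phi)\mid\mathcal{F}_n]=U_n(\Phi)$ gives the claim.

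From this identity and $\mathcal{F}_{n+1}\subset\mathcal{F}_n$, the tower property yields $\mathbb{E}[U_n(\Phi)\mid\mathcal{F}_{n+1}]=U_{n+1}(\Phi)$, so $(U_n(\Phi),\mathcal{F}_n)_{n\geq k}$ is a reverse martingale; since each term is the conditional expectation of the fixed integrable variable $\Phi(X_1,\ldots,X_k)$, the family is automatically uniformly integrable. The reverse martingale convergence theorem then gives that $U_n(\Phi)$ converges almost surely (and in $L^1$) to $U_\infty:=\mathbb{E}[\Phi(X_1,\ldots,X_k)\mid\mathcal{F}_\infty]$, where $\mathcal{F}_\infty:=\bigcap_{n\geq k}\mathcal{F}_n$.

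It remains to identify $U_\infty$. Any $A\in\mathcal{F}_\infty$ lies in $\mathcal{E}_n\vee\sigma(X_{n+1},X_{n+2},\ldots)$ for every $n$, which forces $A$ to be invariant under all finite permutations of the index set; thus $\mathcal{F}_\infty$ is contained in the exchangeable $\sigma$-algebra of $(X_j)_{j\geq1}$, which is $\mathbb{P}$-trivial by the Hewitt--Savage zero-one law (the $X_j$ being i.i.d.). Hence $U_\infty$ is almost surely constant, necessarily equal to $\mathbb{E}[U_\infty]=\mathbb{E}[\Phi(X_1,\ldots,X_k)]$, which is the assertion. There is no genuine obstacle here since the statement is classical; the only points requiring care are the exchangeability computation underlying the reverse-martingale identity and the verification that $\mathcal{F}_\infty$ sits inside the exchangeable $\sigma$-algebra so that Hewitt--Savage applies. (Alternatively one could invoke Hoeffding's representation of $U_n(\Phi)$ as an average over $\mathfrak{G}_n$ of block averages of i.i.d.\ terms and apply the ordinary strong law, but the reverse-martingale argument is the most economical.)
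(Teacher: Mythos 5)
Your proposal takes essentially the same route as the paper: both realize $U_n(\Phi)$ as the conditional expectation of $\Phi(X_1,\ldots,X_k)$ with respect to the decreasing filtration of "symmetric-in-the-first-$n$-coordinates" $\sigma$-algebras, invoke the reverse martingale convergence theorem, and identify the limit via the Hewitt--Savage zero-one law. Your write-up is a bit more explicit about checking that the filtration is decreasing and that the tail $\sigma$-algebra sits inside the exchangeable one, but the decomposition and key lemmas coincide.
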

\begin{proof}
See \cref{proof1} or \cite{Ref1}.
\end{proof}
\noindent In terms of integrals, this result means that for any function $\Phi\in\mathcal{M}_{sym}( E^{k},\R)$ with $E^{k}$ provided with the tensor tribe (or product) and any measure $\mu\in\mathcal{P}(E)$ such that $\Phi\in L^{1}(\mu^{\bigotimes k})$, we almost surely have 
\begin{equation}
U_{n}(\Phi)\overset{n\to+\infty}{\longrightarrow}\mathbb{E}_ {\mu^{\bigotimes k}}[\Phi]:=\int_{E^{k}}\Phi(x)\mu^{\bigotimes k}(dx).
\end{equation} 
This result can also be seen as a law of large numbers for $U-$statistics. From this result, we deduce that $\forall k\in\{2,\ldots,N\}$, if $W^{(k)}\in L^{1}(\mu^{\bigotimes k})$, then we almost surely have $U_{n}(W^{(k)})$ tends to $\mathbf{W}^{(k) }[\mu]$.
We first recall the decoupling inequality of Victor H. De La Pe$\Tilde{\textnormal{n}}$a (see [\cite{Ref5}, 1992]).
\begin{proposition}[Decoupling and Khintchine inequalities for $U-$statistics]\label{couplage}
Let $(X_{n})_{n\geq 1}$ be a sequence of random variables with values in a measurable space $(E,\mathcal{B}(E))$, independent and identically distributed. We assume that 
\begin{equation}
(X^{j}_{1},\ldots,X^{j}_{n})_{j=1,\ldots,k}
\end{equation} 
are $k$ independent copies of $(X_ {1},\ldots,X_{n})$. Then for all increasing convex functions $\Psi:[0,+\infty)\longrightarrow\R$ and measurable symmetric $\Phi:E^{k}\longrightarrow\R$ such that $\mathbb{E}[|\Phi(X_{1},\ldots,X_{k})|]<+\infty$, we have
\begin{equation}
\mathbb{E}\bigg[\Psi\bigg(\bigg|\sum_{(i_{1},\ldots,i_{k})\in I^{k}_{n}}\Phi(X_{ i_{1}},\ldots,X_{i_{k}})\bigg|\bigg)\bigg]\leq\mathbb{E}\bigg[\Psi\bigg(C_{k}\bigg|\sum_ {(i_{1},\ldots,i_{k})\in I^{k}_{n}}\Phi(X^{1}_{i_{1}},\ldots,X^{k }_{i_{k}})\bigg|\bigg)\bigg] 
\end{equation}
with
\begin{equation}
C_{2}:=8\quad\textnormal{and}\quad\forall k\geq 3,\quad  C_{k} :=2^{k}\prod_{j=2}^{k}(j^{j}-1).
\end{equation}
\end{proposition}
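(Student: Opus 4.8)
This is De la Peña's decoupling inequality for $U$-statistics, and I would follow its classical proof, whose engine is a \emph{randomized partition of the index set} combined with a conditional Jensen inequality. Consider first the core reduction for a single kernel: fix $\Phi$, write $U:=\sum_{(i_1,\dots,i_k)\in I^{k}_{n}}\Phi(X_{i_1},\dots,X_{i_k})$, draw labels $(\xi_i)_{1\le i\le n}$ i.i.d.\ uniform on $\{1,\dots,k\}$ and independent of $(X_i)_i$, and set $I_\ell:=\{i:\xi_i=\ell\}$. Form the ``cross'' partial sum $T:=\sum_{i_1\in I_1,\dots,i_k\in I_k}\Phi(X_{i_1},\dots,X_{i_k})$. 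Because the $I_\ell$ are disjoint, only tuples of distinct indices contribute, and for such a tuple $\Pr(i_1\in I_1,\dots,i_k\in I_k)=k^{-k}$; hence $\mathbb{E}[T\mid(X_i)_i]=k^{-k}U$. The map $t\mapsto\Psi(k^{k}|t|)$ is convex and nondecreasing in $|t|$ (composition of the convex nondecreasing $\Psi$ with $|\cdot|$), so conditional Jensen gives $\Psi(|U|)=\Psi\big(k^{k}|\mathbb{E}[T\mid(X_i)_i]|\big)\le\mathbb{E}[\Psi(k^{k}|T|)\mid(X_i)_i]$ and therefore $\mathbb{E}\,\Psi(|U|)\le\mathbb{E}\,\Psi(k^{k}|T|)$; here the hypothesis $\mathbb{E}|\Phi(X_1,\dots,X_k)|<+\infty$ guarantees $\mathbb{E}|T|<+\infty$.

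Next I would \emph{decouple} $T$. Conditionally on $(\xi_i)_i$, the blocks $\{X_i:i\in I_1\},\dots,\{X_i:i\in I_k\}$ are mutually independent, each i.i.d.\ with law $\mathcal L(X_1)$, which is exactly the conditional law of the corresponding index-blocks drawn from the independent copies $X^1,\dots,X^k$. Hence, conditionally on $(\xi_i)_i$,
\[
T\;\overset{d}{=}\;\sum_{i_1\in I_1,\dots,i_k\in I_k}\Phi\big(X^{1}_{i_1},\dots,X^{k}_{i_k}\big)=:T',
\]
so $\mathbb{E}\,\Psi(k^{k}|T|)=\mathbb{E}\,\Psi(k^{k}|T'|)$. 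The symmetry of $\Phi$ is what makes the assignment of blocks to argument slots immaterial here, so no relabeling is needed. At this point $U$ has been replaced by a partition-restricted version $T'$ of the \emph{decoupled} statistic.

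The remaining step — upgrading $T'$ to the full decoupled sum $U^{\mathrm{dec}}:=\sum_{(i_1,\dots,i_k)\in I^{k}_{n}}\Phi(X^{1}_{i_1},\dots,X^{k}_{i_k})$ — is the genuinely delicate part, since $\mathbb{E}_\xi[T']=k^{-k}U^{\mathrm{dec}}$ means another application of Jensen points the wrong way. I would proceed by induction on $k$, decoupling one coordinate at a time: expanding the block indicators $\mathbf{1}_{\{i\in I_\ell\}}$ into centered fluctuation variables turns $T'$ into $k^{-k}U^{\mathrm{dec}}$ plus Rademacher-type chaos corrections, and one isolates the non-centered (Hoeffding lower-order) parts of $\Phi$ via the convexity bound $\Psi(a+b)\le\tfrac12\Psi(2a)+\tfrac12\Psi(2b)$ together with the Jensen lower bound $\mathbb{E}\,\Psi(c|U^{\mathrm{dec}}|)\ge\Psi(c|\mathbb{E}\,U^{\mathrm{dec}}|)$ to re-absorb them, each pass producing an explicit combinatorial factor. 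Tracking these factors is what yields $C_2=8$ and $C_k=2^{k}\prod_{j=2}^{k}(j^{j}-1)$ for $k\ge3$; verifying that $\mathbb{E}|\Phi|<+\infty$ survives every reduction completes the decoupling inequality, and the Khintchine estimate then follows by taking $\Psi(t)=t^{p}$ in the decoupled chaos and applying the classical scalar Khintchine inequality to the inserted signs. I expect this last inductive bookkeeping of the constants to be the main obstacle; the randomization-plus-Jensen reduction above is routine.
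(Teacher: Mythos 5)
The paper does not actually prove this proposition: it is recalled verbatim from de la Pe\~{n}a (1992) \cite{Ref5}, with no argument given in the text or the appendix, so the relevant comparison is with that classical proof. Your opening reduction is correct: with i.i.d.\ labels $\xi_i$ uniform on $\{1,\dots,k\}$ one indeed has $\mathbb{E}[T\mid (X_i)_i]=k^{-k}U$ (only tuples of distinct indices survive the disjointness of the blocks), the map $t\mapsto\Psi(k^{k}|t|)$ is convex so conditional Jensen gives $\mathbb{E}\,\Psi(|U|)\le\mathbb{E}\,\Psi(k^{k}|T|)$, and conditionally on $\xi$ the block-wise replacement by independent copies gives $T\overset{d}{=}T'$, hence $\mathbb{E}\,\Psi(|U|)\le\mathbb{E}\,\Psi(k^{k}|T'|)$.

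The genuine gap is the third step, which is exactly where the theorem lives: you never establish $\mathbb{E}\,\Psi(k^{k}|T'|)\le\mathbb{E}\,\Psi(C_{k}|U^{\mathrm{dec}}|)$, and the route you sketch does not go through routinely. Already for $k=2$, writing $\mathbf{1}(\xi_i=1)=\tfrac12(1+\eta_i)$ and $\mathbf{1}(\xi_j=2)=\tfrac12(1-\eta_j)$ gives $4T'=U^{\mathrm{dec}}+R_1-R_2-R_3$ with, e.g., $R_1=\sum_i\eta_i A_i$, $A_i:=\sum_{j\neq i}\Phi(X^1_i,X^2_j)$; your plan needs $\mathbb{E}\,\Psi(c|R_1|)\le\mathbb{E}\,\Psi(C|U^{\mathrm{dec}}|)$, but there is no generic Jensen/convexity argument for a sign-randomized sum against its unrandomized total (for fixed coefficients $a_i$ the analogous bound $\mathbb{E}\,\Psi(|\sum_i\eta_i a_i|)\le\Psi(C|\sum_i a_i|)$ is false, e.g.\ when $\sum_i a_i=0$), so the ``Rademacher-type chaos corrections'' cannot simply be re-absorbed by $\Psi(a+b)\le\tfrac12\Psi(2a)+\tfrac12\Psi(2b)$ plus a Jensen lower bound. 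Controlling these terms is precisely the content of de la Pe\~{n}a's induction, which is structured differently from your reduction (it decouples one coordinate at a time directly from the coupled statistic, using conditional-expectation identities that reproduce lower-order coupled sums), and it is that bookkeeping which produces $C_2=8$ and $C_k=2^{k}\prod_{j=2}^{k}(j^{j}-1)$. As written, your text is a plan that defers the theorem's real difficulty; either carry out that induction in full or do as the paper does and quote \cite{Ref5}. (Also note the displayed statement contains only the decoupling inequality, so the closing remark about deducing Khintchine with $\Psi(t)=t^{p}$ is not needed here.)
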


\begin{proposition}\label{jensen}
Let $1\leq k\leq n$, $(X^{j}_{i})_{1\leq i\leq n,1\leq j\leq k}$ be independent random variables with values in $( E,\mathcal{B}(E))$. For all $(i_{1},\ldots,i_{k})\in I^{k}_{n}$, defining $\Phi_{i_{1},\ldots,i_{k}}: E^{k}\longrightarrow\R$ a measurable function of $k$ variables, we have
\begin{align}
\log\mathbb{E}\bigg[\exp\bigg(\frac{1}{|I^{k}_{n}|}\sum_{i\in I^{k}_{n}}\Phi_{i}(X^{1}_{i_{1}},\ldots,X^{k}_{i_{k}})\bigg)\bigg]\leq 
\frac{n-k+1 }{|I^{k}_{n}|}\sum_{i\in I^{k}_{n}}\log\mathbb{E}\bigg[\exp\bigg(\frac{1} {n-k+1}\Phi_{i}(X^{1}_{i_{1}},\ldots,X^{k}_{i_{k}})\bigg)\bigg].
\end{align}
\end{proposition}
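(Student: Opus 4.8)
The plan is to prove this bound by a Hoeffding-type symmetrization combined with the convexity of the log-Laplace functional. Write $N:=|I^k_n|=n!/(n-k)!$, $m:=n-k+1$, and, for $i=(i_1,\dots,i_k)\in I^k_n$, set $Y_i:=\Phi_i(X^1_{i_1},\dots,X^k_{i_k})$. Since the random variables $(X^j_i)_{1\le i\le n,\,1\le j\le k}$ are (fully) independent, two summands $Y_i$ and $Y_{i'}$ are independent whenever the underlying variable sets are disjoint, i.e. whenever $i_j\ne i'_j$ for every $j$. The first step is to rewrite the normalized sum over $I^k_n$ as a double average over the symmetric group and over a one-parameter family of "sliding windows", namely
\begin{equation*}
\frac{1}{N}\sum_{i\in I^k_n}Y_i=\frac{1}{n!}\sum_{\sigma\in\mathfrak{G}_n}\frac{1}{m}\sum_{t=0}^{n-k}Y_{(\sigma(1+t),\dots,\sigma(k+t))}.
\end{equation*}
To see this, observe that for each fixed $t\in\{0,\dots,n-k\}$ the map $\sigma\mapsto(\sigma(1+t),\dots,\sigma(k+t))$ takes every value of $I^k_n$ exactly $(n-k)!$ times (choose the remaining $n-k$ images of $\sigma$ arbitrarily), so summing over $\sigma$ and then over the $m=n-k+1$ shifts $t$ reproduces $\frac{(n-k+1)!}{n!}\sum_i Y_i=\frac1N\sum_iY_i$ after dividing by $m$.

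The key structural observation, and the reason the shifts $t$ are restricted to $\{0,\dots,n-k\}$, is that for a fixed $\sigma$ the $m$ summands $Y_{(\sigma(1+t),\dots,\sigma(k+t))}$, $t=0,\dots,n-k$, are mutually independent: the $j$-th layer of the $t$-th window uses the variable $X^j_{\sigma(j+t)}$, and for $t\ne t'$ one has $\sigma(j+t)\ne\sigma(j+t')$ in every layer $j$, so the windows rely on pairwise disjoint sets of the (independent) $X^j_i$. Using only non-cyclic shifts keeps every index $j+t\le n$, which is exactly what forces the count $m=n-k+1$.

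With this in hand I would conclude as follows. Let $\Lambda(Z):=\log\mathbb{E}[e^{Z}]$, which is convex in $Z$ (Hölder) and additive on independent summands, and set $Z_\sigma:=\frac1m\sum_{t=0}^{n-k}Y_{(\sigma(1+t),\dots,\sigma(k+t))}$. Applying convexity of $\Lambda$ to the uniform average over $\mathfrak{G}_n$ (equivalently, generalized Hölder with $n!$ equal exponents) gives
\begin{equation*}
\Lambda\!\left(\frac{1}{N}\sum_{i\in I^k_n}Y_i\right)=\Lambda\!\left(\frac{1}{n!}\sum_{\sigma\in\mathfrak{G}_n}Z_\sigma\right)\le\frac{1}{n!}\sum_{\sigma\in\mathfrak{G}_n}\Lambda(Z_\sigma),
\end{equation*}
and then, by the within-window independence of the previous paragraph, $\Lambda(Z_\sigma)=\sum_{t=0}^{n-k}\Lambda\!\bigl(\tfrac1mY_{(\sigma(1+t),\dots,\sigma(k+t))}\bigr)$. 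Finally, re-applying the same counting (each $i\in I^k_n$ occurs as $(\sigma(1+t),\dots,\sigma(k+t))$ for exactly $(n-k+1)!$ pairs $(\sigma,t)$) turns $\frac{1}{n!}\sum_\sigma\sum_t\Lambda(\tfrac1mY_{(\sigma(1+t),\dots)})$ into $\frac{(n-k+1)!}{n!}\sum_{i\in I^k_n}\Lambda(\tfrac1mY_i)=\frac{n-k+1}{N}\sum_{i\in I^k_n}\log\mathbb{E}\bigl[e^{\Phi_i(X^1_{i_1},\dots,X^k_{i_k})/(n-k+1)}\bigr]$, which is precisely the right-hand side claimed.

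The only genuinely delicate point is the symmetrization identity together with the independence inside a sliding window; once these are established, the remainder is bookkeeping — the convexity/Hölder step for the $\Lambda$-functional and the arithmetic identity $\frac{(n-k+1)!}{n!}=\frac{n-k+1}{|I^k_n|}$. (If some of the exponential moments are infinite the right-hand side is $+\infty$ and the inequality is trivial, so no integrability hypothesis is needed; note also that symmetry of the kernels plays no role here, in contrast with \cref{couplage}.)
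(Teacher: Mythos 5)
Your proof is correct, but it follows a genuinely different route from the paper. The paper proceeds by induction on $k$: it conditions on the last layer $X^{k}=(X^{k}_{1},\ldots,X^{k}_{n})$, applies the induction hypothesis conditionally to the kernels $\tilde\Phi_{i_{1},\ldots,i_{k-1}}=\frac{1}{n-k+1}\sum_{i_{k}\notin\{i_{1},\ldots,i_{k-1}\}}\Phi_{i}(X^{1}_{i_{1}},\ldots,X^{k}_{i_{k}})$, and then combines convexity of $X\mapsto\log\mathbb{E}[e^{X}]$ (H\"older), a further H\"older step and Jensen with the independence across $i_{k}$. You avoid the induction altogether by a Hoeffding-type blocking: the decoupled layer structure lets you use the $n-k+1$ sliding windows $(\sigma(1+t),\ldots,\sigma(k+t))$, $t=0,\ldots,n-k$, which for fixed $\sigma$ involve pairwise disjoint entries of the independent array in every layer, so the log-Laplace functional $\Lambda$ is additive over them; convexity of $\Lambda$ over the average in $\sigma$ and the counting ($(n-k)!$ permutations per target tuple per shift, hence $(n-k+1)!$ pairs $(\sigma,t)$ per tuple) then give exactly the constant $\frac{n-k+1}{|I^{k}_{n}|}$. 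Your argument is non-inductive, makes transparent where $n-k+1$ comes from (the number of length-$k$ windows in $\{1,\ldots,n\}$, available only because each layer uses its own independent copy), and needs no integrability hypothesis since the H\"older steps are valid with values in $[0,+\infty]$; the paper's induction is closer to the standard decoupling literature and reuses the conditional-Jensen machinery it needs elsewhere. One cosmetic slip: in the sentence justifying the symmetrization identity, the intermediate expression should be $\frac{(n-k)!}{n!}\sum_{i}Y_{i}=\frac{1}{|I^{k}_{n}|}\sum_{i}Y_{i}$ after dividing by $m$ (as written, $\frac{(n-k+1)!}{n!}\neq\frac{1}{|I^{k}_{n}|}$); the displayed identity itself and the final bookkeeping are correct.
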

\begin{proof}
See \cref{proof2} or \cite{Ref1}.
\end{proof}

\begin{proposition}[Decoupling corollary]\label{decouplage1}
For $(X_{i})_{i\geq 1}$ a sequence of independent and identically distributed random variables according to $\alpha$, we denote $\Lambda_{n}(\cdot,W^{(k)} )$ the log-Laplace transformation associated with the $U-$statistic of order $k$, i.e. to within a factor, the logarithm of the moment generating function, namely
\begin{equation}
\forall n\geq k\geq 2,\quad\forall \lambda>0,\quad\Lambda_{n}(\lambda,W^{(k)}):=\frac{1}{n}\log \mathbb{E}\bigg[e^{n\lambda U_{n}(W^{(k)})}\bigg].
\end{equation}
If $W^{(k)}\in L^{1}(\alpha^{\bigotimes k})$, then
\begin{equation}
\Lambda_{n}(\lambda,W^{(k)})\leq\frac{1}{k}\log\mathbb{E}\bigg[\exp\bigg(kC_{k}\lambda|W ^{(k)}(X_{1},\ldots,X_{k})|\bigg)\bigg].
\end{equation}
\end{proposition}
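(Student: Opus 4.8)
The plan is to chain the three $U$-statistics estimates already at hand — the decoupling inequality (\cref{couplage}), the triangle inequality, and the tensorized Jensen bound (\cref{jensen}) — and then tidy up the resulting constant using monotonicity of $L^p$-norms on a probability space.

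First I would write $n\lambda U_n(W^{(k)})=\frac{n\lambda}{|I^k_n|}\sum_{i\in I^k_n}W^{(k)}(X_{i_1},\ldots,X_{i_k})$ and use $\lambda>0$ with $x\le|x|$ to get $e^{n\lambda U_n(W^{(k)})}\le\Psi\big(\big|\sum_{i\in I^k_n}W^{(k)}(X_{i_1},\ldots,X_{i_k})\big|\big)$ for the increasing convex function $\Psi(t):=\exp(\frac{n\lambda}{|I^k_n|}t)$ on $[0,+\infty)$. Applying \cref{couplage} with this $\Psi$ and $\Phi=W^{(k)}$ (legitimate since $W^{(k)}\in L^1(\alpha^{\otimes k})$), then the triangle inequality $|\sum_i\cdot|\le\sum_i|\cdot|$, gives
\[
\mathbb{E}\big[e^{n\lambda U_n(W^{(k)})}\big]\le\mathbb{E}\Big[\exp\Big(\frac{n\lambda C_k}{|I^k_n|}\sum_{i\in I^k_n}\big|W^{(k)}(X^1_{i_1},\ldots,X^k_{i_k})\big|\Big)\Big].
\]

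Next I would apply \cref{jensen} with $\Phi_i:=n\lambda C_k|W^{(k)}|$ for all $i\in I^k_n$, which bounds the logarithm of the right-hand side by $\frac{n-k+1}{|I^k_n|}\sum_{i\in I^k_n}\log\mathbb{E}\big[\exp\big(\frac{n\lambda C_k}{n-k+1}|W^{(k)}(X^1_{i_1},\ldots,X^k_{i_k})|\big)\big]$. Because the superscripts index $k$ independent copies of the $\alpha$-sample, for every multi-index $i$ the vector $(X^1_{i_1},\ldots,X^k_{i_k})$ has law $\alpha^{\otimes k}$, so each summand equals $\log\mathbb{E}[\exp(\frac{n\lambda C_k}{n-k+1}|W^{(k)}(X_1,\ldots,X_k)|)]$ and the $|I^k_n|$ factors cancel, yielding
\[
\Lambda_n(\lambda,W^{(k)})=\frac1n\log\mathbb{E}\big[e^{n\lambda U_n(W^{(k)})}\big]\le\frac{n-k+1}{n}\log\mathbb{E}\big[e^{\frac{n\lambda C_k}{n-k+1}|W^{(k)}(X_1,\ldots,X_k)|}\big].
\]

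Finally I would absorb the prefactor: with $Y:=\exp(\lambda C_k|W^{(k)}(X_1,\ldots,X_k)|)$, the elementary estimate $(k-1)(n-k)\ge 0$ gives $1\le p:=\frac{n}{n-k+1}\le k$, and monotonicity of $L^p$-norms yields $\frac1p\log\mathbb{E}[Y^p]\le\frac1k\log\mathbb{E}[Y^k]$, i.e. $\frac{n-k+1}{n}\log\mathbb{E}[e^{\frac{n\lambda C_k}{n-k+1}|W^{(k)}|}]\le\frac1k\log\mathbb{E}[e^{kC_k\lambda|W^{(k)}|}]$; combining with the previous display gives the assertion. I do not expect a real obstacle — each step is an invocation of a previously stated inequality — and the only point requiring care is the bookkeeping of the constants (the factor $C_k$, the change of weights from $|I^k_n|^{-1}$ to $(n-k+1)^{-1}$ in \cref{jensen}, and the choice of exponents for $L^p$-monotonicity). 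Note also that if the right-hand side is $+\infty$ the inequality is vacuous, so no integrability beyond $W^{(k)}\in L^1(\alpha^{\otimes k})$ is needed.
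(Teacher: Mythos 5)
Your proposal is correct and follows essentially the same route as the paper's own proof: decoupling (\cref{couplage}) applied with $\Psi(t)=\exp(\tfrac{n\lambda}{|I_n^k|}t)$ and the triangle inequality, then the tensorized Jensen bound (\cref{jensen}) with constant kernels $\Phi_i=n\lambda C_k|W^{(k)}|$, and finally absorbing the prefactor $\tfrac{n-k+1}{n}$ into $\tfrac1k$ using $\tfrac{n}{n-k+1}\le k$ and the monotonicity of $a\mapsto\tfrac1a\log\mathbb{E}[e^{aX}]$ (which is the same fact as your $L^p$-norm monotonicity). No gaps.
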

\begin{proof}
See \cref{proof3} or \cite{Ref1}.
\end{proof}

\noindent\textbf{Large deviations: inf-compactness of mean-field entropy and existence of an equilibrium point.}\label{sanov}
We will use a \textit{large deviations result} ensuring the \textit{infcompactness of the entropy functional} to show the \textit{existence of an invariant measure} for the nonlinear process studied.

\begin{proposition}[Lower bound of large deviations for $L_{n}$ under $\mu^{*}_{n}$]\label{lower}
Under the integrability assumptions on the interaction potentials $(W^{(k)})_{2\leq k\leq N}$, we have the lower bound of large deviations for $\{\mu^{ *}_{n}(L_{n}\in\cdot)\}_{n\geq N}$, i.e.
\begin{align}
\forall\mathcal{O}\subset\mathcal{M}_{1}(\R^{d})\quad\textnormal{open},\quad l^{*}(\mathcal{O})&:= \underset{n\to+\infty}{\liminf}\frac{1}{n}\log(\mu^{*}_{n}(L_{n}\in\mathcal{O}))\\
&\geq-\inf\bigg\{\mathbf{E}_{W}[\mu]\bigg|\quad\mu\in\mathcal{O},\quad\forall 2\leq k\leq N,\quad W ^{(k)}\in L^{1}(\mu^{\bigotimes k})\bigg\}.\nonumber
\end{align}
In particular, we have
\begin{align}
\underset{n\to+\infty}{\liminf}\bigg\{\frac{1}{n}\log(Z_{n})-\log(C)\bigg\}\geq
-\inf\bigg \{\mathbf{E}_{W}[\mu]\bigg|\quad\mu\in\mathcal{M}_{1}(\R^{d}),\quad\forall 2\leq k \leq N,\quad W^{(k)}\in L^{1}(\mu^{\bigotimes k})\bigg\}.
\end{align}
\end{proposition}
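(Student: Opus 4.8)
\textbf{The plan} is to prove the bound pointwise in $\mu$ and then optimize. Precisely, I would fix a $\mu\in\mathcal{O}$ with $W^{(k)}\in L^{1}(\mu^{\bigotimes k})$ for every $k$ and with $\mathbf{H}[\mu|\alpha]<+\infty$ — if no such $\mu$ exists, or every such $\mu$ has $\mathbf{E}_{W}[\mu]=+\infty$, the infimum on the right-hand side is $+\infty$ and there is nothing to prove — then establish $l^{*}(\mathcal{O})\geq-\mathbf{E}_{W}[\mu]$, and finally take the supremum over all admissible $\mu\in\mathcal{O}$. The engine is the Gibbs variational lower bound (change of reference measure followed by Jensen's inequality), i.e. the lower half of the Sanov argument transported to the tilted measure $\mu^{*}_{n}$.

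Fixing $\mu$ as above, since $\mathbf{H}[\mu|\alpha]<+\infty$ one has $\mu\ll\alpha$, so I would set $g:=\log\tfrac{d\mu}{d\alpha}\in L^{1}(\mu)$, which satisfies $\int g\,d\mu=\mathbf{H}[\mu|\alpha]$, and rewrite the reference of $\mu^{*}_{n}$ from $\alpha^{\bigotimes n}$ to $\mu^{\bigotimes n}$:
\[
\mu^{*}_{n}(L_{n}\in\mathcal{O})=\int_{\{L_{n}\in\mathcal{O}\}}e^{-nS_{n}(x)-nG_{n}(x)}\,\mu^{\bigotimes n}(dx),\qquad S_{n}:=\sum_{k=2}^{N}U_{n}(W^{(k)}),\quad G_{n}(x):=\frac1n\sum_{i=1}^{n}g(x_{i}).
\]
Because $\mathcal{O}$ is open and contains $\mu$, the empirical measure converges weakly to $\mu$ for $\mu^{\bigotimes n}$-almost every sample, so $p_{n}:=\mu^{\bigotimes n}(L_{n}\in\mathcal{O})\to1$ and is positive for $n$ large. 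Introducing the probability measure $P_{n}:=p_{n}^{-1}\1_{\{L_{n}\in\mathcal{O}\}}\mu^{\bigotimes n}$ and applying Jensen's inequality to $\exp$,
\[
\mu^{*}_{n}(L_{n}\in\mathcal{O})=p_{n}\int e^{-nS_{n}-nG_{n}}\,dP_{n}\;\geq\;p_{n}\exp\!\Big(-n\,\mathbb{E}_{P_{n}}[S_{n}]-n\,\mathbb{E}_{P_{n}}[G_{n}]\Big),
\]
whence $\tfrac1n\log\mu^{*}_{n}(L_{n}\in\mathcal{O})\geq\tfrac1n\log p_{n}-\mathbb{E}_{P_{n}}[S_{n}]-\mathbb{E}_{P_{n}}[G_{n}]$.

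Then I would identify the limits as $n\to\infty$: $\tfrac1n\log p_{n}\to0$; under $\mu^{\bigotimes n}$ the law of large numbers for $U$-statistics (\cref{tcl}), together with the reverse-martingale structure of $(U_{n}(W^{(k)}))_{n\geq k}$ (hence uniform integrability), gives $S_{n}\to\sum_{k}\mathbf{W}^{(k)}[\mu]$ a.s. and in $L^{1}$; and the classical strong law gives $G_{n}\to\int g\,d\mu=\mathbf{H}[\mu|\alpha]$ a.s. and in $L^{1}$ (the families $\{G_{n}^{\pm}\}$ being $L^{1}$-convergent, hence uniformly integrable). Since $p_{n}\to1$, uniform integrability promotes these to $\mathbb{E}_{P_{n}}[S_{n}]\to\sum_{k}\mathbf{W}^{(k)}[\mu]$ and $\mathbb{E}_{P_{n}}[G_{n}]\to\mathbf{H}[\mu|\alpha]$, so taking $\liminf$ yields $l^{*}(\mathcal{O})\geq-\sum_{k=2}^{N}\mathbf{W}^{(k)}[\mu]-\mathbf{H}[\mu|\alpha]=-\mathbf{E}_{W}[\mu]$; optimizing over $\mu$ gives the claim. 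The ``in particular'' statement then drops out by choosing $\mathcal{O}=\mathcal{M}_{1}(\R^{d})$ and using \cref{IM1} to read $\mu^{*}_{n}(L_{n}\in\mathcal{M}_{1}(\R^{d}))=\mu^{*}_{n}\big((\R^{d})^{n}\big)=Z_{n}/C^{n}$, hence $\tfrac1n\log\mu^{*}_{n}\big((\R^{d})^{n}\big)=\tfrac1n\log Z_{n}-\log C$.

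The hard part will be the last transfer: the laws of large numbers above live under the i.i.d. measure $\mu^{\bigotimes n}$, while the bound needs convergence of expectations under the conditioned measures $P_{n}$. This is precisely where I would lean on the reverse-martingale structure of $U$-statistics (free uniform integrability) and on $p_{n}\to1$; without them only the unconditioned expectations are controlled. A more pedestrian but longer alternative would avoid the conditioning entirely by first truncating each $W^{(k)}$ to a bounded continuous kernel, applying Sanov's theorem together with Varadhan's lemma in the bounded case, and then sending the truncation level to infinity using $W^{(k)}\in L^{1}(\mu^{\bigotimes k})$; I prefer the change-of-measure route since it also sidesteps the weak-topology discontinuity of $\mu\mapsto\mathbf{W}^{(k)}[\mu]$ for unbounded $W^{(k)}$.
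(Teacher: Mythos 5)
Your argument is correct and takes essentially the same route as the paper: fix $\mu\in\mathcal O$ with $\mathbf{H}[\mu|\alpha]<\infty$ and $W^{(k)}\in L^1(\mu^{\otimes k})$, rewrite $\mu^*_n$ against the reference $\mu^{\otimes n}$, and control the exponent $nS_n+nG_n$ via the law of large numbers (ordinary for $G_n$, and \cref{tcl} for the $U$-statistics). The only difference is in the finishing step: the paper restricts the integral to the intersection of three high-probability events $A_n\cap B_n\cap C_n$ on which $G_n\leq\mathbf{H}[\mu|\alpha]+\varepsilon$ and $S_n\leq\sum_k\mathbf{W}^{(k)}[\mu]+\varepsilon$ pointwise, shows $\mu^{\otimes n}(A_n\cap B_n\cap C_n)\to1$, and lets $\varepsilon\to0$; you instead apply Jensen's inequality over the conditioned measure $P_n$ and invoke $L^1$-convergence / uniform integrability through the reverse-martingale structure of $U_n(W^{(k)})$. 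Both finishes are standard Cramér-type lower bounds and give the same conclusion.
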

\begin{proof}
See \cref{proof4} or \cite{Ref1}.
\end{proof}
\begin{proposition}[Exponential approximation of the $U-$statistic]\label{approx}
Assuming that for all $\lambda>0$, 
\begin{equation}
\mathbb{E}[\exp(\lambda|W^{(k)}|(X_{1},\ldots,X_{k}))]< +\infty,
\end{equation}
then there exists a sequence $(W^{(k)}_{m})_{m\geq 1}$ of bounded continuous functions such that
\begin{equation}
\forall\delta>0,\quad\underset{m\to+\infty}{\lim}\underset{n\to+\infty}{\limsup}\frac{1}{n}\log\mathbb{P} (|U_{n}(W^{(k)}_{m})-U_{n}(W^{(k)})|>\delta)=-\infty.
\end{equation}
\end{proposition}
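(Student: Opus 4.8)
The plan is to take $W^{(k)}_m$ to be a plain truncation of $W^{(k)}$ and to turn the Chernoff bound underlying \cref{decouplage1} into a \emph{super}-exponential estimate by optimising over the tilting parameter. Concretely, set $\psi_m(t):=\max(-m,\min(m,t))$ and $W^{(k)}_m:=\psi_m\circ W^{(k)}$. Since $W^{(k)}\in\mathcal{C}^{2}((\R^{d})^{k})$ and $\psi_m$ is continuous, each $W^{(k)}_m$ is continuous, bounded by $m$, and still invariant under $\mathfrak{G}_k$; moreover pointwise $|W^{(k)}_m-W^{(k)}|=(|W^{(k)}|-m)_{+}=:g_m$, a nonnegative symmetric kernel with $0\le g_m\le|W^{(k)}|$ and $g_m\downarrow 0$ as $m\to+\infty$.

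The first step reduces the event: since $U_n$ is an average, $|U_n(W^{(k)}_m)-U_n(W^{(k)})|=|U_n(W^{(k)}_m-W^{(k)})|\le U_n(g_m)$, whence
\[
\mathbb{P}\!\left(\,|U_n(W^{(k)}_m)-U_n(W^{(k)})|>\delta\,\right)\ \le\ \mathbb{P}\!\left(U_n(g_m)>\delta\right).
\]
The second step is an exponential Markov inequality: for every $\lambda>0$,
\[
\frac1n\log\mathbb{P}\!\left(U_n(g_m)>\delta\right)\ \le\ -\lambda\delta+\Lambda_n(\lambda,g_m),\qquad \Lambda_n(\lambda,g_m):=\frac1n\log\mathbb{E}\!\left[e^{\,n\lambda U_n(g_m)}\right].
\]
Since $g_m$ is symmetric and $g_m\in L^{1}(\alpha^{\bigotimes k})$ — in fact it has finite exponential moments of every order, being dominated by $|W^{(k)}|$ — the corollary \cref{decouplage1} applies verbatim to $g_m$ and gives, \emph{uniformly in $n$},
\[
\Lambda_n(\lambda,g_m)\ \le\ \frac1k\log\mathbb{E}\!\left[\exp\!\big(kC_k\lambda\,g_m(X_1,\ldots,X_k)\big)\right].
\]

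The third step passes to the limit in the right order. Taking $\limsup_{n\to+\infty}$ and using that this bound is $n$-free,
\[
\limsup_{n\to+\infty}\frac1n\log\mathbb{P}\!\left(\,|U_n(W^{(k)}_m)-U_n(W^{(k)})|>\delta\,\right)\ \le\ -\lambda\delta+\frac1k\log\mathbb{E}\!\left[\exp\!\big(kC_k\lambda\,g_m(X_1,\ldots,X_k)\big)\right].
\]
Because $0\le g_m\le|W^{(k)}|$, $g_m\to 0$ pointwise, and $\exp\!\big(kC_k\lambda\,|W^{(k)}(X_1,\ldots,X_k)|\big)$ is integrable by the standing hypothesis (apply it with $\lambda'=kC_k\lambda$), dominated convergence yields $\mathbb{E}\!\big[\exp(kC_k\lambda\,g_m(X_1,\ldots,X_k))\big]\to 1$, so the last term tends to $0$ as $m\to+\infty$. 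Hence $\limsup_{m\to+\infty}\limsup_{n\to+\infty}\frac1n\log\mathbb{P}(\,\cdots>\delta\,)\le-\lambda\delta$ for every $\lambda>0$; since the left-hand side does not depend on $\lambda$, letting $\lambda\to+\infty$ forces it to equal $-\infty$, which is the assertion.

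The only genuinely delicate point is obtaining the bound on $\Lambda_n(\lambda,g_m)$ \emph{uniformly in $n$}: the $U$-statistic $U_n(g_m)$ of order $k\ge 2$ has correlated summands, so a naive tensorisation of the log-Laplace transform is not available — this is exactly what the decoupling inequality of \cref{couplage} together with the Jensen-type bound of \cref{jensen} (packaged in \cref{decouplage1}) provides. Once the hypotheses of that corollary are checked for $g_m$, the remainder is the routine Chernoff-plus-dominated-convergence argument above, the one subtlety being to keep the limits in the order $n\to\infty$, then $m\to\infty$, then $\lambda\to\infty$.
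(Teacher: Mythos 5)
Your proposal is correct, and it runs on the same engine as the paper's proof: an exponential Chebyshev bound for the $U$-statistic, the decoupling estimate of \cref{decouplage1} applied to the (symmetric, integrable) error kernel to get an $n$-free bound on the log-Laplace transform, and then the limits taken in the order $n\to\infty$, $m\to\infty$, $\lambda\to\infty$. The genuine difference is in the choice of approximants and in how the exponential moment of the error is made small. You take $W^{(k)}_m$ to be the plain truncation at level $m$ — legitimate here because $W^{(k)}\in\mathcal{C}^{2}((\R^{d})^{k})$, so the truncation is already bounded, continuous and symmetric — and you send $\mathbb{E}\big[\exp\big(kC_k\lambda\,g_m\big)\big]\to 1$ at each fixed $\lambda$ by dominated convergence, the dominating function $\exp\big(kC_k\lambda|W^{(k)}|\big)$ being integrable by the standing hypothesis; here $g_m:=(|W^{(k)}|-m)_{+}$. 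The paper, following the general measurable-kernel setting of \cite{Ref1}, inserts an extra layer: after truncating at a level $L(m)$ it approximates the truncation in $L^{1}(\alpha^{\otimes k})$ by bounded continuous functions, chooses $L(m)$ and $l(m)$ diagonally so that $\log\mathbb{E}\big[\exp\big(m|W^{(k)}-W^{(k)}_m|\big)\big]\leq \frac{2}{m}$, and then uses a Jensen step to pass from the exponent $m$ to an arbitrary fixed $\lambda$ before invoking the same Chernoff-plus-decoupling bound. Your route is shorter and fully valid in this paper's smooth setting; the paper's route buys robustness, since it does not need continuity of $W^{(k)}$ to produce bounded continuous approximants.
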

\begin{proof}
See \cref{proof5} or \cite{Ref1}.
\end{proof}

\begin{theorem}[Large deviations principle for $U-$statistics]\label{LDP}
Let $(X_{i})_{i\geq 1}$ be a sequence of independent and identically distributed random variables with distribution $\alpha.$ We assume that we have exponential integrability of the interaction potentials under the tensor products of $\alpha$ by itself, i.e.
\begin{equation}
\forall k\in\{2,\ldots,N\},\quad\forall\lambda>0,\quad\bigg(\mathbb{E}\bigg[e^{\lambda|W^{(k) }(X_{1},\ldots,X_{k})|}\bigg]<+\infty\Longleftrightarrow e^{\lambda|W^{(k)}|}\in L^{1}(\alpha^{\bigotimes k})\bigg).
\end{equation}
Then 
\begin{equation}
\bigg\{\mathbb{P}\bigg((L_{n},U_{n}(W^{(2)}),\ldots,U_{n}(W^{(N)}) )\in\cdot\bigg)\bigg\}_{n\geq N}
\end{equation} satisfies a large deviations principle on the product space $\mathcal{M}_{1}(\R^{d})\times\R^{N-1}$ and good rate function given by 
\begin{equation}
\mathbf{I}_{U}(\mu,x_{2},\ldots,x_{N}):=\begin{cases}\mathbf{H}[\mu|\alpha],\quad&\textnormal{if}\quad\forall k,\quad x_{k}=\mathbf{W}^{(k)}[\mu],\\+\infty\quad &\textnormal{otherwise.}\end{cases}
\end{equation}
\end{theorem}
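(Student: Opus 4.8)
\emph{Strategy.} The plan is to deduce the joint principle from \emph{Sanov's theorem} for the empirical measure $L_{n}$, first through the \emph{contraction principle} applied to bounded continuous kernels, then through an \emph{exponential approximation} argument based on \cref{approx}, with exponential tightness supplied by \cref{decouplage1} (the overall scheme being that of \cite{Ref1}). Recall that under $\alpha^{\otimes n}$ the sequence $\{L_{n}\}_{n\geq1}$ satisfies the LDP on $\mathcal{M}_{1}(\R^{d})$ with the weak topology and good rate function $\mathbf{H}[\cdot|\alpha]$ (the level sets $\{\mathbf{H}[\cdot|\alpha]\leq L\}$ being weakly compact for the fixed reference measure $\alpha$). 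Fix $m\geq1$ and let $(W^{(k)}_{m})_{2\leq k\leq N}$ be the bounded continuous approximants provided by \cref{approx}. Since $\mu\mapsto\mu^{\otimes k}$ is weakly continuous and each $W^{(k)}_{m}$ is bounded continuous, the map $\Psi_{m}\colon\mu\mapsto\big(\mu,\langle\mu^{\otimes2},W^{(2)}_{m}\rangle,\ldots,\langle\mu^{\otimes N},W^{(N)}_{m}\rangle\big)$ is continuous from $\mathcal{M}_{1}(\R^{d})$ into $\mathcal{M}_{1}(\R^{d})\times\R^{N-1}$, so by the contraction principle $\{\Psi_{m}(L_{n})\}_{n}$ satisfies the LDP with good rate function $\mathbf{I}^{(m)}(\mu,x_{2},\ldots,x_{N})=\mathbf{H}[\mu|\alpha]$ when $x_{k}=\langle\mu^{\otimes k},W^{(k)}_{m}\rangle$ for all $k$ and $+\infty$ otherwise (the first coordinate of $\Psi_{m}$ being the identity). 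Finally, $\langle L_{n}^{\otimes k},W^{(k)}_{m}\rangle-U_{n}(W^{(k)}_{m})$ only involves the ``diagonal'' index tuples and is therefore bounded deterministically by $2\|W^{(k)}_{m}\|_{\infty}\,(1-|I^{k}_{n}|/n^{k})=O(1/n)$, so $\big(L_{n},U_{n}(W^{(2)}_{m}),\ldots,U_{n}(W^{(N)}_{m})\big)$ obeys exactly the same LDP, with rate $\mathbf{I}^{(m)}$.

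\emph{Passage to the limit in $m$.} Next I would let $m\to\infty$. By \cref{approx}, $\limsup_{m}\limsup_{n}\tfrac1n\log\mathbb{P}\big(|U_{n}(W^{(k)}_{m})-U_{n}(W^{(k)})|>\delta\big)=-\infty$ for every $k$ and $\delta>0$, i.e. $\big(L_{n},U_{n}(W^{(\cdot)}_{m})\big)_{m}$ is an exponentially good approximation of $\big(L_{n},U_{n}(W^{(\cdot)})\big)$. The standard exponential approximation lemma for large deviations then yields a weak LDP for the target sequence with rate function $\widehat{\mathbf{I}}(\mu,\mathbf{x})=\sup_{\delta>0}\liminf_{m}\inf_{B((\mu,\mathbf{x}),\delta)}\mathbf{I}^{(m)}$, and it remains to identify $\widehat{\mathbf{I}}=\mathbf{I}_{U}$. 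For this one uses that $\mathbf{H}[\mu|\alpha]<+\infty$ gives $\mathbf{H}[\mu^{\otimes k}|\alpha^{\otimes k}]=k\,\mathbf{H}[\mu|\alpha]<+\infty$, which together with the Donsker--Varadhan variational inequality and the hypothesis $e^{\lambda|W^{(k)}|}\in L^{1}(\alpha^{\otimes k})$ for all $\lambda>0$ forces $W^{(k)}\in L^{1}(\mu^{\otimes k})$ and $\langle\mu^{\otimes k},W^{(k)}_{m}\rangle\to\mathbf{W}^{(k)}[\mu]$, the convergence being moreover uniform over the sublevel sets $\{\mathbf{H}[\cdot|\alpha]\leq L\}$ by the resulting uniform integrability. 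Plugging $\mu'=\mu$ into the $\delta$-ball infimum gives $\widehat{\mathbf{I}}\leq\mathbf{I}_{U}$, while lower semicontinuity of $\mathbf{H}[\cdot|\alpha]$ combined with this uniform convergence yields the reverse inequality.

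\emph{Goodness and the full principle.} Finally I would check goodness of $\mathbf{I}_{U}$ and upgrade the weak LDP to a full one. The level set $\{\mathbf{I}_{U}\leq L\}$ equals $\{(\mu,\mathbf{W}^{(2)}[\mu],\ldots,\mathbf{W}^{(N)}[\mu])\colon \mathbf{H}[\mu|\alpha]\leq L\}$, i.e. the image of the (weakly) compact set $\{\mathbf{H}[\cdot|\alpha]\leq L\}$ under the map $\mu\mapsto(\mathbf{W}^{(k)}[\mu])_{k}$, which is continuous on that set by the same exponential-moment uniform integrability; hence $\{\mathbf{I}_{U}\leq L\}$ is compact and $\mathbf{I}_{U}$ is a good rate function. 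For exponential tightness of the joint law: $\{L_{n}\}$ is exponentially tight (Sanov on a Polish state space), and for each $k$ \cref{decouplage1} gives $\tfrac1n\log\mathbb{E}\big[e^{n\lambda U_{n}(W^{(k)})}\big]\leq\tfrac1k\log\mathbb{E}\big[e^{kC_{k}\lambda|W^{(k)}|}\big]<+\infty$ for all $\lambda>0$, so Chebyshev's exponential inequality yields $\limsup_{n}\tfrac1n\log\mathbb{P}(|U_{n}(W^{(k)})|\geq R)\leq-\lambda R+\tfrac1k\log\mathbb{E}[e^{kC_{k}\lambda|W^{(k)}|}]\to-\infty$ as $R\to\infty$. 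Combining exponential tightness with the weak LDP of the previous step produces the full LDP on $\mathcal{M}_{1}(\R^{d})\times\R^{N-1}$ with the good rate function $\mathbf{I}_{U}$.

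\emph{Main obstacle.} The delicate point is the rate-function identification in the second step: it hinges on the continuity, in fact the uniform integrability, of $\mu\mapsto\mathbf{W}^{(k)}[\mu]$ over sublevel sets of $\mathbf{H}[\cdot|\alpha]$ --- the one place where the full strength of the exponential integrability of $W^{(k)}$ is essential --- together with a careful interchange of the $\delta$-ball infimum, the $\liminf$ in $m$, and the \emph{moving} constraint sets $\{x_{k}=\langle\mu^{\otimes k},W^{(k)}_{m}\rangle\}$ coming from the approximants of \cref{approx}; the same estimate is exactly what makes $\mathbf{I}_{U}$ good in the last step. A more direct G\"artner--Ellis computation of the joint log-moment generating function (controlled through \cref{jensen} and \cref{couplage}) is conceivable, but identifying its Legendre transform with $\mathbf{I}_{U}$ seems less transparent than the approximation route sketched above.
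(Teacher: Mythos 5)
Your proposal is correct and follows essentially the same route as the paper's proof: approximate the $W^{(k)}$ by the bounded continuous kernels of \cref{approx}, invoke Sanov together with the contraction principle to get the LDP for the approximating maps $f_m(L_n)$, control the $O(1/n)$ diagonal discrepancy between $U_n(W^{(k)}_m)$ and $\langle L_n^{\otimes k},W^{(k)}_m\rangle$, and pass to the limit in $m$ via the exponential-approximation theorem, with the rate-function identification hinging on the Donsker--Varadhan estimate $\int|W^{(k)}_m-W^{(k)}|\,d\mu^{\otimes k}\leq\lambda^{-1}(kL+\varepsilon(\lambda,m,k))$ uniformly over $\{\mathbf{H}[\cdot|\alpha]\leq L\}$. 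Your added exponential-tightness check via \cref{decouplage1} is not incorrect but is redundant here, since the approximation theorem combined with that same uniform sublevel-set estimate already delivers the full LDP with a good rate function.
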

\begin{proof}
Let $(W^{(k)}_{m})_{m\geq 1}$ be the sequence of bounded continuous functions of the proof of \cref{approx} (see \cref{proof5}) such that for all $\lambda>0$,
\begin{equation}
\varepsilon(\lambda,m,k):=\log\int_{(\R^{d})^{k}}e^{\lambda|W^{(k)}-W^{(k) }_{m}|}d\alpha^{\bigotimes k}\overset{m\to+\infty}{\longrightarrow}0.
\end{equation}
For all $m\geq 1$, we set
\begin{equation}
f_{m}(\mu):=\bigg(\mu,\mathbf{W}^{(2)}_{m}[\mu],\ldots,\mathbf{W}^{(N)} _{m}[\mu]\bigg),\quad f(\mu):=\bigg(\mu,\mathbf{W}^{(2)}[\mu],\ldots,\mathbf{W }^{(N)}[\mu]\bigg).
\end{equation}
We consider the following metric on the product space
\begin{equation}
\mathbf{d}\bigg((\mu,x_{2},\ldots,x_{N}),(\nu,y_{2},\ldots,y_{N})\bigg):=d_{ LP}(\mu,\nu)+\sum_{k=2}^{N}|x_{k}-y_{k}|=d_{LP}(\mu,\nu)+||x-y|| _{1},
\end{equation}
and note that
\begin{equation}
\mathbf{d}(f_{m}(\mu),f(\mu))=\sum_{k=2}^{N}\bigg|\int_{(\R^{d})^{k }}(W^{(k)}_{m}-W^{(k)})d\mu^{\bigotimes k}\bigg|.
\end{equation}
The sequel of the proof is divided in three steps.\newline
 
 \noindent\textbf{Step 1: Continuity of $f_{m}$.} For this step, it suffices to show that for all $k\in\{2,\ldots,N\}$, $\mu\in\mathcal{M}_{1}(\R^{d})\longmapsto\mathbf{W}^{(k)}_{m}[\mu]$ is continuous for the convergence topology weak. Let $\mu_{n}\overset{n\to+\infty}{\longrightarrow}\mu$ in $(\mathcal{M}_{1}(\R^{d}),d_{LP})$ . By the Skorokhod representation theorem, there exists a sequence $(Y_{n})_{n}$ of random variables with values in $\R^{d}$ such that $Y_{n}\sim\mu_{ n}$ and almost surely, $Y_{n}\overset{n\to+\infty}{\longrightarrow}Y\sim\mu$. Let $(Y^{(i)}_{n}, n\geq 0, Y^{(i)})_{1\leq i\leq k}$ be independent copies of $(Y_{n}, n\geq 0, Y)$. We have for all $i$, almost surely, $Y^{(i)}_{n}\overset{n\to+\infty}{\longrightarrow}Y^{(i)}$, which implies that almost surely, $(Y^{(1)}_{n},\ldots,Y^{(k)}_{n})\overset{n\to+\infty}{\longrightarrow}(Y^{(1 )},\ldots,Y^{(k)})$. In particular, $\mu^{\bigotimes k}_{n}$ tends weakly to $\mu^{\bigotimes k}$, which proves the continuity of the above functional.\newline
 
 \noindent\textbf{Step 2: Good exponential approximation of $(L_{n},U_{n}(W^{(2)}),\ldots,U_{n}(W^{(N) }))$ by $f_{m}(L_{n})$.} By exponential approximation of the $U-$statistic, we have for all $\delta>0,$
\begin{align}
\lim_{m\to+\infty}\limsup_{n\to+\infty}\frac{1}{n}\log\mathbb{P}\bigg(\mathbf{d}\bigg((L_{n}, U_{n}(W^{(2)}),\ldots,U_{n}(W^{(N)})),
(L_{n},U_{n}(W^{(2)} _{m}),\ldots,U_{n}(W^{(N)}_{m}))\bigg)>\delta\bigg)=-\infty,\nonumber
\end{align}
i.e. $(L_{n},U_{n}(W^{(2)}_{m}),\ldots,U_{n}(W^{(N)}_{ m}))$ is a good exponential approximation of $(L_{n},U_{n}(W^{(2)}),\ldots,U_{n}(W^{(N)}))$ .\newline Moreover, $(L_{n},U_{n}(W^{(2)}),\ldots,U_{n}(W^{(N)}))$ and $f_{m}(L_ {n})$ are exponentially equivalent because we have the following uniform estimate
\begin{align}
\bigg|U_{n}(W^{(k)}_{m})-\int W^{(k)}_{m}dL^{\bigotimes k}_{n}\bigg|&\leq \bigg(1-\frac{|I^{k}_{n}|}{n^{k}}\bigg)\bigg(|U_{n}(W^{(k)}_{m} )|+||W^{(k)}_{m}||_{\infty}\bigg)\\
&\leq 2\bigg(1-\frac{|I^{k}_{n}|}{ n^{k}}\bigg)||W^{(k)}_{m}||_{\infty}\overset{n\to+\infty}{\longrightarrow}0.\nonumber
\end{align}
We get that when $m\to+\infty$, $f_{m}(L_{n})$ is a good exponential approximation of $(L_{n},U_{n}(W^{(2)} ),\ldots,U_{n}(W^{(N)}))$.\newline
    
\noindent\textbf{Step 3: LDP.} By Sanov theorem and the LDP approximation theorems, to get the desired LDP, it suffices to show that for all $L>0$, 
\begin{equation}
\sup_{\mu,\quad\mathbf{H}[\mu|\alpha]\leq L}\mathbf{d}(f_{m}(\mu),f(\mu))\overset{m\to+\infty}{\longrightarrow}0.
\end{equation} 
Indeed, for all $\lambda>0$, $L>0$ and $\mu$ such that $\mathbf{H}[\mu|\alpha]\leq L$, by the variational formula of Donsker-Varadhan and Fatou's lemma, we have for all $k\in\{2\ldots,N\}$,
\begin{align}
\int|W^{(k)}_{m}-W^{(k)}|d\mu^{\bigotimes k}\leq\frac{1}{\lambda}\bigg(\mathbf{H }[\mu^{\bigotimes k}|\alpha^{\bigotimes k}]+\log\int e^{\lambda|W^{(k)}_{m}-W^{(k)} |}d\alpha^{\bigotimes k}\bigg) \leq\frac{1}{\lambda}\bigg(kL+\varepsilon(\lambda,m,k)\bigg).
\end{align}
This completes the proof of the theorem because $\lambda$ is arbitrary and for all $\lambda>0$, $\varepsilon(\lambda,m,k)\overset{m\to+\infty}{\longrightarrow}0 $.
\end{proof}
\noindent We are now able to prove the inf-compactness of the mean-field entropy functional.
\begin{proposition}[Inf-compactness of the mean-field entropy functional]\label{infcompactHW}
  From $\mathbf{(H^{(1)}_{VW})}$ and $(\mathbf{H^{(2)}_{VW}})$ in \ref{Hypo1}, the mean-field entropy functional is inf-compact. 
\end{proposition}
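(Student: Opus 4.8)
The plan is to deduce the statement from the large deviations principle of \cref{LDP}. That theorem furnishes a \emph{good} rate function $\mathbf{I}_{U}$ on the product space $\mathcal{M}_{1}(\R^{d})\times\R^{N-1}$ (weak topology on the first factor), finite precisely on the graph $\textbf{dom}(\mathbf{I}_{U})=\{(\mu,\mathbf{W}^{(2)}[\mu],\dots,\mathbf{W}^{(N)}[\mu]):\mathbf{H}[\mu|\alpha]<+\infty\}$, where it equals $\mathbf{H}[\mu|\alpha]$. Introduce the continuous map $\Psi(\mu,x_{2},\dots,x_{N}):=\sum_{k=2}^{N}x_{k}$; then $\mathbf{I}_{U}+\Psi$ is lower semicontinuous and, on $\textbf{dom}(\mathbf{I}_{U})$, coincides with $\mathbf{E}_{W}[\mu]$. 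The whole point is thus to show that $\mathbf{I}_{U}+\Psi$ is itself inf-compact, project, and then upgrade the topology.

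The one ingredient not already in \cref{LDP} is a coercivity bound forcing $\Psi$ to be bounded below on sublevel sets of $\mathbf{I}_{U}$. By the Gibbs (Donsker--Varadhan) variational inequality applied to $\lambda W^{(k),-}$, together with $\mathbf{H}[\mu^{\otimes k}|\alpha^{\otimes k}]=k\,\mathbf{H}[\mu|\alpha]$,
\[
\int W^{(k),-}\,d\mu^{\otimes k}\ \le\ \frac{1}{\lambda}\Big(k\,\mathbf{H}[\mu|\alpha]+\log\int e^{\lambda W^{(k),-}}\,d\alpha^{\otimes k}\Big),\qquad\lambda>0 ;
\]
the last integral is finite for every $\lambda$ by the exponential-integrability clause of $\mathbf{(H^{(1)}_{VW})}$ (recall $\alpha^{\otimes k}(dx)=C^{-k}e^{-\sum_{j}V(x_{j})}dx$). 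Choosing $\lambda$ large enough (depending on $k$ and $N$) and summing over $k=2,\dots,N$ yields a finite $C\ge0$ with $\sum_{k}\int W^{(k),-}\,d\mu^{\otimes k}\le\tfrac12\mathbf{H}[\mu|\alpha]+C$, hence on $\textbf{dom}(\mathbf{I}_{U})$ one has $\Psi\ge-\tfrac12\mathbf{I}_{U}-C$ and therefore $\mathbf{I}_{U}+\Psi\ge\tfrac12\mathbf{I}_{U}-C$. Consequently $\{\mathbf{I}_{U}+\Psi\le L\}\subseteq\{\mathbf{I}_{U}\le 2(L+C)\}$, which is compact; being also closed by lower semicontinuity of $\mathbf{I}_{U}+\Psi$, it is compact, and it lies inside $\textbf{dom}(\mathbf{I}_{U})$.

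Now the first-coordinate projection $\pi_{1}$ is continuous and maps $\{\mathbf{I}_{U}+\Psi\le L\}$ bijectively onto $\{\mathbf{E}_{W}\le L\}$, its inverse being $\mu\mapsto(\mu,\mathbf{W}^{(2)}[\mu],\dots,\mathbf{W}^{(N)}[\mu])$; a continuous bijection from a compact space onto a Hausdorff one is a homeomorphism, so $\{\mathbf{E}_{W}\le L\}$ is weakly compact. Since $\{\mathbf{E}_{W}\le L\}\subseteq\{\mathbf{H}[\cdot|\alpha]\le 2(L+C)\}$, applying Donsker--Varadhan once more to $\lambda\|x-x_{0}\|^{p}$ and using $\mathbf{(H^{(2)}_{VW})}$ gives $\sup_{\mathbf{H}[\mu|\alpha]\le 2(L+C)}\int e^{\lambda\|x-x_{0}\|^{p}}\,d\mu<+\infty$ for all $\lambda>0$; thus $\|x\|^{p}$ is uniformly integrable on that set, so weak convergence upgrades to $\mathcal{W}_{p}$-convergence and $\{\mathbf{E}_{W}\le L\}$ is compact in $(\mathcal{M}^{p}_{1}(\R^{D}),\mathcal{W}_{p})$. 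Finally, the coercivity bound gives $\mathbf{E}_{W}\ge-C>-\infty$ while the finiteness clause of $\mathbf{(H^{(1)}_{VW})}$ provides some $\mu$ with $\mathbf{E}_{W}[\mu]<+\infty$, so $\inf\mathbf{E}_{W}$ is finite and $\mathbf{H}_{W}=\mathbf{E}_{W}-\inf\mathbf{E}_{W}$ has the same (compact) sublevel sets up to a shift: $\mathbf{H}_{W}$ is inf-compact.

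Essentially all of the analytic difficulty — the continuity of $\mu\mapsto\mathbf{W}^{(k)}[\mu]$ on relative-entropy sublevel sets, which is delicate because the kernels $W^{(k)}$ are unbounded — is already handled inside the proof of \cref{LDP} (via the bounded continuous approximants of \cref{approx} and Donsker--Varadhan). The genuinely new step here is the coercivity estimate of the second paragraph, and this is exactly where $\mathbf{(H^{(1)}_{VW})}$ is indispensable: without exponential integrability of the negative parts, $\mathbf{E}_{W}$ could be unbounded below with non-compact (indeed possibly all of $\mathcal{M}^{p}_{1}$) sublevel sets. The second hypothesis $\mathbf{(H^{(2)}_{VW})}$ plays only the secondary role of promoting weak compactness to $\mathcal{W}_{p}$-compactness on $\mathcal{M}^{p}_{1}(\R^{D})$.
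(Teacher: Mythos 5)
There is a genuine gap at the very first step. You invoke \cref{LDP} to obtain $\mathbf{I}_{U}$ as a \emph{good} rate function and then argue with $\mathbf{I}_{U}+\Psi$; but the hypothesis of \cref{LDP} is that $\forall\lambda>0,\ \mathbb{E}[e^{\lambda|W^{(k)}|(X_1,\ldots,X_k)}]<+\infty$, i.e.\ \emph{both} the positive and negative parts of $W^{(k)}$ have all exponential moments under $\alpha^{\otimes k}$. The assumptions $\mathbf{(H^{(1)}_{VW})}$ and $\mathbf{(H^{(2)}_{VW})}$ only give this for $W^{(k),-}$; they say nothing about exponential integrability of $W^{(k),+}$. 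Under the stated hypotheses $\mu\mapsto\mathbf{W}^{(k)}[\mu]$ is only lower semicontinuous (via monotone approximation of the positive part from below and uniform control of the negative part on entropy sublevel sets), not continuous, so the graph $\{(\mu,\mathbf{W}^{(2)}[\mu],\ldots):\mathbf{H}[\mu|\alpha]\leq M\}$ need not be closed, $\mathbf{I}_{U}$ need not be lower semicontinuous, and the projection/homeomorphism argument does not close. Your own coercivity bound is correct but only restores boundedness below; it does not repair the missing continuity of the positive part.

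This is exactly the obstruction the paper's proof is organized around. It first treats the case where each $W^{(k)}$ is bounded from above (there $W^{(k),+}$ is bounded, so combined with $\mathbf{(H^{(1)}_{VW})}$ the full exponential-moment hypothesis of \cref{LDP} holds, Ellis's tilted LDP applies, and inf-compactness of $\mathbf{E}_{W}$ follows), and then handles the general case by truncating $W^{(k)}_{L}:=\min(W^{(k)},L)$ and passing to the monotone limit $L\to+\infty$: each $\mathbf{E}_{W_{L}}$ is inf-compact by Step~1, $\mathbf{E}_{W_{L}}\uparrow\mathbf{E}_{W}$, and sublevel sets of $\mathbf{E}_{W}$ are decreasing intersections of compact sets. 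This truncation-and-monotone-limit step is the ingredient your argument needs and does not supply. (Also a small inaccuracy: Donsker--Varadhan plus $\mathbf{(H^{(2)}_{VW})}$ controls $\int\|x-x_0\|^{p}\,d\mu$ and the tails of $\|x\|^{p}$ uniformly on $\{\mathbf{H}[\cdot|\alpha]\leq M\}$, not $\sup_{\mu}\int e^{\lambda\|x-x_0\|^{p}}d\mu$; the uniform-integrability conclusion is right but the stated exponential bound under $\mu$ is not what the inequality gives. The paper keeps this topology upgrade in a separate statement, \cref{Thmsanov}.)
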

\begin{proof}[Proof of \cref{infcompactHW}]\label{exist}
We will do the proof in three steps. We recall that if we have a good rate function, then its infimum on any closed nonempty is reached, that is to say that this infimum is a minimum.\newline

\noindent\textbf{Step 1: $W^{(k)}$ bounded from above.}
In this case, from $\mathbf{(H^{(1)}_{VW})}$ and $(\mathbf{H^{(2)}_{VW}})$ in \ref{Hypo1}, we have for all $\lambda>0,$ 
\begin{equation}
\mathbb{E}[e^{\lambda|W^{(k)}|(X_{1},\ldots,X_{k})}]<+\infty.
\end{equation} 
In principle, large deviations for the $U-$statistic, under $\mathbb{P}:=\alpha^{\bigotimes N}$, $(L_{n},U_{n}(W ^{(2)}),\ldots,U_{n}(W^{(N)}))$ satisfies a large deviations principle on $\mathcal{M}_{1}(\R^{d} )\times\R^{N-1}$ of good rate function $\mathbf{I}_{U}$. As
\begin{align}
\sum_{k=2}^{N}U_{n}(W^{(k)})\quad\textnormal{is continuous in}\quad(L_{n},U_{n}(W^{( 2)}),\ldots,U_{n}(W^{(N)})),\\
\forall p>1,\quad\limsup_{n\to+\infty}\frac{1}{n} \log\mathbb{E}\bigg[e^{-np\sum_{k=2}^{N}U_{n}(W^{(k)})}\bigg]<+\infty,\nonumber
\end{align}
by what precedes and the theorem of R.Ellis, we deduce that $\mu_{n}((L_{n},U_{n}(W^{(2)}),\ldots,U_{n} (W^{(N)}))\in\cdot)$ satisfies a large deviations principle with rate function defined by
\begin{equation}
\Tilde{I}(\mu,x_{2},\ldots,x_{N})=\mathbf{I}_{U}(\mu,x_{2},\ldots,x_{N})+ \sum_{k=2}^{N}x_{k}-\inf_{\mu,x_{2},\ldots,x_{N}}\bigg\{\mathbf{I}_{U}(\mu,x_{2},\ldots,x_{N})+\sum_{k=2}^{N}x_{k}\bigg\}.
\end{equation}
So
\begin{equation}
\Tilde{I}(\mu,x_{2},\ldots,x_{N})=\begin{cases}\mathbf{E}_{W}[\mu]-\inf_{\eta}\mathbf {E} _{W}[\eta]\quad &\textnormal{if}\quad\mathbf{H}[\mu|\alpha]<+\infty,\quad\forall k,\quad x_{k}= \mathbf{W}^{(k)}[\mu],\\+\infty\quad &\textnormal{otherwise}. \end{cases}
\end{equation}
We conclude by the principle of contraction that $\mu_{n}(L_{n}\in\cdot)$ satisfies a PGD of rate function $\mathbf{H}_{W}$. Note in this case that $\mathbf{E}_{W}$ is inf-compact, so $\mathbf{H}_{W}$ too.\newline

\noindent\textbf{Step 2: General case.}
In this case, for all $L>0$, we set $W^{(k)}_{L}:=\min(W^{(k)},L)$. So
\begin{equation}
\mathbf{E}_{W_{L}}[\mu]=\begin{cases}\mathbf{H}[\mu|\alpha]+\sum_{k=2}^{N}\mathbf{W }^{(k)}_{L}[\mu]\quad &\textnormal{if}\quad\mathbf{H}[\mu|\alpha]<+\infty,\\+\infty\quad &\textnormal{otherwise}.\end{cases}
\end{equation}
is inf-compact on $\mathcal{M}_{1}(\R^{d})$ by \textbf{step 1}. This proves that $\mathbf{H}_{W}$ is also inf -compact by passing to the monotonous limit. For all closed $\mathcal{F}\subset\mathcal{M}_{1}(\R^{d})$ and $L>0$, we have
\begin{align}
\mu^{*}_{n}(L_{n}\in\mathcal{F})=\int\mathbb{I}_{L_{n}\in\mathcal{F}}\exp\bigg(-n\sum_{k=2}^{N}U_{n}(W^{(k)})\bigg)d\alpha^{\bigotimes n}&\leq
\int\mathbb{I}_{L_{n}\in\mathcal{F}}\exp\bigg(-n\sum_{k=2}^{N}U_{n}(W^{(k)}_{L})\bigg)d \alpha^{\bigotimes n}\\ &\leq\exp\bigg(-n\inf_{\mu\in\mathcal{F}}\mathbf{E}_{W_{L}}[\mu]+\mathbf{o}(n)\bigg)\nonumber
\end{align}
and this last inequality is given by the LDP for the $U-$statistic and the Varadhan-Laplace lemma. It follows
\begin{equation}
\limsup_{n\to+\infty}\frac{1}{n}\log\mu^{*}_{n}(L_{n}\in\mathcal{F})\leq-\inf_{\mu \in\mathcal{F}}\mathbf{E}_{W_{L}}[\mu]\Longrightarrow\limsup_{n\to+\infty}\frac{1}{n}\log\mu^{* }_{n}(L_{n}\in\mathcal{F})\leq-\inf_{\mu\in\mathcal{F}}\mathbf{E}_{W}[\mu]
\end{equation}
by monotone limit and inf-compactness. In particular, for $\mathcal{F}=\mathcal{M}_{1}(\R^{d})$, we deduce that
\begin{equation}
\limsup_{n\to+\infty}\bigg\{\frac{1}{n}\log Z_{n}-\log C\bigg\}\leq-\inf_{\mu\in\mathcal{M} _{1}(\R^{d})}\mathbf{E}_{W}[\mu].
\end{equation}
By the lower bound of the large deviations for $L_{n}$ under $\mu^{*}_{n}$ obtained, this upper bound and given that $\mathbf{E}_{W}[\mu]= +\infty$ if for a $k\in\{2,\ldots,N\}$, $W^{(k)}\notin L^{1}(\mu^{\bigotimes k})$, we derive that
\begin{equation}
\lim_{n\to+\infty}\bigg\{\frac{1}{n}\log Z_{n}-\log C\bigg\}=-\inf_{\mu\in\mathcal{M}_ {1}(\R^{d})}\mathbf{E}_{W}[\mu]
\end{equation}
which is a finite quantity by assumptions and inf-compactness. With this equality, we thus obtain upper and lower bounds of large deviations for $\{\mu_{n}(L_{n}\in\cdot)\}_{n\geq N}$.

\end{proof}
\begin{proposition}[Sanov's theorem for the Wasserstein metric by Wang et.al]\label{Thmsanov}
Let $(X_{n})_{n\geq 1}$ be a sequence of independent random variables, identically distributed, with values in $\R^{d}$ endowed with one of its norms that we will denote $||\cdot||$ and law $\alpha$. We have equivalence between the following two assertions
\begin{enumerate}
    \item $(\mathbb{P}(L_{n}\in\cdot))_{n\geq 1}$ satisfies a principle of large deviations on the Wasserstein space $(\mathcal{M}^{p }_{1}(\R^{d}),\mathcal{W}_{p})$ with speed $n$ and good rate function $\mathbf{H}[\cdot|\alpha]$ .
    \item \begin{equation}
    \forall\lambda>0\quad x_{0}\in\R^{d},\quad\int_{\R^{d}}e^{\lambda||x-x_{0} ||^{p}}\alpha(dx)<+\infty.\end{equation}
\end{enumerate}
\end{proposition}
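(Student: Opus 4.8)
The plan is to deduce the equivalence from the classical Sanov theorem on the weakly topologised space $\mathcal{P}(\R^{d})$ (which always holds, with good rate function $\mathbf{H}[\cdot|\alpha]$) together with two soft facts: $\mathcal{W}_{p}$-convergence is strictly finer than weak convergence, and a subset of $\mathcal{M}^{p}_{1}(\R^{d})$ is $\mathcal{W}_{p}$-relatively compact iff it is tight and has uniformly integrable $p$-th moments. The bridge to the exponential-moment condition is the Donsker--Varadhan inequality $\int g\, d\mu\leq\mathbf{H}[\mu|\alpha]+\log\int e^{g}\, d\alpha$, valid for all $\mu,g$.

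For ``(ii)$\Rightarrow$(i)'', write $A_{R}=\{x:\|x-x_{0}\|^{p}>R\}$. First I would check that $\mathbf{H}[\cdot|\alpha]$ is a $\mathcal{W}_{p}$-good rate function: it is $\mathcal{W}_{p}$-lower semicontinuous because it is weakly l.s.c. and $\mathcal{W}_{p}$ is finer, and its sublevel set $\{\mathbf{H}[\cdot|\alpha]\leq L\}$ is weakly compact (classical) and, applying Donsker--Varadhan with $g=\lambda\mathbf{1}_{A_{R}}\|x-x_{0}\|^{p}$, satisfies $\int_{A_{R}}\|x-x_{0}\|^{p}d\mu\leq\lambda^{-1}\bigl(L+\log(1+\int_{A_{R}}e^{\lambda\|x-x_{0}\|^{p}}d\alpha)\bigr)$; letting $\lambda=\lambda_{R}\to\infty$ slowly enough that $\int_{A_{R}}e^{\lambda_{R}\|x-x_{0}\|^{p}}d\alpha\to 0$ (possible precisely because (ii) gives finiteness for every $\lambda$) forces uniform $p$-integrability of the sublevel set, hence its $\mathcal{W}_{p}$-compactness. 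Next I would prove that $\mathbb{P}(L_{n}\in\cdot)$ is exponentially tight in $(\mathcal{M}^{p}_{1}(\R^{d}),\mathcal{W}_{p})$: take compacts cut out by constraints of the type $\mu(\{\|x\|>r_{m}\})\leq\eta_{m}$ and $\int_{A_{R_{j}}}\|x-x_{0}\|^{p}d\mu\leq 2^{-j}$; the probability that $L_{n}$ violates one of them is of the form $\mathbb{P}(\tfrac1n\sum_{i}Y_{i}>\delta)$ with $Y_{i}$ a bounded indicator or $\|X_{i}-x_{0}\|^{p}\mathbf{1}_{A_{R_{j}}}(X_{i})$, and since $\mathbb{E}e^{\lambda Y}\leq\mathbb{E}e^{\lambda\|X-x_{0}\|^{p}}<+\infty$ by (ii), Chernoff's bound makes it $\leq e^{-n(L+m)}$ (resp.\ $\leq e^{-n(L+j)}$) once $r_{m}$ (resp.\ $R_{j}$) is chosen large, and a union bound closes the estimate. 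Finally, since the inclusion $(\mathcal{M}^{p}_{1}(\R^{d}),\mathcal{W}_{p})\hookrightarrow(\mathcal{P}(\R^{d}),\mathrm{weak})$ is continuous and injective and $L_{n}$ is exponentially tight in the finer topology, the inverse contraction principle transfers the Sanov LDP to $(\mathcal{M}^{p}_{1}(\R^{d}),\mathcal{W}_{p})$ with the same good rate $\mathbf{H}[\cdot|\alpha]$.

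For ``(i)$\Rightarrow$(ii)'' I would argue by contraposition. If (ii) fails put $\lambda^{\ast}=\sup\{\lambda>0:\int e^{\lambda\|x-x_{0}\|^{p}}d\alpha<+\infty\}<+\infty$; then $\int e^{\lambda\|x-x_{0}\|^{p}}d\alpha=+\infty$ for $\lambda>\lambda^{\ast}$, which forces $\limsup_{M\to\infty}M^{-1}\log\alpha(A_{M})\geq-\lambda^{\ast}$. Along a sequence $M_{j}\to\infty$ realising this limsup, set $\mu_{j}=(1-\varepsilon_{j})\alpha+\varepsilon_{j}\,\alpha(\cdot\,|\,A_{M_{j}})$; by convexity of $\mathbf{H}[\cdot|\alpha]$ one gets $\mathbf{H}[\mu_{j}|\alpha]\leq\varepsilon_{j}\mathbf{H}[\alpha(\cdot|A_{M_{j}})|\alpha]=\varepsilon_{j}|\log\alpha(A_{M_{j}})|\leq\varepsilon_{j}\lambda^{\ast}M_{j}(1+o(1))$, while for every $R<M_{j}$ one has $\int_{A_{R}}\|x-x_{0}\|^{p}d\mu_{j}\geq\varepsilon_{j}\,\alpha(A_{M_{j}})^{-1}\int_{A_{M_{j}}}\|x-x_{0}\|^{p}d\alpha\geq\varepsilon_{j}M_{j}$. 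Choosing $\varepsilon_{j}$ of order $1/(\lambda^{\ast}M_{j})$ keeps $\mathbf{H}[\mu_{j}|\alpha]$ bounded by a fixed $L$ and keeps $\int_{A_{R}}\|x-x_{0}\|^{p}d\mu_{j}$ bounded below by a fixed $\delta_{0}>0$ for every $R$; hence $\{\mathbf{H}[\cdot|\alpha]\leq L\}$ is not uniformly $p$-integrable, so it is not $\mathcal{W}_{p}$-compact, so $\mathbf{H}[\cdot|\alpha]$ is not a $\mathcal{W}_{p}$-good rate function, and the $\mathcal{W}_{p}$-LDP of (i) cannot hold. (If $\lambda^{\ast}=0$ the same construction works and is only easier; if $\alpha\notin\mathcal{M}^{p}_{1}(\R^{d})$ one conditions $\alpha$ to an annulus $\{M_{j}<\|x-x_{0}\|^{p}\leq M_{j}'\}$ instead, with the same estimates.)

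The main obstacle is the exponential tightness in the Wasserstein topology in the direction (ii)$\Rightarrow$(i): one must convert the scalar statement ``$e^{\lambda\|x-x_{0}\|^{p}}$ is $\alpha$-integrable for all $\lambda$'' into genuinely uniform-in-$n$ control, at the exponential scale, of the $p$-th-moment tails of the random measure $L_{n}$, and it is exactly the ``for all $\lambda$'' (rather than ``for one $\lambda$'') that allows the truncation levels $R_{j}$ to be sent to $+\infty$ while keeping every Chernoff exponent above the prescribed value. Everything else reduces to classical Sanov, the inverse contraction principle, and the entropy computation above, which is routine but must be done carefully enough to see that $\mathbf{H}[\mu_{j}|\alpha]$ grows only linearly in $M_{j}$.
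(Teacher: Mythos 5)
Your proposal and the paper's displayed proof of \cref{Thmsanov} address genuinely different things, and it is worth being clear about which is which. You give a self-contained proof of the equivalence exactly as stated (the Wang et al.\ characterisation): for (ii)$\Rightarrow$(i) you combine weak-topology Sanov, $\mathcal{W}_{p}$-compactness of entropy sublevel sets obtained from Donsker--Varadhan by exploiting the full range of $\lambda$, $\mathcal{W}_{p}$-exponential tightness of the laws of $L_{n}$ via Chernoff bounds and a union bound over tightness and uniform-$p$-integrability constraints, and the inverse contraction principle; for (i)$\Rightarrow$(ii) you argue by contraposition, building entropy-bounded measures $\mu_{j}$ whose $p$-th-moment tails stay bounded away from zero, so that $\mathbf{H}[\cdot|\alpha]$ cannot be $\mathcal{W}_{p}$-good. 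This is correct (the one thing you elide is that the compacts you construct are $\mathcal{W}_{p}$-closed, which follows because the constraints involve open sets and a lower-semicontinuous, nonnegative integrand), and your identification of the crux of (ii)$\Rightarrow$(i) as the need for the \emph{entire} family of exponential moments, so that the truncation levels $R_{j}$ can be sent to infinity while keeping every Chernoff exponent above the prescribed $L$, is exactly right. By contrast, the paper's argument does not reprove the stated equivalence: it quotes the forward direction as a known fact (``Under $\mathbf{(H4)}$\ldots the LDP holds for $L_{n}$ under $\alpha^{\otimes n}$ on the Wasserstein space''), i.e.\ it uses Wang et al.\ as a black box, and devotes its displayed computation to something else — transferring $\mathcal{W}_{p}$-exponential tightness from $\alpha^{\otimes n}$ to the Gibbs measure $\mu_{n}$ by H\"older's inequality, combined with the already-known limit of $\tfrac1n\log(Z_{n}/C^{n})$, so as to upgrade the weak LDP of \cref{infcompactHW} to a $\mathcal{W}_{p}$-LDP for the interacting particle system. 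In short, you proved the cited black box, while the paper's ``proof'' is really the proof of a corollary of the proposition; your argument buys self-containedness for the statement as written, and the paper's buys the transfer to $\mu_{n}$ that it actually needs downstream, a step your proof (reasonably) does not attempt.
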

\begin{proof}
Since we have established a LDP for the random empirical measure $L_{n}$ under $\mu_{n}$ on $\mathcal{M}_{1}(\R^{d})$ equipped with the topology of weak convergence, it suffices to prove the exponential tension of $(\mu_{n}(L_{n}\in\cdot))_{n\geq N}$ on $(\mathcal{M}^{ p}_{1}(\R^{d}),\mathcal{W}_{p})$.\\
Let $K\subset\mathcal{M}^{p}_{1}(\R^{d})$ be compact and $(a,b)\in [1,+\infty]^{2}$ a pair of conjugate exponents ($\frac{1}{a}+\frac{1}{b}=1$). By Holder's inequality, we have
\begin{align}
\mu_{n}(L_{n}\notin K)&=\frac{C^{n}}{Z_{n}}\int\mathbb{I}_{L_{n}\notin K}\exp\bigg(-n\sum_{k=2}^{N}U_{n}(W^{(k)})\bigg)d\alpha^{\bigotimes n}\\ 
&\leq\frac{C^{n} }{Z_{n}}\bigg(\alpha^{\bigotimes n}(L_{n}\notin K)\bigg)^{\frac{1}{a}}\bigg(\int\exp\bigg (-nb\sum_{k=2}^{N}U_{n}(W^{(k)})\bigg)d\alpha^{\bigotimes n}\bigg)^{\frac{1}{ b}}.\nonumber
\end{align}
It is deduced that
\begin{align}
\limsup_{n\to+\infty}\frac{1}{n}\log\mu_{n}(L_{n}\notin K)&\leq
\frac{1}{a}\limsup_{n\to+\infty}\frac{1}{n}\log\alpha^{\bigotimes n}(L_{n}\notin K)- \limsup_{n\to+\infty}\frac{1}{n}\log\frac{Z_{n}}{C^{n}}\\ &+\frac{1}{b}\limsup_{n\to+\infty}\frac{1}{n}\int\exp\bigg(-n\sum_{k=2}^{N}U_{n}(bW^{(k)})\bigg)d\alpha^{\bigotimes n}.\nonumber
\end{align}
Now the right-hand side of this inequality is upper bounded by
\begin{equation}
\frac{1}{a}\limsup_{n\to+\infty}\frac{1}{n}\log\alpha^{\bigotimes n}(L_{n}\notin K)+\inf_{\mu \in\mathcal{M}_{1}(\R^{d})}\mathbf{E}_{W}[\mu]-\frac{1}{b}\inf_{\mu\in\mathcal{M}_{1}(\R^{d})}\mathbf{E}_{bW}[\mu],
\end{equation}
and from the above, $\inf_{\mu\in\mathcal{M}_{1}(\R^{d})}\mathbf{E}_{W}[\mu]$ and
\begin{equation}
\inf_{\mu\in\mathcal{M}_{1}(\R^{d})}\mathbf{E}_{bW}[\mu]:=\inf_{\mu\in\mathcal{ M}_{1}(\R^{d})}\bigg\{\mathbf{H}[\mu|\alpha]+\sum_{k=2}^{N}\int bW^{(k )}d\mu^{\bigotimes k}\bigg\},\quad\textnormal{are finite quantities.}
\end{equation}
Under $\mathbf{(H4)}$ in \ref{Hypo1}, the LDP holds for $L_{n}$ under $\alpha^{\bigotimes n}$
on the Wasserstein space. So, for all $L>0$, there is a compact $K_{L}\subset\mathcal{M}^{p}_{1}(\R^{d})$ such that
\begin{equation}
\limsup_{n\to+\infty}\frac{1}{n}\log\alpha^{\bigotimes n}(L_{n}\notin K_{L})\leq -aL-a\inf_{\mu \in\mathcal{M}_{1}(\R^{d})}\mathbf{E}_{W}[\mu]+\frac{a}{b}\inf_{\mu\in\mathcal{M}_{1}(\R^{d})}\mathbf{E}_{bW}[\mu].
\end{equation}
It follows that
\begin{equation}
\limsup_{n\to+\infty}\frac{1}{n}\log\mu_{n}(L_{n}\notin K_{L})\leq -L.
\end{equation}
\end{proof}

\noindent\textbf{Uniqueness of invariant measure.}
The assumptions on the interaction potentials and the confinement potential ensure the existence (via the inf-compactness of the entropy functional proven in \cref{exist} and \cite{Ref1}) of an invariant measure (global minimum point for the entropy functional) for the McKean-Vlasov process obtained by propagation of chaos. It remains to prove the uniqueness. To do this, we will use the characterization of the local extrema of a differentiable functional in the sense of Fr\'echet (flat derivation) on an open set. Let 
\begin{align}
\mathcal{O}&:=\bigg\{\mu\in\mathcal{P}(\R^{d}),\quad\mathbf{H}[\mu|\alpha]<+\infty,\quad \forall k,\quad\int W^{(k),-}d\mu^{\bigotimes k}<+\infty\bigg\}\\
&=\mathbf{H}[\cdot|\alpha]^{- 1}(]-\infty,+\infty[)\bigcap\Psi^{-1}(]-\infty,+\infty[^{N-1}),\nonumber
\end{align}
with 
\begin{equation}
\Psi:\mu\longmapsto\bigg(\int W^{(2),-}d\mu^{\bigotimes 2},\ldots,\int W^{(N),-}d\mu^{ \bigotimes N}\bigg).\end{equation} 
We know that $\mathbf{E}_{W}\equiv+\infty$ over $\mathcal{O}^{C}$. By Fr\'echet differentiability of the relative entropy $\mathbf{H}[\cdot|\alpha]$ and of $\Psi$ on $\mathcal{M}_{1}(\R^{d})$ endowed with its structure of differential Fr\'echet manifold, $\mathcal{O}$ is open as an intersection of open sets. We deduce that the local extrema (here minimum) of $\mathbf{E_{W}}$ are critical points on $\mathcal{O}$, i.e.  $\mu\in\mathcal{O}$ such that
\begin{equation}
   Z_{\mu}:=\int e^{-\frac{\delta F}{\delta m}(\mu,x)-V(x)}dx<+\infty,\quad\frac{\delta\mathbf{E}_{W}}{\delta m}(\mu,\cdot)\equiv0\Longleftrightarrow\mu(dx)=\frac{1}{Z_{\mu}}e^{-\frac{\delta F}{\delta m}(\mu,x)-V(x)}dx.
\end{equation}

\begin{proposition}[Fixed point uniqueness]\label{Pfixunique}
The functional
   \begin{equation}
       \Gamma:\mu\in\mathcal{O}\subset\mathcal{M}^{2}_{1}(\R^{d})\longmapsto\Gamma(\mu)(dx):=\frac{1}{Z_{\mu}}e^{-\frac{\delta F}{\delta m}(\mu,x)-V(x)}dx\in\mathcal{O}
   \end{equation}
   admits a unique fixed point.
\end{proposition}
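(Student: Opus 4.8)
The plan is to split ``admits a unique fixed point'' into existence and uniqueness, drawing existence from the variational picture already in place and uniqueness from the contraction hypothesis $\mathbf{(H6)}$. For existence, I would first invoke \cref{infcompactHW}: $\mathbf{E}_{W}$ (equivalently $\mathbf{H}_{W}$) is inf-compact on $\mathcal{M}_{1}(\R^{d})$ with finite infimum, hence attains its infimum at some $\mu_{\infty}$. Since $\mathbf{E}_{W}\equiv+\infty$ on $\mathcal{O}^{C}$ this minimizer lies in $\mathcal{O}$, so $\mathbf{H}[\mu_{\infty}|\alpha]<+\infty$; combined with $\mathbf{(H4)}$ and the Donsker--Varadhan variational formula this forces $\mu_{\infty}\in\mathcal{M}^{p}_{1}(\R^{d})\subset\mathcal{M}^{2}_{1}(\R^{d})$, so $\mu_{\infty}$ is an interior point of the open set $\mathcal{O}$. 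As recalled just before the statement, an interior minimizer of $\mathbf{E}_{W}$ is a critical point, $\frac{\delta\mathbf{E}_{W}}{\delta m}(\mu_{\infty},\cdot)\equiv0$, and since $\mathbf{E}_{W}=\mathbf{H}[\cdot|\alpha]+F$, $\alpha\propto e^{-V}$ and $\frac{\delta}{\delta m}\mathbf{H}[\mu|\alpha](x)=\log\frac{d\mu}{d\alpha}(x)+\mathrm{const}$, this identity reads exactly $\mu_{\infty}(dx)=\frac{1}{Z_{\mu_{\infty}}}e^{-\frac{\delta F}{\delta m}(\mu_{\infty},x)-V(x)}\,dx=\Gamma(\mu_{\infty})(dx)$. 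Hence $\Gamma$ has at least one fixed point.

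For uniqueness, let $\mu_{1},\mu_{2}\in\mathcal{O}$ with $\Gamma(\mu_{i})=\mu_{i}$. Since $\mathcal{O}\subset\mathcal{M}^{2}_{1}(\R^{d})$, assumption $\mathbf{(H6)}$ applies: $(\mathcal{M}^{2}_{1}(\R^{d}),\mathcal{W}_{2})$ continuously injects into $(\mathcal{Z},\mathbf{d}_{Lip})$, so $\mu_{1},\mu_{2}\in\mathcal{Z}$ and $\mathbf{d}_{Lip}(\mu_{1},\mu_{2})<+\infty$, and $\Phi=\Gamma$ is $k$-Lipschitz on $(\mathcal{Z},\mathbf{d}_{Lip})$ for some $k\in(0,1)$. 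Therefore
\[
\mathbf{d}_{Lip}(\mu_{1},\mu_{2})=\mathbf{d}_{Lip}(\Gamma(\mu_{1}),\Gamma(\mu_{2}))\leq k\,\mathbf{d}_{Lip}(\mu_{1},\mu_{2}),
\]
and $k<1$ forces $\mathbf{d}_{Lip}(\mu_{1},\mu_{2})=0$, i.e.\ $\mu_{1}=\mu_{2}$ because $\mathbf{d}_{Lip}$ is a distance. So $\Gamma$ has exactly one fixed point, which is then also the unique critical point and the unique minimizer of $\mathbf{E}_{W}$; this is the measure denoted $\mu_{\infty}$ in the sequel. (Had one known $(\mathcal{Z},\mathbf{d}_{Lip})$ to be complete with $\Gamma$ mapping $\mathcal{O}$ into a $\mathbf{d}_{Lip}$-closed subset of itself, Banach's fixed point theorem would yield existence and uniqueness at once; the argument above sidesteps the completeness issue.)

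I expect the Banach-type step to be routine and the real work to sit in the verification of the hypotheses --- which is why they are packaged as standing assumptions. The two delicate points are: (i) that $\Gamma$ is well defined on $\mathcal{O}$, namely $Z_{\mu}=\int e^{-\frac{\delta F}{\delta m}(\mu,x)-V(x)}\,dx<+\infty$ and $\Gamma(\mu)$ again has finite relative entropy for every $\mu\in\mathcal{O}$, which rests on $\mathbf{(H3)}$--$\mathbf{(H4)}$ and the explicit polynomial form \cref{flatintrinpol} of $\frac{\delta F}{\delta m}$; and (ii) the contraction estimate $\mathbf{(H6)}$ itself, which in concrete situations is obtained (\`a la \cite{Ref2}) from an Eberle-type Lipschitzian spectral-gap condition for the one-particle conditional diffusion, cf.\ \cref{condeberl} --- this is the genuinely hard analytic input, assumed here. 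Finally, identifying interior minimizers of $\mathbf{E}_{W}$ with fixed points of $\Gamma$ uses the Fr\'echet (flat) differentiability of $F$ and of $\mathbf{H}[\cdot|\alpha]$ on $\mathcal{O}$, which follows from the regularity of $V$ and the $W^{(k)}$ and the polynomial structure \cref{GFFE} of $F$.
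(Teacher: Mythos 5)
Your proposal matches the paper's argument: existence comes from the inf-compactness of $\mathbf{E}_{W}$ together with the identification of interior minimizers with critical points, i.e.\ fixed points of $\Gamma$, and uniqueness from the contraction hypothesis $\mathbf{(H6)}$ applied to two putative fixed points. Your write-up is in fact slightly more careful than the paper's (you justify that $\mathbf{d}_{Lip}(\mu_{1},\mu_{2})$ is finite via the continuous injection of $(\mathcal{M}^{2}_{1}(\R^{d}),\mathcal{W}_{2})$ into $(\mathcal{Z},\mathbf{d}_{Lip})$ before concluding it must vanish), but the route is the same.
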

\begin{proof}
    Indeed, according to the hypothesis $\mathbf{H6.}$ of \ref{Hypo1}, we have
    \begin{equation}
        \mathbf{d}_{Lip}(\Gamma(\mu),\Gamma(\nu))\leq k\mathbf{d}_{Lip}(\mu,\nu),
    \end{equation}
    and since there is a fixed point, suppose by absurd that there is more than one, i.e. there is $\mu_{1},\mu_{2}\in\mathcal{O}$ such that $\mu_{1}\neq\mu_{2}$ and for all $i$, $\Gamma(\mu_{i})=\mu_{i}$. It follows that $k\geq 1$ which is absurd because $k<1$.
\end{proof}

\noindent\textbf{Ces\~aro tensorial: About entropies and Fisher Information.}\label{Fisher}
We will establish \textit{convergences in entropy and Fisher information} which are useful for the proof of the \textit{exponential decrease of the mean field entropy} and the establishment of the \textit{nonlinear Talagrand inequality}.
\begin{proposition}[Convergence in relative entropy]\label{Cesaro1}
For any probability measure $\nu$ on $\R^{d}$ such that $\mathbf{H}[\nu|\alpha]<+\infty$, we have:
\begin{equation}
\frac{1}{n}\mathbf{H}[\nu^{\otimes n}|\mu_{n}]\overset{n\to +\infty}{\longrightarrow}\mathbf{H}_ {W}[\nu],\quad\textnormal{where}\quad\mu_{n}\quad\textnormal{is defined in}\quad\cref{IM1}.
\end{equation}
\end{proposition}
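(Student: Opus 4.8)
The plan is to compute $\tfrac1n\mathbf{H}[\nu^{\otimes n}|\mu_n]$ essentially in closed form for each fixed $n$, using the product structure of $\nu^{\otimes n}$ together with the explicit expression \cref{IM1} for $\mu_n$, and then to pass to the limit by feeding in the partition-function asymptotics already obtained in the proof of \cref{infcompactHW}. Since $\mathbf{H}[\nu|\alpha]<+\infty$ gives $\nu\ll\alpha$, and $\mu_n$ is mutually absolutely continuous with $\alpha^{\otimes n}$ (its density $\tfrac{C^n}{Z_n}e^{-n\sum_{k=2}^N U_n(W^{(k)})}$ is everywhere finite and positive, $Z_n<+\infty$ holding under $\mathbf{(H1)}$), one has $\nu^{\otimes n}\ll\mu_n$ and, by \cref{IM1},
\[
\log\frac{d\nu^{\otimes n}}{d\mu_n}(x)=\log\frac{Z_n}{C^n}+n\sum_{k=2}^N U_n(W^{(k)})(x)+\sum_{j=1}^n\log\frac{d\nu}{d\alpha}(x_j).
\]

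Next I would integrate this identity against $\nu^{\otimes n}$ and divide by $n$. The last term contributes $\mathbf{H}[\nu|\alpha]$ by tensorization of the relative entropy ($\mathbf{H}[\nu^{\otimes n}|\alpha^{\otimes n}]=n\,\mathbf{H}[\nu|\alpha]$); for the middle term, because $\nu^{\otimes n}$ is a product measure and every multi-index in $I^{k}_{n}$ has pairwise distinct entries, each summand of $U_n(W^{(k)})$ integrates to $\int W^{(k)}\,d\nu^{\otimes k}=\mathbf{W}^{(k)}[\nu]$, whence $\int U_n(W^{(k)})\,d\nu^{\otimes n}=\mathbf{W}^{(k)}[\nu]$. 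This gives, for every $n\geq N$,
\[
\frac1n\mathbf{H}[\nu^{\otimes n}|\mu_n]=\Big(\tfrac1n\log Z_n-\log C\Big)+\mathbf{H}[\nu|\alpha]+\sum_{k=2}^N\mathbf{W}^{(k)}[\nu]=\Big(\tfrac1n\log Z_n-\log C\Big)+\mathbf{E}_W[\nu].
\]

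To conclude I would invoke the limit of $\tfrac1n\log Z_n$ established inside the proof of \cref{infcompactHW}: the upper bound comes from Varadhan's lemma applied to the large deviations principle of \cref{LDP} for the $U$-statistics, the matching lower bound from \cref{lower}, giving $\tfrac1n\log Z_n-\log C\to-\inf_{\mu\in\mathcal{M}_{1}(\R^{d})}\mathbf{E}_W[\mu]=-\inf\mathbf{E}_W$, a finite quantity by the inf-compactness of \cref{infcompactHW}. Plugging this into the identity above and using $\mathbf{H}_W=\mathbf{E}_W-\inf\mathbf{E}_W$ yields $\tfrac1n\mathbf{H}[\nu^{\otimes n}|\mu_n]\to\mathbf{E}_W[\nu]-\inf\mathbf{E}_W=\mathbf{H}_W[\nu]$, the two occurrences of $\inf\mathbf{E}_W$ cancelling exactly.

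The one genuinely delicate point — which I would actually settle first — is that $\mathbf{H}[\nu|\alpha]<+\infty$ already forces $W^{(k)}\in L^1(\nu^{\otimes k})$ for every $k$ (equivalently $\nu\in\mathbf{dom}(\mathbf{H}_W)$, so in particular $\mathbf{H}_W[\nu]<+\infty$), without which the term-by-term evaluation of $\int U_n(W^{(k)})\,d\nu^{\otimes n}$ and the identity $\mathbf{E}_W[\nu]=\mathbf{H}[\nu|\alpha]+\sum_k\mathbf{W}^{(k)}[\nu]$ would be illegitimate. For this I would use the Donsker--Varadhan entropy inequality $\lambda\int\|x\|^2\,\nu(dx)\leq\mathbf{H}[\nu|\alpha]+\log\int e^{\lambda\|x\|^2}\,\alpha(dx)$, whose right-hand side is finite for all $\lambda>0$ by $\mathbf{(H4)}$ in \ref{Hypo1} (finiteness of $\int e^{\lambda\|x-x_0\|^p}\,\alpha(dx)$ with $p\geq2$), so that $\nu$ has a finite second moment; combined with $\mathbf{(H1)}$, which forces each $W^{(k)}$ (bounded Hessian) to grow at most quadratically, this yields $W^{(k)}\in L^1(\nu^{\otimes k})$. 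This interplay between the quadratic growth of the potentials and the super-Gaussian integrability of $\alpha$ is the crux; once it is in hand the remaining computation is routine, and the limit is handed to us for free by the $Z_n$-asymptotics of \cref{infcompactHW}. (If one wanted to avoid this a priori step, a possibly degenerate $\nu$ could instead be treated by truncating the $W^{(k)}$ as in Step 2 of the proof of \cref{infcompactHW} and passing to the monotone limit, but under \ref{Hypo1} this is unnecessary.)
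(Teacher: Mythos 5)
Your proof is correct and follows essentially the same route as the paper's: you integrate the explicit log-density ratio, use tensorization of relative entropy together with the exact identity $\int U_n(W^{(k)})\,d\nu^{\otimes n}=\mathbf{W}^{(k)}[\nu]$, and invoke the partition-function limit $\tfrac{1}{n}\log Z_n-\log C\to-\inf\mathbf{E}_W$ from \cref{infcompactHW}, supplying the same Donsker--Varadhan plus quadratic-growth argument the paper uses to secure $W^{(k)}\in L^1(\nu^{\otimes k})$. The only cosmetic difference is that you cite $\mathbf{(H4)}$ for the super-Gaussian moment of $\alpha$ where the paper derives it from $\mathbf{(H2)}$; both suffice.
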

\begin{proof}
For $\mu$ such that $\mu\ll\alpha$ and for all $k\in\{2,\ldots,N\}$, $W^{(k),-}\in L^{1 }(\mu^{\bigotimes k})$, we have
\begin{align}\label{cesaro}
\frac{1}{n}\mathbf{H}[\mu^{\bigotimes n}|\mu_{n}]&=\frac{1}{n}\mathbb{E}_{\mu^{\bigotimes n}}\bigg[\frac{d\mu^{\bigotimes n}}{d\alpha^{\bigotimes n}}+n\sum_{k=2}^{N}U_{n}(W^{(k)})+\log\frac{Z_{n}}{C^{n}}\bigg]\\
&=\mathbf{H}[\mu|\alpha]+\sum_{k=2}^ {N}\int W^{(k)}d\mu^{\bigotimes n}+\frac{1}{n}\log Z_{n}-\log C.\nonumber
\end{align}
We recall that $\alpha(dx):=\frac{e^{-V(x)}}{C}dx$. Under the assumption $\mathbf{(H2)}$ in \ref{Hypo1}, we know that $\exists$ $\lambda_{0}>0$ such that:
\begin{equation}
\int_{\R^{d}}e^{\lambda_{0}||x||^{2}}\alpha(dx)<+\infty.
\end{equation}
By asking:
\begin{equation}
\widetilde{Z}_{n}:=\int_{(\R^{d})^{n}}e^{-n\sum_{k=2}^{N}U_{n}(W^{(k)})}\alpha^{\otimes n}(dx_{1},\ldots,dx_{n}),
\end{equation}
we get: (by Fubini-Tonelli)
\begin{equation}
\mu_{n}(dx)=\frac{C^{n}}{Z_{n}}e^{-n\sum_{k=2}^{N}U_{n}(W^ {(k)})}\alpha^{\bigotimes n}(dx).
\end{equation}
Let $\nu\in\mathcal{M}_{1}(\R^{d})$ be such that $\mathbf{H}[\nu|\alpha]<+\infty$. Since 
\begin{equation}
\mathbf{H}[\nu^{\otimes k}|\alpha^{\otimes k}]=k\mathbf{H}[\nu|\alpha],
\end{equation} 
$x\longmapsto e^{ \lambda_{0}||x||^{2}}\in L^{1}(\alpha)$ and 
\begin{equation}
\forall k\in\{2,\ldots,N\}\quad\forall x\in\R^{kd},\quad|W^{(k)}(x)|\leq\beta(1+\sum_{j=1}^{k}||x_{j}||^ {2})
\end{equation} 
by boundedness of its hessian $\nabla^{2}W^{(k)}$ (hypothesis $\mathbf{(H1)}$ in \ref{Hypo1}), according to Donsker-Varadhan variational formula of entropy, we have $W^{(k)}\in L^{1}(\nu^{\otimes k})$. We have successively: (by a direct calculation and application of the Fubini-Tonelli theorem)
\begin{equation}
\frac{1}{n}\mathbf{H}[\nu^{\otimes n}|\mu_{n}]=\frac{1}{n}\mathbf{Ent}_{\mu_{ n}}\bigg[\frac{d\nu^{\otimes n}}{d\mu_{n}}\bigg]=\frac{1}{n}\int_{(\R^{d} )^{n}}\log\bigg(\frac{d\nu^{\otimes n}}{d\mu_{n}}\bigg)d\nu^{\otimes n}
\end{equation}
We deduce that:
\begin{equation}
\frac{1}{n}\mathbf{H}[\nu^{\otimes n}|\mu_{n}]=\frac{1}{n}\int\sum_{i=1}^{ n}\log\bigg(\frac{d\nu}{d\alpha}(x_{i})\bigg)d\nu^{\otimes n}+\sum_{k=2}^{N}\int U_{n}(W^{(k)}d\nu^{\otimes n}+\frac{1}{n}\log(\widetilde{Z}_ {n})
\end{equation}
$\underset{n\to+\infty}{\lim}\frac{1}{n}\log(\widetilde{Z}_{n})=-\underset{\eta\in\mathcal{M} _{1}(\R^{d})}{\mathbf{inf}}\mathbf{E}_{W}[\eta]$ (see \cref{LDP}), 
\begin{equation}
\frac{1} {n}\int\sum_{i=1}^{n}\log\bigg(\frac{d\nu}{d\alpha}(x_{i})\bigg)d\nu^{\otimes n }=\mathbf{H}[\nu|\alpha]
\end{equation} 
and finally, we also have: (see \cref{tcl}) 
\begin{equation}
\sum_{k=2}^{N}\int U_{n}(W^{(k)})d\nu^{\otimes n}=\sum_{k=2}^{N}\int W^{(k)}(x)\nu^{\otimes k}(dx).
\end{equation}
Thereby: 
\begin{equation}
\frac{1}{n}\mathbf{H}[\nu^{\otimes n}|\mu_{n}]\overset{n\to +\infty}{\longrightarrow}\mathbf{H}[ \nu|\alpha]+\sum_{k=2}^{N}\int W^{(k)}(x)\nu^{\otimes k}(dx)-\underset{\eta\in\mathcal{M}_{ 1}(\R^{d})}{\mathbf{inf}}\mathbf{E}_{W}[\eta]=\mathbf{H}_{W}[\nu].
\end{equation}
What needed to be proven.
\end{proof}
\begin{proposition}[Fisher Information Convergence]\label{Cesaro2}
If $\mathbf{I}[\nu|\alpha]<+\infty$, we have:
\begin{equation}
\frac{1}{n}\mathbf{I}[\nu^{\otimes n}|\mu_{n}]\overset{n\to+\infty}{\longrightarrow}\mathbf{I}_{ W}[\nu].
\end{equation}
\end{proposition}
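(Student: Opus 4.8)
The plan is to differentiate the explicit density $\tfrac{d\nu^{\otimes n}}{d\mu_n}$, recognise that the dominant part of the gradient is a sum of $U$-statistics built from the iid sample, and pass to the limit using two elementary facts about $U$-statistics: the conditional mean of a $U$-statistic equals the integral of its kernel, and its variance is $O(1/n)$.

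\emph{Setup.} Since $\mathbf{I}[\nu|\alpha]<+\infty$ we have $\nu\ll\alpha$ with $\sqrt{d\nu/d\alpha}\in\mathbf{H}^1_\alpha$; as $\mathbf{(H1)}$--$\mathbf{(H2)}$ give a logarithmic Sobolev inequality for $\alpha$, the dual inequality $\rho\mathbf{H}[\cdot|\alpha]\le2\mathbf{I}[\cdot|\alpha]$ forces $\mathbf{H}[\nu|\alpha]<+\infty$, and then the Gaussian-type integrability of $\alpha$ (as in the proof of \cref{Cesaro1}) together with the Donsker--Varadhan inequality yields a finite second moment $m_2(\nu)<+\infty$. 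Using $\mu_n(dx)=\tfrac{C^n}{Z_n}e^{-n\sum_{k=2}^N U_n(W^{(k)})}\alpha^{\otimes n}(dx)$ we get $\log\tfrac{d\nu^{\otimes n}}{d\mu_n}(x)=\sum_{i=1}^n\log\tfrac{d\nu}{d\alpha}(x_i)+n\sum_{k=2}^N U_n(W^{(k)})(x)+\log\tfrac{Z_n}{C^n}$, which is weakly differentiable, so $\mathbf{I}[\nu^{\otimes n}|\mu_n]=\tfrac14\int\bigl\|\nabla\log\tfrac{d\nu^{\otimes n}}{d\mu_n}\bigr\|^2d\nu^{\otimes n}$, and by exchangeability $\tfrac1n\mathbf{I}[\nu^{\otimes n}|\mu_n]=\tfrac14\int\bigl\|\nabla_{x_1}\log\tfrac{d\nu^{\otimes n}}{d\mu_n}\bigr\|^2d\nu^{\otimes n}$. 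Differentiating the $U$-statistics and using the symmetry of each $W^{(k)}$ gives $\nabla_{x_1}\log\tfrac{d\nu^{\otimes n}}{d\mu_n}(x)=\nabla\log\tfrac{d\nu}{d\alpha}(x_1)+G_n(x)$ with $G_n(x)=\sum_{k=2}^N k\,U_{n-1}^{(k-1)}\!\bigl(\nabla_1 W^{(k)}(x_1,\cdot)\bigr)(x_2,\dots,x_n)$, where $U_{n-1}^{(k-1)}$ is the order-$(k-1)$ $U$-statistic built from $x_2,\dots,x_n$ (a plain empirical average when $k=2$).

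\emph{Taking the limit.} Expand $\bigl\|\nabla\log\tfrac{d\nu}{d\alpha}(x_1)+G_n(x)\bigr\|^2$ and integrate against $\nu^{\otimes n}$. The first term contributes $\int\|\nabla\log\tfrac{d\nu}{d\alpha}\|^2d\nu=4\mathbf{I}[\nu|\alpha]$. For the cross term, conditioning on $x_1=y$ and using that the conditional mean of a $U$-statistic is the integral of its kernel gives, for every $n$, $\mathbb{E}[G_n\mid x_1=y]=\sum_{k=2}^N k\!\int\!\nabla_1 W^{(k)}(y,x_2,\dots,x_k)\,\nu^{\otimes(k-1)}(dx_{2:k})=\nabla\tfrac{\delta F}{\delta m}(\nu,y)=\mathcal{D}_m F(\nu,y)$, by the symmetry identity underlying \cref{flatintrinpol}; hence the cross term equals $2\int\langle\nabla\log\tfrac{d\nu}{d\alpha}(y),\mathcal{D}_m F(\nu,y)\rangle\nu(dy)$, exactly. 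For the last term, write $\int\|G_n\|^2d\nu^{\otimes n}=\int\mathbb{E}[\|G_n\|^2\mid x_1=y]\,\nu(dy)$ and $\mathbb{E}[\|G_n\|^2\mid x_1=y]=\|\mathcal{D}_m F(\nu,y)\|^2+\mathbb{E}[\|G_n-\mathcal{D}_m F(\nu,y)\|^2\mid x_1=y]$; the remainder is bounded, via Cauchy--Schwarz and the classical $O(1/n)$ variance estimate for $U$-statistics, by $\tfrac{C}{n}\sum_{k}\int\|\nabla_1 W^{(k)}(y,\cdot)\|^2\nu^{\otimes(k-1)}$, and since $\mathbf{(H1)}$ makes $\nabla W^{(k)}$ of at most linear growth, this is $\le\tfrac1n(A+B\|y\|^2)$ with $A,B$ independent of $n,y$; integrating and using $m_2(\nu)<+\infty$, it is $O(1/n)$. (Alternatively the qualitative statement follows from \cref{tcl} and a generalized dominated convergence argument, dominating $\|G_n\|^2$ by the order-$(k-1)$ $U$-statistics of $\|\nabla_1 W^{(k)}(y,\cdot)\|^2$, whose conditional means are constant in $n$.) Collecting the three pieces, $\tfrac1n\mathbf{I}[\nu^{\otimes n}|\mu_n]\to\tfrac14\int\bigl\|\nabla\log\tfrac{d\nu}{d\alpha}(y)+\mathcal{D}_m F(\nu,y)\bigr\|^2\nu(dy)$, and since $\mathbf{E}_W=\mathbf{H}[\cdot|\alpha]+F$ on its domain gives $\nabla\tfrac{\delta\mathbf{E}_W}{\delta m}(\nu,y)=\nabla\log\tfrac{d\nu}{d\alpha}(y)+\mathcal{D}_m F(\nu,y)$, the right-hand side is exactly $\mathbf{I}_W[\nu]$ --- in fact with rate $O(1/n)$.

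\emph{Main obstacle.} Everything except the $\|G_n\|^2$ term is an exact identity; the one point requiring care is the interchange of limit and $\nu$-integration in that term, which is precisely where the linear growth of $\nabla W^{(k)}$ inherited from $\mathbf{(H1)}$ and the finiteness of $m_2(\nu)$ (deduced above from $\mathbf{I}[\nu|\alpha]<+\infty$ and the log-Sobolev inequality for $\alpha$) are used to produce an $n$-uniform integrable majorant.
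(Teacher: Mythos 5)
Your proposal is correct and follows essentially the same route as the paper: write the explicit density of $\nu^{\otimes n}$ with respect to $\mu_{n}$, reduce to a single coordinate by exchangeability, recognize the gradient of the interaction term as (order-$(k-1)$) $U$-statistics whose mean is exactly $\mathcal{D}_{m}F(\nu,\cdot)$, and pass to the limit to obtain $\mathbf{I}_{W}[\nu]$. The only differences are cosmetic: you obtain the finite second moment via the log-Sobolev inequality for $\alpha$ and Donsker--Varadhan rather than the paper's direct use of the Lyapunov condition $\mathbf{(H2)}$, and your conditional bias--variance decomposition with the Hoeffding-type $O(1/n)$ bound makes explicit (and even quantifies) the limit interchange that the paper handles by a bare appeal to the law of large numbers for $U$-statistics as in \cref{tcl}.
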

\begin{proof}
For any probability measure $\nu$ on $\R^{d}$ such that $\mathbf{I}[\nu|\alpha]<+\infty$, by the Lyapunov condition $(\mathbf{H2})$ in \ref{Hypo1} on the potential $V$, we have:
\begin{equation}
c_{1}\int||x||^{2}d\nu\leq c_{2}+\mathbf{I}[\nu|\alpha]<+\infty.
\end{equation}
As the second order derivatives of $W^{(k)}$ are bounded by the condition $\mathbf{(H1)}$ in \ref{Hypo1} on its Hessian, $\nabla_{x_{j}}W^{(k)}$ has a linear increase. So $\nabla_{x_{j}}W^{(k)}\in L^{2}(\nu^{\otimes k})$. By the law of large numbers for independent and identically distributed sequences, we have successively:
\begin{align}
\frac{1}{n}\mathbf{I}[\nu^{\otimes n}|\mu_{n}]&=\frac{1}{4n}\int\bigg|\bigg|\nabla\log\bigg(
\frac{d\nu^{\otimes n}}{d\mu_{n}}\bigg)\bigg|\bigg|^{2}d\nu^{\otimes n}\\
&=\frac{1} {4n}\int\sum_{i=1}^{n}\bigg|\bigg|\nabla_{x_{i}}\log\bigg(
\frac{d\nu^{\otimes n}}{d\alpha^{\otimes n}}\bigg)+\sum_{k=2}^{N}\nabla_{x_{i} }U_{n}(W^{(k)})\bigg|\bigg|^{2}d\nu^{\otimes n}\nonumber\\
&=\frac{1}{4}\int\bigg|\bigg|\nabla\log\bigg(\frac{d\nu}{d\alpha}\bigg)(x_{1})+\sum_{k=2}^{N}\nabla_{x_{1} }U_{n}(W^{(k)})\bigg|\bigg|^{2}d\nu^{\otimes n}\nonumber\\
&\overset{n\to+\infty}{\longrightarrow}
\frac{1}{4}\int\bigg|\bigg|\nabla\log\bigg(\frac{d\nu}{d\alpha} \bigg)(y)+\sum_{k=2}^{N}\sum_{j=1}^{k}\int\nabla_{x_{j}}W^{(k)}(x_{1},\ldots,x_{j-1},y,x_{j+1},\ldots,x_{k}) 
\nu^{\otimes k-1}\bigg(\prod_{i=1,i\neq j}^{k}dx_{i}\bigg)\bigg|\bigg|^{2}\nu(dy )\nonumber\\
&=\mathbf{I}_{W}[\nu].\nonumber
\end{align}
\end{proof}
\noindent We recall the tensorisation property of relative entropy: The \cref{entropie} on entropy and tensor product allows us, in what follows, to show the exponential decreasing of mean-field entropy along the flow of solution distributions of the McKean-Vlasov equation associated with the particle system.
\begin{proposition}[Relative entropy and tensor product]\label{entropie}
Let $\prod_{i=1}^{N}\alpha_{i}$ and $Q$ respectively be a product probability measure and a probability measure defined on $E_{1}\times\cdots\times E_{N }$ a product of Polish spaces. Denoting $Q_{i}$ the marginal distribution of $x_{i}$ under $Q$, we have:
\begin{equation}
\mathbf{H}[Q|\prod_{i=1}^{N}\alpha_{i}]\geq\sum_{i=1}^{N}\mathbf{H}[Q_{i}|\alpha_{i}].
\end{equation}
\end{proposition}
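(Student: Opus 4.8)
This is the classical subadditivity (tensorization) property of relative entropy, and the plan is to reduce to the absolutely continuous case and then combine the Radon--Nikodym chain rule with the non-negativity of relative entropy. If $\mathbf{H}[Q\,|\,\bigotimes_{i=1}^{N}\alpha_{i}]=+\infty$ the inequality is trivial, so assume $Q\ll\bigotimes_{i=1}^{N}\alpha_{i}$ and write $q:=dQ/d\bigotimes_{i}\alpha_{i}$. Integrating out all coordinates but the $i$-th shows $Q_{i}\ll\alpha_{i}$, with density $q_{i}:=dQ_{i}/d\alpha_{i}$, and that $\bigotimes_{i=1}^{N}Q_{i}\ll\bigotimes_{i=1}^{N}\alpha_{i}$ with density $(x_{1},\dots,x_{N})\mapsto\prod_{i=1}^{N}q_{i}(x_{i})$. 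Since the marginalization maps are measurable and relative entropy does not increase under push-forward, each $\mathbf{H}[Q_{i}\,|\,\alpha_{i}]$ is finite; together with $s\log s\geq -e^{-1}$ (which bounds the negative parts of all integrands against the probability measures at hand), this guarantees that all the integrals below are well defined and may be split.

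\textbf{Key identity.} The heart of the argument is the decomposition
\begin{equation}
\mathbf{H}\Bigl[Q\,\Big|\,\bigotimes_{i=1}^{N}\alpha_{i}\Bigr]=\mathbf{H}\Bigl[Q\,\Big|\,\bigotimes_{i=1}^{N}Q_{i}\Bigr]+\sum_{i=1}^{N}\mathbf{H}[Q_{i}\,|\,\alpha_{i}].
\end{equation}
To obtain it I would write, $Q$-almost surely, $\dfrac{dQ}{d\bigotimes_{i}\alpha_{i}}=\dfrac{dQ}{d\bigotimes_{i}Q_{i}}\,\prod_{i=1}^{N}\dfrac{dQ_{i}}{d\alpha_{i}}(x_{i})$, take logarithms, and integrate against $Q$, using that $\mathbb{E}_{Q}[\log q_{i}(X_{i})]=\mathbb{E}_{Q_{i}}[\log q_{i}]=\mathbf{H}[Q_{i}\,|\,\alpha_{i}]$ because $Q_{i}$ is exactly the $i$-th marginal of $Q$. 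The term $\mathbf{H}[Q\,|\,\bigotimes_{i}Q_{i}]$ is the total correlation (multi-information) of $Q$ and is non-negative by Jensen's inequality applied to the strictly convex function $s\mapsto s\log s$; discarding it yields $\mathbf{H}[Q\,|\,\bigotimes_{i}\alpha_{i}]\geq\sum_{i=1}^{N}\mathbf{H}[Q_{i}\,|\,\alpha_{i}]$, which is the assertion. As a bonus this shows equality holds precisely when the coordinates are independent under $Q$.

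\textbf{An alternative route} that sidesteps densities altogether is the Donsker--Varadhan variational formula $\mathbf{H}[Q\,|\,P]=\sup\{\int f\,dQ-\log\int e^{f}\,dP\}$ over bounded measurable $f$: given bounded $f_{i}:E_{i}\to\R$, the separable test function $f(x):=\sum_{i=1}^{N}f_{i}(x_{i})$ gives $\int f\,dQ=\sum_{i}\int f_{i}\,dQ_{i}$ and, by the product structure of $\bigotimes_{i}\alpha_{i}$, $\log\int e^{f}\,d\bigotimes_{i}\alpha_{i}=\sum_{i}\log\int e^{f_{i}}\,d\alpha_{i}$; hence $\mathbf{H}[Q\,|\,\bigotimes_{i}\alpha_{i}]\geq\sum_{i}\bigl(\int f_{i}\,dQ_{i}-\log\int e^{f_{i}}\,d\alpha_{i}\bigr)$, and taking the supremum in each $f_{i}$ separately produces $\sum_{i}\mathbf{H}[Q_{i}\,|\,\alpha_{i}]$.

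\textbf{Main obstacle.} There is no substantive difficulty here; the only point requiring a line of care is the legitimacy of splitting the integral in the key identity (equivalently, the finiteness of each marginal entropy), which is dispatched by the push-forward monotonicity of relative entropy together with the elementary bound $s\log s\geq -e^{-1}$.
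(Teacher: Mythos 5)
Your proof is correct and complete, but it takes a genuinely different route from the paper. The paper disintegrates $Q$ through successive regular conditional distributions $Q^{i}(\cdot\,|\,x_{[1,i-1]})$, obtains the exact chain-rule identity
\begin{equation*}
\mathbf{H}\Bigl[Q\,\Big|\,\textstyle\prod_{i}\alpha_{i}\Bigr]=\sum_{i=1}^{N}\mathbb{E}_{Q}\Bigl[\mathbf{H}\bigl[Q^{i}(\cdot\,|\,x_{[1,i-1]})\,\big|\,\alpha_{i}\bigr]\Bigr],
\end{equation*}
and then applies Jensen's inequality to the convex functional $\nu\mapsto\mathbf{H}[\nu\,|\,\alpha_{i}]$ together with the observation that averaging the conditional law over $Q$ reproduces the marginal $Q_{i}$. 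You instead factor the Radon--Nikodym density through the product of marginals, obtaining $\mathbf{H}[Q\,|\,\bigotimes_{i}\alpha_{i}]=\mathbf{H}[Q\,|\,\bigotimes_{i}Q_{i}]+\sum_{i}\mathbf{H}[Q_{i}\,|\,\alpha_{i}]$ and discarding the non-negative total-correlation term. Both are classical; the paper's route localizes the slack in the inequality at the level of each conditional-vs-marginal comparison (and relies on the Polish-space hypothesis precisely to produce the regular conditional distributions), whereas yours isolates all of the slack in a single term $\mathbf{H}[Q\,|\,\bigotimes_{i}Q_{i}]$, which has the bonus of characterizing equality as independence under $Q$. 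Your Donsker--Varadhan alternative is also valid and arguably cleanest, since it avoids densities and integrability bookkeeping entirely. The one step you state rather than prove is the absolute continuity $Q\ll\bigotimes_{i}Q_{i}$ implicit in writing the key identity; it does hold here (because $Q(\{q_{i}(x_{i})=0\})=Q_{i}(\{q_{i}=0\})=0$ for each $i$, so $\prod_{i}q_{i}>0$ $Q$-a.s.), but it is worth a line.
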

\begin{proof}
See \cref{proof6} or \cite{Ref2}.
\end{proof}
\begin{proposition}[Relative entropy and Boltzmann measure]\label{bolz}
Let $\mu$ be a probability measure on a Polish space $E$ and $U:E\longrightarrow(-\infty,+\infty]$ be a measurable potential such that:
\begin{equation}
\int e^{-pU}d\mu<+\infty
\end{equation}
for some $p>1.$ Considering the Boltzmann probability measure $\mu_{U}:=\frac{e^{-U}}{C}d\mu$, if for some measure $\nu $, $\mathbf{H}[\nu|\mu_{U}]<+\infty$, we have successively:
\begin{enumerate}
    \item $\mathbf{H}[\nu|\mu]<+\infty$ and $U\in L^{1}(\nu)$.
    \item \begin{equation}
    \mathbf{H}[\nu|\mu_{U}]=\mathbf{H}[\nu|\mu]+\int Ud\nu+\log\int e^{-U}d\mu.
    \end{equation}
\end{enumerate}
\end{proposition}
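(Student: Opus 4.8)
The plan is to unwind the chain of Radon--Nikodym derivatives relating $\nu$, $\mu_U$ and $\mu$, collapse everything to a single pointwise identity between the corresponding log-densities, and then carry out the integrability bookkeeping; the only nonroutine step is the one where the strict inequality $p>1$ is consumed.

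First I would check that $C:=\int e^{-U}\,d\mu\in(0,+\infty)$. Finiteness follows from Jensen's inequality for the concave function $t\mapsto t^{1/p}$ on the probability space $(E,\mu)$, since $\int e^{-U}\,d\mu=\int\big(e^{-pU}\big)^{1/p}\,d\mu\leq\big(\int e^{-pU}\,d\mu\big)^{1/p}<+\infty$; positivity is part of the standing assumption that $\mu_U$ is a genuine probability measure. From $\mathbf{H}[\nu|\mu_U]<+\infty$ we get $\nu\ll\mu_U$, and since $\mu_U\ll\mu$ with $\tfrac{d\mu_U}{d\mu}=\tfrac{e^{-U}}{C}$ we get $\nu\ll\mu$ and the $\nu$-a.e.\ chain rule $\tfrac{d\nu}{d\mu}=\tfrac{d\nu}{d\mu_U}\cdot\tfrac{e^{-U}}{C}$. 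The set $\{U=+\infty\}$ is $\mu_U$-null, hence $\nu$-null, and $U>-\infty$ everywhere, so on a set of full $\nu$-measure all three densities are finite and strictly positive; taking logarithms yields the pointwise identity ($\nu$-a.e.)
\[
\log\frac{d\nu}{d\mu}=\log\frac{d\nu}{d\mu_U}-U-\log C .
\]

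Next I would observe that once $\int U^{-}\,d\nu<+\infty$ is known (with $U^{-}:=\max(-U,0)$), both claims drop out. Indeed $\big(\log\tfrac{d\nu}{d\mu_U}\big)^{-}\in L^{1}(\nu)$ for any pair of probability measures (since $-t\log t\leq e^{-1}$ on $(0,1)$), and the displayed identity gives $\big(\log\tfrac{d\nu}{d\mu}\big)^{+}\leq\big(\log\tfrac{d\nu}{d\mu_U}\big)^{+}+U^{-}+|\log C|$ with the right-hand side in $L^{1}(\nu)$; hence $\mathbf{H}[\nu|\mu]=\int\log\tfrac{d\nu}{d\mu}\,d\nu<+\infty$. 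Then $U=\log\tfrac{d\nu}{d\mu_U}-\log\tfrac{d\nu}{d\mu}-\log C$ exhibits $U$ as a difference of $L^{1}(\nu)$ functions, so $U\in L^{1}(\nu)$, which finishes \textbf{(i)}; integrating this identity against $\nu$ and rearranging gives $\mathbf{H}[\nu|\mu_U]=\mathbf{H}[\nu|\mu]+\int U\,d\nu+\log\int e^{-U}\,d\mu$, i.e.\ \textbf{(ii)}.

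So the whole proof reduces to bounding $\int U^{-}\,d\nu$, and here I would use the Gibbs variational inequality with respect to $\mu_U$: for measurable $g\geq0$ with $\int e^{g}\,d\mu_U<+\infty$ one has $\int g\,d\nu\leq\mathbf{H}[\nu|\mu_U]+\log\int e^{g}\,d\mu_U$ (a one-line consequence of Young's inequality $xy\leq x\log x-x+e^{y}$). Applying it with $g=\lambda U^{-}$ for a fixed $\lambda\in(0,p-1]$, one has $\int e^{\lambda U^{-}}\,d\mu_U=\tfrac1C\int e^{\lambda U^{-}-U}\,d\mu$, and $e^{\lambda U^{-}-U}=e^{-U}\leq1$ on $\{U\geq0\}$ while $e^{\lambda U^{-}-U}=e^{-(1+\lambda)U}\leq e^{-pU}$ on $\{U<0\}$ (using $1+\lambda\leq p$ and $U<0$); therefore $\int e^{\lambda U^{-}}\,d\mu_U\leq\tfrac1C\big(1+\int e^{-pU}\,d\mu\big)<+\infty$ and $\int U^{-}\,d\nu\leq\tfrac1\lambda\big(\mathbf{H}[\nu|\mu_U]+\log\int e^{\lambda U^{-}}\,d\mu_U\big)<+\infty$. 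This is the only delicate point, and it is exactly where a hypothesis strictly stronger than $e^{-U}\in L^{1}(\mu)$ is needed: the slack $p-1>0$ is spent in the exponent when transferring integrability from $\mu_U$ back to $\mu$. Nothing here uses anything about Polish spaces beyond the existence of Radon--Nikodym densities.
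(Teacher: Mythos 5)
Your proof is correct, and it takes a route that is structurally different from the paper's. The paper argues at the level of log-Laplace transforms: setting $\Lambda_{\mu}(f):=\log\int e^{f}\,d\mu$, it writes $\Lambda_{\mu_{U}}(f)=\Lambda_{\mu}(f-U)-\Lambda_{\mu}(-U)$, splits $\Lambda_{\mu}(f-U)\leq\frac{1}{p}\Lambda_{\mu}(-pU)+\frac{1}{q}\Lambda_{\mu}(qf)$ by H\"older with $q=\frac{p}{p-1}$, and then takes the Donsker--Varadhan supremum over bounded measurable $f$ to obtain $\mathbf{H}[\nu|\mu_{U}]\geq\frac{1}{q}\mathbf{H}[\nu|\mu]+\Lambda_{\mu}(-U)-\frac{1}{p}\Lambda_{\mu}(-pU)$, which yields the finiteness of $\mathbf{H}[\nu|\mu]$ in one stroke; after that it invokes the same pointwise identity $\log\frac{d\nu}{d\mu_{U}}=\log\frac{d\nu}{d\mu}+U+\Lambda_{\mu}(-U)$ that you use, to get $U\in L^{1}(\nu)$ and, upon integration, the identity (ii). You instead invoke the entropy--exponential-moment duality only once, for the single nonnegative function $\lambda U^{-}$ with $\lambda\in(0,p-1]$, to control $\int U^{-}\,d\nu$, and then let the chain rule of densities do the remaining bookkeeping. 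Both arguments spend the slack $p>1$, but in different places: in the H\"older exponents for the paper, in the bound $e^{(1+\lambda)|U|}\leq e^{p|U|}$ on $\{U<0\}$ for you. Your variant is somewhat more elementary (no supremum over all bounded test functions, only the Gibbs inequality for one function) and it makes explicit several points the paper leaves implicit, such as the finiteness of $\int e^{-U}\,d\mu$ via Jensen, the integrability of the negative parts of the log-densities, and the final integration step; the paper's version, in exchange, produces the quantitative lower bound $\mathbf{H}[\nu|\mu_{U}]\geq\frac{1}{q}\mathbf{H}[\nu|\mu]+\Lambda_{\mu}(-U)-\frac{1}{p}\Lambda_{\mu}(-pU)$, which carries slightly more information than bare finiteness.
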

\begin{proof}
See \cref{proof7} or \cite{Ref2}.
\end{proof}
\noindent\textbf{Functional and transportation inequalities.}
Functional inequalities are powerful tools to quantify the trend to equilibrium of Markov semigroups and have
a wide range of important applications to the concentration of measure phenomenon and hypercontractivity.
$\forall n$, we recall that $\mu_{n}(t):=\mathbb{P}\circ(X^{n}_{t})^{-1}$ and $\beta_{n}:=\rho_{LS}(\mu_{n})$.
\begin{theorem}[Transportation inequalities]\label{transport}
Under the assumptions in \ref{Hypo1}, we have
\begin{enumerate}
    \item 
    \begin{align}
    &\mathbf{H}[\mu_{n}(t)|\mu_{n}]\leq\mathbf{H}[\mu_{n}(0)|\mu_{n}]e^{-\beta_{n}\frac{t}{2}}=\mathbf{H}[\mu^{\bigotimes n}_{0}|\mu_{n}]e^{-\beta_{n}\frac{ t}{2}};\\
    &\rho_{LS}(\mu_{n})\mathbf{H}[\cdot|\mu_{n}]\leq 2\mathbf{I}[\cdot|\mu_{n}];\\
    &\rho_{LS}(\mu_{n})\mathcal{W}^{2}_{2}(\cdot,\mu_{n})\leq 2\mathbf{H}[\cdot|\mu_{n}].
    \end{align}
    \item $\exists$! $\mu_{\infty}\in\mathcal{M}_{1}(\R^{d})$ such that: (\cref{sanov}.\cref{Pfixunique})
    \begin{equation}
    \mu_{\infty}=\mathbf{argmin}\bigg\{\mathbf{H}_{W}[\nu], \nu\in\mathcal{M}_{1}(\R^{d} )\bigg\},
    \end{equation}
    with $\mathbf{H}_{W}$ the mean field entropy.
    \item $\rho_{LS}:=\underset{n\to+\infty}{\limsup}\textnormal{ }\rho_{LS}(\mu_{n})>0$ checks:
    \begin{align}
    \forall\nu\in\mathcal{M}_{1}(\R^{d}),\quad\rho_{LS}\mathbf{H}_{W}[\nu]\leq 2\mathbf{I}_{W}[\nu]\quad\textnormal{and}\quad\rho_{LS}\mathcal{W}^ {2}_{2}(\nu,\mu_{\infty})\leq 2\mathbf{H}_{W}[\nu].
    \end{align}
    We say that we have a nonlinear log-Sobolev inequality for the first inequality and a Talagrand transport inequality for the second.
\end{enumerate}
\end{theorem}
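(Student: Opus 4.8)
The statement splits into three parts, which I would prove in order. For Part (i), the middle inequality $\rho_{LS}(\mu_n)\mathbf{H}[\cdot|\mu_n]\leq 2\mathbf{I}[\cdot|\mu_n]$ is just the dual (integral) form of the logarithmic Sobolev inequality for $\mu_n$ granted by $\mathbf{(H5)}$: substitute $\varphi^2=d\nu/d\mu_n$ into $\rho_{LS}(\mu_n)\mathbf{Ent}_{\mu_n}[\varphi^2]\leq 2\mathbb{E}_{\mu_n}[\|\nabla\varphi\|^2]$ and recognise $\mathbf{Ent}_{\mu_n}[\varphi^2]=\mathbf{H}[\nu|\mu_n]$, $\mathbb{E}_{\mu_n}[\|\nabla\varphi\|^2]=\mathbf{I}[\nu|\mu_n]$. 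For the first inequality I would use the entropy–production identity $-\tfrac{d}{dt}\mathbf{H}[\mu_n(t)|\mu_n]=\mathbf{I}[\mu_n(t)|\mu_n]$ along the Langevin flow of $\mathcal{L}_n$ (legitimate since $\nabla^2 H_n$ is bounded below by $\mathbf{(H1)}$ and the Lyapunov condition $\mathbf{(H2)}$ excludes explosion), feed in the dual log-Sobolev inequality to get $\tfrac{d}{dt}\mathbf{H}[\mu_n(t)|\mu_n]\leq-\tfrac{\beta_n}{2}\mathbf{H}[\mu_n(t)|\mu_n]$, and close by Grönwall — the bound being vacuous when $\mathbf{H}[\mu_0^{\otimes n}|\mu_n]=+\infty$ and otherwise finite by the finite-$n$ identity in the proof of \cref{Cesaro1}. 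The last inequality is the $T_2$–Talagrand inequality for $\mu_n$, which I would deduce from its logarithmic Sobolev inequality via the Otto–Villani theorem with the same constant $\rho_{LS}(\mu_n)$.

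For Part (ii), by \cref{infcompactHW} the functional $\mathbf{H}_W$ is a good rate function, hence inf-compact, so it attains its infimum (equal to $0$ by construction) at some $\mu_\infty\in\mathcal{M}_1(\R^d)$; the Euler–Lagrange computation recorded just before \cref{Pfixunique} shows every minimiser is a fixed point of $\Gamma$, and \cref{Pfixunique} (using $\mathbf{(H6)}$) gives uniqueness of that fixed point. I would also note that $\mathbf{H}[\mu_\infty|\alpha]<+\infty$, which together with the exponential moment bound $\mathbf{(H4)}$ on $\alpha$ places $\mu_\infty\in\mathcal{M}^2_1(\R^d)$, and that $\mathbf{H}_W[\mu_\infty]=0$.

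For Part (iii), fix $\nu$; if $\mathbf{H}_W[\nu]=+\infty$ both inequalities are trivial, so I may assume $\mathbf{H}_W[\nu]<+\infty$, whence $\mathbf{H}[\nu|\alpha]<+\infty$ and $\nu\in\mathcal{M}^2_1(\R^d)$. Pass to a subsequence $(n_\ell)$ along which $\rho_{LS}(\mu_{n_\ell})\to\rho_{LS}=\limsup_n\rho_{LS}(\mu_n)>0$. Applying the dual log-Sobolev inequality of Part (i) to $\nu^{\otimes n}$ and dividing by $n$ gives $\rho_{LS}(\mu_n)\tfrac1n\mathbf{H}[\nu^{\otimes n}|\mu_n]\leq 2\tfrac1n\mathbf{I}[\nu^{\otimes n}|\mu_n]$; if $\mathbf{I}_W[\nu]=+\infty$ there is nothing to prove, and otherwise the Hessian bound $\mathbf{(H1)}$ together with $\nu\in\mathcal{M}^2_1$ forces $\mathbf{I}[\nu|\alpha]<+\infty$, so I can pass to the limit along $(n_\ell)$ using \cref{Cesaro1} on the left and \cref{Cesaro2} on the right to obtain $\rho_{LS}\mathbf{H}_W[\nu]\leq 2\mathbf{I}_W[\nu]$. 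For the Talagrand bound, apply the particle $T_2$ inequality both to $\nu^{\otimes n}$ and to $\mu_\infty^{\otimes n}$: the latter gives $\rho_{LS}(\mu_n)\tfrac1n\mathcal{W}^2_2(\mu_\infty^{\otimes n},\mu_n)\leq\tfrac2n\mathbf{H}[\mu_\infty^{\otimes n}|\mu_n]\to 2\mathbf{H}_W[\mu_\infty]=0$ by \cref{Cesaro1}, hence $\tfrac1n\mathcal{W}^2_2(\mu_\infty^{\otimes n},\mu_n)\to 0$ because $\limsup_n\rho_{LS}(\mu_n)>0$; combined with the tensorisation identity $\mathcal{W}^2_2(\nu^{\otimes n},\mu_\infty^{\otimes n})=n\,\mathcal{W}^2_2(\nu,\mu_\infty)$ and the triangle inequality for $\mathcal{W}_2$ this yields $\tfrac1n\mathcal{W}^2_2(\nu^{\otimes n},\mu_n)\to\mathcal{W}^2_2(\nu,\mu_\infty)$, and dividing the particle $T_2$ inequality for $\nu^{\otimes n}$ by $n$ and letting $\ell\to\infty$ (using \cref{Cesaro1} on the right) gives $\rho_{LS}\mathcal{W}^2_2(\nu,\mu_\infty)\leq 2\mathbf{H}_W[\nu]$.

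\noindent\textbf{Main difficulty.} The one genuinely non-routine step is $\tfrac1n\mathcal{W}^2_2(\mu_\infty^{\otimes n},\mu_n)\to 0$ in Part (iii): it is precisely here that the \emph{uniformity in $n$} of the log-Sobolev (hence Talagrand) constant of $\mu_n$, i.e.\ assumption $\mathbf{(H5)}$, is indispensable, and it must be paired with \cref{Cesaro1} evaluated at $\nu=\mu_\infty$ — which presupposes Part (ii), namely that $\mu_\infty$ is the genuine minimiser so that $\mathbf{H}_W[\mu_\infty]=0$. A secondary care-point is that, since $\rho_{LS}$ is only a $\limsup$, one must first extract the subsequence $(n_\ell)$ before passing to the limit in products of the form $\rho_{LS}(\mu_n)a_n\leq b_n$.
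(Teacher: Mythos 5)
Your proof is correct, and Parts~(i), (ii), and the nonlinear log-Sobolev half of Part~(iii) follow essentially the same route as the paper: Otto--Villani for the finite-$n$ inequalities, inf-compactness plus the fixed-point contraction for existence and uniqueness of $\mu_\infty$, and the Ces\`aro-tensorial limits of \cref{Cesaro1}--\cref{Cesaro2} along a subsequence realizing the $\limsup$ to pass to the mean-field inequality. Where you genuinely diverge is in the derivation of the nonlinear Talagrand inequality. The paper proceeds through the marginal bound $\mathcal{W}_2^2(\nu^{\otimes n},\mu_n)\geq n\,\mathcal{W}_2^2(\nu,\mu_n^{(1)})$ (by projecting any coupling onto its coordinate marginals and using exchangeability of $\mu_n$), then invokes weak convergence $\mu_n^{(1)}\to\mu_\infty$ (obtained from the large deviation principle and uniqueness of the minimizer) and lower semi-continuity of $\mathcal{W}_2$ to pass to the limit. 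You instead apply the finite-$n$ $T_2$ inequality \emph{twice} --- once at $\nu^{\otimes n}$ and once at $\mu_\infty^{\otimes n}$ --- exploit the normalization $\mathbf{H}_W[\mu_\infty]=0$ to force $\tfrac1n\mathcal{W}_2^2(\mu_\infty^{\otimes n},\mu_n)\to 0$, and then use the exact tensorization $\mathcal{W}_2^2(\nu^{\otimes n},\mu_\infty^{\otimes n})=n\,\mathcal{W}_2^2(\nu,\mu_\infty)$ plus the reverse triangle inequality to recover $\liminf_n\tfrac1n\mathcal{W}_2^2(\nu^{\otimes n},\mu_n)\geq\mathcal{W}_2^2(\nu,\mu_\infty)$. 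Your route has the advantage of staying entirely inside Wasserstein--Talagrand algebra and of making no appeal to weak convergence of the stationary marginals or to the lower semi-continuity lemma; the cost is that it requires $\mathbf{H}_W[\mu_\infty]=0$ exactly (which is true by construction once Part~(ii) is in hand, but must be stated) together with the product-measure tensorization identity for $\mathcal{W}_2$. Your closing observation --- that one must extract the subsequence realizing $\rho_{LS}=\limsup_n\rho_{LS}(\mu_n)$ \emph{before} passing to the limit in the product inequalities --- is a point the paper's sketch glosses over, and it is correct and worth recording.
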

\begin{proof}[Proof of \cref{transport}]\label{sketch}
The logarithmic Sobolev inequality of constant $\beta_{n}:=\rho_{LS}(\mu_{n})$ for $\mu_{n}$ given by $\mathbf{(H5)}$ in \ref{Hypo1}, the large deviations principle ( Sanov's theorem) in \cref{sanov} and the uniqueness of the minimum argument ($\mu_{\infty}$) in \cref{Pfixunique} of the mean field entropy ensure that we have successively:
\begin{itemize}
    \item $\forall\mu$ such as  $\mathbf{H}[\mu|\alpha]<+\infty,$ (\cref{Fisher}.\cref{Cesaro1}.\cref{Cesaro2})
    \begin{align}
    \frac{1}{n}\mathbf{H}[\mu^{\bigotimes n}|\mu_{n}]\overset{n\to+\infty}{\longrightarrow}\mathbf{H}_{W}[\mu]\quad\textnormal{and}\quad
    \frac{1}{n}\mathbf{I }[\mu^{\bigotimes n}|\mu_{n}]\overset{n\to+\infty}{\longrightarrow}\mathbf{I}_{W}[\mu].
    \end{align}
    \item Equivalence between Sobolev's inequality, exponential decay of entropy and Talagrand's second inequality for Gibbs measures (Otto-Villani,\cite{OV00},\cite{Villani}) 
    \begin{align}
    \beta_{n}\mathbf{H}[\cdot|\mu_{n}]\leq 2\mathbf{I}[\cdot|\mu_{n}]\quad\textnormal{and}\quad 
    \beta_{n}\mathcal{W}^{2}_{2}(\cdot,\mu_{n})\leq 2\mathbf{H}[\cdot|\mu_{n}].
    \end{align}
    \item \textit{Chaos propagation.} (\cite[\cite{kacprogramm}]{propachaos1}) Denoting $(\mu_{t})_{t\geq 0}$ the flow of solution distributions of the McKean-Vlasov equation associated with the particle system defined by the $U-$ statistic and the confinement potential, if $\mu_{0}\in\mathcal{M}^{2}_{1}(\R^{d})$, then for any non-empty set $ I\subset\N^{*}$ of finite cardinality, $\mathbb{P}_{(X^{n}_{t}(i))_{i\in I}}$ converges in metric $L^{2}-$Wasserstein to $\mu^{\bigotimes\mathbf{Card}(I)}_{t}$ (arrow $\mathbf{(1)}$ in \cref{fig1}).
    \item Denoting $\mu^{(i)}_{n}$ the i-th marginal distribution of $\mu_{n}$, we have by uniqueness and LDP (arrow $\mathbf{(3)}$ in \cref{fig1})
    \begin{equation}
    \mu^{(i)}_ {n}\overset{\mathcal{L}}{\longrightarrow}\mu_{\infty}.
    \end{equation}
    \item By symmetry of $\mu_{n}$, all its marginal distributions are identical and as 
    \begin{equation}
    \mathcal{W}^{2}_{2}(\mu^{\bigotimes n},\mu_{n })\geq\sum_{i=1}^{n}\mathcal{W}^{2}_{2}(\mu^{(i)}_{n},\mu)=n\mathcal{ W}^{2}_{2}(\mu^{(1)}_{n},\mu),
    \end{equation}
    we deduce: 
    \begin{equation}
    n\beta_{n}\mathcal{W}^{2}_{2}(\mu^{(1)}_{n},\mu)\leq 2\mathbf{H}[ \mu^{\bigotimes n}|\mu_{n}].\end{equation}
\end{itemize}
By equivalence of the logarithmic Sobolev inequality to the exponential decrease of entropy along the semigroup, we have (arrow $\mathbf{(2)}$ in \cref{fig1})
\begin{equation}
\mathbf{H}[\mu_{n}(t)|\mu_{n}]\leq\mathbf{H}[\mu_{n}(0)|\mu_{n}]e^{-\beta_{n}\frac{t}{2}}=\mathbf{H}[\mu^{\bigotimes n}_{0}|\mu_{n}]e^{-\beta_{n}\frac{ t}{2}},\quad\mu_{n}(t):=\mathbb{P}\circ (X^{n}_{t})^{-1}.
\end{equation}
And by lower semi-continuity of the Wasserstein metric, we deduce the \textit{nonlinear $T_{2}-$Talagrand inequality} given by (arrow $\mathbf{(4)}$ in \cref{fig1})
\begin{equation}
\rho_{LS}\mathcal{W}^{2}_{2}(\mu,\mu_{\infty})\leq\rho_{LS}\liminf_{n\to+\infty}\mathcal{W} ^{2}_{2}(\mu,\mu^{(1)}_{n})\leq 2\mathbf{H}_{W}[\mu],\quad\rho_{LS}: =\limsup_{n\to+\infty}\beta_{n}>0.
\end{equation}
We also have the \textit{nonlinear logarithmic Sobolev inequality} given by (arrow $\mathbf{(4)}$ in \cref{fig1})
\begin{equation}
\rho_{LS}\mathbf{H}_{W}[\cdot]\leq 2\mathbf{I}_{W}[\cdot].
\end{equation}
\end{proof}

\noindent\textbf{In Kinetic case.}\label{prelvfp} 
We consider $\mathcal{H}_{n}:=\Delta_{x}-\nabla_{x}S_{1,n}\cdot\nabla=\mathcal{L}_{n}$ the elliptical generator associated with $\mu_{1,n}=\mu_{n}$ . 
\begin{remark}\label{remarkhormanderform}
$\mathcal{L}_{Z,n}$ admits the following Hormander form
\begin{equation}\label{hormanderkfp}
\mathcal{L}_{Z,n}=X_{0}+Y+\sum_{i=1}^{n}\sum_{j=1}^{d}X^{2}_{i,j}, \quad X_{i,j}=\frac{\partial}{\partial v_{i,j}}\quad X_{0}=-v\cdot\nabla_{v}\quad Y=\nabla S_{1 ,n}\cdot\nabla_{v}-v\cdot\nabla_{x}
\end{equation}
The family 
\begin{equation}
\bigg\{X_{1,1},\ldots,X_{i,j},\ldots,X_{i,j},\cdots,X_{n,d},[Y,X_{ 1,1}],\ldots,[Y,X_{i,j}],\ldots,[Y,X_{n,d}]\bigg\}
\end{equation}
form a basis of $\R^{2nd} $ at any point. Which implies by Hormander's theorem that $\mathcal{L}_{Z,n}$ is hypoelliptic. Moreover, $\mathcal{L}_{Z,n}$ is non-symmetric, i.e. in $L^{2}(\mu^{n}_{Z})$, we have :
\begin{equation}
\mathcal{L}^{*}_{Z,n}=\mathcal{L}_{Z,n}-2Y\Longrightarrow(\mathcal{L}^{*}_{Z,n},\mathcal{D}(\mathcal {L}^{*}_{Z,n})) \quad\textnormal{is not a closed extension of}\quad (\mathcal{L}_{Z,n},\mathcal{D}(\mathcal{L }_{Z,n})).
\end{equation}
\end{remark}
\noindent The following known lemma is a key to the Lyapunov type conditions. We include its simple proof for completeness.
\begin{proposition}[Lemma.8 in \cite{Ref3}]\label{prelvfp1}
 For any function $\varphi\in\mathcal{C}^2(\R^{nd})$ strictly positive ($\varphi>0$), we have
\begin{equation}
\forall\psi\in\mathcal{H}^{1}(\mu_{1,n}),\quad\int-\frac{\mathcal{H}_{n}\varphi}{\varphi}\psi^2d\mu_{1,n}\leq\int|\nabla\psi|^2d\mu_{1,n}.
\end{equation}
\end{proposition}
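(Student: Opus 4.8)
The plan is to turn the claimed bound into the elementary pointwise inequality $2\,a\cdot b-|b|^2\leq|a|^2$ by invoking the integration-by-parts (Dirichlet form) identity for the reversible generator $\mathcal{H}_n$. Since $\mu_{1,n}(dx)=\frac{1}{C_{1,n}}e^{-S_{1,n}(x)}dx$ and $\mathcal{H}_n=\Delta_x-\nabla_x S_{1,n}\cdot\nabla$, the operator $\mathcal{H}_n$ is symmetric in $L^2(\mu_{1,n})$ with Dirichlet form $\int(-\mathcal{H}_n f)\,g\,d\mu_{1,n}=\int\nabla f\cdot\nabla g\,d\mu_{1,n}$, valid for instance when $f\in\mathcal{C}^2(\R^{nd})$ and $g\in\mathcal{C}^2_c(\R^{nd})$.

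First I would prove the inequality for $\psi\in\mathcal{C}^{\infty}_c(\R^{nd})$. Fix such a $\psi$ with compact support $K$. Since $\varphi$ is continuous and strictly positive, $\inf_K\varphi>0$, so $g:=\psi^2/\varphi$ is $\mathcal{C}^2$ with compact support and is an admissible test function. Applying the identity above with $f=\varphi$,
\begin{equation}
\int-\frac{\mathcal{H}_n\varphi}{\varphi}\,\psi^2\,d\mu_{1,n}=\int(-\mathcal{H}_n\varphi)\,\frac{\psi^2}{\varphi}\,d\mu_{1,n}=\int\nabla\varphi\cdot\nabla\!\Big(\frac{\psi^2}{\varphi}\Big)d\mu_{1,n}.
\end{equation}
Since $\nabla(\psi^2/\varphi)=\dfrac{2\psi\,\nabla\psi}{\varphi}-\dfrac{\psi^2\,\nabla\varphi}{\varphi^2}$, setting $w:=\dfrac{\psi\,\nabla\varphi}{\varphi}$ the integrand equals $2\,\nabla\psi\cdot w-|w|^2$, which is $\leq|\nabla\psi|^2$ pointwise because it is $|\nabla\psi|^2-|\nabla\psi-w|^2$. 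Integrating against $\mu_{1,n}$ gives the claimed estimate for every $\psi\in\mathcal{C}^{\infty}_c$.

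Finally I would extend the inequality to all $\psi\in\mathcal{H}^1(\mu_{1,n})$ by density: pick $\psi_m\in\mathcal{C}^{\infty}_c(\R^{nd})$ with $\psi_m\to\psi$ and $\nabla\psi_m\to\nabla\psi$ in $L^2(\mu_{1,n})$ (and, along a subsequence, $\psi_m\to\psi$ $\mu_{1,n}$-a.e.), so that the right-hand sides converge, while the left-hand side is handled by Fatou's lemma applied to the nonnegative functions $\big(-\mathcal{H}_n\varphi/\varphi\big)^+\psi_m^2$ together with the already-proven bound on the $\psi_m$. The only genuinely delicate point is this limiting step and the admissibility of $\psi^2/\varphi$ as a test function when $\varphi$ has no lower bound or growth control (in the Lyapunov regime used later one has $-\mathcal{H}_n\varphi/\varphi$ bounded below, which makes the Fatou argument immediate); everything else is the one-line Young inequality above, so the writeup would essentially consist of making the integration by parts and this density passage precise, with the $\mathcal{C}^{\infty}_c$ computation as the heart of the matter.
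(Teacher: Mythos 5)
Your argument is correct and is essentially the paper's own proof: integration by parts against $\mu_{1,n}$ to produce $\int\langle\nabla\varphi,\nabla(\psi^2/\varphi)\rangle d\mu_{1,n}$, expansion of the gradient, and the completing-the-square (Young) inequality $2\psi\,\nabla\psi\cdot\frac{\nabla\varphi}{\varphi}\leq\frac{\psi^2|\nabla\varphi|^2}{\varphi^2}+|\nabla\psi|^2$. The only difference is that you make explicit the restriction to $\mathcal{C}^\infty_c$ test functions and the density/Fatou passage to general $\psi\in\mathcal{H}^1(\mu_{1,n})$, a point the paper's short proof leaves implicit.
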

\begin{proof}[Proof of \cref{prelvfp1}]
 Indeed, by integrating by parts, we successively obtain
\begin{align}
\int-\frac{\mathcal{H}_{n}\varphi}{\varphi}\psi^2d\mu_{1,n}&\leq\int\bigg\langle\nabla\varphi,\nabla\frac {\psi^2}{\varphi}\bigg\rangle d\mu_{1,n}\\ 
&\leq\int\bigg\langle\nabla\varphi,\frac{2\psi\nabla\psi}{\varphi} -\frac{\psi^2\nabla\varphi}{\varphi^2}\bigg\rangle d\mu_{1,n}\nonumber\\ 
&\leq\int|\nabla\psi|^2d\mu_{1,n} .\nonumber
\end{align}
And this last inequality follows from the inequality
\begin{equation}
\bigg\langle 2\psi\nabla\psi,\frac{\nabla\varphi}{\varphi}\bigg\rangle\leq\frac{\psi^2|\nabla\varphi|^2}{\varphi^ 2}+|\nabla\psi|^2.
\end{equation}   
\end{proof}
\noindent This second \cref{prelvfp2} is the heart of the proof of \cref{theo3}: this proposition is inspired by \cite[Lemma.10]{Ref3} for the two-body interaction. It uses Lyapunov conditions, yet well know for being highly dimensional, but at the marginal level, thus providing results independent of the number of particles.

\begin{proposition}\label{prelvfp2}
Under the conditions in \ref{Hypo2} giving $\mathbf{UPI}$, there are two constants $C_{1}$ and $C_{2}$ depending on $N,K,K_{1},K_{2}$ and $d$ (dimension of $\R^{d}$) and such that
\begin{equation}
\forall\psi\in\mathcal{H}^1(\mu_{1,n}),\quad\int||\nabla^2 V(x_{i})||^2_{\textbf{op} }\psi^2d\mu_{1,n}\leq C_{1}\int|\nabla_{x}\psi|^2d\mu_{1,n}+C_{2}\int\psi^2d\mu_{1,n}.
\end{equation}  
\end{proposition}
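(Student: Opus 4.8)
The plan is to reduce the Hessian bound to a gradient bound via the Lyapunov inequality \textbf{VFP2}, and then to extract that gradient bound from \cref{prelvfp1} by feeding it a suitable, block-localized test function. Since $\|\nabla^{2}V(x_i)\|_{\mathbf{op}}^{2}\le(K_1\|\nabla V(x_i)\|+K_2)^{2}\le 2K_1^{2}\|\nabla V(x_i)\|^{2}+2K_2^{2}$, it suffices to prove that, for constants $C_1',C_2'$ not depending on $n$,
\begin{equation*}
\int\|\nabla V(x_i)\|^{2}\psi^{2}\,d\mu_{1,n}\le C_1'\int|\nabla_x\psi|^{2}\,d\mu_{1,n}+C_2'\int\psi^{2}\,d\mu_{1,n}.
\end{equation*}
To this end I would apply \cref{prelvfp1} with $\varphi:=\exp\bigl(\lambda V(x_i)\bigr)$ for a fixed $\lambda\in(0,1)$ (say $\lambda=\tfrac12$): this $\varphi$ is $\mathcal{C}^{2}$ and strictly positive since $V\in\mathcal{C}^{2}$, and it depends on the $i$-th block only, which keeps the whole computation local.

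Writing $\log\varphi=\lambda V(x_i)$ and setting $R_i:=\nabla_{x_i}S_{1,n}-\nabla V(x_i)=n\sum_{k=2}^{N}\nabla_{x_i}U_n(W^{(k)})$ for the interaction part of the $i$-th block-gradient, a direct computation gives
\begin{equation*}
-\frac{\mathcal{H}_n\varphi}{\varphi}=-\lambda\,\Delta V(x_i)+\lambda(1-\lambda)\|\nabla V(x_i)\|^{2}+\lambda\langle R_i,\nabla V(x_i)\rangle.
\end{equation*}
As $\lambda(1-\lambda)>0$, \cref{prelvfp1} then gives that $\lambda(1-\lambda)\int\|\nabla V(x_i)\|^{2}\psi^{2}\,d\mu_{1,n}$ is bounded above by $\int|\nabla_x\psi|^{2}\,d\mu_{1,n}+\lambda\int|\Delta V(x_i)|\psi^{2}\,d\mu_{1,n}+\lambda\int\|R_i\|\,\|\nabla V(x_i)\|\psi^{2}\,d\mu_{1,n}$, and it remains to absorb the last two integrals.

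For the Laplacian term I would use $|\Delta V(x_i)|=|\trace\nabla^{2}V(x_i)|\le d\,\|\nabla^{2}V(x_i)\|_{\mathbf{op}}\le d\bigl(K_1\|\nabla V(x_i)\|+K_2\bigr)$ by \textbf{VFP2}. For $R_i$ the key point, the one responsible for uniformity in $n$, is the combinatorics of $U$-statistics together with \textbf{VFP1}: among the $A_n^{k}=|I_n^{k}|$ arrangements in $I_n^{k}$ only $k\,A_{n-1}^{k-1}$ contain a prescribed index $i$, so $\|\nabla_{x_i}U_n(W^{(k)})\|\le\frac{k\,A_{n-1}^{k-1}}{A_n^{k}}K=\frac{kK}{n}$, whence $\|R_i\|\le K\sum_{k=2}^{N}k=:C(N,K)$, a constant independent of $n$. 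After inserting these two estimates, every term on the right-hand side is either a multiple of $\int\psi^{2}\,d\mu_{1,n}$ or of the form $(\text{const})\cdot\int\|\nabla V(x_i)\|\psi^{2}\,d\mu_{1,n}$; one application of Young's inequality splits the latter into $\tfrac{\lambda(1-\lambda)}{2}\int\|\nabla V(x_i)\|^{2}\psi^{2}\,d\mu_{1,n}$ plus a multiple of $\int\psi^{2}\,d\mu_{1,n}$, the first piece gets absorbed on the left, and dividing through yields the displayed gradient estimate with constants depending only on $N,K,K_1,K_2,d$. Multiplying back by $2K_1^{2}$ and adding the $2K_2^{2}\int\psi^{2}\,d\mu_{1,n}$ term gives the asserted inequality.

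I expect the uniform-in-$n$ control to be the only genuine difficulty: one must ensure that the mean-field scaling $n\,U_n(W^{(k)})$ inside $S_{1,n}$ does not inflate the block-gradient $R_i$, which is precisely where the fact that a fixed particle sits in only an $O(1/n)$ fraction of the $k$-tuples meets the Lipschitz hypothesis \textbf{VFP1}; note that the boundedness of the Hessians of the $W^{(k)}$ is not needed here. The remaining point is routine: \cref{prelvfp1} and the integrations by parts behind it are first applied to $\psi\in\mathcal{C}^{\infty}_c(\R^{nd})$ and then extended to all $\psi\in\mathcal{H}^{1}(\mu_{1,n})$ by density, which is legitimate since both sides of the final inequality are continuous in the $\mathcal{H}^{1}(\mu_{1,n})$-norm.
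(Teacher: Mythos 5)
Your argument is correct and is essentially the paper's proof: the same Lyapunov test function $\varphi=e^{cV(x_i)}$ fed into \cref{prelvfp1}, the same reliance on \textbf{VFP2} to turn a Hessian bound into a gradient bound, the same exploitation of the $U$-statistic scaling to keep $R_i$ of size $O(1)$, and Young's inequality to absorb the linear $\|\nabla V\|$ terms. The only cosmetic difference is in the bookkeeping: the paper first establishes the composite inequality $\|\nabla^{2}V\|_{\mathbf{op}}^{2}\le\eta_{1}\bigl((1-\gamma)\|\nabla V\|^{2}-\Delta V\bigr)+\eta_{2}$ so that the $\Delta V$ term is carried directly by $-\mathcal{H}_n\varphi/\varphi$ and never has to be bounded separately, whereas you split off $\Delta V$ via $|\Delta V|\le d\,\|\nabla^{2}V\|_{\mathbf{op}}\le d(K_1\|\nabla V\|+K_2)$ and absorb it with a second Young step; both are fine and lead to constants of the same shape. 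One small point worth flagging: your combinatorial count for $R_i$ gives $\|n\nabla_{x_i}U_n(W^{(k)})\|\le kK$ (hence $\|R_i\|\le K\sum_{k=2}^N k$), which keeps the factor $k$ that the paper's stated bound $(N-1)K$ appears to drop; your version is the careful one, and the discrepancy is immaterial since both are $O(1)$ in $n$, which is all the proposition needs.
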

\begin{proof}[Proof of \cref{prelvfp2}]
 This lemma follows from the Lyapunov property, from the particular form of the invariant measure generator\footnote{We have $$\mathcal{H}_{n}=\sum_{i=1}^{n}\mathcal{T}_{i},\quad\mathcal{T}_{i}:=\Delta_{x_{i}}-\nabla V(x_{i})\cdot\nabla_{x_{i}}-n\sum_{k=2}^{N}\nabla_{x_{i}}U_{n}(W^{(k)})\cdot\nabla_{x_{i}}.$$} $\mu_{1,n}$ and from the previous \cref{prelvfp1}. Indeed, we have:
    \begin{enumerate}
        \item\begin{align} ||\nabla^2V||^2_{\textbf{op}}\leq\eta_{1}\bigg((1-\gamma)||\nabla V ||^2-\Delta V\bigg)+\eta_{2},\\ \eta_{1}:=5K^2_{1}\quad\eta_{2}:=4K^2_{2}+\frac{25K^4_1d^2}{4}\quad\gamma:=\frac{1}{5}.\end{align}
        \item Since the interactions are Lipschitz, we know that $\forall k\in\{2,\ldots,N\}\exists K^{(k)}$ such that $||\nabla W^{(k) }||\leq K^{(k)}$.\newline Let $K:=\max\{K^{(k)},\quad k=2,\ldots,N\}$. It follows that
        \begin{equation}
        -n\sum_{k=2}^{N}\nabla_{x_{i}}U_{n}(W^{(k)})\cdot\nabla V(x_{i})\leq (N- 1)K|\nabla V|(x_{i})\leq(N-1)\bigg(\frac{K^2}{2\gamma}+\frac{\gamma}{2}|\nabla V |^2(x_{i})\bigg).
        \end{equation}
        But for $\varphi(x):=e^{\frac{\gamma}{2}V(x_{i})}$, we have
        \begin{equation}
        \frac{\mathcal{H}_{n}\varphi}{\varphi}=\frac{\mathcal{T}_{i}\varphi}{\varphi}=\frac{\gamma}{2}\bigg(\Delta V(x_{i})+(\frac{\gamma}{2}-1)|\nabla V|^2(x_{i})-n\sum_{k=2}^{N }\nabla_{x_{i}}U_{n}(W^{(k)})\cdot\nabla V(x_{i})\bigg).
        \end{equation}
        Thereby
        \begin{equation}
        2\frac{\mathcal{H}_{n}\varphi}{\gamma\varphi}\leq\Delta V(x_{i})+(\frac{N\gamma}{2}-1)||\nabla V||^2(x_{i})+\frac{(N-1)K^2}{2\gamma}.
        \end{equation}
        Moreover, we have
        \begin{equation}
            (1-\gamma)||\nabla V||^{2}(x_{i})-\Delta V(x_{i})\leq-2\frac{\mathcal{H}_{n}\varphi}{\gamma\varphi}+\frac{(N-1)K^2}{2\gamma}.
        \end{equation}
        Therefore, by the inequality obtained in \textbf{(i)},
        \begin{equation}
            ||\nabla^{2}V(x_{i})||^{2}_{\textbf{op}}\leq\eta_{1}\bigg(-2\frac{\mathcal{H}_{n}\varphi}{\gamma\varphi}+\frac{(N-1)K^2}{2\gamma}\bigg)+\eta_{2}
        \end{equation}
        Integrating with respect to $\psi^{2}d\mu_{1,n}$, we obtain
        \begin{equation}
            \int ||\nabla^{2}V(x_{i})||^{2}_{\textbf{op}}\psi^{2}d\mu_{1,n}\leq\frac{2\eta_{1}}{\gamma}\int-\frac{\mathcal{H}_{n}\varphi}{\varphi}\psi^2d\mu_{1,n}+\bigg(\eta_{2}+\frac{(N-1)K^2}{2\gamma}\eta_{1}\bigg)\int\psi^2d\mu_{1,n}.
        \end{equation}
        And we conclude by the previous \cref{prelvfp1} that
        \begin{equation}
        \int||\nabla^2V(x_{i})||^2_{\textbf{op}}\psi^2d\mu_{1,n}\leq C_{1}\int|\nabla_{x}\psi| ^2d\mu_{1,n}+C_{2}\int\psi^2d\mu_{1,n},
        \end{equation}
        where $C_{1}=\frac{2\eta_{1}}{\gamma}$ and $C_{2}=\eta_{2}+\frac{(N-1)K^2}{2\gamma}\eta_{1}$.
    \end{enumerate}
   
\end{proof}

\section{Proofs of Main Theorems}\label{proofsmain}
\begin{proof}[Proof of \cref{theo1}]
Indeed, we have the inequality (\cref{entropie}) 
\begin{equation}\frac{1}{n}\mathbf{H}[\mu_{n}(t)|\alpha^{\bigotimes n}]\geq\mathbf{H}[\mu^{(1)}_{n}(t)|\alpha]
\end{equation} 
and by lower semi-continuity of relative entropy and propagation of chaos, 
\begin{equation}\underset{n\to+\infty}{\liminf}\quad\mathbf{H}[\mu^{(1)}_{n}(t)|\alpha]\geq\mathbf{H}[\mu_{t}|\alpha]\quad.
\end{equation} 
On the other hand, we have (\cref{LDP}.\cref{transport}.\cref{cesaro})
\begin{align}
\frac{1}{n}\mathbf{H}[\mu_{n}(t)|\mu_{n}]\leq\frac{1}{n}\mathbf{H}[\mu^{\bigotimes n}_{0}|\mu_{n}]e^{-\beta_{n}\frac{t}{2}}\quad\textnormal{and}\quad
\liminf_{n\to+\infty}\frac{1}{ n}\mathbf{H}[\mu^{\bigotimes n}_{0}|\mu_{n}]e^{-\beta_{n}\frac{t}{2}}=\mathbf{H }_{W}[\mu_{0}]e^{-\rho_{LS}\frac{t}{2}}.
\end{align}
Also, as 
\begin{equation}\mu_{n}(dx)=\frac{C^{n}}{Z_{n}}e^{-n\sum_{k=2}^{N}U_{n}( W^{(k)})}\alpha^{\bigotimes n}(dx),
\end{equation}
we also have
\begin{equation}
\frac{1}{n}\mathbf{H}[\mu_{n}(t)|\mu_{n}]=\frac{1}{n}\mathbf{H}[\mu_{n}( t)|\alpha^{\bigotimes n}]+\sum_{k=2}^{N}\int U_{n}(W^{(k)})d\mu_{n}(t)+\bigg(\frac{1}{n}\log(Z_{n})-\log(C)\bigg),
\end{equation}
and (\cref{TCL}.\cref{cesaro})
\begin{align}
\sum_{k=2}^{N}\int U_{n}(W^{(k)})d\mu_{n}(t)\overset{n\to+\infty}{\longrightarrow}\sum_ {k=2}^{N}\int W^{(k)}d\mu^{\bigotimes k}_{t}=\sum_{k=2}^{N}\mathbf{W}^{(k )}[\mu_{t}],\quad
\frac{1}{n}\log(Z_{n})-\log(C)\overset{n\to+\infty}{\longrightarrow}-\inf_ {\mu\in\mathcal{M}_{1}(\R^{d})}\mathbf{E}_{W}[\mu].
\end{align}
It is deduced that
\begin{align}
\forall t\geq 0,\quad\mathbf{H}_{W}[\mu_{0}]e^{-\rho_{LS}\frac{t}{2}}\geq\liminf_{n\to+\infty}\frac{1}{n}\mathbf{H}[\mu_{n}(t)|\mu_{n}]\geq
\mathbf{H}[\mu_{t}|\alpha]+\sum_{k=2}^{N}\mathbf{W}^{(k)}[\mu_{t}]-\inf_{\mu\in\mathcal{M}_{ 1}(\R^{d})}\mathbf{E}_{W}[\mu]
=\mathbf{H}_{W}[\mu_{t}].
\end{align}
This completes the proof of the exponential decrease of entropy along the flow.
\end{proof}
\begin{proof}[Proof of \cref{theo2}]
Just use the nonlinear $T_{2}-$Talagrand inequality, i.e.: (\cref{transport})
\begin{equation}
\forall t\geq 0,\quad\rho_{LS}\mathcal{W}^{2}_{2}(\mu_{t},\mu_{\infty})\leq 2\mathbf{H}_ {W}[\mu_{t}].
\end{equation}
This completes the proof of the desired inequality: We conclude with the \cref{theo1}.
\end{proof}

\begin{proof}[Proof of \cref{theo3}]
By the Lyapunov condition in the assumptions \ref{Hypo2}, we can apply \cref{prelvfp2} and obtain that for any $\psi\in\mathcal{H}^{1}(\mu_{1,n})$, it holds 
\begin{equation}
        \int||\nabla^2V(x_{i})||^2_{\textbf{op}}\psi^2d\mu_{1,n}\leq C_{1}\int|\nabla_{x}\psi| ^2d\mu_{1,n}+C_{2}\int\psi^2d\mu_{1,n},
\end{equation}
with $C_{1}=\frac{2\eta_{1}}{\gamma}$ and $C_{2}=\eta_{2}+\frac{(N-1)K^2}{2\gamma}\eta_{1}$ for instance which are independent of the number $n$ of particles. It follows that the boundedness condition in Villani's theorem holds. Since the uniform Sobolev inequality implies the uniform Poincar\'e inequality, we can apply Villani's hypocoercivity theorem (\cite[Theorem.3]{Ref3} or \cite[Theorem.18 and Theorem.35]{villanihypo}), which completes the proof.
\end{proof}

\begin{proof}[Proof of \cref{theo4}]
Note that $(\mu^{n}_{Z}(t))_{t\geq 0}$ is a solution of a (large dimensional) linear Fokker-Planck equation, for which the exponential decay of the entropy is already known under assumptions including \ref{Hypo2} (see e.g. \cite{villanihypo}). Consider the generator $\mathcal{L}_{Z,n}$ given by \cref{generatorcm2}. Then $\Psi_{n}:=\frac{d\mu^{n}_{Z}(t)}{d\mu^{n}_{Z}}$, the density of the law of the particle system given by  \cref{CM2} with respect to its equilibrium distribution, solves
\begin{equation}
    \partial_{t}\Psi_{n}=\mathcal{L}^{*}_{Z,n}\Psi_{n}.
\end{equation}
This is a linear kinetic Fokker-Planck equation, for which convergence to equilibrium has been proven by many ways. All we need to check is that the explicit estimates we obtain do not depend on $n$ (see e.g. respectively Theorem.7 and Theorem.10 in \cite[\cite{Monmarche2}]{Monmarche1}). The key point in \cref{vfpcv1} is that $C$ and $\xi$ do not depend on $n$: Indeed, as $\mu^{n}_{Z}=\mu_{1,n}\bigotimes\mu_{2,n}$ and these measures satisfy logarithmic Sobolev inequalities of constants $\rho_{LS }(\mu_{1,n})=\rho$ and $\rho_{LS}(\mu_{2,n})=1$, $\mu^{n}_{Z}$ satisfies an inequality of logarithmic Sobolev of constant $\rho_{LS}(\mu^{n}_{Z}):=\max(\rho,1)$. 
This enables us to prove the following: ($T2-$inequality)
\begin{equation}
    \exists\kappa>0\quad\forall n\quad\forall t,\quad\mathcal{W}^{2}_{2}(\mu^{n}_{Z}(t),\mu^{n}_{Z})\leq \kappa e^{-\xi t}\mathbf{H}[\mu^{n}_{Z}(0)|\mu^{n}_{Z}],\quad\mu^{n}_{Z}(0)=\mu^{\otimes n},\quad\mu\in\mathcal{P}_{2}(\R^{d}\times\R^{d}).
\end{equation}
By symmetry, propagation of chaos and Sanov's theorem (LDP), we have respectively
\begin{align}
    \forall i\in\{1,\ldots,n\},\quad n\mathcal{W}^{2}_{2}(\mu^{n,(i)}_{Z}(t),\mu^{n,(i)}_{Z})\leq\mathcal{W}^{2}_{2}(\mu^{n}_{Z}(t),\mu^{n}_{Z}),\quad
    \mu^{n,(i)}_{Z}(t)\overset{n\to+\infty}{\longrightarrow}\mu^{\mathbf{VFP}}_{t},\quad
    \mu^{n,(i)}_{Z}\overset{n\to+\infty}{\longrightarrow}\mu^{Z}_{\infty}.
\end{align}
We have by lower semi-continuity
\begin{align}
    &\forall\mu\in\mathcal{P}_{2}(\R^{d}\times\R^{d}),\quad\mathcal{W}^{2}_{2}(\mu,\mu^{Z}_{\infty})\leq\liminf_{n\to+\infty}\mathcal{W}^{2}_{2}(\mu,\mu^{n,(i)}_{Z})\leq\kappa\liminf_{n\to+\infty}\frac{1}{n}\mathbf{H}[\mu^{n}_{Z}(0)|\mu^{n}_{Z}]=\kappa\mathcal{S}[\mu],\\
    &\frac{1}{n}\mathbf{H}[\mu^{\otimes n}|\mu^{n}_{Z}]=\mathbf{H}[\mu|\alpha\otimes\mathcal{N}(0,\mathbf{Id}_{d})]+\sum_{k=2}^{N}\int U_{n}(W^{(k)})d\mu^{\otimes n}+\frac{1}{n}\log(Z_{n})-\log(C)
    \overset{n\to+\infty}{\longrightarrow}\mathcal{S}[\mu]:=\mathcal{E}[\mu]-\mathcal{E}[\mu^{Z}_{\infty}].
\end{align}
According to \cref{vfpcv1}, we have
\begin{equation}
    \mathcal{S}[\mu^{\mathbf{VFP}}_{t}]\leq C\mathcal{S}[\mu]e^{-\xi t}.
\end{equation}
It follows that
\begin{equation}
    \mathcal{W}^{2}_{2}(\mu^{\mathbf{VFP}}_{t},\mu^{Z}_{\infty})\leq\kappa\mathcal{S}[\mu^{\mathbf{VFP}}_{t}]\leq\kappa C\mathcal{S}[\mu]e^{-\xi t}.
\end{equation}
\end{proof}

\newpage
\appendix
\section{Appendix and Proofs}
\subsection{McKean-Vlasov theory}\label{McKeanTheory}
\begin{theorem}[Existence and uniqueness of solutions of \cref{PMckeanG}]\label{ExistuniqueMcV}
Let us assume that the functions $b$ and $\sigma$ are globally Lipschitz: $\exists K>0$ $\forall (x,y,\mu,\nu)\in\R^{D}\times\R^{D}\times\mathcal{P}_{2}(\R^{D})\times\mathcal{P}_{2}(\R^{D}),$ 
\begin{equation}\label{condLip}
    ||b(x,\mu)-b(y,\nu)||+|||\sigma(x,\mu)-\sigma(y,\nu)|||\leq K\bigg(||x-y||+\mathcal{W}_{2}(\mu,\nu)\bigg),
\end{equation}
 where $||\cdot||$ denotes a vector norm, $|||\cdot|||$ is a matrix norm and $\mathcal{W}_{2}$ denotes the Wasserstein-2 distance. Assume that $\mu_{0}\in\mathcal{P}(\R^{D})$. Then for any $T\geq 0$ the SDE \cref{PMckeanG} has a unique strong solution on $[0, T ]$ and consequently, its law is the unique weak solution to the Fokker-Planck equation \cref{GNFPlanck} and the unique solution to the nonlinear martingale problem \cref{EquaNLMP}.
\end{theorem}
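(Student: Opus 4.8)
The plan is to realise the law of the solution as the unique fixed point of a map acting on flows of probability measures, i.e.\ a Picard-type argument transported to Wasserstein space. Fix $T\ge 0$ and, for $R>0$, let $\mathcal{X}_{R}$ be the set of weakly continuous flows $\mu_{\bullet}=(\mu_{t})_{t\in[0,T]}$ in $\mathcal{P}_{2}(\mathbb{R}^{D})$ with $\mu_{0}$ equal to the prescribed initial datum and $\sup_{t\le T}\int\|x\|^{2}\mu_{t}(dx)\le R$; equipped with $d_{T}(\mu_{\bullet},\nu_{\bullet}):=\sup_{t\in[0,T]}\mathcal{W}_{2}(\mu_{t},\nu_{t})$ this is a complete metric space. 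Given $\mu_{\bullet}\in\mathcal{X}_{R}$, the coefficients $(t,x)\mapsto b(x,\mu_{t})$ and $(t,x)\mapsto\sigma(x,\mu_{t})$ are, by the Lipschitz bound \cref{condLip} and the continuity of $t\mapsto\mu_{t}$, globally Lipschitz in $x$ uniformly in $t$ and of linear growth, so the time-inhomogeneous SDE
\begin{equation}
dX^{\mu}_{t}=b(X^{\mu}_{t},\mu_{t})\,dt+\sigma(X^{\mu}_{t},\mu_{t})\,dB_{t},\qquad X^{\mu}_{0}\sim\mu_{0},
\end{equation}
has a unique strong solution by classical It\^o theory, with a second-moment bound $\sup_{t\le T}\mathbb{E}\|X^{\mu}_{t}\|^{2}\le C(1+\mathbb{E}\|X^{\mu}_{0}\|^{2}+R)e^{CT}$. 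For $T$ (or $R$) chosen so that this bound is $\le R$, the map $\Phi(\mu_{\bullet}):=(\mathrm{Law}(X^{\mu}_{t}))_{t\in[0,T]}$ sends $\mathcal{X}_{R}$ into itself, and any fixed point of $\Phi$ produces, together with $X^{\mu}$, a solution of \cref{PMckeanG}.

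Next I would prove that $\Phi$ is a contraction on a short interval $[0,T_{0}]$. Given $\mu_{\bullet},\nu_{\bullet}$, solve the two SDEs driven by the \emph{same} Brownian motion $B$ and with the \emph{same} initial variable $X_{0}$; writing $\Delta_{t}:=X^{\mu}_{t}-X^{\nu}_{t}$, It\^o's formula together with \cref{condLip}, the Burkholder--Davis--Gundy inequality, and $\mathcal{W}_{2}(\mu_{t},\nu_{t})^{2}\le d_{T}(\mu_{\bullet},\nu_{\bullet})^{2}$ yields
\begin{equation}
\mathbb{E}\Big[\sup_{s\le t}\|\Delta_{s}\|^{2}\Big]\le C_{K,T}\int_{0}^{t}\Big(\mathbb{E}\big[\sup_{u\le s}\|\Delta_{u}\|^{2}\big]+\mathcal{W}_{2}(\mu_{s},\nu_{s})^{2}\Big)\,ds,
\end{equation}
so that Gronwall's lemma gives $\sup_{t\le T_{0}}\mathbb{E}\|\Delta_{t}\|^{2}\le C_{K,T}T_{0}e^{C_{K,T}T_{0}}\,d_{T_{0}}(\mu_{\bullet},\nu_{\bullet})^{2}$. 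Since the synchronous coupling is an admissible coupling of the output laws, $\mathcal{W}_{2}(\mathrm{Law}(X^{\mu}_{t}),\mathrm{Law}(X^{\nu}_{t}))^{2}\le\mathbb{E}\|\Delta_{t}\|^{2}$, whence $d_{T_{0}}(\Phi(\mu_{\bullet}),\Phi(\nu_{\bullet}))\le\theta\,d_{T_{0}}(\mu_{\bullet},\nu_{\bullet})$ with $\theta<1$ for $T_{0}$ small enough. The Banach fixed point theorem gives existence and uniqueness on $[0,T_{0}]$; restarting from the law at time $T_{0}$ and concatenating finitely many such steps covers all of $[0,T]$. Pathwise uniqueness for the McKean--Vlasov SDE itself then follows: two solutions have, by fixed-point uniqueness, the same marginal flow $\mu_{\bullet}$, and the frozen SDE with that flow has a unique strong solution.

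Finally, the equivalence with the weak Fokker--Planck equation \cref{GNFPlanck} and the nonlinear martingale problem \cref{EquaNLMP} is routine once a solution exists: applying It\^o's formula to $\varphi(X_{t})$ for $\varphi\in\mathcal{F}$ (e.g.\ $\varphi\in\mathcal{C}_{c}^{\infty}(\mathbb{R}^{D})$) shows directly that $M^{\varphi}_{t}$ in \cref{EquaNLMP} is a martingale, and taking expectations gives $\frac{d}{dt}\langle\mu_{t},\varphi\rangle=\langle\mu_{t},\mathcal{L}_{\mu_{t}}\varphi\rangle$, i.e.\ $\mu_{t}$ solves \cref{GNFPlanck} weakly; conversely a solution of the martingale problem yields, via the martingale representation theorem and the fact that $\mathcal{L}_{\mu_{t}}$ is a diffusion generator with known marginal flow, a weak solution of the SDE with the prescribed time marginals, and uniqueness transfers back through the fixed-point statement. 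The only genuinely delicate point is the \textbf{contraction estimate}: one must keep careful track of the second-moment bounds so that $\mathcal{X}_{R}$ is complete and stable under $\Phi$, and one must exploit the synchronous coupling so that the $\mathcal{W}_{2}$-distance of the output laws is dominated by the $L^{2}$-distance of the coupled processes — everything else is the standard Picard--Lindel\"of mechanism.
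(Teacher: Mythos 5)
Your proof is correct and uses precisely the fixed-point (Picard) argument in Wasserstein space that the paper itself defers to, citing only \cite[Proposition.1]{propachaos1} without giving details. Your write-up, with the complete metric space $(\mathcal{X}_R,d_T)$, the frozen-coefficient SDE, the synchronous-coupling contraction estimate, and the concatenation over short intervals, simply fills in the standard argument the paper invokes.
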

\noindent The proof of this theorem is fairly classical. This proof is based on a \textit{fixed point argument} that is sketched in \cite[Proposition.1]{propachaos1}.
\begin{theorem}[Polynomial Potential]\label{LDPPolinter}
   Let $E$ be a Polish measurable space. Let $\alpha\in\mathcal{P}(E)$. Let us consider a random vector $X^{n}$ in $E^{n}$, distributed according to the Gibbs measure:
   \begin{equation}\label{Gibbs1}
   \mu_{n}(dx):=\frac{1}{Z_{n}}e^{nF(\mu_{x})}\alpha^{\otimes n}(dx),
   \end{equation}
   where $Z_{n}$ is a normalization constant and $F$ is a polynomial function on $\mathcal{P}(E)$ (called the
energy functional) of the form given by \cref{GFFE}. Then (for some symmetric continuous bounded functions $W^{(k)}$)  the laws of $\mu_{X^{n}}$ satisfy a large deviation principle in $\mathcal{P}(\mathcal{P}(E))$ with speed $\frac{1}{n}$ and rate function
\begin{equation}\label{Rate1}
    \mu\longmapsto\mathbf{H}[\mu|\alpha]-F(\mu)-\inf_{\eta\in\mathcal{P}(E)}\{\mathbf{H}[\eta|\alpha]-F(\eta)\}.
\end{equation}
\end{theorem}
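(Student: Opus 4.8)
The plan is to obtain this as a \emph{tilted} version of Sanov's theorem, i.e.\ via Varadhan's lemma, in the same spirit as the three-step argument of the proof of \cref{LDP} and of \cref{infcompactHW}. First I would record that, since $E$ is Polish, $\mathcal{P}(E)$ equipped with the weak topology is Polish, and that under $\alpha^{\otimes n}$ the empirical measures $L_n=\mu_{X^n}$ satisfy an LDP in $\mathcal{P}(E)$ with good rate function $\mathbf{H}[\cdot|\alpha]$ (classical Sanov theorem). Note that an LDP for the $\mathcal{P}(E)$-valued variables $\mu_{X^n}$ is, by definition, an LDP ``in $\mathcal{P}(\mathcal{P}(E))$'' for their laws, so that reformulation costs nothing.

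Next I would verify the hypotheses of Varadhan's lemma for $F\colon\mathcal{P}(E)\to\mathbb{R}$, $F(\mu)=\sum_{k=2}^{N}\int W^{(k)}\,d\mu^{\otimes k}$. Boundedness is immediate, $|F(\mu)|\le\sum_{k=2}^{N}\|W^{(k)}\|_{\infty}$. Weak continuity of $F$ follows from the weak continuity of the tensorisation map $\mu\mapsto\mu^{\otimes k}$, which I would establish exactly as in Step~1 of the proof of \cref{LDP} via the Skorokhod representation theorem applied to $k$ independent copies, together with the continuity and boundedness of each $W^{(k)}$. Because $F$ is bounded, the moment condition in Varadhan's lemma holds trivially: $\limsup_{n}\frac1n\log\int e^{n\gamma F(L_n)}\,d\alpha^{\otimes n}\le|\gamma|\,\sup_{\mathcal{P}(E)}|F|<+\infty$ for every $\gamma\in\mathbb{R}$. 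Observe also that $F(\mu_x)=F(L_n(x))$ by the very definition of the empirical measure, so there is no diagonal/exponential-equivalence correction to handle here (unlike in the $U$-statistic setting of the main text).

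Varadhan's lemma --- equivalently the tilted-LDP / Gibbs-conditioning statement of R.~Ellis already invoked in the proof of \cref{infcompactHW} --- then gives at once that $\frac1n\log Z_n\to\sup_{\eta\in\mathcal{P}(E)}\{F(\eta)-\mathbf{H}[\eta|\alpha]\}$, a finite number lying in $[\,F(\alpha),\ \sup_{\mathcal{P}(E)}|F|\,]$, and that under $\mu_n$ the random measures $\mu_{X^n}=L_n$ satisfy an LDP with the rate function $\mu\mapsto\mathbf{H}[\mu|\alpha]-F(\mu)-\inf_{\eta}\{\mathbf{H}[\eta|\alpha]-F(\eta)\}$ claimed in the statement. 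This rate function is good: subtracting the bounded continuous function $F$ and an additive constant preserves lower semicontinuity, and each of its sublevel sets is a closed subset of some $\{\mathbf{H}[\cdot|\alpha]\le L+2\sup_{\mathcal{P}(E)}|F|\}$, hence compact. Alternatively, one may simply run Steps~1--3 of the proof of \cref{LDP} verbatim, with the vector $(L_n,U_n(W^{(2)}),\dots,U_n(W^{(N)}))$ replaced by $L_n$ alone.

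The only genuinely non-trivial ingredient is the weak continuity of $F$, i.e.\ of $\mu\mapsto\mu^{\otimes k}$ and hence of $\mu\mapsto\int W^{(k)}\,d\mu^{\otimes k}$; this is exactly why the statement must assume the kernels $W^{(k)}$ bounded and continuous. Relaxing boundedness would force one to replace the direct use of Varadhan's lemma by the exponential-approximation machinery of \cref{approx} and the good-exponential-approximation bookkeeping in the proof of \cref{LDP}, under exponential-integrability hypotheses $e^{\lambda|W^{(k)}|}\in L^{1}(\alpha^{\otimes k})$ for all $\lambda>0$; for the present statement that is unnecessary.
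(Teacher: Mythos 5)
Your proof is correct, and the approach is the right one; but note that the paper does not actually prove \cref{LDPPolinter} --- it is stated in the appendix as a recollection of \cite[Theorem~A]{LDP1}, with no proof given --- so there is nothing in the paper to compare line-by-line. What the paper does prove, in \cref{LDP} and Step~1 of the proof of \cref{infcompactHW}, is a more delicate sibling: an LDP for the joint vector $(L_n,U_n(W^{(2)}),\dots,U_n(W^{(N)}))$ under the $U$-statistic Gibbs measure with only exponentially integrable (possibly unbounded) kernels, which forces the exponential-approximation machinery of \cref{approx} and the careful bookkeeping of exponential equivalence between $U_n(W^{(k)})$ and $\int W^{(k)}\,dL_n^{\otimes k}$, followed by an Ellis-type tilting in the auxiliary $\R^{N-1}$ coordinates. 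Your route is the clean shortcut available precisely because \cref{LDPPolinter} is stated in V-statistic form (the exponent is $nF(\mu_x)$ with $\mu_x=L_n(x)$, no diagonal correction) with bounded continuous kernels: Sanov gives the reference LDP, $F$ is bounded and weakly continuous (the Skorokhod argument for $\mu\mapsto\mu^{\otimes k}$ being the one genuine point, which you correctly lift from Step~1 of the paper's proof of \cref{LDP}), and then Varadhan's integral lemma applied to the tilt $\exp(nF(L_n))$ yields both $\tfrac1n\log Z_n\to\sup_\eta\{F(\eta)-\mathbf H[\eta|\alpha]\}$ and the LDP for $L_n$ under $\mu_n$ with the stated rate function; goodness follows since subtracting a bounded continuous $F$ and a constant preserves lower semicontinuity and traps the sublevel sets inside compact sublevel sets of $\mathbf H[\cdot|\alpha]$. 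Your closing remark correctly identifies exactly where the harder machinery of \cref{LDP} would become necessary if one dropped boundedness.
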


\subsection{Gibbs-Laplace Variational Principle}
\begin{definition}[Distribution support]
Let $\mu$ be a probability measure on a Polish space $E$ (or even a measure on a topological space!). We call support of $\mu$ noted $\textbf{supp}(\mu)$ the closed set defined by
\begin{equation}
\bigcap_{F\subset E\textnormal{ closed, }\mu(F)=1}F=\bigg(\bigcup_{O\subset E\textnormal{ open, }\mu(O)=0}
O\bigg)^{C}.
\end{equation}
In other words, the support of a distribution is the complement of the largest open set over which it is zero: the smallest closed set of maximum mass!
\end{definition}
\begin{definition}[Extremum essential]
Let $\mu$ be a probability measure on a Polish space $E$ and $V:E\longrightarrow [-\infty,+\infty]$ measurable. We call infimum $\mu-$essential of $V$ the quantity
\begin{equation}
\mu-\textbf{essinf} V:=\inf\{v\in\R,\quad\mu(\{V\leq v\})>0\}
\end{equation}
\end{definition}
\begin{theorem}[Variational principle]
For any probability measure $\mu$ on a topological space $\Omega$ and any measurable function $V:\Omega\longrightarrow\overline{\R}$, we have
\begin{equation}
\lim_{n\to+\infty}\frac{1}{n}\log\int e^{-n V}d\mu=-\mu-\mathbf{essinf} V.
\end{equation}
Moreover, if $V$ is upper semicontinuous, then
\begin{equation}
\inf_{\mathbf{supp}(\mu)}V=\mu-\mathbf{essinf}V.
\end{equation}
\end{theorem}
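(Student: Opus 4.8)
The plan is to treat the two assertions separately, beginning with the limit formula, which needs no topology whatsoever. Write $m:=\mu-\mathbf{essinf}\,V\in[-\infty,+\infty]$. The first observation is that $\mu(\{V<m\})=0$: if this measure were positive, then from $\{V<m\}=\bigcup_{k\geq 1}\{V\leq m-\tfrac1k\}$ one would get $\mu(\{V\leq m-\tfrac1k\})>0$ for some $k$, contradicting that $m$ is the infimum of $\{w:\mu(\{V\leq w\})>0\}$. Hence $V\geq m$ holds $\mu$-a.e., so when $m>-\infty$ we have $\int e^{-nV}d\mu\leq e^{-nm}$, giving $\tfrac1n\log\int e^{-nV}d\mu\leq -m$ for every $n$. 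Conversely, for any real $v>m$ there is $v'<v$ with $\mu(\{V\leq v'\})>0$, hence $\mu(\{V\leq v\})>0$, and the elementary bound $e^{-nv}\mathbf{1}_{\{V\leq v\}}\leq e^{-nV}$ yields $\tfrac1n\log\int e^{-nV}d\mu\geq -v+\tfrac1n\log\mu(\{V\leq v\})\to -v$. Letting $v\downarrow m$ gives $\liminf_n\tfrac1n\log\int e^{-nV}d\mu\geq -m$, and together with the upper bound this proves $\lim_n\tfrac1n\log\int e^{-nV}d\mu=-m$. The degenerate cases cost nothing: $m=+\infty$ means $V=+\infty$ a.e., so both sides are $-\infty$; $m=-\infty$ means the lower bound applies for every real $v$, so both sides are $+\infty$.

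For the identity $\inf_{\mathbf{supp}(\mu)}V=m$ when $V$ is upper semicontinuous, I would invoke the two standard properties of the support $S:=\mathbf{supp}(\mu)$ of a measure on a Polish space (the setting of the definition recalled above): $\mu(S^{c})=0$, and every open set meeting $S$ has strictly positive $\mu$-measure. The inequality $\inf_S V\leq m$ uses only the first: if $\mu(\{V\leq v\})>0$ then $\{V\leq v\}$ is not $\mu$-null, hence meets $S$, so $\inf_S V\leq v$; taking the infimum over all such $v$ gives $\inf_S V\leq m$. For the reverse inequality, fix $x_0\in S$ and $\varepsilon>0$. Upper semicontinuity provides an open neighbourhood $O$ of $x_0$ with $V<V(x_0)+\varepsilon$ on $O$; since $x_0\in S$, $\mu(O)>0$, so $\mu(\{V<V(x_0)+\varepsilon\})>0$ and therefore $m\leq V(x_0)+\varepsilon$. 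Letting $\varepsilon\downarrow 0$ gives $m\leq V(x_0)$, and taking the infimum over $x_0\in S$ gives $m\leq\inf_S V$. Combining the two inequalities finishes the proof.

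The only step that requires genuine care — rather than being conceptually deep — is the direction $m\leq\inf_S V$, where the purely local information coming from upper semicontinuity must be upgraded to a level set of positive $\mu$-measure via the support property $\mu(O)>0$; everything else is bookkeeping with the defining infimum and the two elementary inequalities $e^{-nv}\mathbf{1}_{\{V\leq v\}}\leq e^{-nV}$ and $V\geq m$ $\mu$-a.e. I would also be explicit about the topological hypothesis (Polish, or at least second countable) under which $\mathbf{supp}(\mu)$ enjoys the two properties invoked, so as to match exactly the framework in which the support was defined above; on a general topological space only the first assertion (the limit) survives without further assumptions.
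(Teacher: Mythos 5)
Your proof is correct and follows essentially the same route as the paper: for the limit, the sandwich $e^{-nv}\mathbf{1}_{\{V\le v\}}\le e^{-nV}\le e^{-nm}$ (the paper normalizes to $m=0$, $V\ge0$ first, but the content is identical); for the support identity, the observation $\{V<\inf_S V\}\cap S=\varnothing$ in one direction and the upper semicontinuity plus positivity of $\mu$ on open sets meeting $S$ in the other. Two small merits of your write-up relative to the paper's sketch are worth noting: you explicitly dispose of the degenerate case $m=+\infty$ (where $V=+\infty$ $\mu$-a.e., so both sides are $-\infty$), which the paper silently omits, and you flag that the second assertion actually needs $\mathbf{supp}(\mu)$ to carry full mass, a property valid on Polish (or second-countable) spaces but not on the "any topological space" the theorem statement announces — the paper's own definition of $\mathbf{supp}$ is given on a Polish space, so the statement is slightly sloppy and your caveat is the right fix.
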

\noindent\textit{Proof Sketch:} Suppose $\mu-\textbf{essinf}V$ is finite. Check that we can assume without loss of generality
that $V\geq 0$ and $\mu-\textbf{essinf}V = 0$. Then check $\mathbb{I}_{V \leq\varepsilon} e^{-n\varepsilon}\leq e^{-nV }\leq1$ and conclude. Show that the limit is $+\infty$ with the lower bound when $\mu-\textbf{essinf}V =-\infty$.
\begin{proof}
 \begin{itemize}
    \item $\mu-\textbf{essinf}V$ is finished: 
    \begin{equation}
    \frac{1}{n}\log\int e^{-n V}d\mu+\mu-\textbf{essinf} V= \frac{1}{n}\log\bigg(\int e^{-n(V-\mu-\textbf{essinf}V)}d\mu\bigg).
    \end{equation}
    This implies that we can assume without loss of generality that $V\geq0$ and $\mu-\textbf{essinf}V = 0$ because $V-\mu-\textbf{essinf}V\geq0$ almost surely, its essential infimum under $\mu$ is zero and the convergence that interests us is equivalent to
    \begin{equation}
    \frac{1}{n}\log\bigg(\int e^{-n(V-\mu-\textbf{essinf}V)}d\mu\bigg)\overset{n\to+\infty}{ \longrightarrow}
    0
    \end{equation}
    But for all $\varepsilon>0=\mu-\textbf{essinf}V,$
    \begin{equation}
    \mathbb{I}_{V \leq\varepsilon} e^{-n\varepsilon}\leq e^{-nV }\leq1\Longleftrightarrow\frac{\log\mu(V\leq\varepsilon)}{n }-\varepsilon\leq\frac{1}{n}\log\int e^{-n V}d\mu\leq0.
    \end{equation}
    We deduce that by the bounding limit theorem, we have
    \begin{equation}
    \limsup_{n}\frac{1}{n}\log\int e^{-n V}d\mu=\liminf_{n}\frac{1}{n}\log\int e^{-n V}d\mu=0.
    \end{equation}
    \item $\mu-\textbf{essinf}V=-\infty$: In this case, for all $v\in\R$, we have $\mu(V\leq v)>0$ and
    \begin{equation}
    \int_{\Omega}e^{-nV}d\mu\geq\int_{\{V\leq v\}} e^{-nV}d\mu\geq e^{-nv}\mu(\{V\leq v\}).
    \end{equation}
    It is deduced that
    \begin{align}
    \forall v\in\R,\quad\frac{1}{n}\log\int_{\Omega} e^{-n V}d\mu\geq -v++\frac{\log\mu(V\leq v)}{n}\\
    \Longrightarrow\lim_{n\to+\infty}\frac{1}{n}\log\int e^{-n V}d\mu=+\infty=-\mu-\textbf{essinf} V.\nonumber
    \end{align}
\end{itemize}
\end{proof}
\begin{theorem}[Gibbs measures and deviations]
Let $E$ be a Polish space, $\mu$ a probability measure on $E$ and $V: E\longrightarrow\overline{\R}$ a measurable function. We have:
\begin{itemize}
    \item 
    \begin{equation}
    \inf_{\mathbf{supp}(\mu)}V\leq\mu-\mathbf{essinf}V.
    \end{equation}
    \item If $V$ is upper semicontinuous, then
    \begin{equation}
    \inf_{\mathbf{supp}(\mu)}V\geq\mu-\mathbf{essinf}V\Longrightarrow\inf_{\mathbf{supp}(\mu)}V=\mu-\mathbf{essinf} V.
    \end{equation}
\end{itemize}
In particular, if $V$ is continuous, then the principle of large deviations holds for 
\begin{equation}
\mu_{n}(dx):=\frac{1}{\int e^{-nV}d\mu}e ^{-nV(x)}\mu(dx)
\end{equation}
with rate function $I^{V}:=V+I_{0}-\inf\{V+I_{0}\}$ with 
\begin{equation}
I_{0}(x):=\begin{cases}0\quad\textnormal{if }x\in\mathbf{supp}(\mu),\\+\infty\quad\textnormal{else.} \end{cases} 
\end{equation}
\end{theorem}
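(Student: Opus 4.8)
The plan is to break the statement into three pieces and handle them in increasing order of difficulty: first the inequality $\inf_{\mathbf{supp}(\mu)}V\leq\mu-\mathbf{essinf}V$, which holds for any measurable $V$; then the reverse inequality under upper semicontinuity; and finally the large deviation principle for the tilted Gibbs measures $\mu_{n}$, which I would deduce by tilting a trivial LDP.

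For the first inequality the only structural input I need is that on a Polish (hence second countable) space the support carries full mass, $\mu(\mathbf{supp}(\mu)^{c})=0$, because $\mathbf{supp}(\mu)^{c}$ is a countable union of open $\mu$-null sets taken from a countable base. Given this, for every $v$ with $\mu(\{V\leq v\})>0$ the set $\{V\leq v\}\cap\mathbf{supp}(\mu)$ still has positive measure, hence is nonempty, so there is a support point $x$ with $V(x)\leq v$ and therefore $\inf_{\mathbf{supp}(\mu)}V\leq v$; taking the infimum over admissible $v$ gives the claim, with no regularity needed on $V$. For the reverse inequality — which together with the first yields the advertised equality, so the implication in the statement is really the assertion that its hypothesis \emph{holds} when $V$ is u.s.c. — I would argue as follows: set $m:=\inf_{\mathbf{supp}(\mu)}V$ and fix $\varepsilon>0$; choose a support point $x_{0}$ with $V(x_{0})<m+\varepsilon$; upper semicontinuity makes $\{V<m+\varepsilon\}$ an open neighbourhood of $x_{0}$, which therefore has positive $\mu$-measure, whence $\mu-\mathbf{essinf}V\leq m+\varepsilon$, and letting $\varepsilon\downarrow0$ finishes it. (This is exactly the upper-semicontinuous half of the Variational Principle stated just above, which could alternatively simply be quoted.)

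For the "in particular" part I would first record that the constant sequence $\nu_{n}\equiv\mu$ satisfies an LDP at speed $n$ with rate function $I_{0}$: the level sets $\{I_{0}\leq L\}$ all equal the closed set $\mathbf{supp}(\mu)$, so $I_{0}$ is a rate function; the closed-set upper bound is trivial, since a closed $F$ either meets $\mathbf{supp}(\mu)$, in which case $-\inf_{F}I_{0}=0\geq\frac1n\log\mu(F)$, or is $\mu$-null; and the open-set lower bound holds because any open set meeting $\mathbf{supp}(\mu)$ has positive mass. With $V$ continuous I would then invoke Varadhan's lemma together with its companion statement on LDPs for tilted measures: tilting $\nu_{n}$ by $e^{-nV}$ produces $\mu_{n}=(\int e^{-nV}d\mu)^{-1}e^{-nV}\mu$, which inherits an LDP with rate function $(V+I_{0})-\inf_{x}(V+I_{0})=I^{V}$; this is a genuine rate function because $V+I_{0}$ is lower semicontinuous, and the normalising constant obeys $\tfrac1n\log\int e^{-nV}d\mu\to-\inf_{x}(V+I_{0})=-\inf_{\mathbf{supp}(\mu)}V$, which by the first two parts equals $-\mu-\mathbf{essinf}V$, consistently with the Variational Principle. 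Concretely, the upper bound comes from $\limsup_{n}\tfrac1n\log\int_{F}e^{-nV}d\mu\leq-\inf_{F}(V+I_{0})$ for closed $F$ and the lower bound from the corresponding open-set estimate, both divided by the normalisation.

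The step I expect to be the main obstacle is the tilting/Varadhan step once $V$ is only continuous rather than bounded: the closed-set upper bound of Varadhan's lemma needs a superexponential integrability tail, e.g. $\limsup_{n}\tfrac1n\log\int e^{-nqV}d\mu<\infty$ for some $q>1$ (or simply boundedness of $V$, which is the situation in the applications of interest, cf. the continuous bounded potentials in \cref{LDPPolinter}); the same boundedness also guarantees $\inf_{x}(V+I_{0})>-\infty$, so that $I^{V}\geq0$ is well defined. Under such a hypothesis the argument is routine, whereas the two essential-infimum inequalities are immediate from the full-measure property of $\mathbf{supp}(\mu)$ and, for the second, the openness of the sublevel sets of an upper-semicontinuous function.
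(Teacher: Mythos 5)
Your proof of the two essential-infimum inequalities is correct and follows exactly the route the paper sketches: for the first inequality you use that a probability measure on a Polish (second-countable) space gives full mass to its support, so $\{V\leq v\}\cap\mathbf{supp}(\mu)$ is nonempty whenever $\mu(V\leq v)>0$; for the second you use that upper semicontinuity makes $\{V<m+\varepsilon\}$ open, so it has positive mass as soon as it meets the support. These are the paper's two bullets in the same order, made slightly more explicit.

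For the LDP the paper offers no proof at all — its sketch stops after the two inequalities — so your tilting argument fills a genuine gap. Starting from the trivial LDP of the constant sequence $\nu_{n}\equiv\mu$ with rate $I_{0}$ and passing to $\mu_{n}$ via the exponential-tilting/Varadhan step is a clean way to get the claimed rate function $I^{V}=V+I_{0}-\inf(V+I_{0})$, and your direct verification that $I_{0}$ is a (not necessarily good) rate function and that $\nu_{n}$ satisfies the trivial LDP is sound. You are also right to flag the hidden hypothesis: as written, ``$V$ continuous'' is not enough. Without $V$ bounded (or at least a superexponential tail bound of the form $\limsup_{n}\tfrac1n\log\int e^{-nqV}d\mu<\infty$ for some $q>1$, plus $V$ bounded below on $\mathbf{supp}(\mu)$ so that $\inf(V+I_{0})>-\infty$), the normalization $\int e^{-nV}d\mu$ may diverge, the Varadhan upper bound can fail, and $I^{V}$ is not even well defined. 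In the paper's actual application (\cref{LDPPolinter}, where the $W^{(k)}$ are continuous and bounded) this is harmless, but it is a useful observation that the statement as printed is incomplete without it. So: parts one and two are the same proof as the paper's; part three is your contribution and is correct modulo the boundedness caveat you already identify.
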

\noindent\textit{Proof sketch:}
\begin{itemize}
    \item Show that
    \begin{equation}
    \bigg\{x,\quad V(x)<\inf_{\textbf{supp}(\mu)}V\bigg\}\bigcap\textbf{supp}(\mu)=\emptyset.
    \end{equation}
    Then conclude.
    \item For all $\varepsilon>0$, show that
    \begin{equation}
    \bigg\{x,\quad V(x)<\inf_{\textbf{supp}(\mu)}V+\varepsilon\bigg\}\quad\textnormal{is an open containing a support element: }
    \end{equation}
    their intersection is non-empty; then conclude.
\end{itemize}

\subsection{Principle of contraction and tensorization}\label{ldpcontraction}
Let $f: X\longrightarrow G$ be continuous between two Polish spaces and $(X_{N})$ a random variable sequence of $X$ satisfying the principle of large deviations of rate function $I:X\longrightarrow[0, +\infty]$. Then $((f(X_{N}))$ satisfies the principle of large deviations of rate function $J: G\longrightarrow[0,+\infty]$ such that 
\begin{equation}
J(g):=\inf_{ f^{-1}(\{g\})}I.
\end{equation}
Let $(X_{n})_{n\geq 1}$ and $(Y_{n})_{n\geq 1}$ be sequences with values respectively in $E_{1}$ and $E_{2} $, independent ($\mathbb{P}_{(X_{n},Y_{n})}=\mathbb{P}_{X_{n}}\bigotimes\mathbb{P}_{Y_{n} }$) and both satisfying the principle of large deviations of the respective good rate functions $I_{1}$ and $I_{2}$. Then $((X_{n},Y_{n}))_{n\geq 1}$ satisfies the principle of large deviations on the product space and of good rate function $I$ defined by 
\begin{equation}
I(x, y):=I_{1}(x)+I_{2}(y).
\end{equation}
\subsection{Entropy and Chaos}
\begin{theorem}[Characterization of relative entropy: Sanov's theorem]\label{thmH}
Let $\mu$ and $\nu$ be probability measures (even finite!) on a Polish space $E$ and $(\varphi_{j})_{j\in\N}$ a dense sequence of functions bounded uniformly continuous. we have
\begin{align}
\lim_{k\to\infty}\lim_{\varepsilon\to0}\lim_{n\to\infty}\frac{1}{n}\log\mu^{\otimes n}\bigg(\bigg\{y\in E^{n};\quad\forall j\in\{1,\ldots,k\},\quad\bigg|\int_{E}\varphi_{j}d\nu-\frac{ 1}{n}\sum_{i=1}^{n}\varphi_{j}(y_{i})\bigg|\leq\varepsilon\bigg\}\bigg)
=-\mathbf{H}[\nu|\mu].
\end{align}
We interpret $n$ as the number of particles; the $\varphi_{j}$ a sequence of observables whose mean value is measured; and $\varepsilon$ as the precision of the measurements. This formula concisely summarizes the essential information contained in the Boltzmann function $\mathbf{H}$.
\end{theorem}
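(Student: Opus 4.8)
The plan is to prove the statement directly, by the standard two-sided Sanov argument: an exponential Markov (Chernoff) bound for the upper estimate and a change-of-measure argument for the lower estimate, glued together by the Donsker--Varadhan variational formula
\begin{equation*}
\mathbf{H}[\nu|\mu]=\sup_{g\in C_b(E)}\Big\{\textstyle\int_E g\,d\nu-\log\int_E e^{g}\,d\mu\Big\},
\end{equation*}
which I would record as a preliminary (``$\geq$'' by taking $g$ a bounded truncation of $\log\frac{d\nu}{d\mu}$ and passing to the limit; ``$\leq$'' from the identity $\mathbf{H}[\nu|\mu]-\mathbf{H}[\nu|\mu_g]=\int g\,d\nu-\log\int e^{g}\,d\mu$ for $\mu_g:=e^{g}(\int e^{g}d\mu)^{-1}\mu$, together with $\mathbf{H}[\nu|\mu_g]\geq 0$; when $\nu\not\ll\mu$ both sides are $+\infty$, the right one by approximating indicators of $\mu$-null sets carrying $\nu$-mass by uniformly continuous functions). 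Write $L_n^y:=\frac1n\sum_{i=1}^n\delta_{y_i}$ and $\mathcal{U}_{k,\varepsilon}:=\{m\in\mathcal{P}(E):|\langle m-\nu,\varphi_j\rangle|\leq\varepsilon,\ j\leq k\}$, so that the event in the statement is $\{L_n^y\in\mathcal{U}_{k,\varepsilon}\}$; these sets form a neighbourhood base of $\nu$ for the weak topology as $k\to\infty$, $\varepsilon\to0$. The goal is then to show that $\lim_k\lim_\varepsilon$ of $\limsup_n$ and of $\liminf_n$ of $\frac1n\log\mu^{\otimes n}(L_n^y\in\mathcal{U}_{k,\varepsilon})$ both equal $-\mathbf{H}[\nu|\mu]$, which yields the asserted limit (and disposes of the fact that the innermost $\lim_n$ need not exist at finite $k,\varepsilon$).

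\textbf{Upper bound.} Fix $k,\varepsilon$ and $g=\sum_{j\leq k}t_j\varphi_j$. On $\{L_n^y\in\mathcal{U}_{k,\varepsilon}\}$ one has $\langle L_n^y,g\rangle\geq\langle\nu,g\rangle-\varepsilon\sum_j|t_j|$, so by the exponential Markov inequality applied to $\exp(\sum_i g(y_i))$ and the product structure of $\mu^{\otimes n}$,
\begin{equation*}
\tfrac1n\log\mu^{\otimes n}(L_n^y\in\mathcal{U}_{k,\varepsilon})\leq-\Big(\langle\nu,g\rangle-\varepsilon\textstyle\sum_j|t_j|-\log\int e^{g}\,d\mu\Big).
\end{equation*}
Let $n\to\infty$, then $\varepsilon\to0$, then take the supremum over $g$ in the linear span of $\varphi_1,\dots,\varphi_k$, then $k\to\infty$; since $(\varphi_j)$ is dense in $C_b(E)$ and $g\mapsto\int g\,d\nu-\log\int e^{g}d\mu$ is Lipschitz for the uniform norm, the resulting supremum is $\sup_{g\in C_b(E)}\{\cdots\}=\mathbf{H}[\nu|\mu]$, giving $\lim_k\lim_\varepsilon\limsup_n(\cdots)\leq-\mathbf{H}[\nu|\mu]$.

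\textbf{Lower bound.} If $\mathbf{H}[\nu|\mu]=+\infty$ there is nothing to prove, so assume $\nu\ll\mu$ with $f=\frac{d\nu}{d\mu}$ and $\mathbf{H}[\nu|\mu]<\infty$; note $\log f\in L^1(\nu)$ because $f\log f\in L^1(\mu)$ and $f\log f\geq-e^{-1}$, and $\int\log f\,d\nu=\mathbf{H}[\nu|\mu]$. Changing measure, $\mu^{\otimes n}(L_n^y\in\mathcal{U}_{k,\varepsilon})=\mathbb{E}_{\nu^{\otimes n}}[\mathbf{1}_{\{L_n^y\in\mathcal{U}_{k,\varepsilon}\}}\exp(-\sum_i\log f(y_i))]$. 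Restricting further to $B_{n,\delta}:=\{|\frac1n\sum_i\log f(y_i)-\mathbf{H}[\nu|\mu]|\leq\delta\}$, the integrand is $\geq e^{-n(\mathbf{H}[\nu|\mu]+\delta)}$ there, whence $\frac1n\log\mu^{\otimes n}(L_n^y\in\mathcal{U}_{k,\varepsilon})\geq-(\mathbf{H}[\nu|\mu]+\delta)+\frac1n\log\nu^{\otimes n}(\{L_n^y\in\mathcal{U}_{k,\varepsilon}\}\cap B_{n,\delta})$. By the law of large numbers under $\nu^{\otimes n}$, $\langle L_n^y,\varphi_j\rangle\to\langle\nu,\varphi_j\rangle$ and $\frac1n\sum_i\log f(y_i)\to\mathbf{H}[\nu|\mu]$ almost surely, so this probability tends to $1$; taking $\liminf_n$, then $\varepsilon\to0$, $k\to\infty$, and finally $\delta\to0$ yields $\lim_k\lim_\varepsilon\liminf_n(\cdots)\geq-\mathbf{H}[\nu|\mu]$. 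Combining the two bounds proves the theorem (the case where $\mu$ is merely a finite measure is identical after inserting the constant $\log\mu(E)$).

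\textbf{Main obstacle.} The genuinely delicate points are the variational formula in the singular case $\nu\not\ll\mu$ (showing the supremum is $+\infty$, which needs outer regularity on the Polish space and an approximation of indicators by uniformly continuous functions) and the bookkeeping of the iterated limits $\lim_k\lim_\varepsilon\lim_n$ — in particular that the $\mathcal{U}_{k,\varepsilon}$ genuinely exhaust a weak neighbourhood base of $\nu$, so that one may interchange ``sup over $g$ in a finite span, then $k\to\infty$'' with ``sup over $C_b(E)$''. A shorter alternative is to quote the classical weak-topology Sanov theorem (the LDP for $L_n^y$ under $\mu^{\otimes n}$ with rate function $\mathbf{H}[\cdot|\mu]$) and read the statement off its upper bound on $\overline{\mathcal{U}_{k,\varepsilon}}$ and lower bound on $\mathcal{U}_{k,\varepsilon}^{\circ}$, using lower semicontinuity of $\mathbf{H}[\cdot|\mu]$ to conclude that both $\inf_{\overline{\mathcal{U}_{k,\varepsilon}}}\mathbf{H}[\cdot|\mu]$ and $\inf_{\mathcal{U}_{k,\varepsilon}^{\circ}}\mathbf{H}[\cdot|\mu]$ converge to $\mathbf{H}[\nu|\mu]$ along the base.
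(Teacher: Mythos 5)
The paper states \cref{thmH} in the appendix as a classical background fact and supplies no proof of its own, so there is nothing internal to compare against; what you have written is new content. Your argument is the classical two-sided Cram\'er--Chernoff proof of Sanov's theorem (exponential Markov bound for the upper estimate, change of measure plus law of large numbers for the lower estimate, glued by the Donsker--Varadhan variational formula), and it is essentially correct, including the nontrivial bookkeeping: you correctly observe that the iterated limit in the statement is exactly what lets one interchange the supremum over finitely many $\varphi_j$'s with the full variational supremum, and you correctly restrict to the event $B_{n,\delta}$ in the lower bound so that the change-of-measure density is controlled.

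One wording imprecision worth fixing: you write that the $(\varphi_j)$ are ``dense in $C_b(E)$''. For non-compact Polish $E$, no countable family of bounded uniformly continuous functions is sup-norm dense in $C_b(E)$, since $C_{bu}(E)$ is a proper closed subspace. What your argument actually uses (and what the hypothesis of the theorem guarantees) is density in $C_{bu}(E)$, together with the fact that the Donsker--Varadhan supremum is unchanged when restricted from bounded measurable to $C_{bu}(E)$ --- this follows from inner regularity and Lusin's theorem exactly as in the ``main obstacle'' you flag for the singular case. With that one phrase corrected the proof is clean; the alternative you sketch (invoking the weak-topology LDP and reading off upper and lower bounds on $\overline{\mathcal{U}_{k,\varepsilon}}$ and $\mathcal{U}_{k,\varepsilon}^{\circ}$ together with lower semicontinuity of $\mathbf{H}[\cdot|\mu]$) is also valid and is closer in spirit to the way the paper actually uses Sanov-type statements elsewhere, e.g.\ in \cref{LDP} and \cref{Thmsanov}.
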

\begin{theorem}[(strict) convexity of relative entropy]
Let $\mu\in\mathcal{P}(\Omega)$. $\mathbf{H}[\cdot|\mu]$ has values in $\overline{\R_{+}}$, convex, strictly convex on $\{\nu,\quad\mathbf{H}[\nu|\mu]<+\infty\}$ and is zero only in $\mu.$
\end{theorem}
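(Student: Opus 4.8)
The plan is to reduce everything to the pointwise (strict) convexity of the scalar function $\phi(t):=t\log t$ on $[0,+\infty)$ (with $\phi(0):=0$) together with Jensen's inequality. First I would record the reduction: if $\nu\not\ll\mu$ then $\mathbf{H}[\nu|\mu]=+\infty$ by definition and there is nothing to check, so I may assume $\nu\ll\mu$ and set $f:=d\nu/d\mu\geq 0$, which satisfies $\int_{\Omega}f\,d\mu=\nu(\Omega)=1$. Then $\mathbf{H}[\nu|\mu]=\int_{\Omega}\phi(f)\,d\mu$, and since $\phi\geq-e^{-1}$ this integral is well defined in $(-\infty,+\infty]$. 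By Jensen's inequality applied to the convex $\phi$ and the probability measure $\mu$, $\mathbf{H}[\nu|\mu]\geq\phi\!\left(\int_{\Omega}f\,d\mu\right)=\phi(1)=0$, which gives that $\mathbf{H}[\cdot|\mu]$ takes values in $\overline{\R_{+}}$. Since $\phi$ is strictly convex, equality in Jensen forces $f$ to be $\mu$-a.s.\ constant, hence $f\equiv 1$ $\mu$-a.s.\ and $\nu=\mu$; conversely $\mathbf{H}[\mu|\mu]=\int_{\Omega}\phi(1)\,d\mu=0$, so $\mathbf{H}[\cdot|\mu]$ vanishes exactly at $\mu$.

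For convexity, take $\nu_{0},\nu_{1}\in\mathcal{P}(\Omega)$ and $\lambda\in[0,1]$ and put $\nu_{\lambda}:=(1-\lambda)\nu_{0}+\lambda\nu_{1}$. If either $\mathbf{H}[\nu_{i}|\mu]=+\infty$ the claimed inequality is trivial, so assume both are finite; then $\nu_{0},\nu_{1}\ll\mu$, hence $\nu_{\lambda}\ll\mu$ with $f_{\lambda}:=d\nu_{\lambda}/d\mu=(1-\lambda)f_{0}+\lambda f_{1}$. Pointwise convexity of $\phi$ gives, for $\mu$-a.e.\ $x$, $\phi(f_{\lambda}(x))\leq(1-\lambda)\phi(f_{0}(x))+\lambda\phi(f_{1}(x))$; integrating against $\mu$ (each integrand is bounded below by $-e^{-1}$, and the right-hand side has finite integral by assumption) yields $\mathbf{H}[\nu_{\lambda}|\mu]\leq(1-\lambda)\mathbf{H}[\nu_{0}|\mu]+\lambda\mathbf{H}[\nu_{1}|\mu]$, and in particular $\mathbf{H}[\nu_{\lambda}|\mu]<+\infty$.

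For strict convexity on $\{\nu:\mathbf{H}[\nu|\mu]<+\infty\}$, keep the notation above, assume $\nu_{0}\neq\nu_{1}$ and $\lambda\in(0,1)$. Then $f_{0}\neq f_{1}$ on a set $A$ with $\mu(A)>0$ (otherwise $\nu_{0}=f_{0}\mu=f_{1}\mu=\nu_{1}$), and strict convexity of $\phi$ gives $\phi(f_{\lambda}(x))<(1-\lambda)\phi(f_{0}(x))+\lambda\phi(f_{1}(x))$ for $\mu$-a.e.\ $x\in A$. Since all three integrals are finite (the two on the right by hypothesis, the one on the left by the convexity step just proved), the nonnegative function $(1-\lambda)\phi(f_{0})+\lambda\phi(f_{1})-\phi(f_{\lambda})$ is integrable and is strictly positive on a set of positive measure, so its integral is strictly positive; this is exactly $\mathbf{H}[\nu_{\lambda}|\mu]<(1-\lambda)\mathbf{H}[\nu_{0}|\mu]+\lambda\mathbf{H}[\nu_{1}|\mu]$.

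The only real care needed — and the point I would watch most closely — is the integrability bookkeeping: ensuring each $\int_{\Omega}\phi(f_{i})\,d\mu$ is unambiguously defined (guaranteed by the uniform lower bound $\phi\geq-e^{-1}$), that one never subtracts two infinite quantities, and that the dichotomy $\nu\not\ll\mu$ versus $\nu\ll\mu$ is disposed of at the outset. An alternative, essentially equivalent route for convexity (and for lower semicontinuity, if wanted) is the Donsker--Varadhan variational formula $\mathbf{H}[\nu|\mu]=\sup_{g\in\mathcal{C}_{b}(\Omega)}\left\{\int_{\Omega}g\,d\nu-\log\int_{\Omega}e^{g}\,d\mu\right\}$: the expression inside the supremum is affine in $\nu$, so $\mathbf{H}[\cdot|\mu]$ is a supremum of affine functionals, hence convex; but this does not by itself yield strict convexity, so I would still use the $\phi$-argument above for that part.
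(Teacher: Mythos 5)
Your proof is correct and complete. Note, however, that the paper states this theorem in the appendix (\S A.4) as background material \emph{without} supplying a proof, so there is no argument in the paper to compare against. Your reduction to the pointwise strict convexity of $\phi(t)=t\log t$ on $[0,+\infty)$ together with Jensen's inequality is the standard route; the care you take with the integrability bookkeeping (the uniform lower bound $\phi\geq -e^{-1}$, the dichotomy $\nu\ll\mu$ versus $\nu\not\ll\mu$, and finiteness of the integrals before subtracting) is exactly what is needed and is handled correctly, including the observation that strict convexity of $\phi$ extends to the endpoint $t=0$. The remark that the Donsker--Varadhan variational formula gives convexity (and lower semicontinuity) as a supremum of affine functionals, but not strict convexity, is also accurate and a sensible thing to flag.
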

\begin{theorem}[Tensorization property]
Let $\mu\in\mathcal{P}(\Omega)$, $\nu\in\mathcal{P}(\Omega^n)$ with $\nu_{i}$ its $i-$th marginal. So
\begin{equation}
\mathbf{H}[\nu|\mu^{\bigotimes n}]=\mathbf{H}[\nu|\bigotimes_{i=1}^{n}\nu_{i}]+\sum_{i =1}^{n}\mathbf{H}[\nu_{i}|\mu]
\end{equation}
\end{theorem}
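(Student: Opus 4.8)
The plan is to reduce the identity to the absolutely continuous case and then read it off from the chain rule for Radon--Nikodym derivatives, after identifying the marginal densities as conditional expectations. First I would dispose of the case $\nu\not\ll\mu^{\bigotimes n}$, in which the left-hand side is $+\infty$: if all marginals satisfied $\nu_{i}\ll\mu$, then $\bigotimes_{i=1}^{n}\nu_{i}\ll\mu^{\bigotimes n}$, so $\nu\ll\bigotimes_{i=1}^{n}\nu_{i}$ would force $\nu\ll\mu^{\bigotimes n}$, a contradiction; hence at least one of $\mathbf{H}[\nu_{i}|\mu]$ or $\mathbf{H}[\nu|\bigotimes_{i=1}^{n}\nu_{i}]$ equals $+\infty$, and since all terms on the right are nonnegative the right-hand side is $+\infty$ as well. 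So the identity holds in $[0,+\infty]$ unless $\nu\ll\mu^{\bigotimes n}$, which I henceforth assume.

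Writing $f:=\tfrac{d\nu}{d\mu^{\bigotimes n}}$, the $i$-th marginal satisfies $\nu_{i}\ll\mu$ with density $f_{i}(x_{i}):=\int_{\Omega^{n-1}}f(x)\prod_{j\neq i}\mu(dx_{j})$, i.e.\ $f_{i}$ is the $\mu^{\bigotimes n}$-conditional expectation of $f$ given the $i$-th coordinate projection. I would then observe that $\nu$ is carried by the set $\{x:\ f_{i}(x_{i})>0\ \text{for all }i\}$: indeed $\nu(\{x:\ f_{i}(x_{i})=0\})=\int\mathds{1}_{\{f_{i}(x_{i})=0\}}\,f\,d\mu^{\bigotimes n}=0$ because $f\geq0$ and $\mathbb{E}_{\mu^{\bigotimes n}}[f\mid X_{i}]=f_{i}$ vanishes on that event. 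Consequently $\nu\ll\bigotimes_{i=1}^{n}\nu_{i}$ with $\tfrac{d\nu}{d(\bigotimes_{i}\nu_{i})}=f/\prod_{i}f_{i}$ on this $\nu$-full set, so that $\log f=\log\!\big(f/\prod_{i}f_{i}\big)+\sum_{i=1}^{n}\log f_{i}$ holds $\nu$-almost everywhere. Integrating this identity against $\nu$ and using $\int\log f_{i}\,d\nu=\int\log f_{i}\,d\nu_{i}=\mathbf{H}[\nu_{i}|\mu]$ (since $\log f_{i}$ depends only on $x_{i}$) together with $\int\log(f/\prod_{i}f_{i})\,d\nu=\mathbf{H}[\nu|\bigotimes_{i}\nu_{i}]$ gives the claim.

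The one point requiring care is the legitimacy of splitting $\int\log f\,d\nu$ into a sum of integrals when some of the terms are infinite; this is ensured by the elementary bound $t\log t\geq-1/e$, which makes the negative parts of all three integrands ($\log f$ against $\nu$, each $\log f_{i}$ against $\nu_{i}$, and $\log(f/\prod_{i}f_{i})$ against $\nu$) integrable, so no $\infty-\infty$ ambiguity can occur and the pointwise identity transfers to the integrals.

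I expect the main obstacle to be the disintegration step, namely correctly identifying $f_{i}$ as the conditional expectation of $f$ and deducing from it both $\nu\ll\bigotimes_{i}\nu_{i}$ and the explicit form of its density on the relevant $\nu$-full set. Once that is in place the remainder (the degenerate cases and the control of negative parts) is routine bookkeeping, and the superadditivity inequality $\mathbf{H}[\nu|\mu^{\bigotimes n}]\geq\sum_{i}\mathbf{H}[\nu_{i}|\mu]$ used earlier (cf.\ \cref{entropie}) follows at once from nonnegativity of $\mathbf{H}[\nu|\bigotimes_{i}\nu_{i}]$.
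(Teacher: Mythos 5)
Your proof is correct, and it is worth noting that the paper states this tensorization identity in the appendix without proof. The nearest proved result in the paper is the superadditivity inequality $\mathbf{H}[Q|\prod_{i}\alpha_i]\geq\sum_i\mathbf{H}[Q_i|\alpha_i]$ (Proposition ``Relative entropy and tensor product''), which, as you correctly observe at the end, is an immediate corollary of the identity. That proposition's proof takes a genuinely different route: it disintegrates $Q$ through the sequential conditional laws $Q^{i}(\cdot|x_{[1,i-1]})$, uses the chain rule to write $\mathbf{H}[Q|\prod_i\alpha_i]=\mathbb{E}_Q\big[\sum_i\mathbf{H}[Q^{i}(\cdot|x_{[1,i-1]})|\alpha_i]\big]$, and then applies Jensen's inequality (convexity of $\eta\mapsto\mathbf{H}[\eta|\alpha_i]$) to replace the averaged conditional entropies by the marginal ones. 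That argument is tailored to the inequality and discards information at the Jensen step; your pointwise density decomposition $\log f=\log(f/\prod_if_i)+\sum_i\log f_i$ on the $\nu$-full set retains the exact defect term $\mathbf{H}[\nu|\bigotimes_i\nu_i]$ and so yields the identity rather than just the bound. The measure-theoretic points you flag — the $\nu$-full carrying set $\{\prod_if_i>0\}$, the integrability of all negative parts via $t\log t\geq-1/e$ so that the integral of the pointwise sum splits unambiguously in $(-\infty,+\infty]$, and the dispatch of the non--absolutely continuous case — are exactly the bookkeeping a complete proof of the identity requires, and your treatment of them is correct.
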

\begin{theorem}[Villani]
Let $(X:=(X_{1},\ldots,X_{n})$ be a random variable on $E^{n}$ with $E$ a Polish space, $\mu_{n}:=\mathbb{P}_{X}\in\mathcal{P}(E^{n})$, $\delta_{X}:=\frac{1}{n}\sum\delta_{X_{i}} $ and $\mu\in\mathcal{P}(E)$. The following assertions are equivalent:
\begin{itemize}
    \item $\delta_{X}$ converges in law to $\mu$:
    \begin{equation}
    \forall\varphi\in\mathcal{C}_{b}(E),\quad\int\varphi d\delta_{X}\overset{n\to+\infty}{\longrightarrow}\int\varphi d\mu\quad\textnormal{almost surely}.
    \end{equation}
    \item 
    \begin{equation}
    \forall\varphi\in\textbf{Lip}_{b}(E),\quad\lim_{n\to+\infty}\mathbb{E}_{\mu_{n}}\bigg[ \bigg|\int\varphi d(\delta_{X}-\mu)\bigg|\bigg]=0.
    \end{equation}
\end{itemize}
\end{theorem}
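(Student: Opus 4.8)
The plan is to prove the two implications separately, after isolating the one structural fact that makes them equivalent. Because the limiting measure $\mu$ is a \emph{deterministic} element of $\mathcal{P}(E)$, the assertion in (i) that $\delta_X$ converges ``in law'' to $\mu$ is the same as $\delta_X\to\mu$ \emph{in probability} in the space $\mathcal{P}(E)$ equipped with the weak topology, and it is this probabilistic mode of convergence that communicates with the $L^1$-type statement (ii): the bridge is that each scalar variable $\int\varphi\,d(\delta_X-\mu)$ is bounded by $2\|\varphi\|_\infty$, so that $L^1$ convergence and convergence in probability coincide for it.

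First I would treat \textbf{(ii)$\,\Rightarrow\,$(i)}. Since $E$ is Polish, I would fix a countable family $(\varphi_j)_{j\ge 1}$ of bounded Lipschitz functions with $\|\varphi_j\|_\infty\le 1$ and $\mathrm{Lip}(\varphi_j)\le 1$ that is convergence-determining for the weak topology, and form the distance
\[
  d(m,m'):=\sum_{j\ge 1}2^{-j}\,\Big|\int\varphi_j\,dm-\int\varphi_j\,dm'\Big|,
\]
which is well defined (each term is $\le 2^{-j+1}$) and metrizes weak convergence on $\mathcal{P}(E)$. Applying (ii) to each $\varphi_j$ gives $\mathbb{E}_{\mu_n}\big[\,|\int\varphi_j\,d(\delta_X-\mu)|\,\big]\to 0$; summing against the weights $2^{-j}$ and invoking dominated convergence in the index $j$ (the general term is dominated by $2^{-j+1}$) yields $\mathbb{E}_{\mu_n}[d(\delta_X,\mu)]\to 0$, hence $d(\delta_X,\mu)\to 0$ in $L^1$ and a fortiori in probability, which is exactly the convergence in law asserted in (i).

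Then I would do \textbf{(i)$\,\Rightarrow\,$(ii)}. Convergence in law of $\delta_X$ to the \emph{constant} $\mu$ forces convergence in probability: the complement of any ball $B(\mu,\varepsilon)$ in $\mathcal{P}(E)$ is closed and is assigned zero mass by the limit law $\delta_\mu$ (the Dirac mass at the point $\mu$), so the Portmanteau theorem gives $\mu_n\big(\,d(\delta_X,\mu)>\varepsilon\,\big)\to 0$. Now fix $\varphi\in\textbf{Lip}_b(E)$; the functional $m\mapsto\int\varphi\,dm$ is continuous on $\mathcal{P}(E)$ for the weak topology, so $\int\varphi\,d\delta_X\to\int\varphi\,d\mu$ in probability, while $|\int\varphi\,d(\delta_X-\mu)|\le 2\|\varphi\|_\infty$ almost surely. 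Since a uniformly bounded sequence of random variables converging to $0$ in probability converges to $0$ in $L^1$ (split the expectation over $\{|\cdot|\le\varepsilon\}$ and its complement, and let $\varepsilon\downarrow 0$), I would conclude $\mathbb{E}_{\mu_n}\big[\,|\int\varphi\,d(\delta_X-\mu)|\,\big]\to 0$, i.e. (ii).

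The hard part is interpretational rather than analytic: the phrase ``almost surely'' in the display of (i) has to be understood as convergence in law to the deterministic measure $\mu$ (equivalently, convergence in probability), and \emph{not} as genuine almost sure convergence, which is strictly stronger and would not be equivalent to (ii) --- from (ii) one recovers almost sure convergence only along a subsequence, via the standard $L^1$-to-almost-sure extraction. Once this is settled, the whole proof reduces to two routine ingredients: the bounded convergence theorem for convergence in probability, and the metrizability of weak convergence on a Polish space by a countable family of Lipschitz functionals; the restriction to bounded Lipschitz test functions in (ii) is harmless because such functions already form a convergence-determining class.
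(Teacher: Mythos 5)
Your proof is correct and follows essentially the route the paper itself indicates (it only sketches the argument): metrize weak convergence on $\mathcal{P}(E)$ by a countable family of bounded Lipschitz functions and control the expected distance $\mathbb{E}_{\mu_n}[d(\delta_X,\mu)]$, which is exactly the $\mathcal{W}_{1}$-distance on $\mathcal{P}(\mathcal{P}(E))$ between the law of $\delta_X$ and the Dirac mass at $\mu$. Your reading of the ``almost surely'' in (i) as convergence in probability (equivalently, in law, since $\mu$ is deterministic) is the right interpretation, since literal almost-sure convergence would make the equivalence fail in the direction (ii)$\Rightarrow$(i).
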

\noindent Without repeating the proof, we can say that this result is obtained by defining a metric on $\mathcal{P}(E)$ from a dense sequence of Lipschitz functions and then by defining the transport distance  Wasserstein's $\mathcal{W}_{1}$ on $\mathcal{P}(\mathcal{P}(E))$ associated with this metric. Using this result, we can more formally prove the propagation of chaos.
\begin{definition}[U-statistics]\label{Ustats}
Let $E$ be a set, $k\in\mathbb{N}^{*}$ and $\Phi:E^{k}\longrightarrow\R$ a symmetric function. Then the application: ($n\geq k$)
\begin{equation}
X:=(x_{j})_{j=1,\ldots,n}\in E^{n}\longmapsto U(X):=\frac{k!(n-k)!}{n!}\sum_{1\leq i_{1}<i_{2}<\ldots<i_{k}\leq n}\Phi(x_{i_{1}},\ldots,x_{i_{k}})
\end{equation}
is called $U-$statistic of order $k$ and kernel $\Phi$. $U(X)$ is called $U-$statistic of order $k$ and kernel $\Phi$ associated with the sample $X.$ This statistic corresponds to the arithmetic mean of the kernel over all the parts at $ k$ elements of the set of sample values. we often write $U_{n}(\Phi)(X):=U(X)$. If $E$ is a measurable space, we generalize this definition to the space of probabilities by the functional $\mu\longmapsto\mathbb{E}_{\mu^{\bigotimes k}}[\Phi]$.
\end{definition}
\subsection{Proofs}\label{proofs}
all these proofs are inspired by \cite{Ref1} by setting $S=\R^{d}$.
\begin{proof}[Proof of \cref{tcl}]\label{proof1}
Let $\mathfrak{G}_{n}$ be the group of permutations of $\{1,\ldots,n\}$ and $\mathfrak{B}_{n}$ the $\sigma-$algebra defined by
\begin{equation}
\mathfrak{B}_{n}:=\sigma\bigg\{B_{n}\times C_{n}|C_{n}\in\mathcal{B}(E^{[n+1,+\infty[}),\quad B_{n}\in\mathcal{B}(E^{n}),\quad\forall\tau\in\mathfrak{G}_{n},\quad\tau\mathbb {I}_{B_{n}}=\mathbb{I}_{B_{n}}\bigg\}.
\end{equation}
This $\sigma-$algebra is invariant under permutations and verifies for all $n\geq 1$, 
\begin{equation}
\mathfrak{B}_{n+1}\subset\mathfrak{B}_{n}.
\end{equation} 
By integrability, 
\begin{equation}
\forall(i_{1},\ldots,i_{k})\in I^{k}_{n},\quad
\mathbb{E}[\Phi(X_{i_{1}},\ldots,X_{i_{k}})|\mathfrak{B}_{n}]=\mathbb{E}[\Phi( X_{1},\ldots,X_{k})|\mathfrak{B}_{n}],
\end{equation} 
which implies that 
\begin{equation}
U_{n}(\Phi)=\mathbb{E}[\Phi(X_ {1},\ldots,X_{k})|\mathfrak{B}_{n}].
\end{equation} 
According to the limit theorems on martingales (closed martingale) and the law of $0-1$ applied to the asymptotic tribe $\mathfrak{B}_{\infty}:=\underset{n\geq 1}{\bigcap }\mathfrak{B}_{n}$, we deduce that we almost surely have
\begin{equation}
U_{n}(\Phi)\overset{n\to+\infty}{\longrightarrow}\mathbb{E}[\Phi(X_{1},\ldots,X_{k})|\mathfrak{B} _{\infty}]=\mathbb{E}[\Phi(X_{1},\ldots,X_{k})].
\end{equation}
\end{proof}

\begin{proof}[Proof of \cref{jensen}]\label{proof2}
We prove this result by induction. Indeed, for $k=1$ the inequality is verified since we have equality of the two members. Suppose that for $k-1$ the inequality holds. Denote by $B_{k}$ the left side of this inequality. We have
\begin{align}
B_{k}=\log\mathbb{E}^{X^{k}}\bigg[\mathbb{E}\bigg[\exp\bigg(\frac{1}{|I^{k-1} _{n}|}\sum_{(i_{1},\ldots,i_{k-1})\in I^{k-1}_{n}}\sum_{i_{k}\notin\{ i_{1},\ldots,i_{k-1}\}} \frac{1}{n-k+1}\Phi_{i_{1},\ldots,i_{k}}(X^{1 }_{i_{1}},\ldots,X^{k}_{i_{k}})\bigg)
\bigg|X^{k}\bigg]\bigg]
\end{align}
with $X^{k} :=(X^{k}_{1},\ldots,X^{k}_{n}).$
Let's pose
\begin{equation}
\Tilde{\Phi}_{i_{1},\ldots,i_{k-1}}:=\frac{1}{n-k+1}\sum_{i_{k}\notin\{i_{ 1},\ldots,i_{k-1}\}}\Phi_{i_{1},\ldots,i_{k}}(X^{1}_{i_{1}},\ldots,X^ {k}_{i_{k}}).
\end{equation}
By induction hypothesis, we deduce that
\begin{equation}
B_{k}\leq\log\mathbb{E}^{X^{k}}\bigg[\exp\bigg(\frac{n-k+2}{|I^{k-1}_{n }|}\sum_{(i_{1},\ldots,i_{k-1})\in I^{k-1}_{n}}\log\mathbb{E}\bigg[\exp\bigg (\frac{1}{n-k+2}\Tilde{\Phi}_{i_{1},\ldots,i_{k-1}}\bigg)\bigg|X^{k}\bigg] \bigg)\bigg].
\end{equation}
Since 
\begin{equation}
\log\mathbb{E}^{X^{k}}\bigg[\exp\bigg(\frac{n-k+2}{|I^{k-1}_{n}|} \sum_{(i_{1},\ldots,i_{k-1})\in I^{k-1}_{n}}\log\mathbb{E}\bigg[\exp\bigg(\frac {1}{n-k+2}\Tilde{\Phi}_{i_{1},\ldots,i_{k-1}}\bigg)\bigg|X^{k}\bigg]\bigg) \bigg]
\end{equation} 
is upper bounded by
\begin{equation}
\log\mathbb{E}^{X^{k}}\bigg[\exp\bigg(\frac{1}{|I^{k-1}_{n}|}\sum_{(i_{1 },\ldots,i_{k-1})\in I^{k-1}_{n}}\log\bigg(\mathbb{E}\bigg[\exp\bigg(\frac{1}{ n-k+2}\Tilde{\Phi}_{i_{1},\ldots,i_{k-1}}\bigg)\bigg|X^{k}\bigg]\bigg)^{n- k+2}\bigg)\bigg],
\end{equation}
by convexity of $X\longmapsto\log\mathbb{E}[e^{X}]$ (consequence of Holder's inequality), we have
\begin{equation}
B_{k}\leq\frac{1}{|I^{k-1}_{n}|}\sum_{(i_{1},\ldots,i_{k-1})\in I^{ k-1}_{n}}\log\mathbb{E}^{X^{k}}\bigg[\bigg(\mathbb{E}\bigg[\exp\bigg(\frac{1}{n -k+2}\Tilde{\Phi}_{i_{1},\ldots,i_{k-1}}\bigg)\bigg|X^{k}\bigg]\bigg)^{n-k +2}\bigg].
\end{equation}
In this last inequality, for all $(i_{1},\ldots,i_{k-1})$, the logarithmic term verifies
\begin{align}
&\mathbb{E}^{X^{k}}\bigg[\bigg(\mathbb{E}\bigg[\exp\bigg(\frac{1}{n-k+2}\Tilde{\Phi} _{i_{1},\ldots,i_{k-1}}\bigg)\bigg|X^{k}\bigg]\bigg)^{n-k+2}\bigg]\\
&=\mathbb{E }^{X^{k}}\bigg[\bigg(\mathbb{E}\bigg[\exp\bigg(\frac{1}{(n-k+2)(n-k+1)}\sum_{i_{k}\notin\{i_{1},\ldots,i_{k-1}\}}\Phi_{i}(X^{1}_{i_{1}},\ldots,X ^{k}_{i_{k}})\bigg)\bigg|X^{k}\bigg]\bigg)^{n-k+2}\bigg],\nonumber
\end{align}
and by Holder's inequality, we have
\begin{align}
&\mathbb{E}^{X^{k}}\bigg[\bigg(\mathbb{E}\bigg[\exp\bigg(\frac{1}{n-k+2}\Tilde{\Phi}_{i_{1},\ldots,i_{k-1}}\bigg)\bigg|X^{k}\bigg]\bigg)^{n-k+2}\bigg]\\
&\leq\mathbb{E}^{X^{k}}\bigg[\bigg(\prod_{i_{k}\notin\{i_{1},\ldots,i_{k-1}\}}\mathbb{E}\bigg[\exp\bigg(\frac{1}{n-k+2}\Phi_{i_{1},\ldots,i_{k}}(X^{1}_{i_{1}},\ldots,X^{k}_{i_{k}})\bigg)\bigg|X^{k}\bigg]\bigg)^{\frac{n-k+2}{n-k+1} }\bigg].\nonumber
\end{align}
By Jensen's inequality, we also have the upper bound of the right-hand side of this last inequality by
$$
\mathbb{E}^{X^{k}}\bigg[\prod_{i_{k}\notin\{i_{1},\ldots,i_{k-1}\}}\mathbb{E}\bigg[\exp\bigg(\frac{1}{n-k+1}\Phi_{i_{1},\ldots,i_{k}}(X^{1}_{i_{1}},\ldots,X^{k}_{i_{k}})\bigg)\bigg|X^{k}\bigg]\bigg],
$$
and by independence, this quantity is equal to
\begin{equation}
\prod_{i_{k}\notin\{i_{1},\ldots,i_{k-1}\}}\mathbb{E}\bigg[\exp\bigg(\frac{1}{n-k +1}\Phi_{i_{1},\ldots,i_{k}}(X^{1}_{i_{1}},\ldots,X^{k}_{i_{k}})\bigg)\bigg].
\end{equation}
\end{proof}

\begin{proof}[Proof of \cref{decouplage1}]\label{proof3}
Let $((X^{j}_{1},\ldots,X^{j}_{n}))_{j=1,\ldots,k}$ be independent copies of $(X_{1} ,\ldots,X_{n}).$ By the two propositions above, setting for all $i\in I^{k}_{n}$, $\Phi_{i_{1},\ldots ,i_{k}}\equiv W^{(k)}$, we have for all $\lambda>0$
\begin{equation}
\Lambda_{n}(\lambda,W^{(k)})=\frac{1}{n}\log\mathbb{E}\bigg[\exp\bigg(\frac{\lambda n}{| I^{k}_{n}|}\sum_{i\in I^{k}_{n}}W^{(k)}(X_{i_{1}},\ldots,X_{i_{ k}})\bigg)\bigg],
\end{equation}
and it follows that
\begin{align}
\Lambda_{n}(\lambda,W^{(k)})&\leq\frac{1}{n}\log\mathbb{E}\bigg[\exp\bigg(\frac{1}{|I ^{k}_{n}|}\sum_{i\in I^{k}_{n}}\lambda nC_{k}|W^{(k)}|(X^{1}_{i_ {1}},\ldots,X^{k}_{i_{k}})\bigg)\bigg]\\
&\leq\frac{1}{n|I^{k-1}_{n}|} \sum_{i\in I^{k}_{n}}\log\mathbb{E}\bigg[\exp\bigg(\frac{\lambda nC_{k}}{n-k+1}|W ^{(k)}|(X^{1}_{i_{1}},\ldots,X^{k}_{i_{k}})\bigg)\bigg].\nonumber
\end{align}
Gold
\begin{align}
\frac{1}{n|I^{k-1}_{n}|}\sum_{i\in I^{k}_{n}}\log\mathbb{E}\bigg[\exp\bigg(\frac{\lambda nC_{k}}{n-k+1}|W^{(k)}|(X^{1}_{i_{1}},\ldots,X^{k}_{i_{k}})\bigg)\bigg]=
\frac{n-k+1}{n}\log\mathbb{E}\bigg[\exp\bigg(\frac{\lambda nC_{k }}{n-k+1}|W^{(k)}|(X_{i_{1}},\ldots,X_{i_{k}})\bigg)\bigg].
\end{align}
It is deduced that
\begin{align}
\Lambda_{n}(\lambda,W^{(k)})&\leq\frac{n-k+1}{n}\log\mathbb{E}\bigg[\exp\bigg(\frac{\lambda nC_{k}}{n-k+1}|W^{(k)}|(X_{i_{1}},\ldots,X_{i_{k}})\bigg)\bigg]\\
&\leq\frac{1}{k}\log\mathbb{E}\bigg[\exp\bigg(kC_{k}\lambda|W^{(k)}|(X_{1},\ldots,X_{k })\bigg)\bigg],\nonumber
\end{align}
and this last inequality is obtained by growth on $(0,+\infty)$ of $a\longmapsto\frac{1}{a}\log\mathbb{E}[e^{aX}]$ and from the fact that for all $n$ and $k$ such that $n\geq k$, we have $\frac{n}{n-k+1}\leq k.$
\end{proof}

\begin{proof}[Proof of \cref{lower}]\label{proof4}
To do this, we will show that for any probability measure $\mu$ such that $\mathbf{H}[\mu|\alpha]<+\infty$ and for any $k$, $W^{(k )}\in L^{1}(\mu^{\bigotimes k})$, we have $l^{*}(\mathcal{O})\geq-\mathbf{E}_{W}[\mu]$. Let $\mathbb{B}(\mu,\delta)$ be the open ball with center $\mu$
and of radius $\delta>0$ in $\mathcal{M}_{1}(\R^{d})$ endowed with the L\'evy-Prokhorov metric $d_{LP}$ such that $\mathbb{B }(\mu,\delta)\subset\mathcal{O}$. Let's introduce the events
\begin{itemize}
    \item 
    \begin{equation}
    A_{n}:=\bigg\{x\in(\R^{d})^{n}\bigg|\quad L_{n}:=L_{n}(x,\cdot)\in\mathbb{B}(\mu,\delta)\bigg\};\end{equation}
    \item 
    \begin{equation}
    B_{n}:=\bigg\{x\in(\R^{d})^{n}\bigg|\quad\frac{1}{n}\sum_{i=1}^{ n}\log\frac{d\mu}{d\alpha}(x_{i})=\frac{1}{n}\log\bigg(\frac{d\mu}{d\alpha}\bigg )^{\bigotimes n}(x)\leq\mathbf{H}[\mu|\alpha]+\varepsilon\bigg\};
    \end{equation}
    \item 
    \begin{equation}
    C_{n}:=\bigg\{x\in(\R^{d})^{n}\bigg|\sum_{k=2}^{N}U_{n}(W^{ (k)})\leq\sum_{k=2}^{N}\mathbf{W}^{(k)}[\mu]+\varepsilon\bigg\}.
    \end{equation}
\end{itemize}
We deduce that for all $\varepsilon>0$, we have
\begin{align}
\mu^{*}_{n}(L_{n}\in\mathbb{B}(\mu,\delta))\geq\int_{A_{n}}\bigg(\frac{d\mu^ {\bigotimes n}}{d\mu^{*}_{n}}(x)\bigg)^{-1}\mu^{\bigotimes n}(dx)=
\int_{A_{n}}e^{-\sum_{i=1}^{n}\log\frac{d\mu}{d\alpha}(x_{i})}e^{-n\sum_{k=2}^{ N}U_{n}(W^{(k)})}\mu^{\bigotimes n}(dx)
\end{align}
and
\begin{align}
\int_{A_{n}}e^{-\sum_{i=1}^{n}\log\frac{d\mu}{d\alpha}(x_{i})}e^{-n\sum_{k=2}^{ N}U_{n}(W^{(k)})}\mu^{\bigotimes n}(dx)\geq\mu^{\bigotimes n}(A_{n}\cap B_{n}\cap C_{n})e^{-n(\mathbf{H}[\mu|\alpha]+\varepsilon)-\gamma}\quad
\textnormal{with }\gamma:=n(\sum_{k=2}^{N}\mathbf{W}^{(k)}[\mu]+\varepsilon).
\end{align}
Thereby
\begin{equation}
\mu^{*}_{n}(L_{n}\in\mathbb{B}(\mu,\delta))\geq\mu^{\bigotimes n}(A_{n}\cap B_{n }\cap C_{n})e^{-n\mathbf{E}_{W}[\mu]-2n\varepsilon}.
\end{equation}
We will prove that 
\begin{equation}
\mu^{\bigotimes n}(A_{n}\cap B_{n}\cap C_{n})\overset{n\to+\infty}{\longrightarrow}1.
\end{equation} 
Indeed, by the law of large numbers, we have
\begin{equation}
\mu^{\bigotimes n}(A_{n})\overset{n\to+\infty}{\longrightarrow}1,\quad\mu^{\bigotimes n}(B_{n})\overset{n\to+\infty}{\longrightarrow}1.
\end{equation}
Moreover, by the law of large numbers for $U-$statistics (\cref{TCL}), we also have 
\begin{equation}
\mu^{\bigotimes n}(C_{n})\overset{n\to+\infty}{\longrightarrow}1.
\end{equation} 
It is deduced that
\begin{equation}
l^{*}(\mathcal{O})\geq\underset{n\to+\infty}{\liminf}\frac{1}{n}\mu^{*}_{n}(L_{n} \in\mathbb{B}(\mu,\delta))\geq-\mathbf{E}_{W}[\mu]-2\varepsilon,
\end{equation}
and we conclude by letting $\varepsilon$ tend to zero.
\end{proof}

\begin{proof}[Proof of \cref{approx}]\label{proof5}
To do this, consider the truncation function 
\begin{equation}
W^{(k),L}:=\max(-L,\min(W^{(k)},L)).
\end{equation} 
We have by Lebesgue's dominated convergence theorem
\begin{equation}
\log\mathbb{E}[\exp(m|W^{(k),L}-W^{(k)}|(X_{1},\ldots,X_{k}))]\overset{ L\to+\infty}{\longrightarrow}0.
\end{equation}
So we can choose $L=L(m)$ so that
\begin{equation}
\log\mathbb{E}[\exp(m|W^{(k),L(m)}-W^{(k)}|(X_{1},\ldots,X_{k}))] \leq\frac{1}{m}.
\end{equation}
For $m\geq 1$ and $L(m)>0$ fixed, we can find a sequence $(W^{(k),L}_{l})_{l\geq 1}$ of continuous functions bounded such that
\begin{equation}
W^{(k),L}_{l}(X_{1},\ldots,X_{k})\overset{l\to+\infty, L^{1}}{\longrightarrow}W^{( k),L}(X_{1},\ldots,X_{k}),\quad\forall l\geq 1,\quad |W^{(k),L}_{l}(X_{1} ,\ldots,X_{k})|\leq L,
\end{equation}
otherwise, we consider the truncation $\max(-L,\min(W^{(k),L}_{l},L))$. Seen that $\forall l\geq 1,$
\begin{align}
\exp\bigg(m\bigg(|W^{(k)}-W^{(k),L}|(X_{1},\ldots,X_{k} )+|W^{(k)}-W^{(k),L}_{l}|(X_{1},\ldots,X_{k})\bigg)\bigg)\leq
\exp\bigg(m|W^{(k)}-W^{(k),L}|(X_{1},\ldots,X_{k})+2mL\bigg),
\end{align}
by dominated convergence, we have
\begin{align}
\mathbb{E}\bigg[\exp\bigg(m\bigg(|W^{(k)}-W^{(k),L}|(X_{1},\ldots,X_{k}) +|W^{(k)}-W^{(k),L}_{l}|(X_{1},\ldots,X_{k})\bigg)\bigg)\bigg]\overset{ l\to+\infty}{\longrightarrow}
\mathbb{E}\bigg[\exp\bigg(m|W^{(k)}-W^{(k),L}|(X_{1},\ldots,X_{k})\bigg)\bigg].
\end{align}
For $L=L(m)$, we can choose $l=l(m)$ so that
\begin{equation}
\log\mathbb{E}\bigg[\exp\bigg(m\bigg(|W^{(k)}-W^{(k),L}|(X_{1},\ldots,X_{k })+|W^{(k)}-W^{(k),L}_{l}|(X_{1},\ldots,X_{k})\bigg)\bigg)\bigg]\leq\frac{2}{m}.
\end{equation}
By setting $W^{(k)}_{m}=W^{(k),L(m)}_{l(m)}$ bounded continuous function, we have by triangular inequality
\begin{equation}
\log\mathbb{E}\bigg[\exp\bigg(m|W^{(k)}-W^{(k)}_{m}|(X_{1},\ldots,X_{k} )\bigg)\bigg]\leq\frac{2}{m}.
\end{equation}
Since by Jensen's inequality, we have for all $\lambda>0$,
\begin{align}
\forall m\geq\lambda,\quad\log\mathbb{E}\bigg[\exp\bigg(\lambda|W^{(k)}-W^{(k)}_{m}|(X_ {1},\ldots,X_{k})\bigg)\bigg]\leq
\frac{\lambda}{m}\mathbb{E}\bigg[\exp\bigg(m|W^{(k) }-W^{(k)}_{m}|(X_{1},\ldots,X_{k})\bigg)\bigg],
\end{align}
we deduce that
\begin{equation}
\log\mathbb{E}\bigg[\exp\bigg(\lambda|W^{(k)}-W^{(k)}_{m}|(X_{1},\ldots,X_{k })\bigg)\bigg]\overset{m\to+\infty}{\longrightarrow}0.
\end{equation}
For all $\delta>0$ and $\lambda>0$, by the Markov-Tchebychev inequality, we have
\begin{equation}
\mathbb{P}(|U_{n}(W^{(k)})-U_{n}(W^{(k)}_{m})|>\delta)\leq e^{-n \lambda\delta}\mathbb{E}\bigg[\exp\bigg(n\lambda U_{n}(|W^{(k)}-W^{(k)}_{m}|)\bigg )\bigg].
\end{equation}
From the above (\cref{TCL}), we deduce that
\begin{align}
\frac{1}{n}\log\mathbb{P}(|U_{n}(W^{(k)})-U_{n}(W^{(k)}_{m})|>\delta)\leq
-\lambda\delta+\frac{1}{k}\log\mathbb{E}\bigg[\exp\bigg(kC_{k}\lambda|W^{(k)}-W^ {(k)}_{m}|(X_{1},\ldots,X_{k})\bigg)\bigg].
\end{align}
We conclude that we have the expected result when $m\to+\infty$ since $\lambda$ is arbitrary.
\end{proof}
\begin{proof}[Proof of \cref{entropie}]\label{proof6}
Let $Q^{i}(\cdot|x_{[1,i-1]})$ be the conditional distribution of $x_{i}$ knowing $x_{[1,i-1]}:=(x_{ 1},\cdots,x_{i-1})$ (not knowing if $i=1$).
We have:
\begin{align}
\mathbf{H}[Q|\prod_{i=1}^{N}\alpha_{i}]=\mathbb{E}_{Q}\bigg[\log\bigg(\frac{dQ}{d \prod_{i=1}^{N}\alpha_{i}}\bigg)\bigg]=\mathbb{E}_{Q}\bigg[\sum_{i=1}^{N}\log\bigg(\frac{Q^{i}(dx_{i}|x_{[1,i-1]})}{\alpha_{i}(dx_{i})}\bigg)\bigg]
=\mathbb{E}_{Q}\bigg[\sum_{i=1}^{N}\mathbf{H}[Q^{i}(\cdot|x_{[1,i-1]})|\alpha_ {i}]\bigg].
\end{align}
Since 
\begin{equation}
\mathbb{E}_{Q}[Q^{i}(\cdot|x_{[1,i-1]})]=Q_{i}(\cdot),
\end{equation} 
we obtain by convexity of the relative entropy (Jensen's inequality):
\begin{equation}
\mathbb{E}_{Q}\bigg[\mathbf{H}[Q^{i}(\cdot|x_{[1,i-1]})|\alpha_{i}]\bigg]\geq \mathbf{H}[Q_{i}|\alpha_{i}]
\end{equation}
Which shows that we have the result of the proposition.\end{proof}
\begin{proof}[Proof of \cref{bolz}]\label{proof7}
For $f$ a measurable function on $E$, we define:
\begin{equation}
\Lambda_{\mu}(f):=\log(\E_{\mu}[e^{f}])=\log\int e^{f}d\mu\in(-\infty,+\infty]
\end{equation}
the log-Laplace transformation under $\mu$ which is convex in $f$ by Holder's inequality. We have:
\begin{equation}
\Lambda_{\mu_{U}}(f)=\log\int e^{f}d\mu_{U}=\Lambda_{\mu}(f-U)-\Lambda_{\mu}(-U)\leq\frac{1}{p}\Lambda_{\mu}(-pU)+\frac{1}{q}\Lambda_{\mu}(qf)-\Lambda_{\mu}(-U)
\end{equation}
by Holder's inequality considering the conjugate exponent $q:=\frac{p}{p-1}$ of $p$. By the variational formula of Donsker-Varadhan, we deduce that:
\begin{align}
\mathbf{H}[\nu|\mu_{U}]=\underset{f\in\mathcal{M}_{b}(E)}{\sup}\bigg\{\int fd\nu-\Lambda_{\mu_{U}}(f)\bigg\}\geq
\underset{f\in\mathcal{M}_{b}(E)}{\sup}\bigg\{\int fd\nu- \frac{1}{q}\Lambda_{\mu}(qf)\bigg\}+\Lambda_{\mu}(-U)-\frac{1}{p}\Lambda_{\mu}(-pU ).
\end{align}
Gold:
\begin{equation}
\underset{f\in\mathcal{M}_{b}(E)}{\sup}\bigg\{\int fd\nu-\frac{1}{q}\Lambda_{\mu}(qf) \bigg\}+\Lambda_{\mu}(-U)-\frac{1}{p}\Lambda_{\mu}(-pU)=\frac{1}{q}\mathbf{H}[\nu|\mu]+\Lambda_{\mu}(-U)-\frac{1}{p}\Lambda_{\mu}(-pU).
\end{equation}
So if $\mathbf{H}[\nu|\mu_{U}]<+\infty$, $\mathbf{H}[\nu|\mu]<+\infty$ or equivalently, $\log (\frac{d\nu}{d\mu})\in L^{1}(\nu)$ and:
\begin{equation}
\log\bigg(\frac{d\nu}{d\mu_{U}}\bigg)=\log\bigg(\frac{d\nu}{d\mu}\bigg)+U+\Lambda_{\mu}(-U)\in L^{1}(\nu).
\end{equation}
This proves the proposition.
\end{proof}

\vfill
\addcontentsline{toc}{section}{Acknowledgements} 
\section*{Acknowledgements}
\boitesimple{
\begin{quotation}
\noindent The author thanks \href{https://blog.univ-angers.fr/panloup/}{Fabien \textsc{Panloup}} for extensive discussions and suggestions. Thanks also to everyone who has contributed in one way or another to the realization of this work. Last but not least, thankful for the benefits from \href{https://www.lebesgue.fr/en}{\textsc{Henri Lebesgue Center}}\footnote{Program \textbf{ANR-11-LABX-0020-0}} such as \href{https://www.lebesgue.fr/en/content/bourses_master}{\textsc{Master Scholarships}}, \href{https://www.lebesgue.fr/en/node/4878}{\textsc{Lebesgue Doctoral Meeting}}, \href{https://www.lebesgue.fr/en/content/post-doc}{\textsc{Post doc positions}}...
\end{quotation}}

\newpage
\addcontentsline{toc}{section}{References} 
\nocite{*}

\bibliographystyle{abbrv}
\bibliography{biblio}

\begin{thebibliography}{10}

\bibitem{ambrosio}
L.~Ambrosio, N.~Gigli, and G.~Savar\'{e}.
\newblock {\em Gradient flows in metric spaces and in the space of probability
  measures}.
\newblock Lectures in Mathematics ETH Z\"{u}rich. Birkh\"{a}user Verlag, Basel,
  2005.

\bibitem{BBCG}
D.~Bakry, F.~Barthe, P.~Cattiaux, and A.~Guillin.
\newblock A simple proof of the {P}oincar\'{e} inequality for a large class of
  probability measures including the log-concave case.
\newblock {\em Electron. Commun. Probab.}, 13:60--66, 2008.

\bibitem{BCG08}
D.~Bakry, P.~Cattiaux, and A.~Guillin.
\newblock Rate of convergence for ergodic continuous {M}arkov processes:
  {L}yapunov versus {P}oincar\'{e}.
\newblock {\em J. Funct. Anal.}, 254(3):727--759, 2008.

\bibitem{BakryEmery}
D.~Bakry and M.~\'{E}mery.
\newblock Diffusions hypercontractives.
\newblock In {\em S\'{e}minaire de probabilit\'{e}s, {XIX}, 1983/84}, volume
  1123 of {\em Lecture Notes in Math.}, pages 177--206. Springer, Berlin, 1985.

\bibitem{BGL-MarkovDiffusion}
D.~Bakry, I.~Gentil, and M.~Ledoux.
\newblock {\em Analysis and geometry of {M}arkov diffusion operators}, volume
  348 of {\em Grundlehren der mathematischen Wissenschaften [Fundamental
  Principles of Mathematical Sciences]}.
\newblock Springer, Cham, 2014.

\bibitem{BB19}
R.~Bauerschmidt and T.~Bodineau.
\newblock A very simple proof of the {LSI} for high temperature spin systems.
\newblock {\em J. Funct. Anal.}, 276(8):2582--2588, 2019.

\bibitem{Biologie3}
N.~Bellomo, J.~A. Carrillo, and E.~Tadmor, editors.
\newblock {\em Active particles. {V}ol. 3. {A}dvances in theory, models, and
  applications}.
\newblock Modeling and Simulation in Science, Engineering and Technology.
  Birkh\"{a}user/Springer, Cham, [2022] \copyright 2022.

\bibitem{Biologie1}
N.~Bellomo, P.~Degond, and E.~Tadmor, editors.
\newblock {\em Active particles. {V}ol. 1. {A}dvances in theory, models, and
  applications}.
\newblock Modeling and Simulation in Science, Engineering and Technology.
  Birkh\"{a}user/Springer, Cham, 2017.

\bibitem{Biologie2}
N.~Bellomo, P.~Degond, and E.~Tadmor, editors.
\newblock {\em Active particles. {V}ol. 2. {A}dvances in theory, models, and
  applications}.
\newblock Modeling and Simulation in Science, Engineering and Technology.
  Birkh\"{a}user/Springer, Cham, 2019.

\bibitem{LDP1}
G.~Ben~Arous and M.~Brunaud.
\newblock M\'{e}thode de {L}aplace: \'{e}tude variationnelle des fluctuations
  de diffusions de type ``champ moyen''.
\newblock {\em Stochastics Stochastics Rep.}, 31(1-4):79--144, 1990.

\bibitem{BobkovLedoux-PTRF97}
S.~Bobkov and M.~Ledoux.
\newblock Poincar\'{e}'s inequalities and {T}alagrand's concentration
  phenomenon for the exponential distribution.
\newblock {\em Probab. Theory Related Fields}, 107(3):383--400, 1997.

\bibitem{BGSR}
T.~Bodineau, I.~Gallagher, and L.~Saint-Raymond.
\newblock The {B}rownian motion as the limit of a deterministic system of
  hard-spheres.
\newblock {\em Invent. Math.}, 203(2):493--553, 2016.

\bibitem{BH99}
T.~Bodineau and B.~Helffer.
\newblock The log-{S}obolev inequality for unbounded spin systems.
\newblock {\em J. Funct. Anal.}, 166(1):168--178, 1999.

\bibitem{BH00}
T.~Bodineau and B.~Helffer.
\newblock Correlations, spectral gap and log-{S}obolev inequalities for
  unbounded spins systems.
\newblock In {\em Differential equations and mathematical physics
  ({B}irmingham, {AL}, 1999)}, volume~16 of {\em AMS/IP Stud. Adv. Math.},
  pages 51--66. Amer. Math. Soc., Providence, RI, 2000.

\bibitem{BGG13}
F.~Bolley, I.~Gentil, and A.~Guillin.
\newblock Uniform convergence to equilibrium for granular media.
\newblock {\em Arch. Ration. Mech. Anal.}, 208(2):429--445, 2013.

\bibitem{LDP3}
E.~Bolthausen.
\newblock Laplace approximations for sums of independent random vectors.
\newblock {\em Probab. Theory Relat. Fields}, 72(2):305--318, 1986.

\bibitem{Boltzmann3}
L.~Boltzmann.
\newblock {\em Lectures on gas theory}.
\newblock University of California Press, Berkeley-Los Angeles, Calif., 1964.
\newblock Translated by Stephen G. Brush.

\bibitem{Boltzmann2}
L.~Boltzmann.
\newblock {\em Wissenschaftliche {A}bhandlungen von {L}udwig {B}oltzmann. {I}.
  {B}and (1865--1874); {II}. {B}and (1875--1881); {III}. {B}and (1882--1905)}.
\newblock Chelsea Publishing Co., New York, 1968.
\newblock Herausgegeben von Fritz Hasen\"{o}hrl.

\bibitem{Boltzmann1}
L.~Boltzmann.
\newblock Journey of a {G}erman professor to {E}ldorado.
\newblock {\em Transport Theory Statist. Phys.}, 20(5-6):499--523, 1991.
\newblock Translated from the German.

\bibitem{games}
P.~Cardaliaguet, F.~Delarue, J.-M. Lasry, and P.-L. Lions.
\newblock {\em The master equation and the convergence problem in mean field
  games}, volume 201 of {\em Annals of Mathematics Studies}.
\newblock Princeton University Press, Princeton, NJ, 2019.

\bibitem{flatderiv}
R.~Carmona and F.~Delarue.
\newblock {\em Probabilistic theory of mean field games with applications.
  {I}}, volume~83 of {\em Probability Theory and Stochastic Modelling}.
\newblock Springer, Cham, 2018.
\newblock Mean field FBSDEs, control, and games.

\bibitem{carmonadelarue}
R.~Carmona and F.~Delarue.
\newblock {\em Probabilistic theory of mean field games with applications.
  {II}}, volume~84 of {\em Probability Theory and Stochastic Modelling}.
\newblock Springer, Cham, 2018.
\newblock Mean field games with common noise and master equations.

\bibitem{optim4}
J.~A. Carrillo, S.~Jin, L.~Li, and Y.~Zhu.
\newblock A consensus-based global optimization method for high dimensional
  machine learning problems.
\newblock {\em ESAIM Control Optim. Calc. Var.}, 27(suppl.):Paper No. S5, 22,
  2021.

\bibitem{Ref9}
J.~A. Carrillo, R.~J. McCann, and C.~Villani.
\newblock Kinetic equilibration rates for granular media and related equations:
  entropy dissipation and mass transportation estimates.
\newblock {\em Rev. Mat. Iberoamericana}, 19(3):971--1018, 2003.

\bibitem{CGM08}
P.~Cattiaux, A.~Guillin, and F.~Malrieu.
\newblock Probabilistic approach for granular media equations in the
  non-uniformly convex case.
\newblock {\em Probab. Theory Related Fields}, 140(1-2):19--40, 2008.

\bibitem{CGW10}
P.~Cattiaux, A.~Guillin, and L.-M. Wu.
\newblock A note on {T}alagrand's transportation inequality and logarithmic
  {S}obolev inequality.
\newblock {\em Probab. Theory Related Fields}, 148(1-2):285--304, 2010.

\bibitem{propachaos1}
L.-P. Chaintron and A.~Diez.
\newblock Propagation of chaos: {A} review of models, methods and applications.
  {I}. {M}odels and methods.
\newblock {\em Kinet. Relat. Models}, 15(6):895--, 2022.

\bibitem{Ref5}
V.~H. de~la Pe\~{n}a.
\newblock Decoupling and {K}hintchine's inequalities for {$U$}-statistics.
\newblock {\em Ann. Probab.}, 20(4):1877--1892, 1992.

\bibitem{data1}
P.~Del~Moral.
\newblock Measure-valued processes and interacting particle systems.
  {A}pplication to nonlinear filtering problems.
\newblock {\em Ann. Appl. Probab.}, 8(2):438--495, 1998.

\bibitem{data2}
P.~Del~Moral.
\newblock {\em Feynman-{K}ac formulae}.
\newblock Probability and its Applications (New York). Springer-Verlag, New
  York, 2004.
\newblock Genealogical and interacting particle systems with applications.

\bibitem{data3}
P.~Del~Moral.
\newblock {\em Mean field simulation for {M}onte {C}arlo integration}, volume
  126 of {\em Monographs on Statistics and Applied Probability}.
\newblock CRC Press, Boca Raton, FL, 2013.

\bibitem{Doljean}
J.~Dolbeault.
\newblock Functional inequalities: nonlinear flows and entropy methods as a
  tool for obtaining sharp and constructive results.
\newblock {\em Milan J. Math.}, 89(2):355--386, 2021.

\bibitem{DEGZ19}
A.~Durmus, A.~Eberle, A.~Guillin, and R.~Zimmer.
\newblock An elementary approach to uniform in time propagation of chaos.
\newblock {\em Proc. Amer. Math. Soc.}, 148(12):5387--5398, 2020.

\bibitem{EberlePTRF}
A.~Eberle.
\newblock Reflection couplings and contraction rates for diffusions.
\newblock {\em Probab. Theory Related Fields}, 166(3-4):851--886, 2016.

\bibitem{EGZ16}
A.~Eberle, A.~Guillin, and R.~Zimmer.
\newblock Quantitative {H}arris-type theorems for diffusions and
  {M}c{K}ean-{V}lasov processes.
\newblock {\em Trans. Amer. Math. Soc.}, 371(10):7135--7173, 2019.

\bibitem{VFP}
W.~L. Erhan~Bayraktar, Qi~Feng.
\newblock Exponential entropy dissipation for weakly self-consistent
  vlasov-fokker-planck equations.
\newblock {\em arXiv:2204.12049v1}, 2022.

\bibitem{GSRT}
I.~Gallagher, L.~Saint-Raymond, and B.~Texier.
\newblock {\em From {N}ewton to {B}oltzmann: hard spheres and short-range
  potentials}.
\newblock Zurich Lectures in Advanced Mathematics. European Mathematical
  Society (EMS), Z\"{u}rich, 2013.

\bibitem{Golse}
F.~Golse.
\newblock On the dynamics of large particle systems in the mean field limit.
\newblock In {\em Macroscopic and large scale phenomena: coarse graining, mean
  field limits and ergodicity}, volume~3 of {\em Lect. Notes Appl. Math.
  Mech.}, pages 1--144. Springer, [Cham], 2016.

\bibitem{Talay}
C.~Graham, T.~G. Kurtz, S.~M\'{e}l\'{e}ard, P.~E. Protter, M.~Pulvirenti, and
  D.~Talay.
\newblock {\em Probabilistic models for nonlinear partial differential
  equations}, volume 1627 of {\em Lecture Notes in Mathematics}.
\newblock Springer-Verlag, Berlin; Centro Internazionale Matematico Estivo
  (C.I.M.E.), Florence, 1996.
\newblock Lectures given at the 1st Session and Summer School held in
  Montecatini Terme, May 22--30, 1995, Edited by Talay and L. Tubaro,
  Fondazione CIME/CIME Foundation Subseries.

\bibitem{optim3}
S.~Grassi and L.~Pareschi.
\newblock From particle swarm optimization to consensus based optimization:
  stochastic modeling and mean-field limit.
\newblock {\em Math. Models Methods Appl. Sci.}, 31(8):1625--1657, 2021.

\bibitem{GLWY}
A.~Guillin, C.~L\'{e}onard, L.~Wu, and N.~Yao.
\newblock Transportation-information inequalities for {M}arkov processes.
\newblock {\em Probab. Theory Related Fields}, 144(3-4):669--695, 2009.

\bibitem{Ref3}
A.~Guillin, W.~Liu, L.~Wu, and C.~Zhang.
\newblock The kinetic {F}okker-{P}lanck equation with mean field interaction.
\newblock {\em J. Math. Pures Appl. (9)}, 150:1--23, 2021.

\bibitem{Ref2}
A.~Guillin, W.~Liu, L.~Wu, and C.~Zhang.
\newblock Uniform {P}oincar\'{e} and logarithmic {S}obolev inequalities for
  mean field particle systems.
\newblock {\em Ann. Appl. Probab.}, 32(3):1590--1614, 2022.

\bibitem{MonmarchGullin}
A.~Guillin and P.~Monmarch\'{e}.
\newblock Uniform long-time and propagation of chaos estimates for mean field
  kinetic particles in non-convex landscapes.
\newblock {\em J. Stat. Phys.}, 185(2):Paper No. 15, 20, 2021.

\bibitem{GuillinWangFeng}
A.~Guillin and F.-Y. Wang.
\newblock Degenerate {F}okker-{P}lanck equations: {B}ismut formula, gradient
  estimate and {H}arnack inequality.
\newblock {\em J. Differential Equations}, 253(1):20--40, 2012.

\bibitem{GZ03}
A.~Guionnet and B.~Zegarlinski.
\newblock Lectures on logarithmic {S}obolev inequalities.
\newblock In {\em S\'{e}minaire de {P}robabilit\'{e}s, {XXXVI}}, volume 1801 of
  {\em Lecture Notes in Math.}, pages 1--134. Springer, Berlin, 2003.

\bibitem{Hauray}
M.~Hauray and S.~Mischler.
\newblock On {K}ac's chaos and related problems.
\newblock {\em J. Funct. Anal.}, 266(10):6055--6157, 2014.

\bibitem{hu2019mean}
K.~Hu, Z.~Ren, D.~Siska, and L.~Szpruch.
\newblock Mean-field {L}angevin dynamics and energy landscape of neural
  networks.
\newblock {\em arXiv preprint arXiv:1905.07769}, 2019.

\bibitem{Lucasmini}
K.~Hu, Z.~Ren, D.~\v{S}i\v{s}ka, and L.~u. Szpruch.
\newblock Mean-field {L}angevin dynamics and energy landscape of neural
  networks.
\newblock {\em Ann. Inst. Henri Poincar\'{e} Probab. Stat.}, 57(4):2043--2065,
  2021.

\bibitem{Jabin}
P.-E. Jabin.
\newblock A review of the mean field limits for {V}lasov equations.
\newblock {\em Kinet. Relat. Models}, 7(4):661--711, 2014.

\bibitem{JabinWang}
P.-E. Jabin and Z.~Wang.
\newblock Mean field limit for stochastic particle systems.
\newblock In {\em Active particles. {V}ol. 1. {A}dvances in theory, models, and
  applications}, Model. Simul. Sci. Eng. Technol., pages 379--402.
  Birkh\"{a}user/Springer, Cham, 2017.

\bibitem{JabinWang1}
P.-E. Jabin and Z.~Wang.
\newblock Quantitative estimates of propagation of chaos for stochastic systems
  with {$W^{-1,\infty}$} kernels.
\newblock {\em Invent. Math.}, 214(1):523--591, 2018.

\bibitem{Entmethods}
A.~J\"{u}ngel.
\newblock {\em Entropy methods for diffusive partial differential equations}.
\newblock SpringerBriefs in Mathematics. Springer, [Cham], 2016.

\bibitem{MKac1}
M.~Kac.
\newblock Foundations of kinetic theory.
\newblock In {\em Proceedings of the {T}hird {B}erkeley {S}ymposium on
  {M}athematical {S}tatistics and {P}robability, 1954--1955, vol. {III}}, pages
  171--197. University of California Press, Berkeley-Los Angeles, Calif., 1956.

\bibitem{MKac3}
M.~Kac.
\newblock {\em Aspects probabilistes de la th\'{e}orie du potentiel}, volume
  1968 of {\em S\'{e}minaire de Math\'{e}matiques Sup\'{e}rieures, No. 32
  (\'{E}t\'{e}}.
\newblock Les Presses de l'Universit\'{e} de Montr\'{e}al, Montreal, Que.,
  1970.

\bibitem{MKac}
M.~Kac.
\newblock The {B}oltzmann equation: theory and applications.
\newblock In E.~G.~D. Cohen and W.~Thirring, editors, {\em Proceedings of the
  {I}nternational {S}ymposium ``100 {Y}ears {B}oltzmann {E}quation'' in
  {V}ienna, 4th-8th {S}eptember 1972}, pages xii+642. Springer-Verlag,
  Vienna-New York, 1973.
\newblock Acta Physica Austriaca Supplementum, X.

\bibitem{MKac2}
M.~Kac.
\newblock Fluctuations near and far from equilibrium.
\newblock In {\em Statistical mechanics and statistical methods in theory and
  application ({P}roc. {S}ympos., {U}niv. {R}ochester, {R}ochester, {N}.{Y}.,
  1976)}, pages 203--218. Plenum, New York, 1977.

\bibitem{Ref4}
V.~S. Koroljuk and Y.~V. Borovskich.
\newblock {\em Theory of {$U$}-statistics}, volume 273 of {\em Mathematics and
  its Applications}.
\newblock Kluwer Academic Publishers Group, Dordrecht, 1994.
\newblock Translated from the 1989 Russian original by P. V. Malyshev and D. V.
  Malyshev and revised by the authors.

\bibitem{LDP2}
S.~Kusuoka and Y.~Tamura.
\newblock Gibbs measures for mean field potentials.
\newblock {\em J. Fac. Sci. Univ. Tokyo Sect. IA Math.}, 31(1):223--245, 1984.

\bibitem{LedouxLSI}
M.~Ledoux.
\newblock Logarithmic {S}obolev inequalities for unbounded spin systems
  revisited.
\newblock In {\em S\'{e}minaire de {P}robabilit\'{e}s, {XXXV}}, volume 1755 of
  {\em Lecture Notes in Math.}, pages 167--194. Springer, Berlin, 2001.

\bibitem{Ref7}
C.~L\'{e}onard.
\newblock Large deviations and law of large numbers for a mean field type
  interacting particle system.
\newblock {\em Stochastic Process. Appl.}, 25(2):215--235, 1987.

\bibitem{birthdeath}
L.~Liu, M.~B. Majka, and Å.~Szpruch.
\newblock Polyak-lojasiewicz inequality on the space of measures and
  convergence of mean-field birth-death processes.
\newblock {\em arXiv}, 2022.

\bibitem{Ref1}
W.~Liu and L.~Wu.
\newblock Large deviations for empirical measures of mean-field {G}ibbs
  measures.
\newblock {\em Stochastic Process. Appl.}, 130(2):503--520, 2020.

\bibitem{M01}
F.~Malrieu.
\newblock Logarithmic {S}obolev inequalities for some nonlinear {PDE}'s.
\newblock {\em Stochastic Process. Appl.}, 95(1):109--132, 2001.

\bibitem{M03}
F.~Malrieu.
\newblock Convergence to equilibrium for granular media equations and their
  {E}uler schemes.
\newblock {\em Ann. Appl. Probab.}, 13(2):540--560, 2003.

\bibitem{Ref8}
R.~J. McCann.
\newblock A convexity principle for interacting gases.
\newblock {\em Adv. Math.}, 128(1):153--179, 1997.

\bibitem{McKean2}
H.~P. McKean, Jr.
\newblock A class of {M}arkov processes associated with nonlinear parabolic
  equations.
\newblock {\em Proc. Nat. Acad. Sci. U.S.A.}, 56:1907--1911, 1966.

\bibitem{McKean1}
H.~P. McKean, Jr.
\newblock Propagation of chaos for a class of non-linear parabolic equations.
\newblock In {\em Stochastic {D}ifferential {E}quations ({L}ecture {S}eries in
  {D}ifferential {E}quations, {S}ession 7, {C}atholic {U}niv., 1967)}, pages
  41--57. Air Force Office Sci. Res., Arlington, Va., 1967.

\bibitem{neural1}
S.~Mei, A.~Montanari, and P.-M. Nguyen.
\newblock A mean field view of the landscape of two-layer neural networks.
\newblock {\em Proc. Natl. Acad. Sci. USA}, 115(33):E7665--E7671, 2018.

\bibitem{Meleard}
S.~M\'{e}l\'{e}ard.
\newblock Asymptotic behaviour of some interacting particle systems;
  {M}c{K}ean-{V}lasov and {B}oltzmann models.
\newblock In {\em Probabilistic models for nonlinear partial differential
  equations ({M}ontecatini {T}erme, 1995)}, volume 1627 of {\em Lecture Notes
  in Math.}, pages 42--95. Springer, Berlin, 1996.

\bibitem{Ref10}
S.~Mischler.
\newblock Introduction aux limites de champ moyen pour des syst\`emes de
  particules.
\newblock In {\em DEA}, pp.166. cel-00576329. Universit\'e Paris-Dauphine,
  2011.

\bibitem{MR3381009}
S.~Mischler.
\newblock Kac's chaos and {K}ac's program.
\newblock In {\em S\'{e}minaire {L}aurent {S}chwartz---\'{E}quations aux
  d\'{e}riv\'{e}es partielles et applications. {A}nn\'{e}e 2012--2013},
  S\'{e}min. \'{E}qu. D\'{e}riv. Partielles, pages Exp. No. XXII, 17. \'{E}cole
  Polytech., Palaiseau, 2014.

\bibitem{kacprogramm}
S.~Mischler and C.~Mouhot.
\newblock Kac's program in kinetic theory.
\newblock {\em Invent. Math.}, 193(1):1--147, 2013.

\bibitem{Monmarche1}
P.~Monmarch\'{e}.
\newblock Long-time behaviour and propagation of chaos for mean field kinetic
  particles.
\newblock {\em Stochastic Process. Appl.}, 127(6):1721--1737, 2017.

\bibitem{Monmarche2}
P.~Monmarch\'{e}.
\newblock Generalized {$\Gamma$} calculus and application to interacting
  particles on a graph.
\newblock {\em Potential Anal.}, 50(3):439--466, 2019.

\bibitem{Biologie4}
A.~Muntean and F.~Toschi, editors.
\newblock {\em Collective dynamics from bacteria to crowds}, volume 553 of {\em
  CISM International Centre for Mechanical Sciences. Courses and Lectures}.
\newblock Springer, Vienna, 2014.
\newblock An excursion through modeling, analysis and simulation.

\bibitem{Biologie5}
G.~Naldi, L.~Pareschi, and G.~Toscani, editors.
\newblock {\em Mathematical modeling of collective behavior in socio-economic
  and life sciences}.
\newblock Modeling and Simulation in Science, Engineering and Technology.
  Birkh\"{a}user Boston, Ltd., Boston, MA, 2010.

\bibitem{ottogradients}
F.~Otto.
\newblock The geometry of dissipative evolution equations: the porous medium
  equation.
\newblock {\em Comm. Partial Differential Equations}, 26(1-2):101--174, 2001.

\bibitem{OV00}
F.~Otto and C.~Villani.
\newblock Generalization of an inequality by {T}alagrand and links with the
  logarithmic {S}obolev inequality.
\newblock {\em J. Funct. Anal.}, 173(2):361--400, 2000.

\bibitem{BGL01}
F.~Otto and C.~Villani.
\newblock Comment on: ``{H}ypercontractivity of {H}amilton-{J}acobi equations''
  [{J}. {M}ath. {P}ures {A}ppl. (9) {\bf 80} (2001), no. 7, 669--696;
  {MR}1846020 (2003b:47073)] by {S}. {G}. {B}obkov, {I}. {G}entil and {M}.
  {L}edoux.
\newblock {\em J. Math. Pures Appl. (9)}, 80(7):697--700, 2001.

\bibitem{optim1}
R.~Pinnau, C.~Totzeck, O.~Tse, and S.~Martin.
\newblock A consensus-based model for global optimization and its mean-field
  limit.
\newblock {\em Math. Models Methods Appl. Sci.}, 27(1):183--204, 2017.

\bibitem{Mario}
M.~Pulvirenti.
\newblock Kinetic limits for stochastic particle systems.
\newblock In {\em Probabilistic models for nonlinear partial differential
  equations ({M}ontecatini {T}erme, 1995)}, volume 1627 of {\em Lecture Notes
  in Math.}, pages 96--126. Springer, Berlin, 1996.

\bibitem{neural2}
J.~Sirignano and K.~Spiliopoulos.
\newblock Mean field analysis of neural networks: a law of large numbers.
\newblock {\em SIAM J. Appl. Math.}, 80(2):725--752, 2020.

\bibitem{SZ2}
D.~W. Stroock and B.~a. Zegarli\'{n}ski.
\newblock The equivalence of the logarithmic {S}obolev inequality and the
  {D}obrushin-{S}hlosman mixing condition.
\newblock {\em Comm. Math. Phys.}, 144(2):303--323, 1992.

\bibitem{SZ1}
D.~W. Stroock and B.~a. Zegarli\'{n}ski.
\newblock The logarithmic {S}obolev inequality for continuous spin systems on a
  lattice.
\newblock {\em J. Funct. Anal.}, 104(2):299--326, 1992.

\bibitem{SZ3}
D.~W. Stroock and B.~a. Zegarli\'{n}ski.
\newblock The logarithmic {S}obolev inequality for discrete spin systems on a
  lattice.
\newblock {\em Comm. Math. Phys.}, 149(1):175--193, 1992.

\bibitem{Ref6}
A.-S. Sznitman.
\newblock Topics in propagation of chaos.
\newblock In {\em \'{E}cole d'\'{E}t\'{e} de {P}robabilit\'{e}s de
  {S}aint-{F}lour {XIX}---1989}, volume 1464 of {\em Lecture Notes in Math.},
  pages 165--251. Springer, Berlin, 1991.

\bibitem{optim2}
C.~Totzeck.
\newblock Trends in consensus-based optimization.
\newblock In {\em Active particles. {V}ol. 3. {A}dvances in theory, models, and
  applications}, Model. Simul. Sci. Eng. Technol., pages 201--226.
  Birkh\"{a}user/Springer, Cham, [2022] \copyright 2022.

\bibitem{villanihypo}
C.~Villani.
\newblock Hypocoercivity.
\newblock {\em Mem. Amer. Math. Soc.}, 202(950):iv+141, 2009.

\bibitem{Villani}
C.~Villani.
\newblock {\em Optimal transport}, volume 338 of {\em Grundlehren der
  mathematischen Wissenschaften [Fundamental Principles of Mathematical
  Sciences]}.
\newblock Springer-Verlag, Berlin, 2009.
\newblock Old and new.

\bibitem{Wu09jfa}
L.~Wu.
\newblock Gradient estimates of {P}oisson equations on {R}iemannian manifolds
  and applications.
\newblock {\em J. Funct. Anal.}, 257(12):4015--4033, 2009.

\bibitem{Y00}
N.~Yoshida.
\newblock Application of log-{S}obolov inequality to the stochastic dynamics of
  unbounded spin systems on the lattice.
\newblock {\em J. Funct. Anal.}, 173(1):74--102, 2000.

\bibitem{Y01}
N.~Yoshida.
\newblock The equivalence of the log-{S}obolev inequality and a mixing
  condition for unbounded spin systems on the lattice.
\newblock {\em Ann. Inst. H. Poincar\'{e} Probab. Statist.}, 37(2):223--243,
  2001.

\bibitem{Z96}
B.~Zegarlinski.
\newblock The strong decay to equilibrium for the stochastic dynamics of
  unbounded spin systems on a lattice.
\newblock {\em Comm. Math. Phys.}, 175(2):401--432, 1996.

\bibitem{Zegarlinski}
B.~a. Zegarli\'{n}ski.
\newblock Dobrushin uniqueness theorem and logarithmic {S}obolev inequalities.
\newblock {\em J. Funct. Anal.}, 105(1):77--111, 1992.

\end{thebibliography}

\end{document}